\documentclass[english,12pt]{elsarticle}
\usepackage[english]{babel}
\usepackage{geometry}
\usepackage{hyperref}
\geometry{verbose,tmargin=2cm,bmargin=2cm,lmargin=2cm,rmargin=2cm}
\usepackage{amsthm}
\usepackage{amsmath}
\usepackage{amssymb}
\usepackage{graphicx}
\usepackage{thmtools, thm-restate}
\usepackage{cleveref}
\makeatletter
\theoremstyle{plain}
\newtheorem{theorem}{Theorem}[section]
\newtheorem{definition}{Definition}[section]
\newtheorem{lemma}{Lemma}[section]
\newtheorem{remark}{Remark}[section]
\newtheorem{proposition}{Proposition}[section]
\newtheorem{corollary}{Corollary}[section]

\usepackage{babel}
\usepackage{tikz}
\usepackage{tikz-cd}
\numberwithin{equation}{section}
\usepackage{caption}
\usepackage{comment}

\usepackage{accents}
\newcommand{\dbtilde}[1]{\accentset{\approx}{#1}}

\newcommand{\id}{\mathrm{id}}

\usepackage{etoolbox}
\apptocmd{\sloppy}{\hbadness 10000\relax}{}{}

\let\today\relax
\makeatletter
\def\ps@pprintTitle{%
	\let\@oddhead\@empty
	\let\@evenhead\@empty
	\def\@oddfoot{\footnotesize\itshape
		\hfill\today}%
	\let\@evenfoot\@oddfoot
}
\makeatother

\usepackage{dynkin-diagrams}

\begin{document}

\begin{frontmatter}

	\title{Classification of Hopf superalgebra structures on Drinfeld super Yangians}

	\author[1]{Alexander Mazurenko}
	\ead{mazurencoal@gmail.com}

	\author[2]{Vladimir A. Stukopin}
	\ead{stukopin@mail.ru}

	\address[1]{MCCME (Moscow Center for Continuous Mathematical Education)}
	\address[2]{MIPT (Moscow Institute of Physics and Technology) \\ SMI of VSC RAS (South Mathematical Institute of Vladikavkaz Scientific Center of Russian Academy of Sciences) \\ MCCME (Moscow Center for Continuous Mathematical Education)}

	\begin{abstract}
		We construct a minimalistic presentation of Drinfeld super Yangians in the case of special linear superalgebra associated with an arbitrary Dynkin diagram. This gives us a possibility to introduce Hopf superalgebra structure on Drinfeld super Yangians. Using complete Weyl group we classify Drinfeld super Yangians endowed with mentioned Hopf superalgebra structures. Also it is constructed an isomorphism between completions of Drinfeld super Yangians and quantum loop superalgebras.
	\end{abstract}

	\begin{keyword}
		Lie superalgebra, Drinfeld Super Yangian, Quantum Loop Superalgebra, Dynkin diagram, Weyl Group, Hopf Superalgebra, Minimalistic Presentation

		MSC Primary 16W35, Secondary 16W55, 17B37, 81R50, 16W30
	\end{keyword}

\end{frontmatter}

\section*{Acknowledgements}

This work is funded by Russian Science Foundation, scientific grant 21-11-00283. 

\vspace{1cm}

\section{Introduction}

Let $\mathfrak{g}(E,\Pi,p)$ be the special linear superalgebra defined by its simple root system $\Pi$, which lies in finite dimensional superspace $E$ with scalar product $(\cdot,\cdot)$ and parity function $p$. We will denote by $D$ a set which elements parametrize Dynkin diagrams. We recall that Drinfeld super Yangian $Y_{\hbar}(\mathfrak{g}(E,\Pi,p))$ and quantum loop superalgebra $U_q(L\mathfrak{g}(E,\Pi,p))$ are deformations of the universal enveloping superalgebras of the current superalgebra $\mathfrak{g}[t]$ and loop superalgebra $\mathfrak{g}[z,z^{-1}]$ respectively. We suppose that parameters of deformations $q$ and $\hbar$ are related by $q = e^{\frac{\hbar}{2}}$ and $q$ is not a root of unity. We consider Drinfeld super Yangian $Y_{\hbar}(\mathfrak{g}(E,\Pi,p))$ and quantum loop superalgeba $U_q(L\mathfrak{g}(E,\Pi,p))$ as associative superalgebras. It is well-known that Drinfeld super Yangian and quantum loop superalgebra are both Hopf superalgebras and our main task is to study relations between these Hopf superalgebras. We begin with studing in detail a minimalistic presentation of Drinfeld super Yangian (see \cite{GNW18}) associated with an arbitrary Dynkin diagram. We use this presentation in order to introduce a Hopf superalgebra structure on Drinfeld super Yangian by analogy with \cite{GNW18}. Our actions are motivated by desire to reduce calculations required to verificate that the introduced by us comultiplication is indeed a homomorphism of superalgebras. Using the introduced comultiplication we easily define the antipode. We also provide a relation between the comultiplication with Lie bisuperalgebra structure on special Lie superalgebra. Thus the coproduct on Drinfeld super Yangian is the deformation of the standard co-supercommutator of the initial Lie superbialgebra. The main difference with \cite{GNW18} is that we generalize results to super case and work with an arbitrary Dynkin diagram which makes the proofs more complicated. It is important to mention that Drinfeld super Yangian and quantum loop superalgebra also have so-called Drinfeld coproduct which was introduced by Drinfeld and was study by S. Khoroshkin and V. Tolstoy (\cite{KT96}), and also D. Hernandez (see \cite{H05}, \cite{H07} and also \cite{GNW18}). This coproduct is given in terms of infinite series though they can be regularized through deformation (see \cite{H07}). In order to study this comultiplication it is used the language of meromorphic categories introduced by D. Kazhdan and Y. Soibelman (see \cite{KS95} and \cite{GNW18} for more general case of affine Yangians). We are going to consider a relation between Drinfeld and standard comultiplications in the next paper. We also plan to describe structures of Hopf superalgebras on the quantum loop superalgebra and their connection with the structures of Hopf superalgebras on the Drinfeld super Yangian.

Next we construct an isomorphism of associative superalgebras $$\widehat{\Phi} : \widehat{U_{q} (L\mathfrak{g}(E,\Pi,p))} \to \widehat{Y_{\hbar}(\mathfrak{g}(E,\Pi,p))}$$ between appropriate completions of quantum loop superalgebra $U_{q} (L\mathfrak{g}(E,\Pi,p))$ and Drinfeld super Yangian $Y_{\hbar}(\mathfrak{g}(E,\Pi,p))$. We act by analogy with the paper \cite{S18} though we provide in some cases absolutely new proofs of important results. The proof of all these result is given in great details. 

Finally we prove when Drinfeld super Yangians $Y_{\hbar}(\mathfrak{g}(E,\Pi,p))$ and $Y_{\hbar}(\mathfrak{g}(E,\Pi_1,p))$ are isomorphic for any simple root systems $\Pi$ and $\Pi_1$. In order to do this we construct special equations based on the comultiplication properties which tell us how Dynkin diagrams of isomorphic Hopf superalgerbas look like. We also use a complete Weyl group in order to give a geometric intuition of the obtained result. Using the isomorphism $\widehat{\Phi}$ we endow the completion of quantum loop superalgebra with the structure of Hopf superalgerba and translate the obtained result about classification of Hopf superalgebra stuctures on it. We are going to discuss this issue in detail in the next paper.

\vspace{1cm}

\section{Preliminaries}

\label{sec:prelim}

In this paper we use the following notation. Let  $\mathbb{N}$, $\mathbb{N}_{0}$, $2 \mathbb{N}_{0}$, $\mathbb{Z}$, $\mathbb{Z}^{*}$, $\mathbb{Q}$ and $\mathbb{C}$ denote the sets of natural numbers, natural numbers with zero, even numbers in $\mathbb{N}_{0}$, integers, integers without zero, rational numbers and complex numbers, respectively. Let $\Bbbk$ be an algebraically closed field of characteristic zero. Denote by $M_{n,m}$ $(n,m \in \mathbb{N} )$ a matrix ring of $n \times m$ matrices. We also use the Iverson bracket defined by $ [P] = \begin{cases} 1 \text{ if } P \text{ is true;} \\ 0 \text{ otherwise}, \end{cases}$ where $P$ is a statement that can be true or false.

\subsection{Lie Superalgebra $A(m,n)$}

\label{subs:LS}

For an associative superalgebra $A$ and its two homogeneous elements $x, y$ and an element $v\in \Bbbk$ we define
\begin{eqnarray}
	&[x,y] := xy - (-1)^{|x||y|} y x, \quad \{x,y\} := xy + (-1)^{|x||y|} yx, \nonumber \quad \\
	&[x,y]_{v} := xy - (-1)^{|x||y|} v y x, \quad
\end{eqnarray}
where $|x|$ denotes the $\mathbb{Z}_2$-grading of $x$ (that is, $x \in A_{|x|} $).

A Lie superalgebra is a superalgerba $\mathfrak{g} = \mathfrak{g}_{\bar{0}} \oplus \mathfrak{g}_{\bar{1}}$ with the bilinear bracket (the super Lie bracket) $[\cdot, \cdot]: \mathfrak{g} \times \mathfrak{g} \to \mathfrak{g}$ which satisfies the following axioms: for any homogeneous $x, y, z \in \mathfrak{g}$
\begin{equation}
	[x, y] = - (-1)^{|x| |y|} [y, x],
\end{equation}
\begin{equation}
	\label{eq:sje}
	(-1)^{|x||z|} [x,[y,z]] + (-1)^{|x||y|} [y,[z,x]] + (-1)^{|z||y|}[z,[x,y]] = 0.
\end{equation}

Given two superspaces $A = A_{\bar{0}} \oplus A_{\bar{1}}$ and $B = B_{\bar{0}} \oplus B_{\bar{1}}$, their tensor product $A \otimes B$ is also a superspace with $(A \otimes B)_{\bar{0}} = A_{\bar{0}} \otimes B_{\bar{0}} \oplus A_{\bar{1}} \otimes B_{\bar{1}}$ and $(A \otimes B)_{\bar{1}} = A_{\bar{0}} \otimes B_{\bar{1}} \oplus A_{\bar{1}} \otimes B_{\bar{0}}$. Furthermore, if $A$ and $B$ are superalgebras, then $A \otimes B$ is made into a superalgebra, the graded tensor product of the superalgebras $A$ and $B$, via the following multiplication:
\begin{equation}
	(x \otimes y) (x^{'} \otimes y^{'}) = (-1)^{|y| |x^{'}|} (x x^{'}) \otimes (y y^{'}) \text{ for any } x \in A_{|x|}, x^{'} \in A_{|x^{'}|}, y \in B_{|y|}, y^{'} \in B_{|y^{'}|}.
\end{equation}
We will use only graded tensor products of superalgebras throughout this paper.

We use a set $D$ to label Dynkin diagrams that will appear further. Fix $m, n \in \mathbb{Z}_{\ge 1}$ and $d \in D$. From now on we set $I = \{1,2,...,m+n-1\}$. Consider a superspace $V^d = V_{\bar{0}}^d \oplus V_{\bar{1}}^d$ with a $\Bbbk$-basis $v_1,...,v_{m+n}$ such that each $v_i$ is either even ($v_{i} \in V_{\bar{0}}^d$) or odd ($v_{i} \in V_{\bar{1}}^d $). We set $n_{+} := \text{dim}(V_{\bar{0}}^d)$, $n_{-} := \text{dim}(V_{\bar{1}}^d)$, $m+n=n_{+} + n_{-} = \text{dim}(V^d)$. For $i \in I$, define $\bar{i} = \begin{cases} \bar{0}, & \mbox{if } v_{i} \in V_{\bar{0}}^d \\ \bar{1}, & \mbox{if } v_{i} \in V_{\bar{1}}^d \end{cases}$. Consider a free $\mathbb{Z}$-module $P_d := \bigoplus_{i=1}^{m+n} \mathbb{Z} \epsilon_{i,d}$ with the bilinear form determined by $(\epsilon_{i,d}, \epsilon_{j,d}) = \delta_{i j} (-1)^{\bar{i}}$ (we set $(-1)^{\bar{0}}:=1, (-1)^{\bar{1}}:=-1$). Let $\Delta_{d} = \{ \epsilon_{j,d} - \epsilon_{i,d} \; | \; i, j \in I, i \ne j \} \subset P_d$ be the root system. Then $\Delta_{d} = \Delta_{d}^{+} \cup \Delta_{d}^{-}$. Here $\Delta_{d}^{+} := \{ \epsilon_{j,d} - \epsilon_{i,d} \}_{1 \le j < i \le m+n}$ is the set of positive roots and $\Delta_{d}^{-} := - \Delta_{d}^{+}$ is the set of negative roots. Let $ \Pi_d = \{ \alpha_{i,d} := \epsilon_{i,d} - \epsilon_{i+1,d} \}_{i \in I} \subset \Delta_d$ be the set of simple roots. Set $|\alpha_{i,d}|:=\overline{i}+\overline{i+1} \in \mathbb{Z}_{2} $ for $i \in I$. Then $\Delta_{\bar{0}, d} := \{ \alpha \; | \; |\alpha| = \bar{0} \}$ is the set of even roots and $\Delta_{\bar{1},d} := \{ \alpha \; | \; |\alpha| = \bar{1} \}$ is the set of odd roots. Finally, let $C_{d} = (c_{ij}^d)_{i,j \in I}$ be the associated Cartan matrix, that is, $c_{ij}^d := (\alpha_{i,d}, \alpha_{j,d})$. Denote by $D_d$ the Dynkin diagram associated with $\Pi_d$ \cite{K77}. Yamane gave defining relations of a Lie superalgebra of type $A(m-1,n-1)$ ($m,n \ge 1$) associated with Dynkin diagram $D_d$ (for more details see \cite{Y91}, \cite{Y94}). Denote such Lie superalgebras by $\mathfrak{g}_{d} := \mathfrak{sl}(V^{d})$ for $d \in D$. In order to give more explicit description we give the well-known

\begin{proposition}
	\label{pr:supliealgdef}
	Consider a superspace $V^{d}$ ($d \in D$) and the Lie superalgebra $\mathfrak{g}_d$ with associated Cartan matrix $C_d = (c_{ij}^{d})_{i,j \in I}$. Then $\mathfrak{g}_d$ is generated by $h_{i,d}$, $e_{i,d}$ and $f_{i,d}$ for $i \in I$ ($|h_{i,d}| = \bar{0}$ and $|e_{i,d}|=|f_{i,d}|=|\alpha_{i,d}|$), where the generators satisfy the relations
	\begin{equation}
	[h_{i,d}, h_{j,d}] = 0, \; [h_{i,d}, e_{j,d}] = c_{ij}^d e_{j,d}, \; [h_{i,d}, f_{j,d}] = - c_{ij}^{d} f_{j,d}, \; [e_{i,d}, f_{j,d}] = \delta_{ij} h_{i,d}
	\end{equation}
	and the "super classical Serre-type" relations
	\begin{equation}
	\label{eq:scstr1}
	[e_{i,d}, e_{i,d}] = [f_{i,d}, f_{i,d}] = 0, \; \text{if} \; |\alpha_{i,d}|=\bar{1},
	\end{equation}
	\begin{equation}
	\label{eq:scstr2}
	(ad_{e_{i,d}})^{1+|c_{ij}^{d}|} e_{j,d} = (ad_{f_{i,d}})^{1+|c_{ij}^{d}|} f_{j,d} = 0, \; \text{if} \; i \ne j, \; \text{and} \; |\alpha_{i,d}|=\bar{0}
	\end{equation}
	plus nonstandart super Serre relations for type A:
	\begin{equation}
	\label{eq:scstr3}
	[ [ [e_{j-1,d}, e_{j,d}], e_{j+1,d} ] , e_{j,d} ] = [ [ [f_{j-1,d}, f_{j,d}], f_{j+1,d} ] , f_{j,d} ] = 0, \; \text{if} \; j-1, j, j + 1 \in I \; \text{and} \; |\alpha_{j,d}|=\bar{1},
	\end{equation}
	where for $x \in \mathfrak{g}_d$ the linear mapping $ad_{x}: \mathfrak{g}_d \to \mathfrak{g}_d$ called adjoint action is defined by $ad_{x}(y) = [x,y]$ for all $y \in \mathfrak{g}_d$.
\end{proposition}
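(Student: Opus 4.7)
The plan is to establish the proposition by constructing an isomorphism between the abstract Lie superalgebra $\tilde{\mathfrak{g}}_d$ presented by the listed generators and relations, and the concrete Lie superalgebra $\mathfrak{g}_d = \mathfrak{sl}(V^d)$. First I would define $\tilde{\mathfrak{g}}_d$ as the quotient of the free Lie superalgebra on $h_{i,d}, e_{i,d}, f_{i,d}$ (with the stated $\mathbb{Z}_2$-grading) by the ideal generated by all of the listed relations. Then I would exhibit a homomorphism $\phi : \tilde{\mathfrak{g}}_d \to \mathfrak{sl}(V^d)$ defined on generators by $e_{i,d} \mapsto E_{i,i+1}$, $f_{i,d} \mapsto (-1)^{\bar{i}} E_{i+1,i}$, and $h_{i,d} \mapsto (-1)^{\bar{i}} E_{i,i} - (-1)^{\overline{i+1}} E_{i+1,i+1}$, the signs being chosen so that $[e_{i,d}, f_{i,d}] = h_{i,d}$ holds inside $\mathfrak{gl}(V^d)$.

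Second, I would verify that $\phi$ is well defined by checking that the images satisfy every stated relation. The Chevalley-type relations reduce to a direct matrix computation using $(\alpha_{i,d}, \alpha_{j,d}) = c_{ij}^{d}$. The classical super Serre relations \eqref{eq:scstr1} and \eqref{eq:scstr2} follow from the adjacency pattern of $D_d$: the matrix units $E_{i,i+1}$ and $E_{j,j+1}$ super-commute when $|i-j| \ge 2$, while an odd root vector $E_{i,i+1}$ square-zeros because $E_{i,i+1}^2 = 0$ as a matrix. The nonstandard relation \eqref{eq:scstr3} at an odd simple root $\alpha_{j,d}$ is the delicate one: here $[[E_{j-1,j}, E_{j,j+1}], E_{j+1,j+2}]$ produces (up to sign) the even matrix $E_{j-1,j+2}$, and its super-bracket with $E_{j,j+1}$ vanishes precisely because the sign factors from $|\alpha_{j,d}| = \bar{1}$ force cancellation.

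Surjectivity of $\phi$ is then immediate: iterated super-brackets of the $e_{i,d}$'s produce the strictly upper-triangular matrix units $E_{i,j}$ ($i<j$), analogously for the $f_{i,d}$'s, and together with the images of the $h_{i,d}$'s these span $\mathfrak{sl}(V^d)$. The main obstacle, and the genuinely nontrivial content of the proposition, is injectivity, i.e.\ proving completeness of the relation set. The standard strategy is to derive a triangular decomposition $\tilde{\mathfrak{g}}_d = \tilde{\mathfrak{n}}^{-} \oplus \tilde{\mathfrak{h}} \oplus \tilde{\mathfrak{n}}^{+}$ from the Chevalley relations alone, show that $\tilde{\mathfrak{h}}$ is abelian of rank $m+n-1$, and then compare the weight space decomposition of $\tilde{\mathfrak{n}}^{\pm}$ under $\tilde{\mathfrak{h}}$ with the root space decomposition of $\mathfrak{sl}(V^d)$. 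Each positive root space of $\mathfrak{sl}(V^d)$ has dimension at most one, and the role of the nonstandard relation \eqref{eq:scstr3} is precisely to kill the higher-weight vectors in $\tilde{\mathfrak{n}}^{+}$ that would otherwise survive in the naive Serre construction associated with the Cartan matrix $C_d$ whenever isotropic odd simple roots are present. Since this combinatorial weight-space argument is carried out in full by Yamane in \cite{Y91, Y94}, I would invoke those results for the final injectivity step rather than reprove them here.
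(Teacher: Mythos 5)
The paper itself offers no proof of this proposition: it is introduced as ``well-known'' and the defining relations are attributed directly to Yamane \cite{Y91,Y94}. Your outline is therefore not competing with an argument in the text; it is a reconstruction of the standard proof, and it is sound. Its one genuinely hard step --- completeness of the relation set, i.e.\ injectivity of the map from the presented superalgebra onto $\mathfrak{sl}(V^d)$ --- is exactly the step you delegate back to Yamane, so in substance you and the paper end up in the same place, with your version adding the useful scaffolding (explicit Chevalley images, verification of the relations in $\mathfrak{gl}(V^d)$, surjectivity via iterated brackets producing the $E_{ij}$).

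Two small points of accuracy. First, your explanation of why \eqref{eq:scstr3} holds in the image is slightly misleading: $[\,[[E_{j-1,j},E_{j,j+1}],E_{j+1,j+2}],E_{j,j+1}\,] $ vanishes because both matrix products $E_{j-1,j+2}E_{j,j+1}$ and $E_{j,j+1}E_{j-1,j+2}$ are individually zero on index grounds, independently of any sign bookkeeping --- consistent with the paper's remark that \eqref{eq:scstr3} also holds when $|\alpha_{j,d}|=\bar{0}$. The parity of $\alpha_{j,d}$ matters only on the abstract side, where \eqref{eq:scstr3} fails to be a consequence of the other relations precisely at an isotropic odd root. Second, the case $m=n$ deserves a word: there the Cartan matrix is degenerate and the presented superalgebra surjects onto $\mathfrak{sl}(n|n)$ (whose Cartan subalgebra still contains the center), not onto the simple quotient $A(n-1,n-1)$; since the proposition is stated for $\mathfrak{g}_d=\mathfrak{sl}(V^d)$ your target is the right one, but the weight-space comparison in your injectivity sketch should be run against $\mathfrak{sl}(V^d)$ rather than against the simple algebra.
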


\label{rm:lsr}
\begin{remark}
	\begin{enumerate}
		\item Relations \eqref{eq:scstr1} are also valid for $|\alpha_{i,d}|=\bar{0}$ by definition.
		\item Relations \eqref{eq:scstr2} are also valid for $|\alpha_{i,d}|=\bar{1}$, but in that case, they already follow from \cref{eq:sje,eq:scstr1}.
		\item Relations \eqref{eq:scstr3} are also valid for $|\alpha_{j,d}|=\bar{0}$, but in that case, they already follow from \cref{eq:sje,eq:scstr1,eq:scstr2}.
	\end{enumerate}
\end{remark}

Recall that the classical Lie superalgebra $A(m-1,n-1)$ ($m,n \ge 1$) coincides with $\mathfrak{sl}(m|n)$ for $m \ne n$, and with the quotient $\mathfrak{sl}(m|n) / (E)$ for $m = n$, where $E = \sum_{i=1}^{2n} E_{ii}$ is the central element, where $E_{ij} \in M_{2n, 2n}( \Bbbk)$ denotes matrix with $1$ at $(i,j)$-position and zeros elsewhere ($i,j \in \{1,2,...,2n\}$). In our notations $\mathfrak{sl}(m|n)$ is actually $\mathfrak{g}_d = \mathfrak{sl}(V^{d})$ for some $d \in D$ with the associated Dynkin diagram $D_{d}$ (the distinguished Dynkin diagram) which can be described in the following way
\begin{equation}
	D_{d} = ( \epsilon_{1,d} - \epsilon_{2,d}, ... , \epsilon_{i,d} - \epsilon_{i+1,d}, ... , \epsilon_{m+n-1,d} - \epsilon_{m+n,d} ),
	\label{eq:dddslmn}
\end{equation}
where $|\epsilon_{i,d}| = \bar{0}$ for all $i \in \{1,...,m\}$ and $|\epsilon_{i,d}| = \bar{1}$ for all $i \in \{m+1,...,m+n\}$.

Denote by $\mathfrak{n}^{+}_d$ (resp. $\mathfrak{n}^{-}_d$) and $\mathfrak{h}_d$ the subalgebra of $\mathfrak{g}_d$ generated by $e_{1,d}$, $...$, $e_{m+n-1,d}$ (resp. $f_{1,d}$, $...$, $f_{m+n-1,d}$) and $h_{1,d}$, $...$, $h_{m+n-1,d}$. Then define by $\mathfrak{b}^{+}_d = \mathfrak{h}_d \oplus \mathfrak{n}^{+}_d$ (resp. $\mathfrak{b}^{-}_d = \mathfrak{h}_d \oplus \mathfrak{n}^{-}_d$) the positive Borel subalgebra (resp. the negative Borel subalgebra) of $\mathfrak{g}_d$.

It is well-known that $P_{d} \otimes_{\mathbb{R}} \Bbbk \cong \mathfrak{h}_{d}^{*}$ in the case of $\mathfrak{g}_d$ ($d \in D$). Also $c_{ij}^{d} = \alpha_{j,d}(h_{i,d})$ for all $i, j \in I$. For any root $\alpha \in \Delta_{d}^{\pm}$, we have
\[ \mathfrak{g}_{ \alpha } = \{ x \in \mathfrak{g}_d \; | \; [h,x] = \pm \alpha(h) x \}. \]
Recall the definition of the root space decomposition of $\mathfrak{g}_d$:
\begin{equation}
	\mathfrak{g}_d = ( \bigoplus_{ \alpha \in \Delta_{d}^{-} } \mathfrak{g}_{ \alpha } ) \oplus \mathfrak{h}_{d} \oplus ( \bigoplus_{ \alpha \in \Delta_{d}^{+} } \mathfrak{g}_{ \alpha } ).
	\label{eq:rsd}
\end{equation}

Consider the following total order $"\le"$ on $\Delta^{+}_d$:
\begin{equation}
	\label{eq:orderRootquant}
	\alpha_{j,d} + \alpha_{j+1,d} + ... + \alpha_{i,d} \le \alpha_{j^{'},d} + \alpha_{j^{'}+1,d} + ... + \alpha_{i^{'},d} \mbox{ iff } j < j^{'} \mbox{ or } j = j^{'}, i \le i^{'}.
\end{equation}
For every $\beta \in \Delta^{+}_d$, we choose a decomposition $\beta = \alpha_{i_1,d}+...+\alpha_{i_p,d}$ such that $[...[e_{\alpha_{i_1,d}}, e_{\alpha_{i_2,d}}],...,e_{\alpha_{i_p,d}}]$ is a non-zero root vector $e_{\beta}$ of $\mathfrak{g}_d$ (here, $e_{\alpha_{i,d}} = e_{i,d}$ denotes the standart Chevalley generator of $\mathfrak{g}_d$). Then, we define
\[ e_{\beta} := [...[ [ e_{i_1,d}, e_{i_2,d} ], e_{i_3,d} ], ... , e_{i_p,d} ], \]
\[ f_{\beta} := [...[ [ f_{i_p,d}, f_{i_{p-1},d} ], f_{i_{p-2},d} ], ... , f_{i_1,d} ]. \]
One more time in particular, $e_{\alpha_{i,d}} = e_{i,d}$ and $f_{\alpha_{i,d}} = f_{i,d}$. It is well known that $ \mathfrak{g}_{ \alpha } = \langle x_{\alpha} \rangle $ as a vector super space for any $\alpha \in \Delta_{d}^{\pm}$.

It can be shown \cite{K77} that there exists a unique (up to a constant factor) non-degenerate supersymmetric invariant bilinear form $\langle . , . \rangle : \mathfrak{g}_d \times \mathfrak{g}_d \to \Bbbk $, i. e. for $x,y,z \in \mathfrak{g}_d$
\begin{enumerate}
	\item the form induces an isomorphism $\mathfrak{g}_d \cong (\mathfrak{g}_d)^{*}$;
	\item $ \langle x , y \rangle = (-1)^{|x||y|} \langle y , x \rangle $;
	\item $ \langle [x,y] , z \rangle + (-1)^{|x||y|} \langle y , [x,z] \rangle = 0$.
\end{enumerate}
We choose a bilinear form on $\mathfrak{g}_d$ in such a way that for all $\alpha, \beta \in \Delta_d^{+}$ we have $\langle e_{\alpha}, f_{\beta} \rangle = \delta_{\alpha,\beta}$. Fix a basis $\{ h_{(k)} \}_{k \in I}$ of $\mathfrak{h}_d$. Then we can construct an orthonormal basis $\{ h^{(k)} \}_{k \in I}$ with respect to the $\langle . , . \rangle$, i. e. $\langle h^{(i)}, h^{(j)} \rangle = \delta_{ij}$ ($i,j \in I$). Recall that
\[ \sum_{k \in I} \langle h^{(k)} , h_{i,d} \rangle h^{(k)} = h_{i,d}, \]
for all $i \in I$. Then the Casimir operator $\Omega_d \in \mathfrak{g}_d \otimes \mathfrak{g}_d$ is given by the formula
\begin{equation}
	\label{eq:CE}
	\Omega_d = \sum_{k \in I} h^{(k)} \otimes h^{(k)} + \sum_{ \alpha \in \Delta^{+}_d} (-1)^{|\alpha|} e_{\alpha} \otimes f_{\alpha} +  \sum_{ \alpha \in \Delta^{+}_d} f_{\alpha} \otimes e_{\alpha}.
\end{equation}
The element $\Omega_d$ is the even, invariant, supersymmetric element, i. e.
\begin{enumerate}
	\item $|\Omega_d| = 0$;
	\item $[ x \otimes 1 + 1 \otimes x, \Omega_d ] = 0$ for all $x \in \mathfrak{g}_d$;
	\item $\Omega_d = \tau_{\mathfrak{g}_d,\mathfrak{g}_d}(\Omega_d)$, where $\tau_{\mathfrak{g}_d,\mathfrak{g}_d}$ is defined by \eqref{eq:taudef}.
\end{enumerate}

Define $ht( \sum_{i \in I} n_{i} \alpha_{i,d} ) = \sum_{i \in I} n_{i}$ for all $( n_{i} )_{i \in I} \in \mathbb{Z}^{|I|}$.

\vspace{1cm}

\subsection{The Lie superbialgebra structures on polynomial current Lie superalgebra and loop superalgebra}

A Lie superbialgebra $(\mathfrak{g}, [\cdot, \cdot], \delta)$ (see \cite{GZB91}) is a Lie superalgebra $(\mathfrak{g}, [\cdot, \cdot])$ with a linear map $\delta: \mathfrak{g} \to \mathfrak{g} \otimes \mathfrak{g}$ called co-supercommutator that preserves the $\mathbb{Z}_{2}$-grading and satisfies the following conditions:
\begin{equation}
	\delta + \tau_{\mathfrak{g}, \mathfrak{g}} \circ \delta = 0,
\end{equation}
\begin{equation}
	\label{eq:bisuli1}
	( \delta \otimes id_{\mathfrak{g}} ) \circ \delta - (id_{\mathfrak{g}} \otimes \delta) \circ \delta = (id_{\mathfrak{g}} \otimes \tau_{\mathfrak{g}, \mathfrak{g}}) \circ (\delta \otimes id_{\mathfrak{g}}) \circ \delta,
\end{equation}
\begin{equation}
	\label{eq:bisuli2}
	\delta([x,y]) = ( ad_{x} \otimes id_{\mathfrak{g}} + id_{\mathfrak{g}} \otimes ad_{x} ) \delta(y) - ( ad_{y} \otimes id_{\mathfrak{g}} + id_{\mathfrak{g}} \otimes ad_{y} ) \delta(x),
\end{equation}
where $x, y \in \mathfrak{g}$, $id_{\mathfrak{g}}$ is the identity map on $\mathfrak{g}$, $ad_{x} y = [x,y]$ is the adjoint and $\tau_{V, W}: V \otimes W \to W \otimes V$ is the linear function given by
\begin{equation}
	\label{eq:taudef}
	\tau_{V, W}(v \otimes w) = (-1)^{|v| |w|} w \otimes v
\end{equation}
for homogeneous $v \in V$ and $w \in W$, where $V$ and $W$ are super vector spaces.

We also need the following well-known result \cite{A93}
\begin{proposition}
	Let $(\mathfrak{g}, [\cdot, \cdot])$ be a Lie superalgebra and let $r \in \mathfrak{g} \otimes \mathfrak{g}$ be even and $r + \tau_{\mathfrak{g}, \mathfrak{g}} \circ r = 0$. Then $(\mathfrak{g}, [\cdot, \cdot], \delta r)$ is a Lie superbialgebra if and only if the element
	\[ CYB(r) := [r_{12}, r_{13}] + [r_{12},r_{23}] + [r_{13},r_{23}] \in \mathfrak{g}^{\otimes 3} \]
	is $\mathfrak{g}$-invariant, where for every $a \in \mathfrak{g}$, $\delta r(a) = [a \otimes 1 + 1 \otimes a, r]$.
\end{proposition}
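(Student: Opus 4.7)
\emph{Proof plan.} The plan is to verify the three defining axioms of a Lie superbialgebra directly for $\delta=\delta r$: (a) co-skew-symmetry $\delta+\tau_{\mathfrak{g},\mathfrak{g}}\circ\delta=0$, (b) the cocycle condition \eqref{eq:bisuli2}, and (c) the co-Jacobi identity \eqref{eq:bisuli1}. Since $r$ is even, $\delta r$ preserves the $\mathbb{Z}_{2}$-grading automatically. Two of the axioms hold unconditionally and the third is exactly what forces the hypothesis on $CYB(r)$.

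For (a), since $\tau_{\mathfrak{g},\mathfrak{g}}$ is an isomorphism of graded tensor superalgebras and $\tau_{\mathfrak{g},\mathfrak{g}}(a\otimes 1+1\otimes a)=1\otimes a+a\otimes 1$, I would compute $\tau_{\mathfrak{g},\mathfrak{g}}\delta r(a)=[a\otimes 1+1\otimes a,\tau_{\mathfrak{g},\mathfrak{g}}r]=-\delta r(a)$, using the hypothesis $\tau_{\mathfrak{g},\mathfrak{g}}\circ r=-r$. For (b), I would rewrite $[x,y]\otimes 1=[x\otimes 1,y\otimes 1]$ and $1\otimes[x,y]=[1\otimes x,1\otimes y]$ inside the graded tensor superalgebra and apply super-Jacobi twice to $[[x,y]\otimes 1+1\otimes[x,y],r]$; the resulting terms regroup into $(ad_{x}\otimes\id+\id\otimes ad_{x})\delta r(y)-(ad_{y}\otimes\id+\id\otimes ad_{y})\delta r(x)$ after using that $r$ is even and that the super bracket $[x,y]$ is super-skew. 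Neither step involves $CYB(r)$.

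The substance of the proposition lies in (c). Choosing a homogeneous decomposition $r=\sum_{i}a_{i}\otimes b_{i}$ with $|a_{i}|=|b_{i}|$ and expanding one $\delta r$ at a time (together with the Koszul signs needed to move tensor slots past one another) produces the super-analogue of the classical identity
\begin{equation*}
\bigl(\delta r\otimes\id-\id\otimes\delta r-(\id\otimes\tau_{\mathfrak{g},\mathfrak{g}})\circ(\delta r\otimes\id)\bigr)\bigl(\delta r(a)\bigr)=\bigl[a\otimes 1\otimes 1+1\otimes a\otimes 1+1\otimes 1\otimes a,\;CYB(r)\bigr].
\end{equation*}
Granting this identity, the co-Jacobi condition holds for every $a\in\mathfrak{g}$ if and only if $CYB(r)$ supercommutes with every element in the image of the diagonal embedding $\mathfrak{g}\hookrightarrow\mathfrak{g}^{\otimes 3}$, i.e.\ is $\mathfrak{g}$-invariant; this settles both directions of the equivalence at once.

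The main obstacle is the expansion leading to the displayed identity: there are many Koszul signs to bookkeep, and the cancellations only materialise after super-Jacobi is applied to the internal brackets of the $r$-legs with $a\otimes 1\otimes 1+1\otimes a\otimes 1+1\otimes 1\otimes a$. To keep the computation uniform I would work throughout with the chosen homogeneous decomposition so that $r_{12}$, $r_{13}$, $r_{23}$ behave consistently under parity swaps, and group terms by their tensorial shape $[a_{i},-]\otimes[b_{i},-]\otimes(-)$ before collecting; each of $[r_{12},r_{13}]$, $[r_{12},r_{23}]$, $[r_{13},r_{23}]$ then emerges with coefficient $+1$, exactly as in the purely even classical case.
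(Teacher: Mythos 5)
The paper does not prove this proposition at all: it is stated as a well-known result with a citation to \cite{A93}, so there is no internal argument to compare yours against. Your plan is the standard coboundary-superbialgebra argument (the super analogue of Drinfeld's original computation), and its architecture is correct: co-skew-symmetry follows from $\tau_{\mathfrak{g},\mathfrak{g}}(r)=-r$ together with $\tau_{\mathfrak{g},\mathfrak{g}}(\square(a))=\square(a)$, the cocycle condition \eqref{eq:bisuli2} is an instance of the super Jacobi identity applied to $[[\square(x),\square(y)],r]$, and the equivalence with $\mathfrak{g}$-invariance of $CYB(r)$ is carried entirely by the displayed identity expressing the co-Jacobiator as $[\square^{(3)}(a),CYB(r)]$.

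That said, as a proof rather than a plan it is incomplete precisely where all the content lives: the displayed identity is asserted ("Granting this identity\dots"), not derived, and in the super setting this is the step where errors actually occur. Two concrete points to watch when you carry it out. First, the evenness of $r$ is what makes $|a_i|=|b_i|$ in your homogeneous decomposition, and this is used tacitly every time you commute a leg of one copy of $r$ past a leg of another; if you track the Koszul signs you will find they cancel in pairs only because $|a_i|+|b_i|=\bar{0}$. Second, the skew-symmetry $\tau_{\mathfrak{g},\mathfrak{g}}(r)=-r$ is also needed in the co-Jacobi computation (not only for axiom (a)): the term $(\id\otimes\tau_{\mathfrak{g},\mathfrak{g}})\circ(\delta r\otimes\id)\circ\delta r$ produces brackets of the form $[r_{13},r_{12}]$ and $[r_{23},r_{13}]$ with the "wrong" slot order, and these are converted into the three terms of $CYB(r)$ with coefficient $+1$ only after using skew-symmetry to reverse legs. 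Neither point invalidates your approach, but a complete write-up must exhibit the cancellation rather than appeal to the classical case, since the claim that "each term emerges with coefficient $+1$ exactly as in the purely even case" is exactly what the Koszul bookkeeping has to certify.
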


Consider the polynomial current Lie superalgebra $\mathfrak{g}_d[u] := \mathfrak{g}_d \otimes_{\Bbbk} \Bbbk[u]$, where $|u| = 0$. Thus
\[ (\mathfrak{g}_d[u])_{\bar{0}} = (\mathfrak{g}_d)_{\bar{0}} \otimes_{\Bbbk} \Bbbk[u] \text{ and } (\mathfrak{g}_d[u])_{\bar{1}} = (\mathfrak{g}_d)_{\bar{1}} \otimes_{\Bbbk} \Bbbk[u]. \]
The Lie superalgebra structure on $\mathfrak{g}_d[u]$ is defined by
\begin{equation}
[ a \otimes u^m, b \otimes u^n ] := [a,b] \otimes u^{m+n},
\end{equation}
where $a, b \in \mathfrak{g}_d$ and $m,n \in \mathbb{N}_{0}$. $\mathfrak{g}_d[u]$ is a Lie superbialgebra (see \cite{AB21} for the non-super case). The co-supercommutator $\delta_d: \mathfrak{g}_d[z] \to \mathfrak{g}_d[u] \otimes \mathfrak{g}_d[v] \cong \mathfrak{g}_d^{\otimes 2}[u, v]$ is given by the following formula
\begin{equation}
	\delta_d(f(z)) = [ f(u) \otimes 1 + 1 \otimes f(v), \frac{\Omega_d}{u-v} ].
\end{equation}
Thus we have
\begin{equation}
	\delta_d( a z^{m} ) = [ a u^{m} \otimes 1 + 1 \otimes a v^{m}, \frac{\Omega_d}{u-v} ] =
\end{equation}
\[ [ a \otimes 1, \Omega_d ] \frac{u^m-v^m}{u-v} = [ m > 0 ] [ a \otimes 1, \Omega_d ] \sum_{k=0}^{m-1} u^{k} v^{m-1-k}, \]
where $a \in \mathfrak{g}_d$, $\Omega_d$ is defined by \eqref{eq:CE} and $m \in \mathbb{N}_{0}$. Note that
\begin{equation}
	\label{eq:cosuperbr1}
	\delta_d(a) = 0;
\end{equation}
\begin{equation}
	\label{eq:cosuperbr2}
	\delta_d(a u) = [ a \otimes 1, \Omega_d ] =
\end{equation}
\[ \sum_{i \in I }  [ a, h_{i,d}^{'} ] \otimes h_{i,d}^{'} + \sum_{\alpha \in \Delta^{+}_d} (-1)^{|\alpha|} [a, e_{\alpha,d}] \otimes f_{\alpha,d} + \sum_{\alpha \in \Delta^{+}_d} [a, f_{\alpha, d}] \otimes e_{\alpha, d}, \]
where $a \in \mathfrak{g}_d$.

The loop superalgebra is defined as $L(\mathfrak{g}_d) := \mathfrak{g}_d \otimes \Bbbk[u,u^{-1}]$, where $|u|=|u^{-1}|=0$. Thus
\[ L(\mathfrak{g}_d)_{\bar{0}} = (\mathfrak{g}_d)_{\bar{0}} \otimes \Bbbk[u,u^{-1}] \text{ and } L(\mathfrak{g}_d)_{\bar{1}} = (\mathfrak{g}_d)_{\bar{1}} \otimes \Bbbk[u,u^{-1}]. \]
The Lie superalgebra structure is defined by
\begin{equation}
[ a \otimes u^{m}, b \otimes u^{n} ] := [a,b] \otimes u^{m+n},
\end{equation}
where $a, b \in \mathfrak{g}_d$ and $m,n \in \mathbb{Z}$. $L(\mathfrak{g}_d)$ is a Lie superbialgebra (see \cite{AB21} for the non-super case). The co-supercommutator $\phi_d: L(\mathfrak{g}_d) \to L(\mathfrak{g}_d)^{\otimes 2} \cong \mathfrak{g}_d^{\otimes 2}[u, u^{-1}, v, v^{-1}]$ is given by the following formula \cite{KPSST}
\begin{equation}
	\phi_d(f(z)) = [ f(u) \otimes 1 + 1 \otimes f(v), r_{0}(u, v) ],
\end{equation}
where
\[r_{0}(u, v) = \frac{1}{2} ( \frac{u+v}{u-v} \Omega_d + w_d ), \]
\[ w_d = \sum_{\alpha \in \Delta_{d}^{+}} f_{\alpha,d} \otimes e_{\alpha,d} - \sum_{\alpha \in \Delta_{d}^{+}} (-1)^{|\alpha|} e_{\alpha,d} \otimes f_{\alpha,d}. \]
Thus we have
\begin{equation}
\phi_d(a z^m) = [ a u^m \otimes 1 + 1 \otimes a v^m, r_{0}(u, v) ] =
\end{equation}
\[ \frac{1}{2} ( [a \otimes 1, \Omega_d] \frac{(u+v)(u^m-v^m)}{u-v} +  [ a u^m \otimes 1 + 1 \otimes a v^m, w_d] ). \]

\vspace{1cm}

\subsection{Weyl Group and Complete Weyl Group}

The material is in some way standart. We reformulate definitions and some important results about Weyl groups in a constructive way needed for our purposes. For classical definitions see for example \cite{K90} and \cite{CH09}. Recall notations introduced in Subsection \ref{subs:LS}.

Fix an arbitrary $(n_{+}, n_{-}) \in \mathbb{N}^{2}$. Denote by $S_{X}$ the symmetric group on the set $X$ and by $S_m$ the symmetric group of degree $m$ ($m \in \mathbb{N}$). Let $P(n_{+}, n_{-})$ be the family of root systems $P_{d}$ ($d \in D$) such that for associated superspaces $V^{d}$ we have $n_{+} := \text{dim}(V_{\bar{0}}^d)$, $n_{-} := \text{dim}(V_{\bar{1}}^d)$. Note that $|P(n_{+}, n_{-})| < \infty$. Also for any $P \in P(n_{+}, n_{-})$ we can construct the associated superspace $V$ and the Lie superalgebra $\mathfrak{sl}(V)$ in the unique way. Denote by $P_{st}(n_{+}, n_{-}) \in P(n_{+}, n_{-})$ the element which has the basis $ \mathcal{B}_{st} = \{ \epsilon_{i} \}_{\{1,2,..., n_{+}+n_{-}\}}$ such that $|\epsilon_{i}| = \bar{0}$ for all $i \in I_{\bar{0}} = \{1,2,..., n_{+}\}$ and $|\epsilon_{i}| = \bar{1}$ for all $i \in I_{\bar{1}} = \{n_{+}+1,n_{+}+2,..., n_{+}+n_{-}\}$. All elements in $P(n_{+}, n_{-})$ are isomorphic as free $\mathbb{Z}$-modules endowed with $\mathbb{Z}_{2}$-grading.

Also we need the following classical result (see for proof \cite{KT08}). Recall that $S_{m}$ is generated by the adjacent transpositions $ \{ \sigma_{i} \}_{ i \in \{1,2,...,m-1\} } $. Define the sets
\[ \Sigma_{k} = \{ \id_{S_{m}}, \sigma_{k}, \sigma_{k} \circ \sigma_{k-1} , ... , \sigma_{k} \circ \sigma_{k-1} \circ ... \circ \sigma_{2} \circ \sigma_{1} \} \]
for all $k \in \{1,2,...,m-1\}$. Then
\begin{lemma}
	\emph{\cite{KT08}}
	For any permutation $w \in S_{m}$ there is a unique element
	\[ (w_{1}, w_{2}, ... , w_{m-1}) \in \Sigma_{1} \times \Sigma_{2} \times ... \times \Sigma_{m-1}, \]
	such that $w = w_{1} \circ w_{2} \circ ... \circ w_{m-1}$.
	\label{lm:udecperm}
\end{lemma}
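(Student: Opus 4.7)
The plan is to proceed by induction on $m$, using a bijection between $\Sigma_{m-1}$ and $\{1, 2, \ldots, m\}$ to identify the last factor. The base case $m = 1$ is trivial, since $S_{1} = \{\id\}$ and the product on the right is the empty product.

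For the key preliminary step I would verify by direct computation that, for each $j \in \{1, \ldots, m-1\}$, the element $\sigma_{m-1} \circ \sigma_{m-2} \circ \ldots \circ \sigma_{j} \in \Sigma_{m-1}$ is the cyclic permutation $(j, m, m-1, \ldots, j+1)$; in particular it sends $j$ to $m$. Together with the identity (which fixes $m$), this yields a bijection
\[
\Sigma_{m-1} \xrightarrow{\sim} \{1, 2, \ldots, m\}, \qquad w_{m-1} \mapsto w_{m-1}^{-1}(m).
\]

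For the inductive step, given $w \in S_{m}$ I would set $k = w^{-1}(m)$ and take $w_{m-1} \in \Sigma_{m-1}$ to be the unique preimage of $k$ under the bijection above, so that $(w \circ w_{m-1}^{-1})(m) = w(k) = m$. Thus $w \circ w_{m-1}^{-1}$ lies in $\mathrm{Stab}_{S_{m}}(m) \cong S_{m-1}$, the subgroup generated by $\sigma_{1}, \ldots, \sigma_{m-2}$. Since $\Sigma_{i} \subset S_{m-1}$ for all $i \le m-2$ (only $\sigma_{1}, \ldots, \sigma_{i}$ appear), the induction hypothesis provides unique $w_{1} \in \Sigma_{1}, \ldots, w_{m-2} \in \Sigma_{m-2}$ with $w \circ w_{m-1}^{-1} = w_{1} \circ \ldots \circ w_{m-2}$, yielding the desired decomposition $w = w_{1} \circ \ldots \circ w_{m-1}$.

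Uniqueness follows by observing that every factor $w_{i}$ with $i \le m-2$ fixes $m$, so the identity $w^{-1}(m) = w_{m-1}^{-1}(m)$ forces $w_{m-1}$ to be determined by the bijection above, after which the remaining factors are pinned down by induction; alternatively, the counting identity $|\Sigma_{1} \times \cdots \times \Sigma_{m-1}| = 2 \cdot 3 \cdots m = m! = |S_{m}|$ shows that existence alone already forces uniqueness. The main obstacle I expect is the cycle identification in the preliminary step, which must be done carefully because of the right-to-left convention for composing the $\sigma_{i}$; once that bijection is established, the induction runs smoothly.
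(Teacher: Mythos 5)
Your proof is correct: the identification of $\sigma_{m-1}\circ\cdots\circ\sigma_{j}$ with the cycle sending $j\mapsto m$ and $k\mapsto k-1$ for $j+1\le k\le m$ is right, the resulting bijection $\Sigma_{m-1}\to\{1,\dots,m\}$, $w_{m-1}\mapsto w_{m-1}^{-1}(m)$, is exactly what makes the induction close, and the counting identity $|\Sigma_1\times\cdots\times\Sigma_{m-1}|=m!$ settles uniqueness. The paper states this lemma with only a citation to \cite{KT08} and gives no proof of its own; your argument is the standard inductive one found there, so there is nothing to reconcile.
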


\subsubsection{Weyl Group}
\label{sub:WGdef}

The Weyl group $W(n_{+}, n_{-})$ of type $(n_{+}, n_{-})$ is the group $S_{I_{\bar{0}}} \times S_{I_{\bar{1}}}$ with the group faithful representation $\rho : S_{I_{\bar{0}}} \times S_{I_{\bar{1}}} \to Aut(P_{st}(n_{+}, n_{-}))$ (described below). Here $Aut(P_{st}(n_{+}, n_{-}))$ is notation for an automorphism group. For an arbitrary element $w = (u, v) \in S_{I_{\bar{0}}} \times S_{I_{\bar{1}}}$ we define
\[ w(\epsilon_{i}) := \rho( w ) (\epsilon_{i}) = \epsilon_{ u(i)}, \]
if $|\epsilon_{i}| = \bar{0}$ and
\[ w(\epsilon_{i}) := \rho( w ) (\epsilon_{i}) = \epsilon_{ v(i)}, \]
if $|\epsilon_{i}| = \bar{1}$, where $ \mathcal{B}_{st} = \{ \epsilon_{i} \; | \; i \in \{1,2,...,n_{+}+n_{-}\} \}$ is the early constructed basis of $P_{st}(n_{+}, n_{-})$. Note that $|\rho( w ) (\epsilon_{i})| = |\epsilon_{i}|$ by definition.

We show that this definition is correct. It is obvious that $ \rho( \id_{S_{I_{\bar{0}}}} \times \id_{S_{I_{\bar{1}}}} ) = \id_{P_{st}(n_{+}, n_{-})}$. Next it is easy to see that for any $ w_{1} = (u, v)$ and $w_{2} = (y, p) \in W(n_{+}, n_{-})$ we have
\[ ( \rho(w_{2}) \circ \rho( w_{1} )  ) (\epsilon_{i}) = \begin{cases}
	\rho(w_{2}) ( \epsilon_{ u(i) } ), \text{ if } |\epsilon_{i}| = \bar{0} \\
	\rho(w_{2}) (\epsilon_{ v(i) } ), \text{ if } |\epsilon_{i}| = \bar{1}
\end{cases} = \begin{cases}
	\epsilon_{  y ( u(i) ) }, \text{ if } |\epsilon_{i}| = \bar{0} \\
	\epsilon_{ q ( v(i) ) }, \text{ if } |\epsilon_{i}| = \bar{1}
\end{cases} = \]
\[ \begin{cases}
	\epsilon_{ (y \circ u )(i) }, \text{ if } |\epsilon_{i}| = \bar{0} \\
	\epsilon_{ (q \circ v )(i) }, \text{ if } |\epsilon_{i}| = \bar{1}
\end{cases} = \rho (w_{2} \circ w_{1})(\epsilon_{i}), \]
where $\mathcal{B}_{st} = \{ \epsilon_{i} \; | \; i \in \{1,2,...,n_{+}+n_{-}\} \}$ is the basis of $P_{st}(n_{+}, n_{-})$. Also we can endow $W(n_{+}, n_{-})$ with the $\mathbb{Z}_{2}$-grading assuming that $|w| = \bar{0}$ for all $w \in W(n_{+}, n_{-})$. Note that $\rho$ is the $\mathbb{Z}_{2}$-graded function and, moreover, it is the even function. The result follows.

Let $\{ \sigma_{i}^{n_{+}} \}_{ i \in I_{\bar{0}} \setminus \{n_{+}\} }$ and $\{ \sigma_{i}^{n_{-}} \}_{ i \in  I_{\bar{1}} \setminus \{n_{+}+n_{-}\} }$ be the adjacent transpositions which generate $S_{I_{\bar{0}}}$ and $S_{I_{\bar{1}}}$ respectively. Thus $W(n_{+}, n_{-})$ is generated by elements
\[ \{ \sigma_{i}^{n_{+}} \times \id_{S_{I_{\bar{1}}}}, \id_{S_{I_{\bar{0}}}} \times \sigma_{j}^{n_{-}} \}_{ i \in I_{\bar{0}} \setminus \{n_{+}\}}^{ j  \in  I_{\bar{1}} \setminus \{n_{+}+n_{-}\} }, \]
 which we call simple reflections. We introduce notations $\sigma_{i}^{n_{+}} := \sigma_{i}^{n_{+}} \times \id_{S_{I_{\bar{1}}}}$ and $\sigma_{j}^{n_{-}} := \id_{S_{I_{\bar{0}}}} \times \sigma_{j}^{n_{-}}$ for convenience. Thus generators satisfy the following relations
\[ ( \sigma_{i}^{n_{\pm}} )^2 = \id_{S_{I_{\bar{0}}}} \times \id_{S_{I_{\bar{1}}}} \]
for all $i \in I_{\bar{0}} \setminus \{n_{+}\}$ in the case of $+$ and $i \in  I_{\bar{1}} \setminus \{n_{+}+n_{-}\}$ in the case of $-$ respectively;
\[ \sigma_{i}^{n_{\pm}} \sigma_{j}^{n_{\pm}} = \sigma_{j}^{n_{\pm}} \sigma_{i}^{n_{\pm}} \text{ for } |i-j| > 1 \]
for $i, j \in I_{\bar{0}} \setminus \{n_{+}\}$ in the case of $+$ and $i, j \in  I_{\bar{1}} \setminus \{n_{+}+n_{-}\}$ in the case of $-$ respectively;
\[ ( \sigma_{i}^{n_{\pm}} \sigma_{i+1}^{n_{\pm}} )^{3} = \id_{S_{I_{\bar{0}}}} \times \id_{S_{I_{\bar{1}}}} \]
for $i \in I_{\bar{0}} \setminus \{n_{+}-1, n_{+}\}$ in the case of $+$ and $i \in  I_{\bar{1}} \setminus \{n_{+}+n_{-}-1,n_{+}+n_{-}\}$ in the case of $-$ respectively;
\[ \sigma_{i}^{n_{+}} \sigma_{j}^{n_{-}} = \sigma_{j}^{n_{-}} \sigma_{i}^{n_{+}} \]
for all $i \in I_{\bar{0}} \setminus \{ n_{+}\}$ and $j \in  I_{\bar{1}} \setminus \{n_{+}+n_{-}\}$.

\begin{remark}
	\item We associate with each $P_{d} \in P(n_{+}, n_{-})$ ($d\in D$) its own $W(d)$ Weyl group $(S_{n_{+}+n_{-}}, \rho_{d} : S_{n_{+}+n_{-}} \to Aut_{\Bbbk}(P_{d}))$ in such a way that representations $\rho_{d}$ and $\rho$ are isomorphic. It is possible as there exists a $\mathbb{Z}_{2}$-graded even isomorphism $\zeta_{d} : P_{d} \to P_{st}(n_{+}, n_{-})$ that sends basis elements $\{ \epsilon_{i,d} \}_{i \in \{1,...,n_{+}+n_{-}\}}$ of $P_{d}$ to elements of basis $\mathcal{B}_{st}$. Then set $\rho_{d} (?) := \zeta_{d}^{-1}  \circ \rho (?) \circ \zeta_{d}$. Thus if start from an arbitrary element of $P(n_{+}, n_{-})$ and define its Weyl group we get an equivalent construction to that described in the case of $P_{st}(n_{+}, n_{-})$. We select to describe $W_{c}(n_{+}, n_{-})$ in the case of $P_{st}(n_{+}, n_{-})$ for convenience.
	\item The first three relations are called the Coxeter relations.
\end{remark}

\vspace{1cm}

\subsubsection{Complete Weyl Group}
\label{sb:WGD}

The complete Weyl group $W_{c}(n_{+}, n_{-})$ of type $(n_{+}, n_{-})$ is the group $S_{n_{+}+n_{-}}$ with the group faithful representation $\rho : S_{n_{+}+n_{-}} \to Aut_{\Bbbk}(P_{st}(n_{+}, n_{-}))$. Here $Aut_{\Bbbk}(P_{st}(n_{+}, n_{-}))$ is notation for an automorphism group. For an arbitrary element $w \in S_{n_{+}+n_{-}}$ we define
\[ w(\epsilon_{i}) := \rho( w ) (\epsilon_{i}) = \epsilon_{ w(i)}, \]
where $\mathcal{B}_{st} = \{ \epsilon_{i} \; | \; i \in \{1,2,...,n_{+}+n_{-}\} \}$ is the early constructed basis of $P_{st}(n_{+}, n_{-})$.

We show that this definition is correct. It is obvious that $ \rho( \id_{S_{n_{+}+n_{-}}} ) = \id_{P_{st}(n_{+}, n_{-})}$. Next it is easy to see that for any $ w_{1}, w_{2} \in W_{c}(n_{+}, n_{-})$ we have
\[ ( \rho(w_{2}) \circ \rho( w_{1} )  ) (\epsilon_{i}) = \rho(w_{2}) ( \epsilon_{ w_{1}(i) } ) = \epsilon_{ w_{2} (w_{1}(i)) } = \]
\[ \epsilon_{ (w_{2} \circ w_{1})(i) } =  \rho (w_{2} \circ w_{1})(\epsilon_{i}), \]
where $\mathcal{B}_{st} = \{ \epsilon_{i} \; | \; i \in \{1,2,...,n_{+}+n_{-}\} \}$ is the basis of $P_{st}(n_{+}, n_{-})$.

It is clear that $W_{c}(n_{+}, n_{-})$ is generated by the adjacent transpositions $ \{ \sigma_{i} \}_{ i \in I } $ which we call simple reflections. Now we can endow $W_{c}(n_{+}, n_{-})$ with the $\mathbb{Z}_{2}$-grading. We do it in the iterative way. Set $|\id_{S_{n_{+}+n_{-}}}| = \bar{0}$ and $|\sigma_{i}| = |\epsilon_{i}| + |\epsilon_{i+1}|$ for all $i \in I$. Next assign for any permutation of the form $w = \sigma_{k} \circ \sigma_{k-1} \circ ... \circ \sigma_{2} \circ \sigma_{1}$ ($k \in I$) the degree $|w| = \sum_{j=1}^{k} |\sigma_{j}|$. Recall Lemma \ref{lm:udecperm}. Consider for any $w \in W_{c}(n_{+}, n_{-})$ a unique decomposition 
\[ w = w_1 \circ w_2 \circ ... \circ w_{n_{+}+n_{-}-1}, \]
where $ (w_{1}, w_{2}, ... , w_{n_{+}+n_{-}-1}) \in \Sigma_{1} \times \Sigma_{2} \times ... \times \Sigma_{n_{+}+n_{-}-1}$. Then set $|w| = \sum_{j=1}^{n_{+}+n_{-}-1} |w_{j}|$. It is clear that $\mathbb{Z}_{2}$-grading is well-defined.

Note that $\rho$ is the $\mathbb{Z}_{2}$-graded function and, moreover, it is the even function. The result follows.

The generators of $W_{c}(n_{+}, n_{-})$ satisfy the Coxeter relations 
\[ ( \sigma_{i} )^2 = \id_{S_{n_{+}+n_{-}}} \]
for all $i \in I$;
\[ \sigma_{i} \sigma_{j} = \sigma_{j} \sigma_{i} \text{ for } |i-j| > 1 \]
for $i, j \in I$;
\[ ( \sigma_{i} \sigma_{i+1} )^{3} = \id_{S_{n_{+}+n_{-}}} \]
for $i \in \{ 1, 2, ..., n_{+} + n_{-} - 2 \}$.

\begin{remark}
	\item We associate with each $P_{d} \in P(n_{+}, n_{-})$ ($d\in D$) its own $W_{c}(d)$ compelete Weyl group $(S_{n_{+}+n_{-}}, \rho_{d} : S_{n_{+}+n_{-}} \to Aut_{\Bbbk}(P_{d}))$ in such a way that representations $\rho_{d}$ and $\rho$ are isomorphic. It is possible as there exists a $\mathbb{Z}_{2}$-graded even isomorphism $\zeta_{d} : P_{d} \to P_{st}(n_{+}, n_{-})$ that sends basis elements $\{ \epsilon_{i,d} \}_{i \in \{1,...,n_{+}+n_{-}\}}$ of $P_{d}$ to elements of basis $\mathcal{B}_{st}$. Then set $\rho_{d} (?) := \zeta_{d}^{-1}  \circ \rho (?) \circ \zeta_{d}$. Thus if start from an arbitrary element of $P(n_{+}, n_{-})$ and define its complete Weyl group we get an equivalent construction to that described in the case of $P_{st}(n_{+}, n_{-})$. We select to describe $W_{c}(n_{+}, n_{-})$ in the case of $P_{st}(n_{+}, n_{-})$ for convenience.
	\item Note that the Weyl group $W(n_{+}, n_{-})$ is isomorphic to a subgroup of the complete Weyl group $W_{c}(n_{+}, n_{-})$. Indeed consider the subgroup $A$ generated by all elements $w \in W_{c}(n_{+}, n_{-})$ which permute elements of the basis $\mathcal{B}_{st}$ in such a way that $w(I_{\bar{0}}) = I_{\bar{0}}$ and $w(I_{\bar{1}}) = I_{\bar{1}}$. It is obvious that there is a group isomorphism $W(n_{+}, n_{-}) \cong A$.
	\label{rm:cwgfadd}
\end{remark}

\vspace{1cm}

\subsection{Completion}

\label{sub:compl}

Let $(J, \le)$ be a directed poset. Consider $\{ \mathfrak{m}_{j} \}_{j \in J}$ a descending sequence of filtration of $\mathbb{Z}_2$-graded ideals of a superalgebra $A$, i. e. if $i \le j$ in $J$, then $\mathfrak{m}_{i} \supseteq \mathfrak{m}_{j}$. We define the completion of $A$ with respect to the filtration $\{ \mathfrak{m}_{j} \}_{j \in J}$ to be
\begin{equation}
	\widehat{A}_{ \{ \mathfrak{m}_{j} \}_{j \in J}} =  \mathop{\lim_{\longleftarrow}}_{ j \in J } A / \mathfrak{m}_{j},
\end{equation}
where the compatability maps are the projections $ \phi_{i}^{j} ( \bar{x_{j}} ) = \phi_{i}^{j}(x + \mathfrak{m}_{j}) = x + \mathfrak{m}_{i} = \bar{x_{i}}$  ($x \in A$ and $i \le j$). Recall that the inverse limit of the inverse system $( (A / \mathfrak{m}_{j})_{j \in J} , ( \phi_{i}^{j} )_{i \le j \in J} )$ is
\begin{equation}
	\widehat{A}_{ \{ \mathfrak{m}_{j} \}_{j \in J}} = \{ ( \bar{x_{j}} ) \in \prod_{j \in J} A / \mathfrak{m}_{j} \; | \; \phi_{i}^{j}(\bar{x_{j}}) = \bar{x_{i}} \text{ for all } i \le j  \text{ in } J  \}.
\end{equation}
Recall that $\widehat{A}_{ \{ \mathfrak{m}_{j} \}_{j \in J}}$ has the natural inverse limit topology.

Let $A$ be an $\mathbb{N}_{0}$-graded superalgebra. Then $A = \bigoplus_{n \in \mathbb{N}_{0}} A_{n}$, where $A_{0} = \Bbbk$, and we have a descending filtration given by $A \supseteq F_{1} \supseteq F_{2} \supseteq ... \supseteq F_{i} \supseteq ... $, where $F_{i} = \bigoplus_{n \ge i } A_{n}$ ($i \in \mathbb{N}$). Thus we are able to consider the completion of $A$ with respect to the filtration $\{F_{i}\}_{i \in \mathbb{N}}$. 

Let $B$ be a subalgebra in $A$, i. e. $B = \bigoplus_{n \in \mathbb{N}_{0}} B_{n}$. Then we can construct in the same manner the completion $\widehat{B}$ with respect to the filtration $\{F_{i} = \bigoplus_{n \ge i } B_{n} \}_{i \in \mathbb{N}}$. More generally, we set $deg(v) = 1$ and $|v| = \bar{0}$ for a fomal variable $v$ and consider $B[v]$ which is an $\mathbb{N}_{0}$-graded superalgebra which extends the $\mathbb{N}_{0}$-grading on $B$. Then we can construct in the same manner the completion $\widehat{B[v]}$ with respect to the $\mathbb{N}_{0}$-grading on $B[v]$.

Let $I$ be a proper $\mathbb{Z}_2$-graded ideal of a superalgebra $A$. Consider the descending filtration given by $A \supseteq I \supseteq I^{2} \supseteq ... \supseteq I^{i} \supseteq ... $, where $i \in \mathbb{N}$. Thus we are able to consider the completion of $A$ with respect to the filtrarion $\{I^{i}\}_{i \in \mathbb{N}}$. The natural inverse limit topology in this case is called $I$-adic topology.

Let $A$ be an  $\mathbb{N}_{0}$-graded superalgebra and $B$ be any subset in $A$. Then by $B_{+}$ we denote a set of elements of positive degree in $B$ with respect to the grading introduced on $A$.

\vspace{1cm}

\section{Quantum structures}

In this Section it is given description of quatum superalgebras associated with Lie superalgebras from the Section \ref{sec:prelim}. We remind definitions of these quantum structures for arbitrary Dynkin diagrams and construct a minimalistic presentation for certain superalgebras.

\subsection{Quantized universal enveloping superalgebra}

\label{sb:ques}

Throughout this paper, $q$ and $\hbar$ are formal variables related by $q = exp(\frac{\hbar}{2}) \in \Bbbk[[\hbar]]$. For all $k, n \in \mathbb{N}_{0}$ and $k \le n $ we set
\[ [0]_{q}!:=1, \; [n]_q = \frac{q^n-q^{-n}}{q-q^{-1}}, \]
\[ [n]_q! = [n]_q [n-1]_q ... [1]_q, \; \begin{bmatrix}n\\k\end{bmatrix}_q = \frac{[n]_q!}{[k]_q! [n-k]_q!}. \]

A quantized universal enveloping (QUE) superalgebra $A_{\hbar}$ is a topologically free Hopf superalgebra over $\Bbbk[[\hbar]]$ such that $A_{\hbar} / \hbar A_{\hbar}$ is isomorphic as a Hopf superalgebra to universal enveloping superalgebra $U(\mathfrak{g})$ for some Lie superalgebra $\mathfrak{g}$. We use the following result proved in the non-super case in \cite{D87} and in the super case in \cite{A93}.

\begin{proposition}
	\label{pr:QUEliebi}
	Let $A_{\hbar}$ be a QUE superalgebra: $A_{\hbar} / \hbar A_{\hbar} \cong U(\mathfrak{g})$. Then the Lie superalgebra $\mathfrak{g}$ has a natural structure of a Lie superbialgebra defined by
	\begin{equation}
		\label{eq:squebialgsupcon}
		\delta(x) = \hbar^{-1} ( \Delta(\tilde{x}) - \Delta^{op}(\tilde{x}) ) \mod \hbar,
	\end{equation}
	where $x \in \mathfrak{g}$, $\tilde{x} \in A_{\hbar}$ is a preimage of $x$, $\Delta$ is a comultiplication in $A$ and $\Delta^{op} := \tau_{U(\mathfrak{g}), U(\mathfrak{g})} \circ \Delta$ (for the definition of $\tau_{U(\mathfrak{g}), U(\mathfrak{g})}$ see \eqref{eq:taudef}).
\end{proposition}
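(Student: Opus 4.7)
The plan is to verify in order that (i) the formula for $\delta$ is well-defined, (ii) its image lies in $\mathfrak{g} \otimes \mathfrak{g}$, and (iii) the resulting map $\delta$ satisfies the three axioms of a Lie superbialgebra.

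For well-definedness, the key observation is that $U(\mathfrak{g}) = A_\hbar / \hbar A_\hbar$ is cocommutative (with the appropriate super-sign convention, as one checks directly on generators of $\mathfrak{g}$ and propagates to products), so $(\Delta - \Delta^{op})(\tilde x) \in \hbar (A_\hbar \otimes A_\hbar)$ for every $\tilde x \in A_\hbar$, making division by $\hbar$ legitimate and the subsequent reduction modulo $\hbar$ an element of $U(\mathfrak{g}) \otimes U(\mathfrak{g})$. Independence of the choice of lift follows in the same way: any two lifts differ by an element of the form $\hbar y$, and the extra term in $\delta(x)$ is $(\Delta - \Delta^{op})(y) \pmod{\hbar}$, which vanishes for the same reason.

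The main obstacle is proving that $\delta(x)$ actually lies in $\mathfrak{g} \otimes \mathfrak{g} \subset U(\mathfrak{g}) \otimes U(\mathfrak{g})$. Because $x \in \mathfrak{g}$ is primitive in $U(\mathfrak{g})$, one can choose a lift $\tilde x$ so that
\[ \Delta(\tilde x) = \tilde x \otimes 1 + 1 \otimes \tilde x + \hbar t_x, \qquad t_x \in A_\hbar \otimes A_\hbar, \]
and then $\delta(x) = \bar t_x - \tau_{U(\mathfrak{g}),U(\mathfrak{g})}(\bar t_x)$, with $\bar t_x$ the image of $t_x$ modulo $\hbar$. Applying coassociativity of $\Delta$ to $\tilde x$ and extracting the $\hbar^1$-coefficient yields the Hochschild-type cocycle equation
\[ (\Delta \otimes \id)(\bar t_x) - (\id \otimes \Delta)(\bar t_x) = 1 \otimes \bar t_x - \bar t_x \otimes 1. \]
Antisymmetrizing this equation and invoking the Milnor--Moore/Friedrichs characterization of primitive elements in the cocommutative Hopf superalgebra $U(\mathfrak{g})$ (primitives are exactly $\mathfrak{g}$) then forces each tensor leg of $\bar t_x - \tau(\bar t_x)$ to be primitive, so $\delta(x) \in \mathfrak{g} \otimes \mathfrak{g}$.

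The three Lie-superbialgebra axioms now follow routinely. Co-skew-symmetry $\delta + \tau_{\mathfrak{g},\mathfrak{g}} \circ \delta = 0$ is transparent from $\Delta^{op} = \tau \circ \Delta$. The co-Jacobi identity \eqref{eq:bisuli1} is obtained by taking the $\hbar^{-2}$-coefficient of the triple coassociativity relations for $\Delta$ and $\Delta^{op}$, sorting terms by their $(\Delta - \Delta^{op})$-content, and reducing modulo $\hbar$. Finally, the cocycle relation \eqref{eq:bisuli2} follows from the fact that $\Delta$ is a superalgebra homomorphism: starting from the identity
\[ (\Delta - \Delta^{op})[\tilde x, \tilde y] = [(\Delta - \Delta^{op})(\tilde x), \Delta(\tilde y)] + [\Delta^{op}(\tilde x), (\Delta - \Delta^{op})(\tilde y)], \]
dividing by $\hbar$, and reducing modulo $\hbar$, the factors $\Delta(\tilde y)$ and $\Delta^{op}(\tilde x)$ reduce to the primitive coproducts $y \otimes 1 + 1 \otimes y$ and $x \otimes 1 + 1 \otimes x$, so that the right-hand side becomes precisely the combination of $ad_x$'s acting on $\delta(y)$ minus $ad_y$'s acting on $\delta(x)$ appearing in \eqref{eq:bisuli2}.
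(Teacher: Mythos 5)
The paper does not actually prove this proposition: it is stated as a known result and attributed to Drinfeld \cite{D87} in the non-super case and to Andruskiewitsch \cite{A93} in the super case. Your sketch reconstructs precisely the classical argument from those sources, correctly transported to the super setting, so there is nothing to compare against in the paper itself --- you have supplied the proof the paper delegates to the literature. The skeleton is right: cocommutativity of $U(\mathfrak{g}) = A_{\hbar}/\hbar A_{\hbar}$ plus topological freeness (hence $\hbar$-torsion-freeness) of $A_{\hbar}$ gives well-definedness and independence of the lift; writing $\Delta(\tilde{x}) = \tilde{x}\otimes 1 + 1\otimes\tilde{x} + \hbar t_{x}$ and extracting the order-$\hbar$ part of coassociativity gives the co-Hochschild cocycle equation for $\bar{t}_{x}$; and the identity $(\Delta-\Delta^{op})[\tilde{x},\tilde{y}] = [(\Delta-\Delta^{op})\tilde{x}, \Delta\tilde{y}] + [\Delta^{op}\tilde{x}, (\Delta-\Delta^{op})\tilde{y}]$, which holds because both $\Delta$ and $\Delta^{op}$ are homomorphisms for the graded tensor product, yields the cocycle axiom after reduction.

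Two steps are stated more tersely than they deserve, though both are standard. First, the passage from the cocycle equation for $\bar{t}_{x}$ to $\delta(x)\in\mathfrak{g}\otimes\mathfrak{g}$ is not a one-line antisymmetrization: one has to combine the cocycle identity with cocommutativity and the filtration of $U(\mathfrak{g})$ by powers of the augmentation ideal (or the symmetrization isomorphism with $S(\mathfrak{g})$) to kill the higher-order components, before Friedrichs' theorem identifies what remains as lying in $\mathfrak{g}\otimes\mathfrak{g}$; this is the content of the detailed computation in Drinfeld's and Chari--Pressley's treatments. Second, the co-Jacobi identity \eqref{eq:bisuli1} deserves at least the remark that it is the order-$\hbar^{2}$ shadow of coassociativity, i.e.\ that $(U(\mathfrak{g}),\delta)$ is a co-Poisson Hopf superalgebra. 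Finally, note that your derivation of the cocycle axiom produces $\delta([x,y]) = [\square(x),\delta(y)] - (-1)^{|x||y|}[\square(y),\delta(x)]$ with the Koszul sign in front of the second term; this is the correct super form, and the absence of that sign in the paper's displayed axiom \eqref{eq:bisuli2} is an imprecision on the paper's side rather than an error in your argument.
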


\begin{definition}
	\normalfont
	Let $A$ be a QUE superalgebra and let $(\mathfrak{g}, [\cdot, \cdot], \delta)$ be the Lie superbialgebra defined in Proposition \ref{pr:QUEliebi}. We say that $A$ is a quantization of the Lie superbialgebra $\mathfrak{g}$.
\end{definition}

Thus a quantization of a Lie superbialgebra $(\mathfrak{g}, [\cdot, \cdot], \delta)$ is called an associative unital Hopf superalgebra $A_{\hbar}$ over the ring of formal power series $\Bbbk[[\hbar]]$ such that the following conditions hold:
\begin{enumerate}
	\item $A_{\hbar}$ is isomorphic to $U(\mathfrak{g})[[\hbar]]$ as a superspace;
	\item $A_{\hbar} / \hbar A_{\hbar} \cong U(\mathfrak{g})$ as Hopf superalgebras;
	\item $\delta(x) = \hbar^{-1} ( \Delta(\tilde{x}) - \Delta^{op}(\tilde{x}) ) \mod \hbar$, where $x \in \mathfrak{g}$, $\tilde{x} \in A_{\hbar}$ is a preimage of $x$, $\Delta$ is a comultiplication in $A_{\hbar}$.
\end{enumerate}

We use results about formal power series proved in \cite{N69}. Let $A$ be a ring. Consider the ring of formal power series $A[[X]]$. Define for each $f \in 1+XA[[X]]$
\[ \log(f) := \sum_{n=1}^{\infty} \frac{(-1)^{n+1}}{n} (f-1)^{n} \in XA[[X]]. \]
Define for each $f \in XA[[X]]$
\[ exp(f) := \sum_{n=0}^{\infty} \frac{f^{n}}{n!} \in 1 + XA[[X]]. \]

\vspace{1cm}

\subsection{The Drinfeld super Yangian}
\label{sect:DSY}

Following \cite{T19a,T19b} (see also \cite{D88}, \cite{S94} and \cite{G07}), define the Drinfeld super Yangian of $\mathfrak{g}_d$ ($d \in D$), denoted by $Y_{\hbar}(\mathfrak{g}_d)$, to be the unital, associative $\Bbbk[[\hbar]]$-Hopf superalgebra generated by $\{ h_{i,r,d} , x_{i,r,d}^{\pm} \}^{r \ge 0}_{i \in I}$ with the $\mathbb{Z}_2$-grading $|h_{i,r,d}| = \bar{0}$, $|x_{i,r,d}^{\pm}| = |\alpha_{i,d}|$, and subject to the following defining relations:
\begin{equation}
	\label{eq:Cer}
	[h_{i,r,d}, h_{j,s,d}] = 0,
\end{equation}
\begin{equation}
	\label{eq:SYrc1}
	[h_{i,0,d}, x_{j,s,d}^{\pm}] = \pm c_{ij}^{d} x_{j,s,d}^{\pm}
\end{equation}
\begin{equation}
	\label{eq:SYrc3}
	[x_{i,r,d}^{+}, x_{j,s,d}^{-}] = \delta_{i,j} h_{i,r+s,d},
\end{equation}
\begin{equation}
	\label{eq:SYrc2}
	[ h_{i,r+1,d} , x_{j,s,d}^{\pm} ] - [h_{i,r,d} , x_{j,s+1,d}^{\pm}] = \pm \frac{c_{ij}^d \hbar}{2} \{h_{i,r,d}, x_{j,s,d}^{\pm}\} \text{ unless } i=j \text{ and } |\alpha_{i,d}|=\bar{1},
\end{equation}
\begin{equation}
	\label{eq:SYrc4}
	[x_{i,r+1,d}^{\pm}, x_{j,s,d}^{\pm}] - [x_{i,r,d}^{\pm}, x_{j,s+1,d}^{\pm}] = \pm \frac{c_{ij}^{d} \hbar}{2} \{ x_{i,r,d}^{\pm} , x_{j,s,d}^{\pm} \} \text{ unless } i=j \text{ and } |\alpha_{i,d}| = \bar{1},
\end{equation}
\begin{equation}
	\label{eq:SYrc5}
	[h_{i,r,d}, x_{i,s,d}^{\pm}] = 0 \text{ if } |\alpha_{i,d}| = \bar{1},
\end{equation}
\begin{equation}
	\label{eq:bssr}
	[x_{i,r,d}^{\pm}, x_{j,s,d}^{\pm}] = 0 \text{ if } c_{ij}^d = 0,
\end{equation}
as well as cubic super Serre relations
\begin{equation}
	\label{eq:cssr}
	[ x_{i,r,d}^{\pm} , [ x_{i,s,d}^{\pm} , x_{j,t,d}^{\pm} ] ] + [ x_{i,s,d}^{\pm} , [x_{i,r,d}^{\pm} , x_{j,t,d}^{\pm}] ] = 0 \text{ if } j=i \pm 1 \text{ and } |\alpha_{i,d}|=\bar{0},
\end{equation}
and quartic super Serre relations
\begin{equation}
	\label{eq:qssr}
	[ [ x_{j-1,r,d}^{\pm} , x_{j,0,d}^{\pm} ] , [ x_{j,0,d}^{\pm}, x_{j+1,s,d}^{\pm} ] ] = 0 \text{ if } |\alpha_{j,d}| = \bar{1}.
\end{equation}

\begin{remark}
	\begin{enumerate}
		\item Similar to Remark \ref{rm:lsr}, cubic super Serre relations \eqref{eq:cssr} also hold for all other parities, but in those cases, they already follow from \cref{eq:sje,eq:bssr}.
		\item Similar to Remark \ref{rm:lsr}, quartic super Serre relations \eqref{eq:qssr} also hold for all other parities, but in those cases, they already follow from \cref{eq:sje,eq:bssr,eq:cssr}.
		\item Generalizing the quartic super Serre relations \eqref{eq:qssr}, the following relations also hold \cite{T19b}:
		\[ [ [x_{j-1,r,d}^{\pm}, x_{j,k,d}^{\pm}], [x_{j,l,d}^{\pm}, x_{j+1,s,d}^{\pm}]] + [ [x_{j-1,r,d}^{\pm}, x_{j,l,d}^{\pm}], [x_{j,k,d}^{\pm},x_{j+1,s,d}^{\pm}]] = 0. \]
		This, in turn, using \eqref{eq:cssr} can be rewritten as
		\[ [[[x^{\pm}_{j-1,r,d}, x^{\pm}_{j,k,d}] , x^{\pm}_{j+1,s,d}], x^{\pm}_{j,l,d}] + [[[x^{\pm}_{j-1,r,d}, x^{\pm}_{j,l,d}] , x^{\pm}_{j+1,s,d}], x^{\pm}_{j,k,d}] = 0. \]
	\end{enumerate}
	\label{rm:yangrelrew}
\end{remark}

We note that the universal enveloping superalgebra $U(\mathfrak{g}_d)$ is naturally embedded in $Y_{\hbar}(\mathfrak{g}_d)$ as Hopf superalgebra, and the embedding is given by the formulas $h_{i,d} \to h_{i,0,d}$, $e_{i,d} \to x_{i,0,d}^{+}$, $f_{i,d} \to x_{i,0,d}^{-}$. We shall identify the universal enveloping superalgebra $U(\mathfrak{g}_d)$ with its image in the Drinfeld super Yangian.

Let $Y_{\hbar}^{0}(\mathfrak{g}_d)$, $Y_{\hbar}^{\pm}(\mathfrak{g}_d) \subset Y_{\hbar}(\mathfrak{g}_d)$ be the subalgebras generated by the elements $\{ h_{i,r} \}_{i \in I, r \in \mathbb{N}_{0}}$ (resp. $\{ x_{i,r}^{\pm} \}_{i \in I, r \in \mathbb{N}_{0}}$) and $Y_{\hbar}(\mathfrak{g}_d)^{\ge 0}$, $Y^{\le 0}_{\hbar}(\mathfrak{g}_d) \subset Y_{\hbar}(\mathfrak{g}_d)$ be the subalgebras generated by $Y^{0}_{\hbar}(\mathfrak{g}_d)$, $Y^{+}_{\hbar}(\mathfrak{g}_d)$ and $Y^{0}_{\hbar}(\mathfrak{g}_d)$, $Y^{-}_{\hbar}(\mathfrak{g}_d)$ respectively.

Denote by $Y^{0}_{i}(\mathfrak{g}_d) \subset Y^{0}_{\hbar}(\mathfrak{g}_d)$ the subalgebra generated by $\{ h_{ir} \}_{r \in \mathbb{N}_{0}}$ for a fixed $i \in I$. Let $ Y_{ \hbar}(\mathfrak{b}_{+}) $ and $ Y_{ \hbar}(\mathfrak{b}_{-}) $ be the subalgebras generated respectively by $ \{ x_{ir}^{+} , h_{ir} \}_{i \in I, r \in \mathbb{Z}} $ and $ \{ x_{ir}^{-} , h_{ir} \}_{i \in I, r \in \mathbb{Z}} $. Denote by $Y_{\hbar}(\mathfrak{sl}(2|1)^{i})$ the subalgebra generated by $\{ x_{ir}^{ \pm}, h_{ir} \}_{r \in \mathbb{Z}}$ for a fixed $i \in I$.

\vspace{1cm}

\subsection{A minimalistic presentation of $Y_{\hbar}(\mathfrak{g}_d)$}

\label{sb:mpsy}

In order to verify that the comultiplication given in Section \ref{sect:DSY} is a homomorphism of superalgebras we describe a more convenient form of superalgebras $Y_{\hbar}(\mathfrak{g}_d)$. Such work was done in \cite{S94} for $Y_{\hbar}(\mathfrak{g}_d)$ associated only with the distinguished Dynkin diagram (the result is stated without proof), and for non-super case in \cite{GNW18} and \cite{L93}. \footnote{from now on we use subscript $d \in D$ only when it is needed in order to simplify notations}

From defining relations we can see that $Y_{\hbar}(\mathfrak{g})$ is generated by elements $\{ h_{ir} , x_{i0}^{\pm} \}^{r \in \{0,1\}}_{i \in I}$. We use equations \eqref{eq:SYrc1}, \eqref{eq:SYrc2} and \eqref{eq:SYrc3} to get recurrent formulas
\begin{equation}
	\label{eq:rec1}
	x_{i,r+1}^{\pm} = \pm (c_{ii^{'}})^{-1} [ h_{i^{'}1} - \frac{\hbar}{2} h_{i^{'}0}^2 , x_{ir}^{\pm} ],
\end{equation}
where $r \ge 0$; $i^{'}=i$, if $|\alpha_{i}|=\bar{0}$, and $|i-i^{'}|=1$, if $|\alpha_{i}|=\bar{1}$ ($i,i^{'} \in I$);
\begin{equation}
	\label{eq:rec2}
	h_{ir} = [ x_{ir}^{+}, x_{i0}^{-} ],
\end{equation}
where $r \ge 2$ and $i \in I$.

We introduce the auxiliary generators for $i \in I$ by setting
\begin{equation}
	\widetilde{h}_{i 1} \overset{\operatorname{def}}{=} h_{i1} - \frac{\hbar}{2} h_{i0}^2.
\end{equation}

From now we exclude cases $|I|=1$; $|I|=3$ and $|\alpha_{1}|=|\alpha_{3}|$. The main reason is that in these cases \eqref{eq:rec1} and Lemmas \ref{lm:SYrc34}, \ref{lm:fassrс}, \ref{lm:hii12} and \ref{lm:leqmp} are not valid. Moreover, we consider Dynkin diagrams that satisfy the following condition: every odd vertex has as a neighbor at least one even vertex. This is needed in order Lemma \ref{lm:cartelrelii} to be true. Note that the distinguished Dynkin diagrams satisfy this condition.

\begin{theorem}
	\label{th:mpy}
	Suppose that our constraints on $I$ and on Dynkin diagrams are satisfied. Then $Y_{\hbar}(\mathfrak{g})$ is isomorphic to the superalgebra generated by $\{ h_{ir} , x_{ir}^{\pm} \}^{r \in \{0,1\}}_{i \in I}$ with the $\mathbb{Z}_2$-grading $|h_{ir}| = \bar{0}$, $|x_{ir}^{\pm}| = |\alpha_{i}|$, subject only to the relations
	\begin{equation}
	\label{eq:mpy1}
	[h_{ir}, h_{js}] = 0 \; (0 \le r,s \le 1),
	\end{equation}
	\begin{equation}
		\label{eq:mpy2}
	[h_{i0}, x_{js}^{\pm}] = \pm (\alpha_{i},\alpha_{j}) x_{js}^{\pm} \; (s = 0,1),
	\end{equation}
	\begin{equation}
		\label{eq:mpy3}
		[x_{ir}^{+}, x_{js}^{-}] = \delta_{ij} h_{i,r+s} \; (0 \le r+s \le 1),
	\end{equation}
	\begin{equation}
		\label{eq:mpy4}
		[\widetilde{h}_{i 1}, x_{j0}^{\pm}] = \pm (\alpha_{i}, \alpha_{j}) x_{j1}^{\pm} \text{ unless } i=j \text{ and } |\alpha_{i}| = \bar{1},
	\end{equation}
	\begin{equation}
		\label{eq:mpy5}
		[x_{i1}^{\pm}, x_{j0}^{\pm}] - [x_{i0}^{\pm}, x_{j1}^{\pm}] = \pm \frac{c_{ij} \hbar}{2} \{ x_{i0}^{\pm} , x_{j0}^{\pm} \} \text{ unless } i=j \text{ and } |\alpha_{i}| = \bar{1},
	\end{equation}
	\begin{equation}
		\label{eq:mpy6}
		[h_{i1}, x_{is}^{\pm}] = 0 \text{ if } |\alpha_{i}| = \bar{1} \; (s=0,1),
	\end{equation}
	\begin{equation}
		\label{eq:mpy7}
		[x_{i0}^{\pm}, x_{j0}^{\pm}] = 0 \text{ if } c_{ij} = 0,
	\end{equation}
	\begin{equation}
		\label{eq:mpy8}
		[ x_{i0}^{\pm} , [ x_{i0}^{\pm} , x_{j0}^{\pm} ] ] = 0 \text{ if } j=i \pm 1 \text{ and } |\alpha_{i}|=\bar{0},
	\end{equation}
	\begin{equation}
		\label{eq:mpy9}
		[ [ x_{j-1,0}^{\pm} , x_{j0}^{\pm} ] , [ x_{j0}^{\pm}, x_{j+1,0}^{\pm} ] ] = 0 \text{ if } |\alpha_{j}| = \bar{1}.
	\end{equation}
\end{theorem}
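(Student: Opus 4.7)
The plan is to define an auxiliary superalgebra $\widetilde{Y}_\hbar(\mathfrak{g})$ by the minimalistic presentation on the right-hand side of the theorem and to produce mutually inverse homomorphisms between it and $Y_\hbar(\mathfrak{g})$. The direction $\iota: \widetilde{Y}_\hbar(\mathfrak{g}) \to Y_\hbar(\mathfrak{g})$ is straightforward: send each generator $h_{ir}$, $x_{ir}^\pm$ of $\widetilde{Y}_\hbar(\mathfrak{g})$ to the element with the same name in $Y_\hbar(\mathfrak{g})$. Well-definedness is immediate because each of the relations \eqref{eq:mpy1}--\eqref{eq:mpy9} is literally a special case of one of \eqref{eq:Cer}--\eqref{eq:qssr} restricted to $r, s \in \{0, 1\}$; surjectivity will follow a posteriori from the construction of the inverse map, since the recurrences \eqref{eq:rec1}, \eqref{eq:rec2} hold in $Y_\hbar(\mathfrak{g})$.

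The substantive direction is the inverse $\pi: Y_\hbar(\mathfrak{g}) \to \widetilde{Y}_\hbar(\mathfrak{g})$. On the generators with $r \in \{0, 1\}$ the map is the identity on names. For $r \geq 2$ the only consistent choice of images is dictated by the recurrences themselves: inductively define
\[ \pi(x_{i,r+1}^{\pm}) := \pm (c_{ii'})^{-1}\, [\widetilde{h}_{i'\,1},\, \pi(x_{i,r}^{\pm})], \qquad \pi(h_{i,r}) := [\pi(x_{i,r}^{+}),\, x_{i,0}^{-}] \text{ for } r \geq 2, \]
where $i'$ is chosen as in \eqref{eq:rec1}. The hypothesis that every odd vertex of the Dynkin diagram has an even neighbour guarantees that such $i'$ exists whenever $|\alpha_i| = \bar{1}$, and the excluded cases $|I|=1$ and $|I|=3$ with $|\alpha_1|=|\alpha_3|$ are ruled out for the same reason. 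The task then reduces to verifying that every defining relation \eqref{eq:Cer}--\eqref{eq:qssr} of $Y_\hbar(\mathfrak{g})$ is satisfied by these extended elements in $\widetilde{Y}_\hbar(\mathfrak{g})$.

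This verification is the main obstacle and is the content of the lemmas announced in the footnote, each of which I would establish by induction on $r+s$ using previously verified relations as the inductive hypothesis. The natural order, forced by mutual dependencies, is as follows: first Lemma \ref{lm:hii12} and Lemma \ref{lm:cartelrelii} promote \eqref{eq:mpy1}, \eqref{eq:mpy2}, \eqref{eq:mpy6} to the higher Cartan commutation relations \eqref{eq:Cer}, \eqref{eq:SYrc1}, \eqref{eq:SYrc5}; next Lemma \ref{lm:SYrc34} upgrades \eqref{eq:mpy4}, \eqref{eq:mpy5} to the full bracket-recursions \eqref{eq:SYrc2} and \eqref{eq:SYrc4}; Lemma \ref{lm:leqmp} then handles \eqref{eq:SYrc3} in all indices by rewriting $h_{i,r+s}$ via iterated brackets using the relations already obtained; finally Lemma \ref{lm:fassrс} promotes the Serre relations \eqref{eq:mpy7}, \eqref{eq:mpy8}, \eqref{eq:mpy9} to \eqref{eq:bssr}, \eqref{eq:cssr}, \eqref{eq:qssr}. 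The hardest step will be the quartic super Serre relation \eqref{eq:qssr} together with its strengthening described in Remark \ref{rm:yangrelrew}, since it involves three distinct indices around an odd vertex and does not reduce to a purely diagonal induction on a single $i$; here the condition that an odd vertex $j$ has an even neighbour is used again to produce the recursion on $j\pm 1$ needed to carry it through.

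Once $\pi$ is shown to be a well-defined superalgebra homomorphism, both compositions collapse on generators: $\pi \circ \iota = \id_{\widetilde{Y}_\hbar(\mathfrak{g})}$ is immediate from the definitions, and $\iota \circ \pi = \id_{Y_\hbar(\mathfrak{g})}$ follows because $\iota \circ \pi$ agrees with the identity on the finitely many generators with $r \in \{0,1\}$ and commutes with the recurrences \eqref{eq:rec1}, \eqref{eq:rec2}, which determine the values on all remaining generators. This establishes the asserted isomorphism.
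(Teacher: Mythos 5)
Your overall skeleton --- present the minimalistic superalgebra $\widetilde{Y}_\hbar(\mathfrak{g})$, map it onto $Y_\hbar(\mathfrak{g})$ by sending generators to generators, define the inverse on higher modes via the recurrences \eqref{eq:rec1}--\eqref{eq:rec2}, and reduce everything to verifying that the full list \eqref{eq:Cer}--\eqref{eq:qssr} holds in the minimalistic algebra --- is exactly the strategy of the paper. But as written the proposal has genuine gaps. First, it defers the entire mathematical content (``the main obstacle'') to lemmas it does not prove, and the one place where the argument cannot be a routine induction is never identified: as Remark \ref{rm:addrel} makes explicit, the whole proof hinges on the single non-formal relation $[[\widetilde{h}_{i'1},x_{i1}^{+}],x_{i1}^{-}]+[x_{i1}^{+},[\widetilde{h}_{i'1},x_{i1}^{-}]]=0$, equivalently $[h_{i'1},h_{i2}]=0$, which is established in Lemma \ref{lm:hii12} by a long explicit computation admitting no inductive shortcut. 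A plan that does not isolate this relation has not located the actual difficulty.

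Second, the order of verification you propose is not workable. You want to ``first'' promote \eqref{eq:mpy1} to the full Cartan commutativity \eqref{eq:Cer} and only ``next'' establish \eqref{eq:SYrc2} and \eqref{eq:SYrc4}; in fact $[h_{ir},h_{js}]=0$ in all degrees is among the last things one can prove, because it requires the corrected Cartan elements $\dbtilde{h}_{ij,r}$ built from $\log(1+\hbar\sum_{r}h_{ir}t^{-r-1})$, whose construction and use in Lemmas \ref{lm:thfdpr}, \ref{lm:hwdpr} and \ref{lm:ijtrancart} presuppose \eqref{eq:SYrc2} and the mixed relations in a range of degrees. You also assign \eqref{eq:SYrc3} to Lemma \ref{lm:leqmp}, which in the paper treats the quartic Serre relation \eqref{eq:qssr}; the relation \eqref{eq:SYrc3} for $i\ne j$ is handled much earlier, in Lemma \ref{lm:SYrc34}. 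Finally, you attribute the role of the constraints on Dynkin diagrams solely to the existence of an even neighbour $i'$ of an odd vertex; their real function is to guarantee that the matrices $\bigl(\begin{smallmatrix}(\alpha_{i},\alpha_{m}) & (\alpha_{j},\alpha_{m})\\ (\alpha_{i},\alpha_{n}) & (\alpha_{j},\alpha_{n})\end{smallmatrix}\bigr)$ (and the $3\times 3$ analogue in Lemma \ref{lm:leqmp}) arising from applying $[\widetilde{h}_{m1},\cdot]$ to the defect terms $X^{\pm}(r,s)$ can be made invertible by a suitable choice of $m,n$; this is precisely why the case $|I|=3$ with $|\alpha_{1}|=|\alpha_{3}|$ must be excluded. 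Without this determinant argument the inductive step does not close in the super case.
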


In this superalgebra, we also define elements $x_{ir}^{\pm}$ ($r \ge 2$) and $h_{ir}$ ($r \ge 2$) for $i \in I$ using \eqref{eq:rec1} and \eqref{eq:rec2}.

\begin{remark}
	\label{rm:addrel}
	As noted in \cite{GNW18} in order to prove Theorem \ref{th:mpy} we must prove that equation
	\begin{equation}
		\label{rm:lsar}
		[ [\widetilde{h}_{i^{'}1},x_{i1}^{+}] , x_{i1}^{-} ] + [ x_{i1}^{+} , [ \widetilde{h}_{i^{'}1},x_{i1}^{-} ] ] = 0,
	\end{equation}
   where $i^{'}=i$, if $|\alpha_{i}|=\bar{0}$, and $|i-i^{'}|=1$, if $|\alpha_{i}|=\bar{1}$, can be deduced from relations \eqref{eq:mpy1} - \eqref{eq:mpy9}.

   Note that it follows from Lemmas \eqref{lm:l2} and \eqref{lm:cel02} that \eqref{rm:lsar} is equivalent to
    \begin{equation}
   	\label{eq:hia12}
    [h_{i^{'}1}, h_{i2}] = 0.
	\end{equation}
\end{remark}

The proof of Theorem \ref{th:mpy}  is splitted in several lemmas and propositions.

\begin{lemma}
	\label{lm:h0eq}
	The relation \eqref{eq:SYrc1} is satisfied for all $i,j \in I$ and $r \in \mathbb{N}_{0}$. Moreover, for the same parameters
	\begin{equation}
		\label{eq:h1eq}
		[\widetilde{h}_{i1} , x_{jr}^{\pm}] = \pm (\alpha_{i}, \alpha_{j}) x_{j,r+1}^{\pm}.
	\end{equation}
\end{lemma}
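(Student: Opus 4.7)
The plan is to establish both assertions simultaneously by induction on $r$, leveraging the recurrence \eqref{eq:rec1} that defines $x_{j,r+1}^{\pm}$ in terms of $\widetilde{h}_{j'1}$ and $x_{jr}^{\pm}$. For the first assertion, the base case $r=0$ is exactly \eqref{eq:mpy2}. For the inductive step, I rewrite $x_{j,r+1}^{\pm}=\pm(c_{jj'})^{-1}[\widetilde{h}_{j'1},x_{jr}^{\pm}]$ and apply the super Jacobi identity to $[h_{i0},[\widetilde{h}_{j'1},x_{jr}^{\pm}]]$. The cross bracket $[h_{i0},\widetilde{h}_{j'1}]$ vanishes because $[h_{i0},h_{j'1}]=0$ by \eqref{eq:mpy1} and $[h_{i0},h_{j'0}^{2}]=0$ follows from $[h_{i0},h_{j'0}]=0$ via the Leibniz rule, so the expression collapses to $\pm(c_{jj'})^{-1}[\widetilde{h}_{j'1},[h_{i0},x_{jr}^{\pm}]]$, into which the inductive hypothesis and the recurrence plug to yield $\pm(\alpha_{i},\alpha_{j})x_{j,r+1}^{\pm}$ after the two factors of $\pm c_{jj'}$ cancel.

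For the second assertion, the base case $r=0$ is \eqref{eq:mpy4} except in the excluded case $i=j$ with $|\alpha_{i}|=\bar{1}$. In that exceptional case one has $(\alpha_{i},\alpha_{i})=c_{ii}=0$, so I must verify directly that $[\widetilde{h}_{i1},x_{i0}^{\pm}]=0$. This is immediate: $[h_{i1},x_{i0}^{\pm}]=0$ by \eqref{eq:mpy6}, while $[h_{i0}^{2},x_{i0}^{\pm}]=\{h_{i0},[h_{i0},x_{i0}^{\pm}]\}=\pm c_{ii}\{h_{i0},x_{i0}^{\pm}\}=0$ using \eqref{eq:mpy2}. For the inductive step I proceed as in part one: expand $x_{j,r+1}^{\pm}$ via \eqref{eq:rec1} and move $\widetilde{h}_{i1}$ past $\widetilde{h}_{j'1}$ using super Jacobi. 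The necessary input is $[\widetilde{h}_{i1},\widetilde{h}_{j'1}]=0$, which reduces via the Leibniz rule to the vanishing of the mixed brackets $[h_{ia},h_{j'b}]$ for $a,b\in\{0,1\}$, all supplied by \eqref{eq:mpy1}.

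The main obstacle is the exceptional base case of \eqref{eq:h1eq} at $j=i$ with $|\alpha_{i}|=\bar{1}$, precisely the point at which the minimalistic presentation omits the relation \eqref{eq:mpy4}; it must be recovered from \eqref{eq:mpy6} together with the vanishing of the Cartan entry $c_{ii}$. Beyond that, the proof is a uniform application of super Jacobi, the only other delicate point being to track the signs carefully through the two factors $\pm(c_{jj'})^{-1}$ and $\pm(\alpha_{i},\alpha_{j})$ in order to recover the single $\pm$ on the right-hand side.
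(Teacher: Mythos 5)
Your proof is correct and follows essentially the same route as the paper's: induction on $r$ using the recurrence \eqref{eq:rec1}, the super Jacobi identity, and the commutativity of the Cartan elements supplied by \eqref{eq:mpy1}. The one place you go beyond the paper is the base case of \eqref{eq:h1eq} at $i=j$ with $|\alpha_{i}|=\bar{1}$, which \eqref{eq:mpy4} explicitly excludes; the paper cites \eqref{eq:mpy4} without comment, whereas your direct verification via \eqref{eq:mpy6} together with $c_{ii}=0$ is exactly what is needed to close that gap.
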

\begin{proof}
	We prove the first equality by induction on $r$. If $r=0$, then if follows from \eqref{eq:mpy2}. Note that from \eqref{eq:rec1}, \eqref{eq:mpy1} and induction hypothesis we have
	\[ [ h_{i0} , x_{j,r+1}^{\pm} ] = \pm (\alpha_{j^{'}}, \alpha_{j})^{-1} [ h_{i0} , [ \widetilde{h}_{j^{'} 1} , x_{jr}^{\pm} ] ] = \]
	\[ (\alpha_{i}, \alpha_{j}) (\alpha_{j^{'}}, \alpha_{j})^{-1} [ \widetilde{h}_{j^{'} 1} , x_{jr}^{\pm} ] = \pm (\alpha_{i}, \alpha_{j}) x_{j,r+1}^{\pm}. \]
	The general case follows then.

	We prove the second equality by induction on $r$. If $r=0$, then if follows from \eqref{eq:mpy4}. Note that from \eqref{eq:rec1}, \eqref{eq:mpy1} and induction hypothesis we have
	\[ [ \widetilde{h}_{i1} , x_{j,r+1}^{\pm} ] = \pm (\alpha_{j^{'}}, \alpha_{j})^{-1} [ \widetilde{h}_{i1} , [ \widetilde{h}_{j^{'} 1} , x_{jr}^{\pm} ] ] = \]
	\[ (\alpha_{i}, \alpha_{j}) (\alpha_{j^{'}}, \alpha_{j})^{-1} [ \widetilde{h}_{j^{'} 1} , x_{j,r+1}^{\pm} ] = \pm (\alpha_{i}, \alpha_{j}) x_{j,r+2}^{\pm}. \]
	The general case follows then.
\end{proof}

\begin{lemma}
	\label{lm:l2}
	The relation \eqref{eq:SYrc3} holds when $i=j$, $r +s \le 2$.
\end{lemma}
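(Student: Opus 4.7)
The plan is to split the lemma into three essentially trivial cases with $r+s\le 1$ and three cases with $r+s=2$, of which one is a definition. For $r+s\le 1$ with $i=j$, the relation is exactly \eqref{eq:mpy3}. For $r+s=2$, the case $(r,s)=(2,0)$ is the definition \eqref{eq:rec2} of $h_{i,2}$, so it remains to verify the two equalities $[x_{i,2}^+, x_{i,0}^-] = [x_{i,1}^+, x_{i,1}^-]$ and $[x_{i,0}^+, x_{i,2}^-] = [x_{i,1}^+, x_{i,1}^-]$.

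For the first equality, I would substitute the recurrence $x_{i,2}^+ = (c_{ii'})^{-1}[\widetilde{h}_{i',1}, x_{i,1}^+]$ from \eqref{eq:rec1} into $[x_{i,2}^+, x_{i,0}^-]$ and apply the super Jacobi identity \eqref{eq:sje}. Since $\widetilde{h}_{i',1}$ is $\mathbb{Z}_{2}$-even, all super-signs collapse to the ordinary ones and the expansion yields
\[
[x_{i,2}^+, x_{i,0}^-] \;=\; (c_{ii'})^{-1}\bigl([\widetilde{h}_{i',1}, [x_{i,1}^+, x_{i,0}^-]] \;-\; [x_{i,1}^+, [\widetilde{h}_{i',1}, x_{i,0}^-]]\bigr).
\]
The inner bracket $[x_{i,1}^+, x_{i,0}^-]$ equals $h_{i,1}$ by the already-proved $r+s\le 1$ case, and $\widetilde{h}_{i',1}$ commutes with $h_{i,1}$ since $[h_{i',0}, h_{i,1}]=[h_{i',1}, h_{i,1}]=0$ by \eqref{eq:mpy1} (whence also $[h_{i',0}^2, h_{i,1}]=0$). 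The other inner bracket is evaluated by Lemma \ref{lm:h0eq} as $-c_{i'i}\, x_{i,1}^-$. Using symmetry $c_{ii'}=c_{i'i}$, the prefactor $(c_{ii'})^{-1}$ cancels, producing $[x_{i,2}^+, x_{i,0}^-] = [x_{i,1}^+, x_{i,1}^-]$.

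The second equality is proved by the mirror argument: expand $x_{i,2}^- = -(c_{ii'})^{-1}[\widetilde{h}_{i',1}, x_{i,1}^-]$, apply super Jacobi to $[x_{i,0}^+, x_{i,2}^-]$, and observe that again one inner bracket is $h_{i,1}$ (and commutes with $\widetilde{h}_{i',1}$) while the other is evaluated by Lemma \ref{lm:h0eq} as $c_{i'i}\, x_{i,1}^+$. The analogous cancellation yields $[x_{i,0}^+, x_{i,2}^-] = [x_{i,1}^+, x_{i,1}^-]$. Combined with the definition of $h_{i,2}$, both equations give the desired identity.

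The main obstacle is really just sign- and index-bookkeeping: one has to ensure that $c_{ii'}\neq 0$ so that the recurrence \eqref{eq:rec1} is meaningful, which is guaranteed by our running constraint that every odd vertex of the Dynkin diagram has an even neighbor (letting us pick a valid $i'$ in all cases), and one has to track the $\pm$ signs in \eqref{eq:rec1} and Lemma \ref{lm:h0eq} consistently for the $x^+$ and $x^-$ versions. Because the only even-parity ingredient in every super Jacobi application is $\widetilde{h}_{i',1}$, no truly super-specific sign surprises arise and the computation reduces to the classical one.
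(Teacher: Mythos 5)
Your proof is correct and follows essentially the same route as the paper, which defers this lemma to \cite[Lemma 2.23]{GNW18}: the $r+s\le 1$ cases are the assumption \eqref{eq:mpy3}, $(r,s)=(2,0)$ is the definition \eqref{eq:rec2}, and the remaining two identities follow by expanding $x_{i,2}^{\pm}$ via \eqref{eq:rec1} and using that $\mathrm{ad}_{\widetilde{h}_{i'1}}$ is an (even) derivation killing $h_{i,1}$ by \eqref{eq:mpy1}, together with \eqref{eq:mpy4}. The only quibble is attributional: the existence of a suitable $i'$ with $c_{ii'}\ne 0$ needs only that $i$ has some neighbour (automatic in type $A$ with $|I|>1$), not the stronger ``every odd vertex has an even neighbour'' constraint, which is reserved for Lemma \ref{lm:cartelrelii}.
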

\begin{proof}
	The proof is analogous to that in \cite[Lemma 2.23]{GNW18}.
\end{proof}

\begin{lemma}
	\label{lm:sy10}
	The relation \eqref{eq:SYrc4} holds when $i=j$, $(r,s) =(1,0)$, i.e.,
	\[ [ x_{i2}^{\pm} , x_{i0}^{\pm} ] = \pm \frac{(\alpha_{i}, \alpha_{i}) \hbar}{2} \{ x_{i1}^{\pm} , x_{i0}^{\pm} \}, \]
	where $|\alpha_{i}| = \bar{0}$.
\end{lemma}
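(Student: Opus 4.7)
Since $|\alpha_i| = \bar{0}$, all of $\widetilde{h}_{i1}$, $x_{i,0}^{\pm}$, $x_{i,1}^{\pm}$, $x_{i,2}^{\pm}$ are even, so every super-commutator below reduces to an ordinary commutator and every super-Jacobi identity becomes the ordinary Jacobi identity. The plan is a direct computation starting from the recurrence \eqref{eq:rec1}.

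First, I would choose $i' = i$ in \eqref{eq:rec1} (permitted because $|\alpha_i| = \bar{0}$) to rewrite
\[ x_{i,2}^{\pm} = \pm (c_{ii})^{-1}[\widetilde{h}_{i1}, x_{i,1}^{\pm}], \]
and hence
\[ [x_{i,2}^{\pm}, x_{i,0}^{\pm}] = \pm (c_{ii})^{-1}\bigl[[\widetilde{h}_{i1}, x_{i,1}^{\pm}], x_{i,0}^{\pm}\bigr]. \]
Applying Jacobi to the inner bracket, this equals
\[ \pm (c_{ii})^{-1}\Bigl( \bigl[\widetilde{h}_{i1}, [x_{i,1}^{\pm}, x_{i,0}^{\pm}]\bigr] - \bigl[x_{i,1}^{\pm}, [\widetilde{h}_{i1}, x_{i,0}^{\pm}]\bigr] \Bigr). \]

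Next, I would evaluate the two pieces separately. By \eqref{eq:mpy4} (applicable because $|\alpha_i| = \bar{0}$), $[\widetilde{h}_{i1}, x_{i,0}^{\pm}] = \pm c_{ii}\, x_{i,1}^{\pm}$, so the second piece becomes $\pm c_{ii}[x_{i,1}^{\pm}, x_{i,1}^{\pm}] = 0$ since $x_{i,1}^{\pm}$ is even. For the first piece, I would use \eqref{eq:mpy5} with $i=j$ to get
\[ [x_{i,1}^{\pm}, x_{i,0}^{\pm}] = \pm \frac{c_{ii}\hbar}{2}\,\{x_{i,0}^{\pm}, x_{i,0}^{\pm}\}/2\cdot 2 = \pm \frac{c_{ii}\hbar}{2}\,(x_{i,0}^{\pm})^{2}\cdot 2\cdot\frac{1}{2}, \]
i.e.\ after symmetrizing the two sides of \eqref{eq:mpy5} (which contains $[x_{i,1}^{\pm},x_{i,0}^{\pm}]-[x_{i,0}^{\pm},x_{i,1}^{\pm}] = 2[x_{i,1}^{\pm},x_{i,0}^{\pm}]$ in the even case), one obtains $[x_{i,1}^{\pm}, x_{i,0}^{\pm}] = \pm \frac{c_{ii}\hbar}{2}(x_{i,0}^{\pm})^{2}$.

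Finally, I would apply the Leibniz rule for the derivation $[\widetilde{h}_{i1}, -]$ together with \eqref{eq:mpy4} once more:
\[ [\widetilde{h}_{i1}, (x_{i,0}^{\pm})^{2}] = [\widetilde{h}_{i1}, x_{i,0}^{\pm}]\,x_{i,0}^{\pm} + x_{i,0}^{\pm}\,[\widetilde{h}_{i1}, x_{i,0}^{\pm}] = \pm c_{ii}\,\{x_{i,1}^{\pm}, x_{i,0}^{\pm}\}. \]
Combining these gives $\bigl[\widetilde{h}_{i1}, [x_{i,1}^{\pm}, x_{i,0}^{\pm}]\bigr] = \frac{c_{ii}^{2}\hbar}{2}\{x_{i,1}^{\pm}, x_{i,0}^{\pm}\}$, and substituting back produces the claimed identity after the factor $\pm(c_{ii})^{-1}$ cancels one power of $c_{ii}$.

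There is no real obstacle here; the proof is a short Jacobi-plus-Leibniz calculation, and the only subtlety is keeping track of the signs from the $\pm$ in \eqref{eq:rec1} and \eqref{eq:mpy4}. The key simplification making it work is that the "diagonal" bracket $[x_{i,1}^{\pm}, x_{i,1}^{\pm}]$ vanishes in the even case, so no new relation beyond \eqref{eq:mpy4} and \eqref{eq:mpy5} is needed.
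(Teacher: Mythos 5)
Your proof is correct and follows essentially the same route as the paper's (which simply defers to \cite[Lemma 2.24]{GNW18}): define $x_{i2}^{\pm}$ via \eqref{eq:rec1}, reduce $[x_{i2}^{\pm},x_{i0}^{\pm}]$ by the Jacobi identity to $[\widetilde{h}_{i1},[x_{i1}^{\pm},x_{i0}^{\pm}]]$ using that $[x_{i1}^{\pm},x_{i1}^{\pm}]=0$ in the even case, then substitute $[x_{i1}^{\pm},x_{i0}^{\pm}]=\pm\frac{c_{ii}\hbar}{2}(x_{i0}^{\pm})^{2}$ from \eqref{eq:mpy5} and apply the Leibniz rule with \eqref{eq:mpy4}. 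The only blemish is the garbled intermediate display when you symmetrize \eqref{eq:mpy5}, but the conclusion you extract from it is the right one and the rest of the calculation checks out.
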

\begin{proof}
	The proof is analogous to that in \cite[Lemma 2.24]{GNW18}.
\end{proof}

\begin{lemma}
	\label{lm:hx20}
	The relation \eqref{eq:SYrc2} holds when $i=j$, $(r,s) =(1,0)$, i.e.,
	\[ [ h_{i2} , x_{i0}^{\pm} ] - [h_{i1} , x_{i1}^{\pm}] = \pm \frac{c_{ii} \hbar}{2} \{h_{i1}, x_{i0}^{\pm}\}, \]
	where $|\alpha_{i}| = \bar{0}$.
\end{lemma}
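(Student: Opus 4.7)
Since $|\alpha_i|=\bar 0$ every element in sight is even, so the super Jacobi identity reduces to the ordinary Jacobi identity and all brackets and anticommutators behave classically. The plan is to compute $[h_{i2},x_{i0}^{\pm}]$ and $[h_{i1},x_{i1}^{\pm}]$ separately using the recursive definitions and the lemmas already established, and then observe that the difference collapses to the desired right-hand side. I will describe the $+$ case; the $-$ case is entirely analogous up to an overall sign.

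First I would rewrite $h_{i2}=[x_{i2}^{+},x_{i0}^{-}]$ via \eqref{eq:rec2} and apply Jacobi to obtain
\[
[h_{i2},x_{i0}^{+}]=[x_{i2}^{+},[x_{i0}^{-},x_{i0}^{+}]]-[x_{i0}^{-},[x_{i2}^{+},x_{i0}^{+}]].
\]
The inner bracket $[x_{i0}^{-},x_{i0}^{+}]=-h_{i0}$ comes from \eqref{eq:mpy3}, so the first term becomes $[h_{i0},x_{i2}^{+}]=c_{ii}x_{i2}^{+}$ by Lemma \ref{lm:h0eq}. For the second term I would apply Lemma \ref{lm:sy10} to rewrite $[x_{i2}^{+},x_{i0}^{+}]=\tfrac{c_{ii}\hbar}{2}\{x_{i1}^{+},x_{i0}^{+}\}$, and then use the Leibniz rule together with Lemma \ref{lm:l2} (which gives $[x_{i0}^{-},x_{i1}^{+}]=-h_{i1}$ and $[x_{i0}^{-},x_{i0}^{+}]=-h_{i0}$) to expand
\[
[x_{i0}^{-},\{x_{i1}^{+},x_{i0}^{+}\}]=-\{h_{i1},x_{i0}^{+}\}-\{h_{i0},x_{i1}^{+}\}.
\]
Assembling these pieces,
\[
[h_{i2},x_{i0}^{+}]=c_{ii}x_{i2}^{+}+\tfrac{c_{ii}\hbar}{2}\{h_{i1},x_{i0}^{+}\}+\tfrac{c_{ii}\hbar}{2}\{h_{i0},x_{i1}^{+}\}.
\]

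Next I would handle $[h_{i1},x_{i1}^{+}]$ by substituting $h_{i1}=\widetilde{h}_{i1}+\tfrac{\hbar}{2}h_{i0}^{2}$. The relation \eqref{eq:h1eq} from Lemma \ref{lm:h0eq} yields $[\widetilde{h}_{i1},x_{i1}^{+}]=c_{ii}x_{i2}^{+}$, while the Leibniz rule together with $[h_{i0},x_{i1}^{+}]=c_{ii}x_{i1}^{+}$ (again Lemma \ref{lm:h0eq}) gives $[h_{i0}^{2},x_{i1}^{+}]=c_{ii}\{h_{i0},x_{i1}^{+}\}$. Hence
\[
[h_{i1},x_{i1}^{+}]=c_{ii}x_{i2}^{+}+\tfrac{c_{ii}\hbar}{2}\{h_{i0},x_{i1}^{+}\}.
\]
Subtracting, the $c_{ii}x_{i2}^{+}$ terms and the $\{h_{i0},x_{i1}^{+}\}$ terms cancel, leaving exactly $\tfrac{c_{ii}\hbar}{2}\{h_{i1},x_{i0}^{+}\}$, as required. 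Repeating the same manipulations with $x_{i0}^{-}$ in place of $x_{i0}^{+}$, and using $[x_{i0}^{+},x_{i0}^{-}]=h_{i0}$ instead of $-h_{i0}$, reverses the overall sign and produces the $-$ case.

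The only delicate step is the anticommutator calculation $[x_{i0}^{-},\{x_{i1}^{+},x_{i0}^{+}\}]$, where one must keep track of which term produces $\{h_{i1},x_{i0}^{+}\}$ versus $\{h_{i0},x_{i1}^{+}\}$; this is the main bookkeeping obstacle, but it is a finite combinatorial check and requires no new input beyond \eqref{eq:mpy3}, Lemma \ref{lm:h0eq}, Lemma \ref{lm:l2}, and Lemma \ref{lm:sy10}.
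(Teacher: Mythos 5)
Your computation is correct and is essentially the argument the paper delegates to \cite[Lemma 2.26]{GNW18}: expand $[h_{i2},x_{i0}^{+}]$ by Jacobi from $h_{i2}=[x_{i2}^{+},x_{i0}^{-}]$, feed in Lemma \ref{lm:sy10} and the Leibniz rule for the anticommutator, and compare with $[h_{i1},x_{i1}^{+}]$ written through $\widetilde{h}_{i1}+\tfrac{\hbar}{2}h_{i0}^{2}$. The one point to tighten is the $-$ case: literally replacing $x_{i0}^{+}$ by $x_{i0}^{-}$ in $[[x_{i2}^{+},x_{i0}^{-}],x_{i0}^{+}]$ yields a tautology, so there you should instead start from $h_{i2}=[x_{i0}^{+},x_{i2}^{-}]$ (supplied by Lemma \ref{lm:l2}) and run the identical manipulations.
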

\begin{proof}
	The proof is analogous to that in \cite[Lemma 2.26]{GNW18}.
\end{proof}

\begin{lemma}
	\label{lm:SYrc34}
	Suppose that $i,j \in I$ and $i \ne j$. The relations \eqref{eq:SYrc3} and \eqref{eq:SYrc4} hold for any $r,s \in \mathbb{N}_{0}$.
\end{lemma}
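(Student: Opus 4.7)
The plan is to establish both \eqref{eq:SYrc3} and \eqref{eq:SYrc4} (for $i\ne j$) simultaneously by induction on the total degree $N=r+s$. The base cases are already present in the minimalistic presentation: \eqref{eq:mpy3} yields the $r+s\le 1$ instances of \eqref{eq:SYrc3}, while \eqref{eq:mpy5} yields the $r=s=0$ instance of \eqref{eq:SYrc4}. For brevity I write out only the inductive step for \eqref{eq:SYrc3}; the step for \eqref{eq:SYrc4} is entirely analogous.

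Set $Z_{r,s} := [x_{i,r}^+, x_{j,s}^-]$. The recurrence \eqref{eq:rec1}, with $i'$ chosen so that $c_{ii'}\ne 0$ (such a choice exists under our standing hypothesis: if $|\alpha_i|=\bar{0}$ take $i'=i$, otherwise take $i'$ to be an even neighbor of $i$), gives
\[ [x_{i,r+1}^+, x_{j,s}^-] = (c_{ii'})^{-1} [[\widetilde{h}_{i'1}, x_{i,r}^+], x_{j,s}^-]. \]
Expanding the right-hand side by the super-Jacobi identity (no extra signs appear because $\widetilde{h}_{i'1}$ is even) and invoking \eqref{eq:h1eq} to rewrite $[\widetilde{h}_{i'1}, x_{j,s}^-] = -(\alpha_{i'},\alpha_j) x_{j,s+1}^-$, I obtain
\[ (\alpha_i,\alpha_{i'})\, Z_{r+1,s} \;-\; (\alpha_{i'},\alpha_j)\, Z_{r,s+1} \;=\; [\widetilde{h}_{i'1}, Z_{r,s}]. \]
A parallel computation, this time writing $x_{j,s+1}^- = -(c_{jj'})^{-1}[\widetilde{h}_{j'1}, x_{j,s}^-]$ and expanding $[x_{i,r}^+, x_{j,s+1}^-]$, yields the companion equation
\[ -(\alpha_{j'},\alpha_i)\, Z_{r+1,s} \;+\; (\alpha_j,\alpha_{j'})\, Z_{r,s+1} \;=\; -[\widetilde{h}_{j'1}, Z_{r,s}]. \]
By the inductive hypothesis at level $N=r+s$ we have $Z_{r,s}=0$, so both right-hand sides vanish and the pair $(Z_{r+1,s}, Z_{r,s+1})$ solves a homogeneous $2\times 2$ linear system whose determinant is $(\alpha_i,\alpha_{i'})(\alpha_j,\alpha_{j'})-(\alpha_{i'},\alpha_j)(\alpha_{j'},\alpha_i)$. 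Once this determinant is shown to be non-zero, the inductive step closes.

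For \eqref{eq:SYrc4} the same machinery is applied to the combination
\[ W_{r,s}^{\pm} := [x_{i,r+1}^{\pm}, x_{j,s}^{\pm}] - [x_{i,r}^{\pm}, x_{j,s+1}^{\pm}] \mp \tfrac{c_{ij}\hbar}{2} \{x_{i,r}^{\pm}, x_{j,s}^{\pm}\}, \]
whose vanishing we wish to prove. The base $W_{0,0}^{\pm}=0$ is precisely \eqref{eq:mpy5}. Acting by $\mathrm{ad}\,\widetilde{h}_{i'1}$ and $\mathrm{ad}\,\widetilde{h}_{j'1}$, using the derivation property together with \eqref{eq:h1eq} and \eqref{eq:mpy2} to evaluate the action on the anticommutator, produces a homogeneous $2\times 2$ system in $(W_{r+1,s}^{\pm}, W_{r,s+1}^{\pm})$ with the same determinant as before, and induction proceeds identically.

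The main obstacle is verifying the non-degeneracy of the determinant $(\alpha_i,\alpha_{i'})(\alpha_j,\alpha_{j'})-(\alpha_{i'},\alpha_j)(\alpha_{j'},\alpha_i)$ for every admissible configuration $(i,j,i',j')$ under the standing hypothesis. This reduces to a short case analysis based on the parities of $|\alpha_i|$ and $|\alpha_j|$ and on the mutual positions of the indices in the Dynkin diagram; the assumption that each odd vertex has at least one even neighbor is exactly what makes the auxiliary $i'$, $j'$ available and keeps the four scalar products involved under control. The super-sign bookkeeping inside the Jacobi identity itself is painless because $\widetilde{h}_{i'1}$ and $\widetilde{h}_{j'1}$ are even, but one must still track the parities of $x_{i,r}^{\pm}$ and $x_{j,s}^{\pm}$ when expanding the anticommutator in \eqref{eq:SYrc4}.
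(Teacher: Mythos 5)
Your overall strategy coincides with the paper's: induct on $r+s$, apply two operators of the form $\mathrm{ad}\,\widetilde{h}_{m1}$ to the vanishing expression at level $r+s$, and conclude from a homogeneous $2\times 2$ system provided its determinant is non-zero. The algebraic manipulations you carry out (the super-Jacobi identity with the even elements $\widetilde{h}_{i'1}$, $\widetilde{h}_{j'1}$ and the identity \eqref{eq:h1eq}) are correct. The gap is in the final non-degeneracy claim: by deriving the system through the recurrence \eqref{eq:rec1} you have locked the two test indices to the specific $i'$ and $j'$ attached to $i$ and $j$, and with that choice the determinant $(\alpha_i,\alpha_{i'})(\alpha_j,\alpha_{j'})-(\alpha_{i'},\alpha_j)(\alpha_{j'},\alpha_i)$ does vanish in admissible configurations. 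Concretely, take $|\alpha_i|=\bar{1}$, $|\alpha_j|=\bar{0}$ with $i,j$ adjacent and $j$ the only even neighbour of $i$ (e.g.\ $|I|=2$ with one odd and one even vertex, or $I=\{1,2,3,4\}$ with parities odd--even--even--odd and $(i,j)=(1,2)$): then $i'=j'=j$, your two equations are the same equation, the system has rank one, and the induction does not close. The standing hypothesis that every odd vertex has an even neighbour guarantees that $i'$ exists, but it does not keep the determinant away from zero.

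The paper avoids this by decoupling the test indices from the recurrence: since \eqref{eq:h1eq} holds for \emph{all} $m\in I$, one may apply $\mathrm{ad}\,\widetilde{h}_{m1}$ and $\mathrm{ad}\,\widetilde{h}_{n1}$ for any pair $(m,n)$, and the choice is made case by case according to the parities of $\alpha_i,\alpha_j$ and their relative position in the Dynkin diagram. In particular, in the configuration above the paper takes $m=i$, $n=j$ even though $\alpha_i$ is odd: the diagonal entry $(\alpha_i,\alpha_i)=0$ is harmless because the determinant $(\alpha_i,\alpha_i)(\alpha_j,\alpha_j)-(\alpha_i,\alpha_j)(\alpha_j,\alpha_i)=-(\alpha_i,\alpha_j)^2=-1$ is saved by the off-diagonal term. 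To repair your argument you should state the system for arbitrary $(m,n)$ and then carry out the explicit case analysis over the possible Dynkin subdiagrams spanned by $i$, $j$ and their neighbours (this is also where the exclusion of $|I|=3$ with $|\alpha_1|=|\alpha_3|$ enters: there no admissible pair $(m,n)$ gives a non-zero determinant).
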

\begin{proof}
	We prove \eqref{eq:SYrc4} by induction on $r$ and $s$. The initial case $r=s=0$ is our assumption \eqref{eq:mpy5}. Let $X^{\pm}(r,s)$ be the result of substracting the right-hand side of \eqref{eq:SYrc4} from the left-hand side. Note that if we apply $[\widetilde{h}_{m1} , \cdot]$ and $[\widetilde{h}_{n1} , \cdot]$ ($m,n \in I$) to $X^{\pm}(r,s)$ we get from \eqref{eq:h1eq}
	\[ 0 = (\alpha_{i}, \alpha_{m}) X^{\pm}(r+1,s) + (\alpha_{j}, \alpha_{m}) X^{\pm}(r,s+1), \]
	\[ 0 = (\alpha_{i}, \alpha_{n}) X^{\pm}(r+1,s) + (\alpha_{j}, \alpha_{n}) X^{\pm}(r,s+1). \]

	In order to determine when the determinant of the matrix $\bigl( \begin{smallmatrix}(\alpha_{i},\alpha_{m}) & (\alpha_{j},\alpha_{m})\\ (\alpha_{i},\alpha_{n}) & (\alpha_{j},\alpha_{n})\end{smallmatrix}\bigr)$ is nonzero depending on the grading of roots it is sufficient to consider the following Dynkin (sub)diagrams:
	\begin{enumerate}
	\item
	\begin{tikzpicture}[scale=2]
		\tikzset{/Dynkin diagram,root radius=.07cm}
		\dynkin[labels={\alpha_{i},\alpha_{j}}] A{xx}
	\end{tikzpicture}

	Then we select $m=i, n=j$ in order to get the nonzero determinant.

	\item
	\begin{tikzpicture}[scale=2]
		\tikzset{/Dynkin diagram,root radius=.07cm}
		\dynkin[labels={\alpha_{i},\alpha_{i^{'}},\alpha_{j}}] A{xxx}
	\end{tikzpicture}

	$|\alpha_i|=|\alpha_j|=\bar{0} \Rightarrow m=i, n=j$;

	$|\alpha_i|=\bar{1}, |\alpha_j|=\bar{0} \Rightarrow m=i^{'}, n=j$;

	$|\alpha_i|=\bar{0}, |\alpha_j|=\bar{1} \Rightarrow m=i, n=i^{'}$;

	if $|I|=3$ and $|\alpha_i|=|\alpha_j|=\bar{1}$, then we are not able to build an invertible matrix for our purposes. That's why we exclude this case from our consideration. If $|I| > 3$ then a Dynkin subdiagram may have the form

	\begin{tikzpicture}[scale=2]
		\tikzset{/Dynkin diagram,root radius=.07cm}
		\dynkin[labels={\alpha_{i},\alpha_{i^{'}},\alpha_{j},\alpha_{j^{'}}}] A{txtx}
	\end{tikzpicture}

	or

	\begin{tikzpicture}[scale=2]
		\tikzset{/Dynkin diagram,root radius=.07cm}
		\dynkin[labels={\alpha_{i^{'}},\alpha_{i},\alpha_{j^{'}},\alpha_{j}}] A{xtxt}
	\end{tikzpicture}

	In these cases we take $m=i^{'}, n=j^{'}$.

	\item
	\begin{tikzpicture}[scale=2]
		\tikzset{/Dynkin diagram,root radius=.07cm}
		\dynkin[labels={\alpha_{i},\alpha_{i^{'}},\alpha_{j^{'}},\alpha_{j}}] A{xxxx}
	\end{tikzpicture}

	$|\alpha_i|=|\alpha_j|=\bar{0} \Rightarrow m=i, n=j$;

	$|\alpha_i|=\bar{1}, |\alpha_j|=\bar{0} \Rightarrow m=i^{'}, n=j$;

	$|\alpha_i|=\bar{0}, |\alpha_j|=\bar{1} \Rightarrow m=i, n=j^{'}$;

	$|\alpha_i|=\bar{1}, |\alpha_j|=\bar{1} \Rightarrow m=i^{'}, n=j^{'}$.

	\end{enumerate}
	When the determinant is nonzero, we have $X^{\pm}(r+1,s)=X^{\pm}(r,s+1)=0$. The result follows by induction hypothesis.

	We prove \eqref{eq:SYrc3} by induction on $r$ and $s$. The initial case $r=s=0$ is our assumption \eqref{eq:mpy3}. Let $X(r,s)$ be the result of substracting the right-hand side of \eqref{eq:SYrc4} from the left-hand side. Note that if we apply $[\widetilde{h}_{m1} , \cdot]$ and $[\widetilde{h}_{n1} , \cdot]$ ($m,n \in I$) to $X(r,s)$ we get from \eqref{eq:h1eq} that
	\[ 0 = (\alpha_{i}, \alpha_{m}) X(r+1,s) + (-1) (\alpha_{j}, \alpha_{m}) X(r,s+1), \]
	\[ 0 = (\alpha_{i}, \alpha_{n}) X(r+1,s) + (-1) (\alpha_{j}, \alpha_{n}) X(r,s+1). \]

	It is obvious that in order to determine when the determinant of the matrix $\bigl( \begin{smallmatrix}(\alpha_{i},\alpha_{m}) & -(\alpha_{j},\alpha_{m})\\ (\alpha_{i},\alpha_{n}) & -(\alpha_{j},\alpha_{n})\end{smallmatrix}\bigr)$ is non-zero depending on the grading of roots we apply the same arguments as above. The result follows by induction hypothesis.
\end{proof}

\begin{lemma}
	Suppose that $i,j \in I$. Then \eqref{eq:SYrc2} holds for $(r,s)=(1,0)$.
\end{lemma}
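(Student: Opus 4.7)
The plan is first to dispatch the case $i=j$: when $|\alpha_i|=\bar 0$ the required identity is precisely Lemma \ref{lm:hx20}, while $i=j$ with $|\alpha_i|=\bar 1$ is excluded by the hypothesis of \eqref{eq:SYrc2}. So the real content is the case $i\ne j$, and from now on I assume $i\ne j$. My strategy is to unfold both $h_{i,1}$ and $h_{i,2}$ into commutators of raising and lowering generators and then let all simplifications be forced by the vanishing $[x_{i,r}^\mp, x_{j,s}^\pm]=0$ (Lemma \ref{lm:SYrc34}, \eqref{eq:SYrc3}) together with the already established \eqref{eq:SYrc4} for $i\ne j$ (again Lemma \ref{lm:SYrc34}).

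Concretely, I would start from $h_{i,1}=[x_{i,1}^+,x_{i,0}^-]$ and $h_{i,2}=[x_{i,1}^+,x_{i,1}^-]$, available by Lemma \ref{lm:l2}, and apply the super Jacobi identity to expand both $[h_{i,2},x_{j,0}^{\pm}]$ and $[h_{i,1},x_{j,1}^{\pm}]$. Since $i\ne j$, every summand containing a bracket between a $\pm$-generator and a $\mp$-generator of opposite index dies, and what remains reduces the full left-hand side of the target identity (up to the overall sign $-(-1)^{|\alpha_i|}$) to
\[
[x_{i,1}^{\mp},\,[x_{i,1}^{\pm},x_{j,0}^{\pm}]]\;-\;[x_{i,0}^{\mp},\,[x_{i,1}^{\pm},x_{j,1}^{\pm}]].
\]

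Next I would invoke \eqref{eq:SYrc4} to rewrite $[x_{i,1}^{\pm},x_{j,0}^{\pm}]=[x_{i,0}^{\pm},x_{j,1}^{\pm}]\pm\frac{c_{ij}\hbar}{2}\{x_{i,0}^{\pm},x_{j,0}^{\pm}\}$ inside the first nested bracket, and apply super Jacobi once more to each piece. Using $[x_{i,1}^{\mp},x_{i,0}^{\pm}]=\pm(-1)^{|\alpha_i|}h_{i,1}$ and $[x_{i,0}^{\mp},x_{i,1}^{\pm}]=\pm(-1)^{|\alpha_i|}h_{i,1}$ together with the vanishing $[x_{i,r}^{\mp},x_{j,s}^{\pm}]=0$, the $[x_{i,0}^{\pm},x_{j,1}^{\pm}]$ part contributes a copy of $-(-1)^{|\alpha_i|}[h_{i,1},x_{j,1}^{\pm}]$ that cancels the second summand above, while the anticommutator part $\{x_{i,0}^{\pm},x_{j,0}^{\pm}\}$ is transported through $[x_{i,1}^{\mp},\cdot]$ (by the super-Leibniz rule) to produce exactly $-(-1)^{|\alpha_i|}\{h_{i,1},x_{j,0}^{\pm}\}$. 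Putting the pieces together reconstructs the desired identity
\[
[h_{i,2},x_{j,0}^{\pm}]-[h_{i,1},x_{j,1}^{\pm}]=\pm\tfrac{c_{ij}\hbar}{2}\{h_{i,1},x_{j,0}^{\pm}\},
\]
with a uniform treatment of both signs $\pm$.

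The principal obstacle is purely bookkeeping, but it is a delicate one: a naive super Jacobi expansion of $[x_{i,1}^{\mp},[x_{i,1}^{\pm},x_{j,0}^{\pm}]]$ collapses it back to $[h_{i,2},x_{j,0}^{\pm}]$, producing a tautology rather than progress. The crucial trick is to \emph{first} split $[x_{i,1}^{\pm},x_{j,0}^{\pm}]$ via \eqref{eq:SYrc4} into a ``lowered-index'' piece plus an $\hbar$-anticommutator piece, and \emph{then} reorganise; this is what converts the would-be circularity into the $\hbar$-term $\{h_{i,1},x_{j,0}^{\pm}\}$. The factor $(-1)^{|\alpha_i||\alpha_i|}=(-1)^{|\alpha_i|}$ in super Jacobi must be tracked carefully because it conspires with the $(-1)^{|\alpha_i|}$ coming from $[x_{i,r}^{\mp},x_{i,s}^{\pm}]=\pm(-1)^{|\alpha_i|}h_{i,r+s}$ to make every intermediate $\mathbb{Z}_2$-sign drop out of the final answer, as it must since both sides of the target identity are even.
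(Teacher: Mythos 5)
Your proposal is correct and follows essentially the route the paper takes, since the paper's own proof simply defers to \cite[Lemma 2.29]{GNW18}, whose argument is exactly this: reduce to $i\ne j$ (the case $i=j$, $|\alpha_i|=\bar 0$ being Lemma \ref{lm:hx20} and $i=j$, $|\alpha_i|=\bar 1$ being excluded), expand $h_{i,1}=[x_{i,1}^{+},x_{i,0}^{-}]$ and $h_{i,2}=[x_{i,1}^{+},x_{i,1}^{-}]$ via Lemma \ref{lm:l2}, kill the mixed brackets with \eqref{eq:SYrc3} for $i\ne j$ (Lemma \ref{lm:SYrc34}), and feed \eqref{eq:SYrc4} into the surviving nested bracket to generate the $\hbar$-anticommutator. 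The only blemish is a sign slip in your stated identity $[x_{i,r}^{\mp},x_{i,s}^{\pm}]=\pm(-1)^{|\alpha_i|}h_{i,r+s}$ (it should carry the opposite sign), but since the same factor enters both cancelling terms and the anticommutator term, the final identity comes out correctly as you claim.
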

\begin{proof}
	The proof is analogous to that in \cite[Lemma 2.29]{GNW18}.
\end{proof}

For each $i \in I$, we define $\widetilde{h}_{i2}$ by $\widetilde{h}_{i2} = h_{i2}- \hbar h_{i0} h_{i1} + \frac{\hbar^2}{3} h_{i0}^3$.

\begin{proposition}
	For any $i,j \in I$, the following identity holds:
	\[ [ \widetilde{h}_{i2} , x_{j0}^{\pm} ] = \pm(\alpha_{i}, \alpha_{j}) x_{j2}^{\pm} \pm \frac{\hbar^2}{12} (\alpha_{i}, \alpha_{j})^3 x_{j0}^{\pm}. \]
\end{proposition}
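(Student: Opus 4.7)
The plan is to expand
\[ \widetilde{h}_{i2} \;=\; h_{i2} \;-\; \hbar\, h_{i0} h_{i1} \;+\; \tfrac{\hbar^{2}}{3}\, h_{i0}^{3} \]
and compute each of the three commutators $[h_{i2}, x_{j0}^{\pm}]$, $[h_{i0} h_{i1}, x_{j0}^{\pm}]$, $[h_{i0}^{3}, x_{j0}^{\pm}]$ separately using only Lemma~\ref{lm:h0eq} and the $(r,s)=(1,0)$ case of \eqref{eq:SYrc2} established in the preceding lemma. For brevity set $c := (\alpha_i,\alpha_j)$, $\epsilon\in\{+1,-1\}$ for the chosen sign, $a := h_{i0}$, and $b := x_{j0}^{\pm}$, so that $[a,b] = \epsilon c\, b$.

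For $[h_{i2}, b]$ I first apply \eqref{eq:SYrc2} at $(r,s)=(1,0)$ and then at $(r,s)=(0,1)$, the latter relation being a direct consequence of Lemma~\ref{lm:h0eq} applied to $\widetilde h_{i1} = h_{i1} - \tfrac{\hbar}{2}h_{i0}^{2}$ together with the identity $[a^{2},\cdot] = \epsilon c\{a,\cdot\}$. This reduces the computation to $[a, x_{j2}^{\pm}] = \epsilon c\, x_{j2}^{\pm}$ (again by Lemma~\ref{lm:h0eq}) plus two anticommutator correction terms of order $\hbar$. For $[h_{i0} h_{i1}, b]$ I would use the even Leibniz rule and substitute $[h_{i1},b] = \epsilon c\, x_{j1}^{\pm} + \tfrac{\epsilon c\hbar}{2}\{a,b\}$ (the $(r,s)=(0,0)$ instance). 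For $[h_{i0}^{3}, b]$ only $[a,b] = \epsilon c\, b$ is needed.

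Assembling the three pieces as $(\text{Step 1}) - \hbar\,(\text{Step 2}) + \tfrac{\hbar^{2}}{3}(\text{Step 3})$, all $x_{j1}^{\pm}$-terms cancel: the piece $\tfrac{\epsilon c\hbar}{2}\{a,x_{j1}^{\pm}\}$ from Step~1 pairs off with $-\hbar\,\epsilon c\, a x_{j1}^{\pm}$ from Step~2, and the $x_{j1}^{\pm}$-contribution hidden inside $\tfrac{\epsilon c\hbar}{2}\{h_{i1},b\}$ of Step~1 cancels against $-\hbar\,\epsilon c\, b h_{i1}$ of Step~2. What remains is the purely $\hbar^{2}$-polynomial expression
\[ \tfrac{c^{2}\hbar^{2}}{4}\{a,b\} \;-\; \tfrac{\epsilon c\hbar^{2}}{2}(a^{2}b + aba) \;+\; \tfrac{\epsilon c\hbar^{2}}{3}(a^{2}b + aba + ba^{2}); \]
applying $ab = ba + \epsilon c\, b$ repeatedly collapses every monomial to a scalar multiple of $b$, and the surviving coefficient of $b$ works out to $\epsilon c^{3}\hbar^{2}\bigl(\tfrac{1}{4} - \tfrac{1}{6}\bigr) = \tfrac{\epsilon c^{3}\hbar^{2}}{12}$, yielding the claimed identity.

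The main obstacle is purely bookkeeping: tracking the anticommutator expansions and noncommutative monomials across the three steps carefully enough to see the cancellations. The numerical coefficient $\tfrac{1}{12}$ is delicate and emerges only after combining $\tfrac{1}{4}$ from the $\tfrac{c^{2}\hbar^{2}}{4}\{a,b\}$ contribution in Step~1 with the net $-\tfrac{1}{2}+\tfrac{1}{3}$ produced by Steps~2 and 3, which confirms the specific choice of the cubic correction $\tfrac{\hbar^{2}}{3}h_{i0}^{3}$ in the definition of $\widetilde h_{i2}$. The degenerate case $i=j$ with $|\alpha_i|=\bar 1$, for which $c=0$, is automatic: both sides vanish since $h_{i,r}$ commutes with $x_{i,s}^{\pm}$ by \eqref{eq:mpy6} and its iterated consequences through the recurrences \eqref{eq:rec1}--\eqref{eq:rec2}.
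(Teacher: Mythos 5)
Your proposal is correct and follows essentially the same route as the paper: the non-degenerate case is exactly the direct expansion of $\widetilde{h}_{i2}$ carried out in \cite[Lemma 2.31]{GNW18}, to which the paper defers, and your coefficient bookkeeping ($\tfrac14-\tfrac12+\tfrac13=\tfrac1{12}$, with the $x_{j1}^{\pm}$- and cubic-monomial terms cancelling) checks out. The only place to be slightly more careful is the excluded case $i=j$, $|\alpha_{i}|=\bar{1}$, where \eqref{eq:SYrc2} is unavailable: there the vanishing of $[h_{i2},x_{i0}^{\pm}]$ is not an immediate ``iterated consequence'' of \eqref{eq:mpy6}, but follows, as in the paper, from $h_{i2}=[x_{i1}^{+},x_{i1}^{-}]$ (Lemma \ref{lm:l2}) together with the super Jacobi identity and $[x_{i0}^{\pm},x_{i1}^{\pm}]=0$.
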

\begin{proof}
	If $i=j$ and $|\alpha_{i}|=\bar{1}$ then the proof follows from \eqref{eq:mpy2}, \eqref{eq:mpy6}, \eqref{eq:mpy7}, \eqref{eq:rec1} and Lemma \eqref{lm:l2} that
	\[ [h_{i2}, x_{i0}^{\pm}] = (-1) ( [x_{i1}^{+} , [x_{i0}^{\pm} , x_{i1}^{-}] ] + [x_{i1}^{-} , [x_{i0}^{\pm} , x_{i1}^{+}] ] ) = \]
	\[ (-1) [ x_{i1}^{\mp} , [x_{i0}^{\pm} , x_{i1}^{\pm}] ] = 0. \]
	Otherwise the proof is analogous to that in \cite[Lemma 2.31]{GNW18}.
\end{proof}

\begin{lemma}
	\label{lm:fassrс}
	Relations \eqref{eq:bssr} (unless $i=j$ and $|\alpha_{i}|=\bar{1}$) and \eqref{eq:cssr} hold for the following cases:
	\begin{enumerate}
		\item $(r,s)=(0, s)$ and $(r,s,t)=(0,0,s) \; (s \in \mathbb{N}_{0})$;
		\item $(r,s)=(1,s)$ and $(r,s,t)=(1,0,s) \; (s \in \mathbb{N}_{0})$,
		\item $(r,s)=(2, s)$ and $(r,s,t)=(2,0,s) \; (s \in \mathbb{N}_{0})$;
		\item $(1,1,s) \; (s \in \mathbb{N}_{0})$,
	\end{enumerate}
	respectively.
\end{lemma}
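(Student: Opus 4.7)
The plan is to prove all four items simultaneously by induction on the remaining free index $s$, using Lemma \ref{lm:h0eq}---specifically, the identity $[\widetilde{h}_{k,1}, x_{j,r}^{\pm}] = \pm(\alpha_k, \alpha_j) x_{j,r+1}^{\pm}$---to shift indices upward. The general mechanism is: given that a commutator expression $X(r,s,\ldots)$ vanishes, bracket it with $\widetilde{h}_{k,1}$; since $|\widetilde{h}_{k,1}| = \bar 0$, the super Jacobi identity \eqref{eq:sje} distributes the bracket inside, producing a linear combination (with scalar-product coefficients) of copies of $X$ at once-shifted indices. Choosing $k$ (or a pair $k, k'$) cleverly using the Cartan data isolates the new commutator we want to vanish.

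For Case (1), the bases $s=0$ are exactly \eqref{eq:mpy7} and \eqref{eq:mpy8}. For the inductive step of the bssr piece, apply $[\widetilde{h}_{k,1},\cdot]$ to $[x_{i,0}^{\pm}, x_{j,s}^{\pm}]=0$ and pick $k$ with $(\alpha_k,\alpha_i)=0$ and $(\alpha_k,\alpha_j)\ne 0$, so only the $[x_{i,0}^{\pm}, x_{j,s+1}^{\pm}]$ term survives. When $|\alpha_j|=\bar 0$ one takes $k=j$; when $|\alpha_j|=\bar 1$ one takes $k$ to be an even neighbour of $j$ (guaranteed by the standing assumption), falling back, if that neighbour happens to be adjacent to $i$, to the $2\times 2$ linear-system trick from Lemma \ref{lm:SYrc34} (with two different auxiliary indices $m,n$ producing an invertible matrix of scalar products). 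The cssr piece $(0,0,s)$ is handled identically, bracketing $[x_{i,0}^{\pm},[x_{i,0}^{\pm},x_{j,s}^{\pm}]]=0$ and absorbing the vanishing shifts coming from $[\widetilde{h}_{k,1},x_{i,0}^{\pm}]=\pm(\alpha_k,\alpha_i)x_{i,1}^{\pm}$ by the same choice of $k$.

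For Cases (2)--(4) I first establish the base $s=0$. Bracketing $[x_{i,0}^{\pm},x_{j,0}^{\pm}]=0$ with $[\widetilde{h}_{k,1},\cdot]$ produces $\pm(\alpha_k,\alpha_i)[x_{i,1}^{\pm},x_{j,0}^{\pm}] \pm (\alpha_k,\alpha_j)[x_{i,0}^{\pm},x_{j,1}^{\pm}]=0$; doing this for two choices $k=m,n$ and checking (by the Dynkin-subdiagram enumeration of Lemma \ref{lm:SYrc34}) that the $2\times 2$ coefficient matrix can be made invertible yields $[x_{i,1}^{\pm},x_{j,0}^{\pm}]=[x_{i,0}^{\pm},x_{j,1}^{\pm}]=0$. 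Iterating once more furnishes the $(2,0)$ base, and the corresponding cssr bases $(1,0,0)$, $(2,0,0)$, $(1,1,0)$ follow by the same two-equation argument applied to $[x_{i,0}^{\pm},[x_{i,0}^{\pm},x_{j,0}^{\pm}]]=0$. The inductive steps in $s$ then repeat the argument of Case (1) verbatim, so the whole lemma is reduced to these base-case manipulations plus a single inductive shift.

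The main obstacle is the case analysis on the parities of $\alpha_i,\alpha_j$ and their neighbours in the Dynkin diagram. As in Lemma \ref{lm:SYrc34}, one has to verify for every subconfiguration---$i,j$ both even, both odd, mixed, and with intermediate vertices of either parity---that a pair $(m,n)$ can be chosen making the scalar-product matrix nondegenerate. This is precisely where the standing assumption that every odd simple root is adjacent to an even one, together with the exclusion of the small-$|I|$ configurations noted before the theorem, is indispensable: without these hypotheses the required invertibility would fail exactly in the pathological diagrams ruled out there. Relation \eqref{eq:mpy6} (i.e.\ $[h_{i,1},x_{i,s}^{\pm}]=0$ for $|\alpha_i|=\bar 1$) is also used to kill terms that would otherwise appear when $k$ coincides with an odd index.
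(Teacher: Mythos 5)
Your overall strategy --- induct on the remaining mode index by applying $\operatorname{ad}(\widetilde{h}_{k,1})$ via \eqref{eq:h1eq} and resolving the resulting homogeneous linear systems through the Dynkin-subdiagram case analysis of Lemma \ref{lm:SYrc34} --- is exactly the paper's (which defers to \cite[Lemma 2.33]{GNW18} and explicitly invokes cases 2 and 3 of Lemma \ref{lm:SYrc34} for the linear systems). Your treatment of case (1), of the inductive shifts in $s$, and of the \eqref{eq:bssr} halves of cases (2) and (3) is sound: for $[x^{\pm}_{i,r},x^{\pm}_{j,s}]$ with $i\ne j$ the two slots carry the independent coefficients $(\alpha_k,\alpha_i)$ and $(\alpha_k,\alpha_j)$, so varying the auxiliary index $k$ genuinely separates $X^{\pm}(r+1,s)$ from $X^{\pm}(r,s+1)$.

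However, there is a genuine gap in your derivation of the cubic-Serre cases $(2,0,s)$ and $(1,1,s)$. The expression $X^{\pm}(r,s,t)=[x^{\pm}_{i,r},[x^{\pm}_{i,s},x^{\pm}_{j,t}]]+[x^{\pm}_{i,s},[x^{\pm}_{i,r},x^{\pm}_{j,t}]]$ is symmetric in $r$ and $s$, and both of its first two slots are occupied by the \emph{same} root $\alpha_i$. Hence applying $\operatorname{ad}(\widetilde{h}_{k,1})$ to $X^{\pm}(1,0,t)=0$ yields $(\alpha_k,\alpha_i)\bigl(X^{\pm}(2,0,t)+X^{\pm}(1,1,t)\bigr)+(\alpha_k,\alpha_j)X^{\pm}(1,0,t+1)=0$, in which $X^{\pm}(2,0,t)$ and $X^{\pm}(1,1,t)$ carry the identical coefficient $(\alpha_k,\alpha_i)$ for \emph{every} choice of $k$; the coefficient matrix of your ``two-equation argument'' therefore has two equal columns, and no choice of auxiliary indices makes it invertible. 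The method only ever delivers $X^{\pm}(2,0,t)+X^{\pm}(1,1,t)=0$, not the vanishing of each summand. Separating them requires additional input beyond the $\operatorname{ad}(\widetilde{h}_{k,1})$ trick, namely the already-established instances of \eqref{eq:SYrc4}: Lemma \ref{lm:sy10} (relating $[x^{\pm}_{i,2},x^{\pm}_{i,0}]$ to $[x^{\pm}_{i,1},x^{\pm}_{i,1}]$ and $\{x^{\pm}_{i,1},x^{\pm}_{i,0}\}$) and Lemma \ref{lm:SYrc34} (rewriting $[x^{\pm}_{i,1},x^{\pm}_{j,t}]$ in terms of $[x^{\pm}_{i,0},x^{\pm}_{j,t+1}]$ plus an anticommutator correction), which convert $\operatorname{ad}(x^{\pm}_{i,1})^2 x^{\pm}_{j,t}$ into expressions controlled by the lower cases. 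This is the substantive computation in \cite[Lemma 2.33]{GNW18} that your proposal replaces with an assertion; as written, that step fails.
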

\begin{proof}
	The proof is analogous to that in \cite[Lemma 2.33]{GNW18}.	Note that in order to prove cases 1 and 2 we arrive to a system of homogeneous linear equations. Then we apply the same arguments as in cases 2 and 3 in Lemma \ref{lm:SYrc34} to determine when this system has a unique trivial solution.
\end{proof}

\begin{lemma}
	\label{lm:hxji}
	We have
	\[ [ h_{j1} , x_{i1}^{\pm} ] = \frac{(\alpha_i , \alpha_{j})}{(\alpha_{i^{'}}, \alpha_{i})} [ h_{i^{'}1} , x_{i1}^{\pm} ] \pm \frac{(\alpha_{i}, \alpha_{j}) \hbar}{2} ( \{ h_{j0} , x_{i1}^{\pm} \} - \{ h_{i^{'}0} , x_{i1}^{\pm} \}  ) \]
	for all $i,j \in I$, where  $i^{'}=i$, if $|\alpha_{i}|=\bar{0}$, and $|i-i^{'}|=1$, if $|i|= \bar{1} \; (i^{'} \in I)$.
\end{lemma}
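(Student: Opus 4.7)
The plan is to reduce both $[h_{j1}, x_{i1}^{\pm}]$ and $[h_{i^{'}1}, x_{i1}^{\pm}]$ to expressions in $x_{i2}^{\pm}$, and then eliminate $x_{i2}^{\pm}$ between the two. The key tool is \eqref{eq:h1eq} of Lemma \ref{lm:h0eq}, which says $[\widetilde{h}_{k1}, x_{i1}^{\pm}] = \pm(\alpha_{k}, \alpha_{i}) x_{i2}^{\pm}$ for every $k \in I$, where $\widetilde{h}_{k1} = h_{k1} - \frac{\hbar}{2} h_{k0}^{2}$ is the auxiliary generator introduced in Section \ref{sb:mpsy}. Once this universal formula is in hand, the lemma reduces to a short piece of linear algebra.

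Concretely, for any $k \in I$ I would split
\[ [h_{k1}, x_{i1}^{\pm}] = [\widetilde{h}_{k1}, x_{i1}^{\pm}] + \tfrac{\hbar}{2} [h_{k0}^{2}, x_{i1}^{\pm}]. \]
The first summand equals $\pm(\alpha_{k},\alpha_{i}) x_{i2}^{\pm}$ directly by \eqref{eq:h1eq}. For the second summand, since $h_{k0}$ is even, we have the identity $[h_{k0}^{2}, y] = h_{k0}[h_{k0}, y] + [h_{k0}, y] h_{k0}$, so using \eqref{eq:SYrc1} (equivalently the $r=0$ case of Lemma \ref{lm:h0eq}) this collapses to $\pm(\alpha_{k}, \alpha_{i})\{h_{k0}, x_{i1}^{\pm}\}$. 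Altogether,
\[ [h_{k1}, x_{i1}^{\pm}] = \pm(\alpha_{k}, \alpha_{i}) x_{i2}^{\pm} \pm \tfrac{(\alpha_{k}, \alpha_{i})\hbar}{2}\{h_{k0}, x_{i1}^{\pm}\}. \]

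To conclude, I would apply this identity once with $k=j$ and once with $k=i^{'}$. Since $(\alpha_{i^{'}}, \alpha_{i}) \neq 0$ by our choice of $i^{'}$ --- namely $(\alpha_{i}, \alpha_{i}) = \pm 2$ when $|\alpha_{i}|=\bar{0}$, and $(\alpha_{i^{'}}, \alpha_{i}) = \pm 1$ when $|\alpha_{i}|=\bar{1}$, using the standing assumption that every odd vertex has an even neighbour --- I can solve the $k=i^{'}$ equation for $x_{i2}^{\pm}$ and substitute into the $k=j$ equation. After multiplying through by $(\alpha_{i}, \alpha_{j})/(\alpha_{i^{'}}, \alpha_{i})$ and combining the two anticommutator contributions, I obtain precisely the claimed formula.

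I do not anticipate any serious obstacle: the argument is purely algebraic bookkeeping of $\pm$ signs. The only case that looks degenerate, $(\alpha_{i}, \alpha_{j}) = 0$, is in fact handled automatically, since then the right-hand side of the claim vanishes thanks to its overall factor $(\alpha_{i}, \alpha_{j})$, and the left-hand side vanishes by the boxed identity above applied with $k=j$ and $(\alpha_{j}, \alpha_{i}) = 0$.
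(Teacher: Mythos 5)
Your argument is correct and is essentially the one the paper delegates to \cite[Lemma 2.34]{GNW18}: write $h_{k1}=\widetilde{h}_{k1}+\frac{\hbar}{2}h_{k0}^{2}$, use $[\widetilde{h}_{k1},x_{i1}^{\pm}]=\pm(\alpha_{k},\alpha_{i})x_{i2}^{\pm}$ from Lemma \ref{lm:h0eq} together with $[h_{k0}^{2},x_{i1}^{\pm}]=\pm(\alpha_{k},\alpha_{i})\{h_{k0},x_{i1}^{\pm}\}$, and eliminate $x_{i2}^{\pm}$ between the instances $k=j$ and $k=i'$, which is legitimate since $(\alpha_{i'},\alpha_{i})\neq 0$. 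The sign bookkeeping and the degenerate case $(\alpha_{i},\alpha_{j})=0$ are handled correctly.
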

\begin{proof}
	The proof is analogous to that in \cite[Lemma 2.34]{GNW18}.
\end{proof}

\begin{lemma}
	\label{lm:cel02}
	For all $i,j \in I$, we have for any $s \in \mathbb{N}_{0}$
	\[ [h_{i0}, h_{js}] = 0. \]
\end{lemma}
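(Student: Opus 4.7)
The plan is to split the claim by the value of $s$. For $s \in \{0, 1\}$ the identity $[h_{i0}, h_{js}] = 0$ is nothing but the defining relation \eqref{eq:mpy1} of the minimalistic presentation, so only the case $s \ge 2$ requires work.

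For $s \ge 2$ I would invoke the recursive definition \eqref{eq:rec2} to rewrite $h_{js} = [x_{js}^{+}, x_{j0}^{-}]$ and then expand via the super Jacobi identity. Because $|h_{i0}| = \bar{0}$, the parity sign collapses and we obtain
\[ [h_{i0}, [x_{js}^{+}, x_{j0}^{-}]] = [[h_{i0}, x_{js}^{+}], x_{j0}^{-}] + [x_{js}^{+}, [h_{i0}, x_{j0}^{-}]]. \]
The two inner commutators are then evaluated using Lemma \ref{lm:h0eq}, which has already been established in full generality, giving $[h_{i0}, x_{jr}^{\pm}] = \pm (\alpha_i, \alpha_j)\, x_{jr}^{\pm}$ for every $r \in \mathbb{N}_0$. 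Substituting yields
\[ (\alpha_i, \alpha_j)\,[x_{js}^{+}, x_{j0}^{-}] - (\alpha_i, \alpha_j)\,[x_{js}^{+}, x_{j0}^{-}] = 0, \]
which is precisely the desired cancellation.

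I do not expect a serious obstacle here: the one substantive ingredient is Lemma \ref{lm:h0eq}, and the rest is a mechanical application of the Jacobi identity. What makes the argument collapse so quickly is that $h_{i0}$ acts on the whole tower $\{x_{j,r}^{\pm}\}_{r \ge 0}$ by a single eigenvalue $\pm(\alpha_i, \alpha_j)$ that is independent of the upper index, so the two Jacobi contributions are automatically proportional to $h_{js}$ with opposite signs. Consequently no induction on $s$ is required, and the written proof should be only a few lines.
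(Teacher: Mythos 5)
Your proposal is correct and is essentially the paper's own proof: the paper likewise writes $h_{js}=[x_{js}^{+},x_{j0}^{-}]$ and applies Lemma \ref{lm:h0eq} together with the Jacobi identity to obtain $(\alpha_i,\alpha_j)(h_{js}-h_{js})=0$, doing so uniformly in $s$ since for $s\in\{0,1\}$ the identity $h_{js}=[x_{js}^{+},x_{j0}^{-}]$ is just \eqref{eq:mpy3}. Your case split at $s\ge 2$ is harmless but unnecessary.
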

\begin{proof}
By Lemma \ref{lm:h0eq}
\[ [ h_{i0} , h_{js} ] = [ h_{i0} , [x_{js}^{+} , x_{j0}^{-}] ] = (\alpha_{i} , \alpha_{j}) ( h_{js} - h_{js} ) = 0. \]
\end{proof}

\begin{lemma}
	\label{lm:hii12}
	Let $i, j \in I$ be such that $|\alpha_{i}|=\bar{0}$, $(\alpha_{i}, \alpha_{j})= \pm 1$ and $(\alpha_{i}, \alpha_{i})= \mp 2$ or $|\alpha_{i}|=\bar{1}$. Then
	\[ [h_{i1}, h_{i2}] = [ h_{i1}, [x_{i1}^{+}, x_{i1}^{-}] ] =  0. \]
\end{lemma}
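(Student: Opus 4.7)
The proof naturally splits according to the parity of $\alpha_i$. In either case the second equality in the statement, $[h_{i1}, h_{i2}] = [h_{i1}, [x_{i1}^+, x_{i1}^-]]$, is immediate from Lemma \ref{lm:l2} applied with $r = s = 1$, which gives $h_{i2} = [x_{i1}^+, x_{i1}^-]$. The substantive task is to show that the common quantity vanishes.

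If $|\alpha_i| = \bar{1}$, then since $h_{i1}$ is even, the super Jacobi identity yields
\[
[h_{i1}, [x_{i1}^+, x_{i1}^-]] = [[h_{i1}, x_{i1}^+], x_{i1}^-] + [x_{i1}^+, [h_{i1}, x_{i1}^-]],
\]
and both inner commutators vanish by \eqref{eq:mpy6}.

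If $|\alpha_i| = \bar{0}$, I would first establish the auxiliary identity
\[
[h_{i1}, x_{i1}^\pm] = \pm c_{ii}\, x_{i2}^\pm + \tfrac{c_{ii}\hbar}{2}\{h_{i0}, x_{i1}^\pm\}
\]
by applying $[h_{i1}, \cdot]$ to the recursion $x_{i1}^\pm = \pm c_{ii}^{-1}[\widetilde{h}_{i1}, x_{i0}^\pm]$ (from \eqref{eq:rec1} with $i' = i$), using $[h_{i1}, \widetilde{h}_{i1}] = 0$, \eqref{eq:mpy4}, and Lemma \ref{lm:h0eq}. Substituting this into the super Jacobi expansion of $[h_{i1}, [x_{i1}^+, x_{i1}^-]]$, the $\hbar$-terms collapse into $\tfrac{c_{ii}\hbar}{2}[h_{i0}, \{x_{i1}^+, x_{i1}^-\}]$, which is zero because $[h_{i0}, x_{i1}^\pm] = \pm c_{ii} x_{i1}^\pm$ forces $[h_{i0}, x_{i1}^+ x_{i1}^-] = [h_{i0}, x_{i1}^- x_{i1}^+] = 0$. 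What remains is
\[
[h_{i1}, h_{i2}] = c_{ii}\bigl([x_{i2}^+, x_{i1}^-] - [x_{i1}^+, x_{i2}^-]\bigr),
\]
so the problem reduces to showing $[x_{i2}^+, x_{i1}^-] = [x_{i1}^+, x_{i2}^-]$.

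To force this, I would invoke the neighbor $\alpha_j$. By \eqref{eq:h1eq} the recursion $x_{i1}^\pm = \pm c_{ij}^{-1}[\widetilde{h}_{j1}, x_{i0}^\pm]$ is also available, and the same computation as above (with $h_{j1}$ replacing $h_{i1}$, using $[h_{j1}, \widetilde{h}_{j1}] = 0$ and Lemma \ref{lm:cel02}) yields
\[
[h_{j1}, x_{i1}^\pm] = \pm c_{ij}\, x_{i2}^\pm + \tfrac{c_{ij}\hbar}{2}\{h_{j0}, x_{i1}^\pm\},
\]
in agreement with Lemma \ref{lm:hxji}, and hence
\[
[h_{j1}, h_{i2}] = c_{ij}\bigl([x_{i2}^+, x_{i1}^-] - [x_{i1}^+, x_{i2}^-]\bigr) = \tfrac{c_{ij}}{c_{ii}}[h_{i1}, h_{i2}] = -\tfrac{1}{2}[h_{i1}, h_{i2}],
\]
the last equality using the hypothesis $c_{ij}/c_{ii} = -1/2$. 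The closing move is to produce a second, independent evaluation of $[h_{j1}, h_{i2}]$ using the Serre relations at the neighbor $j$ (\eqref{eq:mpy8}, \eqref{eq:mpy9}, and their consequences collected in Lemma \ref{lm:fassrс}) together with the Cartan commutativity $[h_{i1}, h_{j1}] = 0$ from \eqref{eq:mpy1}; confronting this with the identity above forces $[h_{i1}, h_{i2}] = 0$. The main obstacle is precisely this last step: the direct Jacobi manipulations keep producing tautologies, and the decisive ingredient is the specific scalar $c_{ij}/c_{ii} = -\tfrac{1}{2}$ provided by the hypothesis, which is what makes the two recursions for $x_{i2}^\pm$ (through $\widetilde{h}_{i1}$ and through $\widetilde{h}_{j1}$) compatible only when $[h_{i1}, h_{i2}]$ vanishes.
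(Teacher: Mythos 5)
Your treatment of the odd case and of the first equality (via Lemma \ref{lm:l2}) coincides with the paper's, and your reduction in the even case to $[h_{i1},h_{i2}] = c_{ii}\bigl([x_{i2}^{+},x_{i1}^{-}]-[x_{i1}^{+},x_{i2}^{-}]\bigr)$ together with $[h_{j1},h_{i2}] = \tfrac{c_{ij}}{c_{ii}}[h_{i1},h_{i2}]$ is correct (modulo a dropped $\pm$ on the $\{h_{i0},x_{i1}^{\pm}\}$ term). But the proof is not complete: the ``closing move'' you describe --- producing a second, independent evaluation of $[h_{j1},h_{i2}]$ from the Serre relations at the neighbor $j$ and confronting it with the scalar $c_{ij}/c_{ii}=-\tfrac12$ --- is exactly the substantive content of the lemma, and you do not carry it out. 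You acknowledge as much when you say the direct Jacobi manipulations keep producing tautologies. In the paper this step is the entire body of the proof: one starts from the cubic Serre relation $[x_{i1}^{+},[x_{i1}^{+},x_{j0}^{+}]]=0$ (Lemma \ref{lm:fassrс}, case (4)), brackets with $x_{j1}^{-}$ to obtain $[x_{i1}^{+},[x_{i1}^{+},h_{j1}]]=0$, and then applies $[\cdot,x_{i0}^{-}]$ twice; the coefficient $3$ in front of $[h_{i1},[x_{i1}^{+},x_{i1}^{-}]]$ emerges from this double bracketing, and the nontrivial part is verifying that all the order-$\hbar$ and order-$\hbar^{2}$ correction terms (which involve Lemmas \ref{lm:sy10} and \ref{lm:hxji}) cancel exactly. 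Without that computation, your two linear relations in the single unknown $[h_{i1},h_{i2}]$ are proportional to each other and force nothing.

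A second caution: your reduction hinges on proving $[x_{i2}^{+},x_{i1}^{-}]=[x_{i1}^{+},x_{i2}^{-}]$, but in the paper's logical order this identity (relation \eqref{eq:SYrc3} for $i=j$, $r+s=3$) is \emph{derived from} Lemma \ref{lm:hii12} in Lemma \ref{lm:r233}, by applying $[\widetilde{h}_{i'1},\cdot]$ to $[x_{i2}^{+},x_{i0}^{-}]=[x_{i1}^{+},x_{i1}^{-}]$ and using $[h_{i2},\widetilde{h}_{i'1}]=0$. So if your missing step were to appeal to that identity, the argument would be circular. Any completion of your proof must obtain the second relation for $[h_{j1},h_{i2}]$ from ingredients available \emph{before} this lemma --- which is precisely why the paper routes the argument through the Serre relation $[x_{i1}^{+},[x_{i1}^{+},x_{j0}^{+}]]=0$ rather than through the Cartan--Cartan brackets directly.
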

\begin{proof}
	The first equality follows from Lemma \ref{lm:l2}, so we prove the second equality.

	If $|\alpha_{i}| = \bar{1}$ then by \eqref{eq:mpy6} we have
	\[ [ h_{i1}, [x_{i1}^{+}, x_{i1}^{-}] ] = (-1) ( [ x_{i1}^{+} , [ x_{i1}^{-} , h_{i1} ] ] + (-1)^{|i|} [  x_{i1}^{-} , [ h_{i1} , x_{i1}^{+} ] ] ) = 0. \]

	Now suppose that $|\alpha_{i}|=\bar{0}$. By Lemma \ref{lm:fassrс} (4) we have
	\[ 0 = [ x_{i1}^{+} , [ x_{i1}^{+} , x_{j0}^{+} ] ]. \]
	By Lemmas \ref{lm:l2} and \ref{lm:SYrc34}
	\[ 0 = [[ x_{i1}^{+} , [ x_{i1}^{+} , x_{j0}^{+} ] ] , x_{j1}^{-}] = [ x_{i1}^{+} , [x_{i1}^{+} , h_{j1}] ]. \]
	We then apply $[\cdot, x_{i0}^{-}]$:
	\[ 0 = [ [ x_{i1}^{+} , [x_{i1}^{+} , h_{j1}] ] , x_{i0}^{-} ] = \]
	\[ [ h_{i1} , [ x_{i1}^{+} , h_{j1} ] ] + (-1)^{1+|i|} (\alpha_{i}, \alpha_{j}) ( [ x_{i1}^{+} , [x_{i1}^{+} , x_{i1}^{-}] ] + \frac{\hbar}{2} [ x_{i1}^{+} , (-1) (\alpha_{i} , \alpha_{j}) \{ x_{i1}^{+} , x_{i0}^{-} \} + \{ h_{j0} , h_{i1} \} ] ). \]
	We apply $[\cdot, x_{i0}^{-}]$ again to obtain by Lemmas \ref{lm:sy10} and \ref{lm:hxji}:
	\[ 0 = [ [ [ x_{i1}^{+} , [x_{i1}^{+} , h_{j1}] ] , x_{i0}^{-} ] , x_{i0}^{-} ] =  \]
	\[ (-1) (\alpha_{i}, \alpha_{i}) [ x_{i1}^{-}, [ x_{i1}^{+} , h_{j1} ] ] + (-1) ( \alpha_{i}, \alpha_{j} ) 2 [ h_{i1} , [ x_{i1}^{+} , x_{i1}^{-} ] ] + (-1) (\alpha_{i}, \alpha_{j}) [ x_{i1}^{+} , [ h_{i1}, x_{i1}^{-} ] ] + \]
	\[ \frac{\hbar}{2} (-1) (\alpha_{i}, \alpha_{i}) [ \{ h_{i0}, x_{i0}^{-} \}, [ x_{i1}^{+} , h_{j1} ] ] + \hbar [ h_{i1} , \{ x_{i1}^{+} , x_{i0}^{-} \} ] + \]
	\[ \frac{\hbar}{2} (-1) (\alpha_{i} , \alpha_{j}) [ x_{i1}^{+} , \{ h_{j0} , [h_{i1}, x_{i0}^{-}] \} ] = \]
	\[ (-1) ( \alpha_{i}, \alpha_{j} ) 3 [ h_{i1} , [ x_{i1}^{+} , x_{i1}^{-} ] ] +  \]
	\[ (\alpha_{i},\alpha_{i}) (\alpha_{i}, \alpha_{j}) \frac{\hbar}{2} ( (  (\alpha_{i}, \alpha_{j}) - (\alpha_{i}, \alpha_{i}) ) \{ x_{i1}^{-} , x_{i1}^{+} \} + \{ h_{j0} , [x_{i1}^{-} , x_{i1}^{+} ] \} + (-1) \{ h_{i0} , [ x_{i1}^{-} , x_{i1}^{+} ] \} ) + \]
	\[ \frac{\hbar}{2} (\alpha_{i}, \alpha_{j}) (\alpha_{i}, \alpha_{i}) \{ [h_{i1}, x_{i1}^{+}], x_{i0}^{-} \} + \]
	\[ \frac{\hbar}{2} (\alpha_{i}, \alpha_{j}) (\alpha_{i},\alpha_{i}) ( \{ h_{i0} , [ x_{i1}^{-} , x_{i1}^{+} ] \} + (-1) 2 \hbar h_{i0}^2 h_{i1} + \frac{\hbar}{2} (\alpha_{i}, \alpha_{i}) \{ h_{i0} , \{ x_{i1}^{+} , x_{i0}^{-} \} \} ) + \]
	\[ \frac{\hbar^2}{4} (\alpha_{i}, \alpha_{i}) (\alpha_{i}, \alpha_{j}) ( (\alpha_{i}, \alpha_{i}) \{ \{ h_{j0}, x_{i1}^{+} \} , x_{i0}^{-}  \} + \{ h_{i0} , - \{ h_{j0} , h_{i1} \} + (\alpha_{i} , \alpha_{j}) \{ x_{i1}^{+} , x_{i0}^{-} \}  \} ) + \]
	\[ (-1) \frac{\hbar^2}{4} (\alpha_{i}, \alpha_{i}) (\alpha_{i}, \alpha_{j}) (  (\alpha_{i}, \alpha_{i}) \{ \{ h_{i0}, x_{i1}^{+} \} , x_{i0}^{-}  \} + (-1) 4 h_{i0}^2 h_{i1} + (\alpha_{i} , \alpha_{i}) \{ h_{i0} , \{ x_{i1}^{+} , x_{i0}^{-} \}  \} )+ \]
	\[ \hbar \{ [h_{i1}, x_{i1}^{+}] , x_{i0}^{-} \} + \hbar \frac{3}{2} ( -(\alpha_{i}, \alpha_{i}) \{ x_{i1}^{+} , x_{i1}^{-} \} - \frac{\hbar ( \alpha_{i} , \alpha_{i} )}{2} \{ x_{i1}^{+} , \{ h_{i0} , x_{i0}^{-} \} \} )  +  \]
	\[ \frac{\hbar}{2} (\alpha_{i},\alpha_{i}) (\alpha_{i} , \alpha_{j}) \{ h_{j0} , [ x_{i1}^{+} , x_{i1}^{-} ] \} + \frac{\hbar^2 ( \alpha_{i} , \alpha_{i} ) (\alpha_{i} , \alpha_{j})}{4} \{ h_{j0} , -(\alpha_{i},\alpha_{i}) \{ x_{i1}^{+} , x_{i0}^{-} \} + \{ h_{i0} , h_{i1} \}  \}. \]
	Thus
	\[ ( \alpha_{i}, \alpha_{j} ) 3 [ h_{i1} , [ x_{i1}^{+} , x_{i1}^{-} ] ] =  \]
	\[ -\frac{\hbar^2}{2} (\alpha_{i}, \alpha_{i}) \{ \{ h_{j0}, x_{i1}^{+} \} , x_{i0}^{-}   \} + (-1)  \frac{\hbar^2}{2} (\alpha_{i}, \alpha_{j}) \{ h_{i0} , \{ x_{i1}^{+} , x_{i0}^{-} \} \} + \]
	\[ \frac{\hbar^2}{2}  (\alpha_{i}, \alpha_{i}) \{ \{ h_{i0}, x_{i1}^{+} \} , x_{i0}^{-}  \} + (-1) \frac{3 \hbar^2 ( \alpha_{i} , \alpha_{i} )}{4} \{ x_{i1}^{+} , \{ h_{i0} , x_{i0}^{-} \} \}  +  \]
	\[ (\alpha_{i},\alpha_{i}) \frac{\hbar^2}{2} \{ h_{j0} , \{ x_{i1}^{+} , x_{i0}^{-} \} \} = (-1) \hbar^2 h_{i1} + \hbar^2 h_{i1} = 0. \]
	This is nothing but the assertion.
\end{proof}

\begin{lemma}
	\label{lm:h12oe}
	Let $[h_{i1}, h_{i2}] = 0$, where $i \in I$, and $|\alpha_{i}|= \bar{0}$. Then $[h_{i1}, h_{j2}] = 0$, where $(\alpha_{i},\alpha_{j}) \ne 0$ $(j \in I)$.
\end{lemma}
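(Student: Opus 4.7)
The plan is to write $h_{j2} = [x_{j1}^+, x_{j1}^-]$ using Lemma \ref{lm:l2} and apply the super Jacobi identity to $[h_{i1}, h_{j2}]$. First I would compute $[h_{i1}, x_{j1}^\pm]$ by decomposing $h_{i1} = \widetilde{h}_{i1} + \tfrac{\hbar}{2}h_{i0}^2$ and invoking Lemma \ref{lm:h0eq}, yielding $[h_{i1}, x_{j1}^\pm] = \pm (\alpha_i, \alpha_j)\bigl(x_{j2}^\pm + \tfrac{\hbar}{2}\{h_{i0}, x_{j1}^\pm\}\bigr)$. Substituting into the Jacobi expansion and using the graded Leibniz rule, the two $\{h_{i0},\cdot\}$-residuals that appear each equal $\{h_{i0}, h_{j2}\} - (\alpha_i, \alpha_j)\{x_{j1}^+, x_{j1}^-\}$ and therefore cancel, leaving
\[ [h_{i1}, h_{j2}] = (\alpha_i, \alpha_j)\bigl([x_{j2}^+, x_{j1}^-] - [x_{j1}^+, x_{j2}^-]\bigr). \]

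I would then reduce to showing that the bracket on the right vanishes, splitting on the parity of $\alpha_j$. When $|\alpha_j| = \bar{0}$, one has $(\alpha_j, \alpha_j) \neq 0$, and the hypotheses of Lemma \ref{lm:hii12} are satisfied for $j$ using its neighbor $i$ (since $(\alpha_j, \alpha_i) = \mp 1$ and $(\alpha_j, \alpha_j) = \pm 2$ in type $A$), giving $[h_{j1}, h_{j2}] = 0$; running the same Jacobi reduction with $i$ and $j$ interchanged yields $(\alpha_j, \alpha_j)\bigl([x_{j2}^+, x_{j1}^-] - [x_{j1}^+, x_{j2}^-]\bigr) = 0$, and we conclude.

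When $|\alpha_j| = \bar{1}$ we have $(\alpha_j, \alpha_j) = 0$ and the previous trick degenerates into a tautology. Here I would genuinely use the hypothesis $[h_{i1}, h_{i2}] = 0$ in its equivalent form from Remark \ref{rm:addrel} (which reads $[x_{i2}^+, x_{i1}^-] = [x_{i1}^+, x_{i2}^-]$). Writing $x_{j2}^\pm = \pm \tfrac{1}{(\alpha_i, \alpha_j)}[\widetilde{h}_{i1}, x_{j1}^\pm]$ via Lemma \ref{lm:h0eq}, and combining the hypothesis with the vanishing of cross-brackets $[x_{i,r}^+, x_{j,s}^-] = 0$ for $i \neq j$ from Lemma \ref{lm:SYrc34} together with the generalized quartic super Serre relations of Remark \ref{rm:yangrelrew}, one can transfer the commutativity from the pair $(i,i)$ to the pair $(j,j)$ and deduce $[x_{j2}^+, x_{j1}^-] = [x_{j1}^+, x_{j2}^-]$.

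The main obstacle will be the $|\alpha_j| = \bar{1}$ case: the straightforward reduction via Lemma \ref{lm:hii12} gives a tautology through $(\alpha_j, \alpha_j) = 0$, so the hypothesis on $i$ must be used in an essential way rather than absorbed into a formal identity. The argument requires a case analysis on whether $j$ has a second even neighbor beyond $i$ (in which case that neighbor can play the role of $i$ in a similar reduction), versus the harder subcase when $i$ is the unique even neighbor of $j$ and the quartic super Serre relation specific to odd vertices must be exploited directly — a possibility controlled by the standing assumption that every odd vertex in the Dynkin diagram has at least one even neighbor.
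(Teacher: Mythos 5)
Your reduction is correct as far as it goes: decomposing $h_{i1}=\widetilde{h}_{i1}+\tfrac{\hbar}{2}h_{i0}^{2}$, the anticommutator residuals do cancel and one is left with $[h_{i1},h_{j2}]=(\alpha_i,\alpha_j)\bigl([x_{j2}^{+},x_{j1}^{-}]-[x_{j1}^{+},x_{j2}^{-}]\bigr)$, which is just $\tfrac{(\alpha_i,\alpha_j)}{(\alpha_{i},\alpha_{i})}[\widetilde h_{i1},h_{j2}]$ rewritten. Your treatment of the case $|\alpha_j|=\bar{0}$ is also valid and is genuinely different from the paper: in type $A$ an even vertex always satisfies the sign condition of Lemma \ref{lm:hii12} with respect to any neighbour, so $[h_{j1},[x_{j1}^{+},x_{j1}^{-}]]=0$ gives $(\alpha_j,\alpha_j)\bigl([x_{j2}^{+},x_{j1}^{-}]-[x_{j1}^{+},x_{j2}^{-}]\bigr)=0$ and you may divide by $(\alpha_j,\alpha_j)\ne 0$. (Note this case never uses the hypothesis $[h_{i1},h_{i2}]=0$, which is harmless.)

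The case $|\alpha_j|=\bar{1}$ is a genuine gap, and it is precisely the case the lemma exists to handle. You correctly observe that the difference $[x_{j2}^{+},x_{j1}^{-}]-[x_{j1}^{+},x_{j2}^{-}]$ cannot be killed by $(\alpha_j,\alpha_j)=0$, but the "transfer" you then invoke is never carried out, and the tools you name are not the right ones: the cross-brackets $[x_{ir}^{+},x_{js}^{-}]=0$ and the quartic Serre relations (which involve only raising or only lowering operators at three adjacent nodes) do not connect the mixed bracket $[x_{j2}^{+},x_{j1}^{-}]$ at the single odd node $j$ to the hypothesis at node $i$. Indeed, if you write $x_{j,r+1}^{\pm}=\pm(\alpha_i,\alpha_j)^{-1}[\widetilde h_{i1},x_{jr}^{\pm}]$ as you propose and expand, you land back on $(\alpha_i,\alpha_j)^{-1}[\widetilde h_{i1},h_{j2}]$, i.e.\ the statement to be proved — the reduction is circular. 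The paper's actual mechanism is the family of corrected Cartan elements $\dbtilde{h}_{ij,r}$ of Lemmas \ref{lm:thfdpr}--\ref{lm:hwdpr}: since $\dbtilde{h}_{ii,2}$ and $\dbtilde{h}_{ji,2}$ both act diagonally on the modes $x_{is}^{\pm}$ (with eigenvalues $\pm(\alpha_i,\alpha_i)$ and $\pm(\alpha_j,\alpha_i)$ respectively), and $h_{i1}=[x_{i1}^{+},x_{i0}^{-}]$ is built from those modes, Lemma \ref{lm:ijtrancart} with $p=n=i$, $m=j$, $z=1$, $s'=2$ yields $[h_{i1},h_{i2}]=\tfrac{(\alpha_i,\alpha_i)}{(\alpha_j,\alpha_i)}[h_{i1},h_{j2}]$, from which the conclusion follows. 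Without this (or an equivalent device) your odd case does not close, and the auxiliary facts it needs — e.g.\ $[h_{j1},h_{j2}]=0$ for odd $j$ via \eqref{eq:mpy6}, and \eqref{eq:SYrc2} for the pair $(j,i)$ — also go unmentioned.
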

\begin{proof}
	By \eqref{eq:mpy1} and Lemmas \ref{lm:cel02}, \ref{lm:hii12}, \ref{lm:r233} it follows that for we can apply Lemma \ref{lm:ijtrancart} with $p=n=i$, $m=j$ ($|i-j|=1$), $z=1$, $s^{'}=2$. The result follows.
\end{proof}

\begin{lemma}
	The equation \eqref{rm:lsar} holds for all $i \in I$.
\end{lemma}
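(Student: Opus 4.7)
The plan is to use Remark \ref{rm:addrel} to reduce the target identity \eqref{rm:lsar} to the Cartan commutativity statement $[h_{i'1}, h_{i2}] = 0$, and then split into the two cases allowed by the definition of $i'$ (depending on whether $\alpha_i$ is even or odd), handling each case by invoking the lemmas already established.

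First I would simply observe that, by the equivalence noted in Remark \ref{rm:addrel} (which rests on Lemmas \ref{lm:l2} and \ref{lm:cel02}), it is enough to prove
\[
[h_{i'1}, h_{i2}] = 0
\]
for the appropriate $i'$. If $|\alpha_i| = \bar 0$ then $i' = i$, and the required relation $[h_{i1}, h_{i2}] = 0$ is \emph{literally} the first assertion of Lemma \ref{lm:hii12}, so this case is immediate.

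The remaining case is $|\alpha_i| = \bar 1$, where $i'$ is only required to satisfy $|i - i'| = 1$. Here I invoke the standing hypothesis on Dynkin diagrams imposed just before Theorem \ref{th:mpy}: every odd vertex has at least one even neighbor. I would choose $i'$ to be such an even neighbor, so that $|\alpha_{i'}| = \bar 0$ and $(\alpha_{i'}, \alpha_i) \ne 0$. Lemma \ref{lm:hii12} then gives $[h_{i'1}, h_{i'2}] = 0$, and Lemma \ref{lm:h12oe} applied with the roles $i \mapsto i'$ and $j \mapsto i$ transfers this to the desired $[h_{i'1}, h_{i2}] = 0$.

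There is essentially no obstacle beyond bookkeeping once the reduction via Remark \ref{rm:addrel} is made; the one point that does require a small justification is the freedom to choose $i'$ in the odd case, which is exactly what the Dynkin diagram restriction was imposed for. It is worth remarking in the proof that the hypothesis on the Dynkin diagram cannot be dropped for this step, since otherwise for an odd vertex $i$ surrounded only by odd vertices we could not reach the even case where Lemma \ref{lm:hii12} is available.
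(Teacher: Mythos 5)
Your proposal is correct and follows essentially the same route as the paper: reduce \eqref{rm:lsar} to $[h_{i'1},h_{i2}]=0$ via Remark \ref{rm:addrel}, settle the even case directly by Lemma \ref{lm:hii12}, and in the odd case pick an even neighbor $i'$ (guaranteed by the Dynkin diagram constraint) and transfer $[h_{i'1},h_{i'2}]=0$ to $[h_{i'1},h_{i2}]=0$ via Lemma \ref{lm:h12oe}. The paper's own proof is just a one-line citation of exactly these ingredients, so your write-up is simply a more explicit version of it.
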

\begin{proof}
	The result follows from \eqref{eq:hia12}, Lemma \ref{lm:hii12}, Lemma \ref{lm:h12oe} and our constraints on Dynkin diagrams introduced before Theorem \ref{th:mpy}.
\end{proof}

\begin{lemma}
	\label{lm:hxinej}
	Suppose that $i,j \in I$ and $i \ne j$. The equation \eqref{eq:SYrc2} holds for any $r,s \in \mathbb{N}_{0}$.
\end{lemma}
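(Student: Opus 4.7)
The plan is to prove \eqref{eq:SYrc2} for $i\ne j$ and both signs directly from a super Jacobi identity computation, with Lemma~\ref{lm:SYrc34} as the key algebraic input. Writing $h_{i,r+1}=[x_{i,r+1}^{+},x_{i,0}^{-}]$ (which is \eqref{eq:mpy3} when $r=0$ and the definition \eqref{eq:rec2} when $r\ge 1$), super Jacobi gives
\[
[h_{i,r+1},x_{j,s}^{\pm}] = [x_{i,r+1}^{+},[x_{i,0}^{-},x_{j,s}^{\pm}]] - (-1)^{|\alpha_{i}|}[x_{i,0}^{-},[x_{i,r+1}^{+},x_{j,s}^{\pm}]],
\]
and, since $i\ne j$, exactly one of the inner brackets vanishes by \eqref{eq:SYrc3}: the first when $\pm=+$ (as $[x_{i,0}^{-},x_{j,s}^{+}]=0$), the second when $\pm=-$ (as $[x_{i,r+1}^{+},x_{j,s}^{-}]=0$).

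For $\pm=+$ I would finish by a direct calculation. Apply Lemma~\ref{lm:SYrc34} to rewrite $[x_{i,r+1}^{+},x_{j,s}^{+}]=[x_{i,r}^{+},x_{j,s+1}^{+}]+\tfrac{c_{ij}\hbar}{2}\{x_{i,r}^{+},x_{j,s}^{+}\}$, and then expand $[x_{i,0}^{-},\cdot]$ via the graded Leibniz rule: the cross terms vanish by $[x_{i,0}^{-},x_{j,s'}^{+}]=0$, while the diagonal terms produce $[x_{i,0}^{-},x_{i,r'}^{+}]=-(-1)^{|\alpha_{i}|}h_{i,r'}$ (true by definition via \eqref{eq:rec2}). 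After a short bookkeeping of signs (using $|\alpha_{i}|^{2}=|\alpha_{i}|$) the expression collapses exactly into $[h_{i,r},x_{j,s+1}^{+}]+\tfrac{c_{ij}\hbar}{2}\{h_{i,r},x_{j,s}^{+}\}$, which is \eqref{eq:SYrc2}.

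For $\pm=-$ the computation leaves $[h_{i,r+1},x_{j,s}^{-}]=[x_{i,r+1}^{+},[x_{i,0}^{-},x_{j,s}^{-}]]$, and matching this with $[h_{i,r},x_{j,s+1}^{-}]-\tfrac{c_{ij}\hbar}{2}\{h_{i,r},x_{j,s}^{-}\}$ is less direct: the natural expansion (shifting $[x_{i,0}^{-},x_{j,s+1}^{-}]$ by Lemma~\ref{lm:SYrc34} and a second Jacobi) produces the bracket $[x_{i,r}^{+},x_{i,1}^{-}]$, which equals $h_{i,r+1}$ only under the $i=j$ instance of \eqref{eq:SYrc3}, not yet available for $r+1\ge 3$. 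To deal with this I would induct on $r$: the base case $r=0$ (for all $s$) follows by rewriting $h_{i,1}=\widetilde{h}_{i,1}+\tfrac{\hbar}{2}h_{i,0}^{2}$ and invoking Lemma~\ref{lm:h0eq}, and the inductive step runs an auxiliary induction on $s$ driven by applying $[\widetilde{h}_{m,1},\cdot]$ (for some $m$ with $(\alpha_{m},\alpha_{j})\ne 0$) to the equation at stage $s$. The main obstacle I anticipate is ensuring the Cartan-type relations $[\widetilde{h}_{m,1},h_{i,r'}]=0$ needed to push $\widetilde{h}_{m,1}$ past the $h$-factors; the low-$r'$ cases follow from \eqref{eq:mpy1} together with Lemmas~\ref{lm:cel02}, \ref{lm:hii12}, and~\ref{lm:h12oe}, while the higher cases would be established by a joint induction alongside the statement of the lemma.
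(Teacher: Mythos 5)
Your treatment of the $+$ case is correct and is essentially the paper's own argument: write $h_{i,r+1}=[x_{i,r+1}^{+},x_{i,0}^{-}]$, use the super Jacobi identity together with $[x_{i,0}^{-},x_{j,s}^{+}]=0$ (valid since $i\ne j$) to transfer the bracket onto $[x_{i,r+1}^{+},x_{j,s}^{+}]$, shift indices with Lemma~\ref{lm:SYrc34}, and collapse back using $[x_{i,r'}^{+},x_{i,0}^{-}]=h_{i,r'}$.

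For the $-$ case, however, your proposed induction does not close. The paper handles this case symmetrically: it writes $h_{i,r+1}=[x_{i,0}^{+},x_{i,r+1}^{-}]$, so that the Jacobi identity (now using $[x_{i,0}^{+},x_{j,s}^{-}]=0$) pushes the bracket onto $[x_{i,r+1}^{-},x_{j,s}^{-}]$, and Lemma~\ref{lm:SYrc34} finishes exactly as in the $+$ case. You are right that the identity $h_{i,r+1}=[x_{i,0}^{+},x_{i,r+1}^{-}]$ is an instance of \eqref{eq:SYrc3} with $i=j$ that is only available for small indices at this stage (Lemma~\ref{lm:l2} gives $r+s\le 2$; the general case belongs to the larger mutual induction running through Lemmas~\ref{lm:r233} and~\ref{lm:cartelrelii}), so the paper is indeed loose here. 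But your replacement scheme has a structural flaw. Write $X^{-}(r,s)$ for the difference of the two sides of \eqref{eq:SYrc2} at $(r,s)$ with sign $-$. Once the Cartan relations $[\widetilde{h}_{m,1},h_{i,r'}]=0$ are granted, the operator $[\widetilde{h}_{m,1},\cdot]$ applied to $X^{-}(r,s)$ annihilates every $h$-factor and only raises the index of $x_{j,s}^{-}$; it therefore yields the implication $X^{-}(r,s)=0\Rightarrow X^{-}(r,s+1)=0$ and nothing more. Your base case $r=0$ already covers all $s$, and the inner $s$-induction can never increment $r$: no step of your argument produces $X^{-}(r+1,0)=0$ from the level-$r$ data, so the outer induction on $r$ has no engine. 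To repair the $-$ case you either need the paper's route --- accepting $[x_{i,0}^{+},x_{i,r+1}^{-}]=[x_{i,r+1}^{+},x_{i,0}^{-}]$ as part of the joint induction, which can be propagated by repeatedly applying $[\widetilde{h}_{i',1},\cdot]$ to $[x_{i,0}^{+},x_{i,0}^{-}]=h_{i,0}$ and $[x_{i,1}^{+},x_{i,0}^{-}]=[x_{i,0}^{+},x_{i,1}^{-}]$ in the manner of Lemma~\ref{lm:r233} --- or some other device that genuinely advances the first index.
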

\begin{proof}
	We prove \eqref{eq:SYrc2} by induction on $r$ and $s$. The initial case $r=s=0$ is our assumption \eqref{eq:mpy4}. Let $X^{\pm}(r,s)$ be the result of substracting the right-hand side of \eqref{eq:SYrc2} from the left-hand side. Note that by Lemma \ref{lm:SYrc34} and \eqref{eq:rec2}
	\begin{equation}
		\label{eq:hxrs+}
		[h_{i,r+1} , x^{+}_{js} ] = [ [ x^{+}_{i,r+1} , x^{-}_{i0} ] , x^{+}_{js} ] = (-1)^{|i||j|} [ [ x_{i,r+1}^{+} , x_{js}^{+} ] , x_{i0}^{-} ] =
	\end{equation}
	\[ (-1)^{|i||j|} [ [x_{ir}^{+}, x_{j,s+1}^{+}] + \frac{c_{ij} \hbar}{2} \{ x_{ir}^{+} , x_{js}^{+} \} , x_{i0}^{-} ] = [h_{ir} , x^{+}_{j,s+1} ] + \frac{c_{ij} \hbar}{2} \{ h_{ir} , x_{js}^{+} \}; \]

	\begin{equation}
		\label{eq:hxrs-}
	[h_{i,r+1} , x^{-}_{js} ] = [ [ x^{+}_{i0} , x^{-}_{i,r+1} ] , x^{-}_{js} ] = [ x_{i0}^{+} , [ x_{i,r+1}^{-} , x_{js}^{-} ] ] =
	\end{equation}
	\[ [ x_{i0}^{+} , [x_{ir}^{-}, x_{j,s+1}^{-}] - \frac{c_{ij} \hbar}{2} \{ x_{ir}^{-} , x_{js}^{-} \} ] =  [h_{ir} , x^{-}_{j,s+1} ] - \frac{c_{ij} \hbar}{2} \{ h_{ir} , x_{js}^{-} \}, \]
	for any $r,s \in \mathbb{N}_{0}$. Thus the result follows by induction hypothesis.
\end{proof}

\begin{lemma}
	\label{lm:r233}
	Let $i \in I$. Equations \eqref{eq:Cer}, \eqref{eq:SYrc3} $(0 \le r+s \le 3)$ and \eqref{eq:SYrc2} $(r=0,1; \; s \in \mathbb{N}_{0})$ hold for $i=j$.
\end{lemma}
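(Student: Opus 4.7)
The plan is to establish the three claims by an interlocking induction, using the recurrences \eqref{eq:rec1}--\eqref{eq:rec2} and the previous lemmas of this section as building blocks.

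I would begin with \eqref{eq:SYrc2} at $r=0$ for any $s$ and $|\alpha_i|=\bar{0}$ (the odd case being excluded from the statement of \eqref{eq:SYrc2}). Rearranging $\widetilde{h}_{i1} = h_{i,1} - \tfrac{\hbar}{2}h_{i,0}^2$ together with \eqref{eq:rec1} gives
\[
[h_{i,1}, x_{i,s}^{\pm}] = \pm c_{ii}\, x_{i,s+1}^{\pm} + \tfrac{\hbar}{2}[h_{i,0}^2, x_{i,s}^{\pm}],
\]
and Lemma \ref{lm:h0eq} evaluates $[h_{i,0}^2, x_{i,s}^{\pm}] = \pm c_{ii}\{h_{i,0}, x_{i,s}^{\pm}\}$; subtracting $[h_{i,0}, x_{i,s+1}^{\pm}] = \pm c_{ii} x_{i,s+1}^{\pm}$ produces the desired identity.

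For \eqref{eq:SYrc3} with $r+s\le 3$ and $i=j$, the cases $r+s\le 2$ are Lemma \ref{lm:l2} together with the definition \eqref{eq:rec2}, and $(3,0)$ is itself \eqref{eq:rec2}. For $(2,1)$, $(1,2)$, $(0,3)$ I would commute the identity $[x_{i,1}^+, x_{i,1}^-] = h_{i,2}$ (from Lemma \ref{lm:l2}) with $\widetilde{h}_{i'1}$, choosing $i' = i$ when $|\alpha_i|=\bar{0}$ and $|i-i'|=1$ otherwise (our Dynkin diagram constraint ensures an even $i'$ exists in the odd case). Using \eqref{eq:h1eq} on both $x$-factors and matching against $[\widetilde{h}_{i'1}, h_{i,2}]$, whose vanishing uses Lemma \ref{lm:hii12}, produces a linear system in the brackets $[x_{i,2}^+, x_{i,1}^-]$ and $[x_{i,1}^+, x_{i,2}^-]$; a parity analysis in the spirit of Lemma \ref{lm:SYrc34} separates these brackets and identifies them with $h_{i,3}$.

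Next I would prove \eqref{eq:SYrc2} at $r=1$ for arbitrary $s$ and $|\alpha_i|=\bar{0}$ by induction on $s$, with base $s=0$ given by Lemma \ref{lm:hx20}. In the inductive step one commutes both sides of the $s$-th instance with $\widetilde{h}_{i1}$ and uses \eqref{eq:h1eq} to convert $x_{i,s}^{\pm}$ into $x_{i,s+1}^{\pm}$, evaluating the resulting mixed commutators via the just-proven $r=0$ case; the anticommutator on the right reassembles into $\{h_{i,1}, x_{i,s+1}^{\pm}\}$ after collecting terms.

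Finally, for \eqref{eq:Cer} at $i=j$: the cases $r=0$ or $s=0$ are Lemma \ref{lm:cel02}, and supersymmetry reduces the remaining work to $r, s \ge 1$. I would prove $[h_{i,1}, h_{i,s}] = 0$ by induction on $s$, writing $h_{i,s} = [x_{i,s}^+, x_{i,0}^-]$ (via \eqref{eq:rec2} for $s \ge 2$), applying Jacobi, and using the now-proven $r \in \{0,1\}$ cases of \eqref{eq:SYrc2} to evaluate the inner commutators; the base $s=2$ is Lemma \ref{lm:hii12}. The remaining range $r, s \ge 2$ is the main obstacle: evaluating $[h_{i,r}, x_{i,s}^{\pm}]$ for $r \ge 2$ is not directly covered by the $r \le 1$ version of \eqref{eq:SYrc2}, so the argument must iteratively rewrite $h_{i,r}$ via \eqref{eq:rec2} as $[x_{i,r}^+, x_{i,0}^-]$ and push the outer commutator inward with Jacobi until every inner bracket falls into the regime already established. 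Carefully cancelling the $\hbar$-correction terms generated along the way, in the spirit of the explicit computation in Lemma \ref{lm:hii12}, is the bookkeeping-heavy core of the proof.
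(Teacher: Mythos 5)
Your treatment of \eqref{eq:SYrc3} and \eqref{eq:SYrc2} is essentially the paper's argument: the $r+s=3$ case of \eqref{eq:SYrc3} is obtained by commuting $\widetilde{h}_{i'1}$ against $h_{i2}$ written as a bracket, using $[\widetilde{h}_{i'1},h_{i2}]=0$ (Lemmas \ref{lm:hii12}, \ref{lm:h12oe}) and \eqref{eq:h1eq}, and the $r=0,1$ cases of \eqref{eq:SYrc2} follow by iterating $[\widetilde{h}_{i1},\cdot\,]$ from the bases \eqref{eq:mpy4} and Lemma \ref{lm:hx20}. Two small remarks: no ``parity analysis'' or invertible matrix is needed for the $(2,1)$, $(1,2)$, $(0,3)$ brackets — each commutation yields a direct two-term equality — but to identify them with $h_{i,3}=[x_{i3}^{+},x_{i0}^{-}]$ you must also commute $\widetilde{h}_{i'1}$ against the identities $[x_{i2}^{+},x_{i0}^{-}]=h_{i2}=[x_{i0}^{+},x_{i2}^{-}]$, not only against $[x_{i1}^{+},x_{i1}^{-}]=h_{i2}$; the paper chains through all three.

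The genuine problem is your last part, on \eqref{eq:Cer}. This lemma is only the \emph{base of induction} for Lemma \ref{lm:cartelrelii} (which is where $[h_{ir},h_{is}]=0$ is proved for all $r,s$), and the paper's proof of the \eqref{eq:Cer} clause consists of citing \eqref{eq:mpy1}, Lemma \ref{lm:cel02} and Lemma \ref{lm:hii12} — i.e.\ only the cases reachable from those. Your attempt to push to all $r,s$ here breaks down: already for $[h_{i1},h_{is}]$ with $s\ge 3$, writing $h_{is}=[x_{is}^{+},x_{i0}^{-}]$ and applying the super Jacobi identity produces the bracket $[x_{is}^{+},x_{i1}^{-}]$, which you would need to equal $h_{i,s+1}$ — that is \eqref{eq:SYrc3} at level $r+s=s+1>3$, not yet available. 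For $r,s\ge 2$ the situation is worse, since pushing $[h_{ir},\cdot\,]$ inward requires \eqref{eq:SYrc2} at $r\ge 2$, which is likewise not yet established; you acknowledge this is ``the main obstacle'' but offer no mechanism to resolve the circularity. The paper resolves it only later, via the Levendorskii-style joint induction of Lemma \ref{lm:cartelrelii}, which relies on the auxiliary elements $\dbtilde{h}_{ij,r}$ of Lemmas \ref{lm:thfdpr}--\ref{lm:ijtrancart}. You should restrict the \eqref{eq:Cer} clause to the base cases (those are correctly covered by your citations of Lemma \ref{lm:cel02} and Lemma \ref{lm:hii12}) and defer the general statement.
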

\begin{proof}
	Consider the equation \eqref{eq:Cer}. The proof follows from \eqref{eq:mpy1} and Lemmas \ref{lm:cel02}, \ref{lm:hii12}.

	Consider the equation \eqref{eq:SYrc3}. It follows from Lemma \ref{lm:l2} for $0 \le r+s \le 2$. Let $i^{'}=i$, if $|\alpha_{i}|=\bar{0}$, and $|i-i^{'}|=1$, if $|\alpha_{i}|=\bar{1}$ ($i, i^{'} \in I$). By Lemmas \eqref{lm:hii12} and \eqref{lm:h12oe} we have
	\[ 0 = [ h_{i2} , \widetilde{h}_{i^{'}1} ] = [ [ x_{i2}^{+} , x_{i0}^{-} ] , \widetilde{h}_{i^{'}1} ] = ( \alpha_{i^{'}} , \alpha_{i} ) ( [ x_{i2}^{+} , x_{i1}^{-} ] - [ x_{i3}^{+} , x_{i0}^{-} ] ). \]
	Apply $[\widetilde{h}_{i^{'}1}, \cdot]$ to
	\[ [ x_{i2}^{+} , x_{i0}^{-} ] = [ x_{i1}^{+} , x_{i1}^{-} ] = [ x_{i0}^{+} , x_{i2}^{-} ]. \]
	Then
	\[ 0 = [ x_{i2}^{+} , x_{i1}^{-} ] - [ x_{i3}^{+} , x_{i0}^{-} ] = [ x_{i1}^{+} , x_{i2}^{-} ] - [ x_{i2}^{+} , x_{i1}^{-} ] = [ x_{i0}^{+} , x_{i3}^{-} ] - [ x_{i1}^{+} , x_{i2}^{-} ]. \]
	Thus we get
	\[ [ x_{i3}^{+} , x_{i0}^{-} ] = [ x_{i2}^{+} , x_{i1}^{-} ] = [ x_{i1}^{+} , x_{i2}^{-} ] = [ x_{i0}^{+} , x_{i3}^{-} ]. \]

	Consider the equation \eqref{eq:SYrc2}. Recall that here $|\alpha_{i}| = \bar{0}$. The case $1 \le r+s \le 2$ follows from \eqref{eq:mpy4}, Lemma \ref{lm:hx20} and Lemma \ref{lm:h0eq}. Note that if \eqref{eq:SYrc2} holds for $0 \le r \le 1$ and $s \in \mathbb{N}_{0}$ we have
	\[ [ \widetilde{h}_{i1} , [h_{i,r+1} , x_{is}^{\pm}] ] = [ \widetilde{h}_{i1} , [h_{ir} , x_{i,s+1}^{\pm}] \pm \frac{c_{ii} \hbar}{2} \{h_{ir}, x_{is}^{\pm}\} ] \Leftrightarrow \]
	\[ [h_{i,r+1} , x_{i,s+1}^{\pm}] = [h_{ir} , x_{j,s+2}^{\pm}] \pm \frac{c_{ii} \hbar}{2} \{h_{ir}, x_{j,s+1}^{\pm}\}. \]
	Thus \eqref{eq:SYrc2} is true for $0 \le r \le 1$ and all $s \in \mathbb{N}_{0}$.
\end{proof}

We use arguments established in \cite{L93} and \cite{GT13} to prove the relation \eqref{eq:Cer}. It is necessary to introduce auxiliary elements and prove a few intermediate lemmas. Our primary goal is the construction of elements $\dbtilde{h}_{ij,r}$ ($i,j \in I$, $r \in \mathbb{N}_{0}$) that satisfy equations \eqref{eq:thxo} and \eqref{eq:thxs}.

Set for any $i \in I$
\begin{equation}
	\label{eq:hln}
	\widetilde{h}_{i}(t) := \hbar \sum_{r \ge 0} \widetilde{h}_{i,r} t^{-r-1} = \log ( 1 + \hbar \sum_{r \ge 0} h_{i,r} t^{-r-1} ) \in Y_{\hbar}(\mathfrak{g})^{0}[[t^{-1}]].
\end{equation}

\begin{lemma}
	\label{lm:thfdpr}
	Let \eqref{eq:Cer} hold for $i,j \in I$ and $0 \le r,s \le v$ and let \eqref{eq:SYrc2} hold for $0 \le r \le v$ and $s \in \mathbb{N}_{0}$. Then, unless $i=j$ and $|\alpha_{i}|=\bar{1}$, for $i,j \in I$, $0 \le r \le v$, $s \in \mathbb{N}_{0}$,
	\begin{equation}
		\label{eq:thx}
		[ \widetilde{h}_{i,r}, x_{j,s}^{\pm} ] = \pm (\alpha_{i},\alpha_{j}) x_{j,r+s}^{\pm} \pm (\alpha_{i},\alpha_{j}) \sum_{p=1}^{\lfloor r/2 \rfloor} {{r}\choose{2p}} \frac{(\hbar (\alpha_{i},\alpha_{j}) / 2)^{2p}}{2p+1} x_{j,r+s-2p}^{\pm}.
	\end{equation}
\end{lemma}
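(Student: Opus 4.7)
The plan is to encode the statement as a generating-function identity and extract it by taking a formal logarithm. Following \eqref{eq:hln}, set
\[ H_i(u) := \sum_{r \ge 0} h_{i,r}\, u^{-r-1}, \quad h_i(u) := 1 + \hbar H_i(u), \quad \widetilde h_i(u) := \log h_i(u), \]
which is well-defined coefficient-wise since $\hbar H_i(u)$ has no constant term in $u^{-1}$. Also set $X_j^\pm(v) := \sum_{s \ge 0} x_{j,s}^\pm v^{-s-1}$. The goal \eqref{eq:thx} will be the extraction of the coefficient of $\hbar u^{-r-1} v^{-s-1}$ in a single generating-function identity between $\widetilde h_i(u)$ and $X_j^\pm(v)$.

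First, I would translate the hypothesis into multiplicative form. Multiplying \eqref{eq:SYrc2} by $u^{-r-1} v^{-s-1}$ and summing over $0 \le r \le v$, $s \ge 0$, a standard telescoping manipulation (cf.\ \cite{L93, GNW18}) yields
\[ (u - v \mp c_{ij}\hbar/2)\, h_i(u)\, X_j^\pm(v) \;=\; (u - v \pm c_{ij}\hbar/2)\, X_j^\pm(v)\, h_i(u) \;+\; P_{ij}^\pm(u, v), \]
where $P_{ij}^\pm(u,v)$ is a Laurent polynomial in $u$ whose $u^{-r-1}$-coefficients vanish for $0 \le r \le v$. This is the standard multiplicative commutation between a Cartan generating function and a root-vector generating function in a Yangian.

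Second, since $h_i(u) \in 1 + u^{-1} Y_\hbar(\mathfrak g_d)[[u^{-1}]]$ is invertible in the relevant $(\hbar, u^{-1})$-adic completion, the identity above gives
\[ h_i(u)\, X_j^\pm(v)\, h_i(u)^{-1} \;\equiv\; \frac{u - v \pm c_{ij}\hbar/2}{u - v \mp c_{ij}\hbar/2}\, X_j^\pm(v) \pmod{\text{poly in } u \text{ of small degree}}. \]
Taking the formal logarithm of the operator $\mathrm{Ad}_{h_i(u)}$ and applying it to $X_j^\pm(v)$ yields
\[ [\widetilde h_i(u),\, X_j^\pm(v)] \;=\; \pm \log\!\left( \frac{u - v + c_{ij}\hbar/2}{u - v - c_{ij}\hbar/2} \right) X_j^\pm(v) \;+\; (\text{poly in } u). \]
Expanding using
\[ \log\!\left( \frac{u - v + a}{u - v - a} \right) = 2 \sum_{p \ge 0} \frac{1}{2p+1}\!\left( \frac{a}{u-v} \right)^{\!2p+1}, \qquad a := c_{ij}\hbar/2, \]
together with $(u-v)^{-(2p+1)} = \sum_{n \ge 2p} \binom{n}{2p} u^{-n-1} v^{n-2p}$, and extracting the coefficient of $\hbar u^{-r-1} v^{-s-1}$, for which the polynomial error is irrelevant since $r \le v$, one arrives after dividing by $\hbar$ and splitting off the $p=0$ contribution at exactly \eqref{eq:thx}.

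The main technical obstacle lies in the passage from the additive relation \eqref{eq:SYrc2} to its multiplicative packaging and back via $\log$: one must verify that the correction polynomial $P_{ij}^\pm(u,v)$ does not disturb any coefficient $u^{-r-1}$ with $r \le v$, and justify the interchange of $\log$ and $\mathrm{Ad}$ in the $\hbar$-adic completion. A fully algebraic alternative, avoiding any topology, is to induct on $r$: a Newton-type recursion obtained by formally differentiating $\widetilde h_i(u) = \log h_i(u)$ expresses $\widetilde h_{i,r}$ as an explicit polynomial in $h_{i,0}, \ldots, h_{i,r}$; combining this with an inductive computation of $[h_{i,r}, x_{j,s}^\pm]$ from \eqref{eq:SYrc2} (with base case $r=0$ supplied by Lemma \ref{lm:h0eq}) gives the result by direct calculation, albeit with heavier bookkeeping.
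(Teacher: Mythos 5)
Your proposal is correct and follows essentially the same route as the paper: the paper's proof reduces to the observation that $\widetilde{h}_{i,r}$ depends only on $h_{i,0},\dots,h_{i,r}$ and then cites \cite[Lemma 2.7, Lemma 2.9, Remark 3.1]{GT13}, and the argument carried out there is precisely your generating-function computation --- packaging \eqref{eq:SYrc2} multiplicatively, taking the formal logarithm of $\mathrm{Ad}_{h_i(u)}$, expanding $\log\bigl(\tfrac{u-v+a}{u-v-a}\bigr)$, and extracting the coefficient of $\hbar u^{-r-1}v^{-s-1}$. Your coefficient extraction reproduces \eqref{eq:thx} exactly, so the only difference is that you make explicit the details the paper outsources to the citation.
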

\begin{proof}
	Note that from \eqref{eq:hln} follows that for arbitrary $r \in \mathbb{N}_{0}$ we have $\widetilde{h}_{i,r} = f(h_{i,0}, h_{i,1} , ..., h_{i,r})$ for some element of free algebra $f \in \Bbbk \langle x_{1},...,x_{r+1} \rangle$. Hence, while deriving \eqref{eq:thx}, we may and shall assume that $\eqref{eq:Cer}$ holds for all $r,s \in \mathbb{N}_{0}$. Then the result follows from \cite[Lemma 2.7, Lemma 2.9, Remark 3.1]{GT13}.
\end{proof}

Suppose that elements $\widetilde{h}_{i,r}$ are defined for $i \in I$ and $0 \le r \le v$. For each $\widetilde{h}_{i,r}$ fix $j \in I$ unless $i=j$ and $|\alpha_{i}|=\bar{1}$. Set $\dbtilde{h}_{ij,0} = h_{i,0}$, and define inductively for $1 \le r \le v$
\[ \dbtilde{h}_{ij,r} = \widetilde{h}_{i,r} - \sum_{p=1}^{\lfloor r/2 \rfloor} {{r}\choose{2p}} \frac{(\hbar (\alpha_{i},\alpha_{j}) / 2)^{2p}}{2p+1} \dbtilde{h}_{ij,r-2p}. \]
Thus we have

\begin{lemma}
	\label{lm:hwdpr}
	Suppose that Lemma \ref{lm:thfdpr} holds. Then in the same notations
	\begin{equation}
		\label{eq:thxo}
		[ \dbtilde{h}_{ij,r} , x_{j,s}^{\pm} ] = \pm (\alpha_{i}, \alpha_{j})x_{j,r+s}^{\pm},
	\end{equation}
	for $i, j \in I$, $0 \le r \le v$, $s \in \mathbb{N}_{0}$, and
	\begin{equation}
		\label{eq:thxs}
		\dbtilde{h}_{ij,r} = h_{i,r} + f(h_{i,0}, h_{i,1}, ... , h_{i,r-1})
	\end{equation}
	for some polynomial $f(x_{1},x_{2},...,x_{r}) \in \Bbbk[x_{1},x_{2},...,x_{r}]$.
\end{lemma}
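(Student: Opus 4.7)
The plan is to prove both statements simultaneously by induction on $r$, using the recursive definition of $\dbtilde{h}_{ij,r}$ together with Lemma \ref{lm:thfdpr}. The base case $r=0$ is immediate since $\dbtilde{h}_{ij,0}=h_{i,0}$, so \eqref{eq:thxo} reduces to the $r=0$ instance of Lemma \ref{lm:h0eq} (or equivalently the $r=0$ case of \eqref{eq:thx}), and \eqref{eq:thxs} is trivial. For the inductive step, I would apply the superbracket $[\,\cdot\,, x_{j,s}^{\pm}]$ to the defining identity
\[
\dbtilde{h}_{ij,r} \;=\; \widetilde{h}_{i,r} - \sum_{p=1}^{\lfloor r/2 \rfloor} \binom{r}{2p} \frac{(\hbar (\alpha_{i},\alpha_{j}) / 2)^{2p}}{2p+1}\, \dbtilde{h}_{ij,r-2p},
\]
plug in the explicit formula \eqref{eq:thx} from Lemma \ref{lm:thfdpr} for the first term and the induction hypothesis \eqref{eq:thxo} for the terms $\dbtilde{h}_{ij,r-2p}$ (where $r-2p<r$), and observe that the two sums, which carry the same binomial coefficients $\binom{r}{2p}\tfrac{(\hbar(\alpha_i,\alpha_j)/2)^{2p}}{2p+1}$ and produce the same generator $x_{j,r+s-2p}^{\pm}$ with opposite signs, cancel exactly. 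What remains is precisely $\pm(\alpha_i,\alpha_j)\,x_{j,r+s}^{\pm}$, which is the desired identity \eqref{eq:thxo}. This cancellation is exactly the raison d'être of the recursive definition of $\dbtilde{h}_{ij,r}$; it is the step whose verification is mechanical but which justifies the choice of coefficients in the definition.

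For the structural claim \eqref{eq:thxs}, I would argue by induction on $r$ using the same recursion. From the logarithm expansion \eqref{eq:hln}, expanding $\log(1+X)$ as a power series in $X=\hbar\sum_{r\ge 0} h_{i,r}t^{-r-1}$ and comparing coefficients of $t^{-r-1}$, one sees that $\hbar\widetilde{h}_{i,r}$ equals $\hbar h_{i,r}$ plus a polynomial in $h_{i,0},\dots,h_{i,r-1}$ with coefficients in $\hbar^{2}\Bbbk[\hbar]$; dividing by $\hbar$ (which is legitimate because every higher-order term carries an extra factor of $\hbar$) gives $\widetilde{h}_{i,r}=h_{i,r}+g(h_{i,0},\dots,h_{i,r-1})$ for some polynomial $g$. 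Substituting into the recursion and using the induction hypothesis that each $\dbtilde{h}_{ij,r-2p}$ is a polynomial in $h_{i,0},\dots,h_{i,r-2p}\subseteq\{h_{i,0},\dots,h_{i,r-2}\}$, we conclude that $\dbtilde{h}_{ij,r}=h_{i,r}+f(h_{i,0},\dots,h_{i,r-1})$, as required.

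The main obstacle I anticipate is purely bookkeeping: one must be careful that the range $0\le r\le v$ is preserved by the induction (so that Lemma \ref{lm:thfdpr} genuinely applies at every step), and that the exclusion ``$i=j$ and $|\alpha_{i}|=\bar{1}$'' is respected throughout, since \eqref{eq:thx} is only available outside that case. Provided $j$ is kept fixed (as the statement prescribes) so that the recursion for $\dbtilde{h}_{ij,r}$ only invokes \eqref{eq:thx} for the pair $(i,j)$ at each level, this is not a serious difficulty, and the identity \eqref{eq:thxo} follows by the telescoping cancellation described above. No further input beyond Lemma \ref{lm:thfdpr} and elementary induction is needed.
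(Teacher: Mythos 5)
Your proof is correct and follows essentially the same route as the paper: induction on $r$, applying $[\,\cdot\,,x_{j,s}^{\pm}]$ to the recursive definition of $\dbtilde{h}_{ij,r}$, substituting \eqref{eq:thx} for the $\widetilde{h}_{i,r}$ term and the induction hypothesis for the $\dbtilde{h}_{ij,r-2p}$ terms, and cancelling the two sums. Your justification of \eqref{eq:thxs} via the logarithm expansion is slightly more explicit than the paper's one-line observation, but it is the same argument.
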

\begin{proof}
	Equation \eqref{eq:thxs} holds from the definition of $\dbtilde{h}_{ij,r}$ and observation that elements $\widetilde{h}_{i,r}$ ($0 \le r \le v$) are of the form $\widetilde{h}_{i,r} = f(h_{i,0}, h_{i,1} , ..., h_{i,r})$ for some polynomial $f \in \Bbbk [ x_{1},...,x_{r+1} ]$.

	Next inductively for $0 \le r \le v$ we verify that
	\[ [ \dbtilde{h}_{ij,r} , x_{j,s}^{\pm} ] = [ \widetilde{h}_{i,r} - \sum_{p=1}^{\lfloor r/2 \rfloor} {{r}\choose{2p}} \frac{(\hbar (\alpha_{i},\alpha_{j}) / 2)^{2p}}{2p+1} \dbtilde{h}_{ij,r-2p} , x_{j,s}^{\pm} ] = \]
	\[ \pm (\alpha_{i},\alpha_{j}) x_{j,r+s}^{\pm} \pm (\alpha_{i},\alpha_{j}) \sum_{p=1}^{\lfloor r/2 \rfloor} {{r}\choose{2p}} \frac{(\hbar (\alpha_{i},\alpha_{j}) / 2)^{2p}}{2p+1} x_{j,r+s-2p}^{\pm} + \]
	\[ \mp (\alpha_{i},\alpha_{j}) \sum_{p=1}^{\lfloor r/2 \rfloor} {{r}\choose{2p}} \frac{(\hbar (\alpha_{i},\alpha_{j}) / 2)^{2p}}{2p+1} x_{j,r+s-2p}^{\pm} = \pm (\alpha_{i},\alpha_{j}) x_{j,r+s}^{\pm}. \]
\end{proof}

\begin{lemma}
	\label{lm:ijtrancart}
	Let $p, n, m \in I$ and $z \in \mathbb{N}_{0}$. Suppose that $[h_{pz} , h_{nv}] = 0$, $[h_{pz}, h_{mv}] = 0$ for $0 \le v \le s^{'}-1$ $(s^{'} \in \mathbb{N}_{0})$; $(\alpha_{p}, \alpha_{n}) \ne 0$ and $(\alpha_{p}, \alpha_{m}) \ne 0$. Moreover, let $[h_{nv_1} , h_{n v_2}] = 0$ and $[h_{mv_1} , h_{m v_2}] = 0$ for $0 \le v_1, v_2 \le s^{'}$ and \eqref{eq:SYrc2} hold for $(i,j)=(n,p)=(m,p)$, $0 \le r \le s^{'}$ and any $s \in \mathbb{N}_{0}$. Then $[h_{pz}, h_{ns^{'}}] = \frac{(\alpha_{n} , \alpha_{p})}{(\alpha_{m} , \alpha_{p})} [h_{pz}, h_{ms^{'}}]$.
\end{lemma}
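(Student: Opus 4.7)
The plan is to use the auxiliary elements $\dbtilde{h}_{n,p,s'}$ and $\dbtilde{h}_{m,p,s'}$ from Lemma \ref{lm:hwdpr} to pull out a common factor from both $[h_{pz}, h_{n,s'}]$ and $[h_{pz}, h_{m,s'}]$. The key observation is that the difference $\dbtilde{h}_{i,p,s'} - h_{i,s'}$ is a polynomial in the lower generators $h_{i,0},\ldots,h_{i,s'-1}$, each of which commutes with $h_{pz}$ by hypothesis, so replacing $h_{i,s'}$ by $\dbtilde{h}_{i,p,s'}$ in the commutator with $h_{pz}$ costs nothing, while $\dbtilde{h}_{i,p,s'}$ acts on $x_{p,s}^{\pm}$ in a clean way.

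First, I would verify that Lemmas \ref{lm:thfdpr} and \ref{lm:hwdpr} apply with the partial hypotheses at hand. Since $\widetilde{h}_{n,r}$ (and hence $\dbtilde{h}_{n,p,r}$) lies in the subalgebra generated by $h_{n,0},\ldots,h_{n,r}$, the only instances of \eqref{eq:Cer} required to derive \eqref{eq:thx} are the pairwise commutations inside $\{h_{n,0},\ldots,h_{n,s'}\}$, and likewise for $m$ — both provided by hypothesis. The instances of \eqref{eq:SYrc2} needed are exactly those stated to hold, namely for $(i,j)=(n,p)$ and $(i,j)=(m,p)$ with $0\le r\le s'$. Then \eqref{eq:thxs} gives $\dbtilde{h}_{n,p,s'} = h_{n,s'} + f(h_{n,0},\ldots,h_{n,s'-1})$ for a polynomial $f$, and because $[h_{pz}, h_{n,v}]=0$ for all $v\le s'-1$ the polynomial part kills the commutator, so $[h_{pz}, \dbtilde{h}_{n,p,s'}] = [h_{pz}, h_{n,s'}]$, and analogously for $m$.

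Second, I would compute $[h_{pz}, \dbtilde{h}_{n,p,s'}]$ by writing $h_{pz} = [x_{p,z}^+, x_{p,0}^-]$ (which is \eqref{eq:mpy3} when $z\in\{0,1\}$ and the definition \eqref{eq:rec2} when $z\ge 2$) and applying the super Jacobi identity, together with the action $[\dbtilde{h}_{n,p,s'}, x_{p,s}^{\pm}] = \pm(\alpha_n,\alpha_p)x_{p,s+s'}^{\pm}$ furnished by \eqref{eq:thxo}. A direct calculation yields
\[ [h_{pz}, h_{n,s'}] = (\alpha_n,\alpha_p)\bigl([x_{p,z}^+, x_{p,s'}^-] - [x_{p,z+s'}^+, x_{p,0}^-]\bigr), \]
and the same computation with $n$ replaced by $m$ yields
\[ [h_{pz}, h_{m,s'}] = (\alpha_m,\alpha_p)\bigl([x_{p,z}^+, x_{p,s'}^-] - [x_{p,z+s'}^+, x_{p,0}^-]\bigr). \]
Since $(\alpha_m,\alpha_p)\ne 0$ by hypothesis, dividing one identity by the other produces the claimed relation.

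The main obstacle I expect is the bookkeeping of the first step: one must be careful that the construction of $\dbtilde{h}_{i,p,s'}$ and the derivation of \eqref{eq:thxo} in Lemma \ref{lm:hwdpr} can be carried out with only the fragmentary commutativity data supplied in the hypothesis, rather than the full relations \eqref{eq:Cer} for all indices. Once this is in place, the second step is essentially a one-line super-Jacobi manipulation in which the common bracket $[x_{p,z}^+, x_{p,s'}^-] - [x_{p,z+s'}^+, x_{p,0}^-]$ cancels out of the ratio and no separate analysis of that expression is needed.
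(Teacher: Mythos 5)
Your proposal is correct and follows essentially the same route as the paper's own proof: both pass to the elements $\dbtilde{h}_{np,s'}$ and $\dbtilde{h}_{mp,s'}$ via Lemmas \ref{lm:thfdpr} and \ref{lm:hwdpr}, use $[h_{pz},\dbtilde{h}_{ip,s'}]=[h_{pz},h_{is'}]$ from the hypothesis on lower modes, write $h_{pz}=[x_{pz}^{+},x_{p0}^{-}]$, and exploit the super Jacobi identity together with the proportional actions $[\dbtilde{h}_{np,s'},x_{ps}^{\pm}]=\frac{(\alpha_n,\alpha_p)}{(\alpha_m,\alpha_p)}[\dbtilde{h}_{mp,s'},x_{ps}^{\pm}]$. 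Your explicit intermediate formula $(\alpha_n,\alpha_p)\bigl([x_{pz}^{+},x_{p,s'}^{-}]-[x_{p,z+s'}^{+},x_{p0}^{-}]\bigr)$ is a harmless unpacking of the paper's substitution step and the conclusion is the same.
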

\begin{proof}
	From conditions in the statement and Lemmas \ref{lm:thfdpr}, \ref{lm:hwdpr} it follows that we can define elements $\dbtilde{h}_{np,r}$ and $\dbtilde{h}_{mp,r}$. Note that
	\[ \frac{(\alpha_{n} , \alpha_{p})}{(\alpha_{m} , \alpha_{p})} [\dbtilde{h}_{mp,r} , x_{ps}^{\pm}] = \pm (\alpha_{n} , \alpha_{p}) x_{p,r+s}^{\pm} = [\dbtilde{h}_{np,r} , x_{ps}^{\pm}]. \]
	Thus
	\[ [h_{pz} , h_{ns^{'}}] = [h_{ir} , \dbtilde{h}_{np,s^{'}}] = [ [x_{pz}^{+} , x_{p0}^{-}] , \dbtilde{h}_{np,s^{'}} ] = \]
	\[ (-1) [x_{pz}^{+} , [\dbtilde{h}_{np,s^{'}}, x_{p0}^{-}]] + (-1)^{|p|} [x_{p0}^{-} , [\dbtilde{h}_{np,s^{'}}, x_{pz}^{+}]] = \]
	\[ \frac{(\alpha_{n} , \alpha_{p})}{(\alpha_{m} , \alpha_{p})} ( (-1) [x_{pz}^{+} , [\dbtilde{h}_{mp,s^{'}}, x_{p0}^{-}]] + (-1)^{|p|} [x_{p0}^{-} , [\dbtilde{h}_{mp,s^{'}}, x_{pz}^{+}]] ) = \]
	\[ \frac{(\alpha_{n} , \alpha_{p})}{(\alpha_{m} , \alpha_{p})} [h_{pz} , \dbtilde{h}_{mp,s^{'}}] = \frac{(\alpha_{n} , \alpha_{p})}{(\alpha_{m} , \alpha_{p})} [h_{pz} , h_{ms^{'}}] . \]
\end{proof}

Now we are ready to prove
\begin{lemma}
	\label{lm:cartelrelii}
	Let $i, j \in I$. Equations \eqref{eq:Cer}, \eqref{eq:SYrc3} and \eqref{eq:SYrc2} hold for $i=j$ and all $r,s \in \mathbb{N}_{0}$.
\end{lemma}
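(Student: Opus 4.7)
The plan is to prove the three relations simultaneously by strong induction on the total degree $N = r + s$, with Lemma \ref{lm:r233} supplying the base cases ($N \le 3$ for \eqref{eq:SYrc3} and the initial cases $r \le 1$, any $s$, for \eqref{eq:SYrc2}). The key device throughout is the transfer argument encapsulated in Lemma \ref{lm:ijtrancart}, which allows one to reduce commutativity statements at the diagonal $i = j$ to statements at distinct indices, where we already have more information either from Lemmas \ref{lm:SYrc34} and \ref{lm:hxinej} (for $i \ne j$) or from the inductive hypothesis at lower total degree.

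First I would extend \eqref{eq:SYrc2} for $i = j$ and $|\alpha_i| = \bar{0}$ to all $(r,s)$. Using $h_{i,r+1} = [x_{i,r+1}^+, x_{i,0}^-]$ from \eqref{eq:rec2} together with the super-Jacobi identity and the known relation \eqref{eq:SYrc4} for $i \ne j$, one derives identities analogous to \eqref{eq:hxrs+}--\eqref{eq:hxrs-} in the proof of Lemma \ref{lm:hxinej}, which propagate the relation from $(r,s)$ to $(r+1,s)$. The only twist compared to Lemma \ref{lm:hxinej} is that one must commute a term through a same-index $x^{\mp}_{i,0}$, producing Cartan elements $h_{i,r}$ whose commutator with $x_{i,s}^{\pm}$ is handled by the inductive hypothesis on \eqref{eq:SYrc2} at lower $r$.

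Next I would establish Cartan commutativity \eqref{eq:Cer} with $i = j$. For $|\alpha_i| = \bar{0}$, apply Lemma \ref{lm:ijtrancart} with $p = n = i$ and $m = i \pm 1$ an adjacent vertex of the Dynkin diagram (which always exists by connectivity of type $A$). The hypotheses of the transfer lemma are met by the inductive hypothesis on lower degrees together with the previously extended \eqref{eq:SYrc2}, yielding $[h_{ir}, h_{is}]$ proportional to $[h_{ir}, h_{i \pm 1, s}]$, which vanishes by the $i \ne j$ case of the Cartan commutativity that has been built up in parallel from Lemmas \ref{lm:h12oe} and the inductive extension. For $|\alpha_i| = \bar{1}$ the direct transfer fails because $(\alpha_i, \alpha_i) = 0$; here the constraint imposed before Theorem \ref{th:mpy}, that every odd vertex admits an even neighbor $i'$, becomes essential. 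I would write $h_{is} = \{x_{is}^+, x_{i0}^-\}$, unfold $[h_{ir}, h_{is}]$ via super-Jacobi, and reduce to computing $[h_{ir}, x_{is}^{\pm}]$; the latter must be shown to vanish in parallel, by extending the odd-parity relation \eqref{eq:SYrc5} inductively using the recurrence \eqref{eq:rec1} against $i'$ and the fact that $[h_{ir}, \widetilde{h}_{i'1}] = 0$ follows from the $i \ne j$ Cartan commutativity.

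Finally, \eqref{eq:SYrc3} for $i = j$ and all $r, s$ is obtained from the extensions above. Applying $[\widetilde{h}_{i'1}, \cdot]$ or $[\dbtilde{h}_{ii',1}, \cdot]$ to the equality $[x_{ir}^+, x_{is}^-] = h_{i,r+s}$ at a lower degree and using \eqref{eq:thxo} (which is legitimate because the hypotheses of Lemma \ref{lm:hwdpr} are now verified up to degree $N$) transports the relation up the grading, with the difference between the two possible derivations vanishing by the Cartan commutativity just established. The main obstacle is precisely the odd-parity Cartan commutativity: bookkeeping the super-Jacobi expansions while propagating \eqref{eq:SYrc5} and \eqref{eq:Cer} together for $|\alpha_i| = \bar{1}$, and verifying that the Dynkin-diagram constraint on even neighbors of odd vertices really does suffice, is the most delicate point.
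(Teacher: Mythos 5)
Your proposal follows essentially the same route as the paper, which carries out Levendorskii's simultaneous induction on the same-index relations with base case Lemma \ref{lm:r233}, and handles the odd-root diagonal case exactly as you describe: by passing to the even neighbor $i'$ guaranteed by the Dynkin-diagram constraint and transferring commutators through the elements $\dbtilde{h}_{i'i,r}$ via Lemmas \ref{lm:thfdpr}, \ref{lm:hwdpr} and \ref{lm:ijtrancart}. The only caveat is that your appeal to \eqref{eq:SYrc4} ``for $i\ne j$'' when propagating the diagonal case of \eqref{eq:SYrc2} should really be to the same-index instances of \eqref{eq:SYrc4} available at lower degree within the intertwined induction, which is how the paper (following \cite{L93}) organizes it.
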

\begin{proof}
	The proof is completely the same as in \cite{L93} (see considerations after Lemma 2.2) where the proof is based on mathematical induction. That's why we only explain here how to deal with equations (2.25)-(2.29) in \cite{L93} where some extra arguments are needed to deal with the case $|\alpha_{i}| = \bar{1}$. First note that the base of induction is obtained by Lemma \ref{lm:r233}. In each equation we use the same notations as in mentioned paper. From now on suppose that $|\alpha_{i}| = \bar{1}$. Let $i^{'} \in I$ be such that $|i^{'}-i|=1$ and $|\alpha_{i^{'}}| = \bar{0}$. Notice that such $i^{'}$ exists by our constraints on Dynkin diagrams. $p \in \mathbb{N}_{0}$ is a fixed number as in the paper.

	Consider the equation (2.25). By the induction hypothesis and the proof of Lemma \ref{lm:cartelrelinej} it follows that $[h_{i^{'}p} , h_{iv}] = 0$ for $0 \le v \le p-1$. We are able to define $\dbtilde{h}_{i^{'}i,p}$ by Lemma \ref{lm:thfdpr} and to use Lemma \ref{lm:ijtrancart} to get
	\[ 0 = [ h_{i^{'}p} , h_{i^{'}p} ] = [ h_{ip} , h_{i^{'}p} ] = [ h_{ip} , \dbtilde{h}_{i^{'}i,p} ]. \]
	Further steps are the same as in the paper. In the equation (2.26) we use $[\widetilde{h}_{i^{'}1}, \cdot]$. Further steps are the same.

	 In the equation between (2.26) and (2.27):
	\[ [ h_{i^{'},r-q} , h_{i^{'}, q+1} ] = \frac{(\alpha_{i^{'}} , \alpha_{i^{'}})}{(\alpha_{i} , \alpha_{i^{'}})} [ h_{i,r-q} , h_{i^{'}, q+1} ] = \frac{(\alpha_{i^{'}} , \alpha_{i^{'}})}{(\alpha_{i} , \alpha_{i^{'}})} [ h_{i,r-q} , \dbtilde{h}_{i^{'}i,q+1} ]. \]
	Further steps are the same.

	Consider the equation (2.27). Here we use $[\widetilde{h}_{i^{'}1},
	\cdot]$. Further steps are the same. Consider the equation (2.28). By the induction hypothesis and the proof of Lemma \ref{lm:cartelrelinej} it follows that $[h_{i,p+1} , h_{i^{'}v}] = 0$ for $0 \le v \le p-1$; $[h_{iv} , h_{i^{'}p}] = 0$ for $0 \le v \le p$. We are able to define $\dbtilde{h}_{i^{'}i,p}$ by Lemma \ref{lm:thfdpr} and to use Lemma \ref{lm:ijtrancart} to get
	\[ [ h_{i^{'}, p+1} , h_{i^{'}p} ]  = \frac{(\alpha_{i^{'}} , \alpha_{i^{'}})}{(\alpha_{i} , \alpha_{i^{'}})} [ h_{i, p+1} , h_{i^{'}p} ] = \frac{(\alpha_{i^{'}} , \alpha_{i^{'}})}{(\alpha_{i} , \alpha_{i^{'}})} [ h_{i, p+1} , \dbtilde{h}_{i^{'}i,p} ]. \]
	Further steps are the same.

	In the equation (2.29) we use the same arguments as in the (2.28) and Lemma \ref{lm:hxinej} in the following to get
	\[ [ h_{i^{'}, p+1} , h_{i^{'}p} ] = \frac{(\alpha_{i^{'}} , \alpha_{i^{'}})}{(\alpha_{i} , \alpha_{i^{'}})} [ h_{i^{'},p+1} , h_{ip} ]. \]
	Further steps are the same. It is easy to see that all considerations after the equation (2.29) are also true in our case.
\end{proof}

\begin{lemma}
	\label{lm:cartelrelinej}
	Suppose that $i,j \in I$ and $i \ne j$. The equation \eqref{eq:Cer} holds for any $r,s \in \mathbb{N}_{0}$.
\end{lemma}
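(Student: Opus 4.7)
The plan is to establish $[h_{i,r}, h_{j,s}] = 0$ for $i \neq j$ by induction on $k = r + s$, using Lemma~\ref{lm:ijtrancart} as the main tool at each step. The base cases $r = 0$ and $s = 0$ are both covered by Lemma~\ref{lm:cel02} (the latter via the fact that $[h_{i,r}, h_{j,0}] = -[h_{j,0}, h_{i,r}] = 0$, since all $h$'s are even). Assume the claim holds for all pairs with smaller total index.

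The easy subcase is $c_{ij} = 0$ (non-adjacent vertices in type $A$, i.e.\ $|i-j|>1$). Using Lemma~\ref{lm:h0eq} as the base and Lemma~\ref{lm:hxinej} (equation \eqref{eq:SYrc2} with vanishing coefficient) inductively in $r$, one obtains $[h_{i,r}, x_{j,t}^{\pm}] = 0$ for all $r,t$. Writing $h_{j,s} = [x_{j,s}^+, x_{j,0}^-]$ and expanding via the super Jacobi identity then gives $[h_{i,r}, h_{j,s}] = 0$ at once.

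The main case is $c_{ij} \neq 0$. The idea is to apply Lemma~\ref{lm:ijtrancart} with a bridging index $m$ that reduces the commutator to a vanishing one. If $|\alpha_i| = \bar{0}$, I would take $p = i$, $n = j$, $m = i$, $z = r$, $s' = s$; the required hypotheses $[h_{i,r}, h_{j,v}] = [h_{i,r}, h_{i,v}] = 0$ for $v \le s-1$ come from the inductive hypothesis together with Lemma~\ref{lm:cartelrelii}, the diagonal vanishing $[h_{i,v_1}, h_{i,v_2}] = 0$ is Lemma~\ref{lm:cartelrelii}, and the needed instances of \eqref{eq:SYrc2} come from Lemmas~\ref{lm:hxinej} and~\ref{lm:cartelrelii}; the conclusion then reads $[h_{i,r}, h_{j,s}] = \frac{(\alpha_j,\alpha_i)}{(\alpha_i,\alpha_i)}[h_{i,r}, h_{i,s}] = 0$. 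The case $|\alpha_j| = \bar{0}$ is handled symmetrically by applying Lemma~\ref{lm:ijtrancart} with $p = j$ instead (the hypothesis $[h_{j,s}, h_{i,v}] = 0$ for $v \le r-1$ then holds by the induction hypothesis since $s + v \le k-1$).

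The delicate subcase is $|\alpha_i| = |\alpha_j| = \bar{1}$, where $(\alpha_i,\alpha_i) = 0$ prevents the choice $m = i$. Here the Dynkin-diagram constraint imposed before Theorem~\ref{th:mpy} supplies an even index $i'$ adjacent to $i$ with $i' \neq j$ (since $j$ is odd). Applying Lemma~\ref{lm:ijtrancart} with $p = i$, $n = j$, $m = i'$ yields $[h_{i,r}, h_{j,s}] = \frac{(\alpha_j,\alpha_i)}{(\alpha_{i'},\alpha_i)}[h_{i,r}, h_{i',s}]$, and the right-hand side vanishes by the previous subcase applied to the pair $(i, i')$, which lies at the same inductive stage and is legitimate because $|\alpha_{i'}| = \bar{0}$. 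The main obstacle I expect is logistical rather than conceptual: at each inductive step one must verify that the full list of hypotheses of Lemma~\ref{lm:ijtrancart} — especially the auxiliary diagonal vanishing $[h_{n,v_1}, h_{n,v_2}] = 0$ for $v_1, v_2 \le s'$ and every instance of \eqref{eq:SYrc2} with indices $(n,p)$ and $(m,p)$ up to the required order — is genuinely in place; this forces the present lemma and Lemma~\ref{lm:cartelrelii} to be threaded through a single simultaneous induction, with the subcases in which at least one of $i,j$ is even being settled before the doubly-odd subcase at every fixed value of $k$.
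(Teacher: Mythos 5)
Your proof is correct, but the inductive step is organized differently from the paper's. The paper inducts on $r$ alone (for all $s$ simultaneously): assuming $[h_{il},h_{js}]=0$ for $0\le l\le r$, it forms the modified element $\dbtilde{h}_{ij,r+1}$ directly for the pair $(i,j)$ (via Lemmas \ref{lm:hxinej} and \ref{lm:hwdpr}), replaces $h_{i,r+1}$ by $\dbtilde{h}_{ij,r+1}$ using the induction hypothesis, writes $h_{js}=[x_{js}^{+},x_{j0}^{-}]$, and obtains $(\alpha_i,\alpha_j)\bigl([x_{js}^{+},x_{j,r+1}^{-}]-[x_{j,r+s+1}^{+},x_{j0}^{-}]\bigr)=(\alpha_i,\alpha_j)(h_{j,r+s+1}-h_{j,r+s+1})=0$ by Lemma \ref{lm:cartelrelii}. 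That computation is uniform in the parities of $\alpha_i,\alpha_j$ and in whether $c_{ij}=0$, so the paper needs no case analysis and no appeal to Lemma \ref{lm:ijtrancart} here. You instead induct on $r+s$ and use the transfer Lemma \ref{lm:ijtrancart} to rewrite $[h_{ir},h_{js}]$ as a scalar multiple of a diagonal commutator $[h_{ir},h_{is}]$ (or of $[h_{ir},h_{i's}]$ for an even neighbour $i'$ in the doubly odd case), which costs you a parity case split, a separate treatment of $c_{ij}=0$, and an extra use of the Dynkin-diagram constraint, but it makes the reliance on Lemma \ref{lm:cartelrelii} and on the hypotheses of Lemma \ref{lm:ijtrancart} completely explicit; I checked that all of those hypotheses are indeed available at each stage of your induction. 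Both arguments consume the same inputs (Lemmas \ref{lm:cel02}, \ref{lm:hxinej}, \ref{lm:hwdpr}, \ref{lm:cartelrelii}), and your closing observation that this lemma must be run as a simultaneous induction with Lemma \ref{lm:cartelrelii} matches the paper, whose proof of Lemma \ref{lm:cartelrelii} already refers forward to the proof of this one.
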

\begin{proof}
	We proof by induction on $r \in \mathbb{N}_{0}$. Consider the case $r=0$ and any $s \in \mathbb{N}_{0}$. It is true by Lemma \ref{lm:cel02}.
	Suppose that $[h_{il}, h_{js}] = 0$ for $0 \le l \le r$ and all $s \in \mathbb{N}_{0}$. Then using Lemmas \ref{lm:hxinej} and \ref{lm:hwdpr} we can define element $\dbtilde{h}_{ij,r+1}$. Then by Lemma \ref{lm:cartelrelii}
	\[ [ h_{i,r+1} , h_{js} ] = [ \dbtilde{h}_{ij,r+1} , h_{js} ] = [ \dbtilde{h}_{ij,r+1} , [x_{js}^{+} , x_{j0}^{-}] ] = \]
	\[ (\alpha_{i} , \alpha_{j}) ( [ x_{js}^{+} , x_{j,r+1}^{-} ] - [ x_{j,r+s+1}^{+} , x_{j0}^{-} ] ) =  (\alpha_{i} , \alpha_{j}) ( h_{j,r+s+1} - h_{j,r+s+1} ) = 0. \]
	Thus the result follows by induction hypothesis.
\end{proof}

\begin{lemma}
	The relation \eqref{eq:SYrc4} is satisfied for all $i,j \in I$ and $r,s \in \mathbb{N}_{0}$.
\end{lemma}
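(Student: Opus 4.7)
The plan is to treat the only remaining case, $i = j$ with $|\alpha_i| = \bar 0$, the case $i \ne j$ being handled by Lemma~\ref{lm:SYrc34} and the case $i = j, |\alpha_i|=\bar 1$ being excluded from~\eqref{eq:SYrc4}. I will introduce
\[
Y^\pm(r, s) := [x_{i,r+1}^\pm, x_{i,s}^\pm] - [x_{i,r}^\pm, x_{i,s+1}^\pm] \mp \tfrac{c_{ii}\hbar}{2}\{x_{i,r}^\pm, x_{i,s}^\pm\},
\]
note the symmetry $Y^\pm(s, r) = Y^\pm(r, s)$ (immediate because all $x_{i,k}^\pm$ are even in this case), and take as base cases the pairs $(r, s) \in \{(0,0), (1, 0), (0, 1)\}$, supplied respectively by~\eqref{eq:mpy5}, Lemma~\ref{lm:sy10}, and this symmetry.

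The main tool will be the Cartan-like elements $\dbtilde{h}_{ii, k}$. Since relations~\eqref{eq:Cer} and~\eqref{eq:SYrc2} now hold at all indices by Lemmas~\ref{lm:cartelrelii}, \ref{lm:cartelrelinej} and~\ref{lm:hxinej}, Lemmas~\ref{lm:thfdpr} and~\ref{lm:hwdpr} apply for every $v \in \mathbb{N}_0$ and produce, for each $k \ge 0$, an even element $\dbtilde{h}_{ii, k}$ satisfying $[\dbtilde{h}_{ii, k}, x_{i, s}^\pm] = \pm c_{ii}\, x_{i, s+k}^\pm$. A direct application of the ordinary Leibniz rule (no super signs, since $\dbtilde{h}_{ii,k}$ and all $x_{i,s}^\pm$ are even) will then yield the key identity
\[
[\dbtilde{h}_{ii, k}, Y^\pm(r, s)] = \pm c_{ii}\bigl(Y^\pm(r + k, s) + Y^\pm(r, s + k)\bigr).
\]

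I will then run an induction on $N = r + s$. Assuming $Y^\pm$ vanishes at every level $\le N$, the $k = 1$ instance of the identity, applied at level $N$, produces at level $N + 1$ the recurrence $Y^\pm(a, b) = -Y^\pm(a - 1, b + 1)$ for $a \ge 1$; iterating this and combining with the symmetry $Y^\pm(N + 1, 0) = Y^\pm(0, N + 1)$ gives $Y^\pm(N + 1, 0) = (-1)^{N + 1} Y^\pm(N + 1, 0)$. For $N$ even this forces $Y^\pm(N + 1, 0) = 0$, and the recurrence propagates the vanishing across the entire level.

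The main obstacle is the case $N$ odd, where the $k = 1$ argument is tautological. I will circumvent it by invoking the $k = 2$ identity applied to $Y^\pm(N - 1, 0) = 0$ (available at level $N - 1 \le N$), which gives $Y^\pm(N + 1, 0) = -Y^\pm(N - 1, 2)$, whereas iterating the $k = 1$ recurrence twice yields $Y^\pm(N + 1, 0) = Y^\pm(N - 1, 2)$; the two independent relations together force $Y^\pm(N + 1, 0) = 0$, and the $k = 1$ recurrence again spreads the vanishing to the rest of level $N + 1$. This use of the second derivation $\dbtilde{h}_{ii,2}$ to break the sign-parity ambiguity is the only genuinely new ingredient beyond the standard adjoint-action technique.
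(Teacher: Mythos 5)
Your proof is correct and rests on the same two ingredients as the paper's own argument: the derivations $\dbtilde{h}_{ii,k}$ supplied by Lemmas \ref{lm:thfdpr} and \ref{lm:hwdpr} (via \eqref{eq:thxo}) together with the symmetry $Y^{\pm}(r,s)=Y^{\pm}(s,r)$. The paper organizes the induction more directly --- applying $[\dbtilde{h}_{ii,k},\cdot]$ for \emph{arbitrary} $k$ to $X^{\pm}(0,0)=0$ gives $(\alpha_i,\alpha_i)\bigl(X^{\pm}(k,0)+X^{\pm}(0,k)\bigr)=2(\alpha_i,\alpha_i)X^{\pm}(k,0)=0$ in one stroke, and a second application to $X^{\pm}(r,0)=0$ then yields $X^{\pm}(r,s)=0$ --- so the level-by-level recursion, the parity case-split, and the $k=2$ workaround in your argument, while valid, are unnecessary.
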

\begin{proof}
	The case for $i \ne j$ is proved in Lemma \ref{lm:SYrc34}. Suppose that $i=j$. Then $|\alpha_{i}|=0$. We prove by induction on $r$ and $s \in \mathbb{N}_{0}$. The initial case $(r,s)=(0,0)$ is our initial assumption \eqref{eq:mpy5}. Let $X^{\pm}(r,s)$ be the result of substracting the right-hand side of \eqref{eq:SYrc4} from the left-hand side. Using the relation \eqref{eq:thxo} we have for an arbitrary $r \in \mathbb{N}_{0}$:
	\[ 0 = [ \dbtilde{h}_{ii,r}, X^{\pm}(0,0) ] = (\alpha_{i}, \alpha_{i}) X^{\pm}(r,0) + (\alpha_{i}, \alpha_{i}) X^{\pm}(0,r) = \]
	\[ 2 (\alpha_{i}, \alpha_{i}) X^{\pm}(r,0) \Rightarrow X^{\pm}(r,0) = 0. \]
	Now for an arbitrary $s \in \mathbb{N}_{0}$:
	\[ 0 = [ \dbtilde{h}_{ii,s}, X^{\pm}(r,0) ] = (\alpha_{i}, \alpha_{i}) X^{\pm}(r+s,0) + (\alpha_{i}, \alpha_{i}) X^{\pm}(r,s) = \]
	\[ (\alpha_{i}, \alpha_{i}) X^{\pm}(r,s) \Rightarrow X^{\pm}(r,s) = 0. \]
	By induction hypothesis the result follows.
\end{proof}

\begin{lemma}
	\label{lm:bcssr}
	Relations \eqref{eq:bssr} and \eqref{eq:cssr} are satisfied for all $i,j \in I$ and $r,s,t \in \mathbb{N}_{0}$.
\end{lemma}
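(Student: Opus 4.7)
The plan is to promote the base cases established in Lemma \ref{lm:fassrс} (together with the initial relation \eqref{eq:mpy7} and the already-proven \eqref{eq:SYrc4} from Lemma \ref{lm:SYrc34}) to arbitrary spectral indices $r,s,t\in\mathbb{N}_{0}$ by induction, using the shift produced by the operators $[\widetilde{h}_{m,1},\cdot]$ via \eqref{eq:h1eq}, exactly in the spirit of the argument of Lemma \ref{lm:SYrc34}.

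For \eqref{eq:bssr} with $c_{ij}=0$ and $i\neq j$, the already-proven \eqref{eq:SYrc4} specialises to $[x_{i,r+1}^{\pm},x_{j,s}^{\pm}]=[x_{i,r}^{\pm},x_{j,s+1}^{\pm}]$, so the bracket $[x_{i,r}^{\pm},x_{j,s}^{\pm}]$ depends only on $r+s$, and the base case $[x_{i,0}^{\pm},x_{j,s}^{\pm}]=0$ for all $s$ from Lemma \ref{lm:fassrс} (case 1) closes the induction immediately. For the remaining subcase $i=j$ with $|\alpha_{i}|=\bar{1}$ (forcing $c_{ii}=0$), set $X^{\pm}(r,s):=[x_{i,r}^{\pm},x_{i,s}^{\pm}]$ (which is super-symmetric in $r\leftrightarrow s$ because both arguments are odd), pick an even neighbour $i'\in I$ of $i$ (guaranteed by our standing Dynkin diagram assumption), and apply $[\widetilde{h}_{i',1},\cdot]$ to the inductive hypothesis $X^{\pm}(r,s)=0$ to obtain $X^{\pm}(r+1,s)+X^{\pm}(r,s+1)=0$; the super-symmetry $X^{\pm}(r+1,s)=X^{\pm}(s,r+1)$ combined with this relation and the base case $X^{\pm}(0,0)=0$ from \eqref{eq:mpy7} allows the induction on $r+s$ to close.

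For \eqref{eq:cssr} with $j=i\pm 1$ and $|\alpha_{i}|=\bar{0}$, set
\[
Y^{\pm}(r,s,t):=[x_{i,r}^{\pm},[x_{i,s}^{\pm},x_{j,t}^{\pm}]]+[x_{i,s}^{\pm},[x_{i,r}^{\pm},x_{j,t}^{\pm}]],
\]
which is symmetric in $r\leftrightarrow s$; the base cases $Y^{\pm}(0,0,t),\;Y^{\pm}(1,0,t),\;Y^{\pm}(2,0,t)$, and $Y^{\pm}(1,1,t)$ for all $t\in\mathbb{N}_{0}$ are supplied by Lemma \ref{lm:fassrс}. Applying $[\widetilde{h}_{m,1},\cdot]$ via \eqref{eq:h1eq} and the super Leibniz rule yields
\[
0=\pm(\alpha_{i},\alpha_{m})\bigl(Y^{\pm}(r+1,s,t)+Y^{\pm}(r,s+1,t)\bigr)\pm(\alpha_{j},\alpha_{m})\,Y^{\pm}(r,s,t+1).
\]
Choosing two indices $m=n_{1},n_{2}\in I$ for which the resulting $2\times 2$ coefficient matrix has non-zero determinant (for example $m=i$ paired with $m=j$ when $|\alpha_{j}|=\bar{0}$, or $m=i$ paired with a neighbour $k$ of $j$ not adjacent to $i$ when $|\alpha_{j}|=\bar{1}$), one deduces both $Y^{\pm}(r,s,t+1)=0$ and $Y^{\pm}(r+1,s,t)+Y^{\pm}(r,s+1,t)=0$; the $r\leftrightarrow s$ symmetry then upgrades the latter to $Y^{\pm}(r+1,s,t)=0$, completing the induction on $r+s+t$.

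The main obstacle, exactly as in Lemma \ref{lm:SYrc34}, is the parity-and-adjacency case analysis needed to always exhibit auxiliary indices $i',m,k\in I$ producing an invertible coefficient matrix: this is the point at which the standing assumptions imposed immediately before Theorem \ref{th:mpy} (excluding $|I|=1$ and $|I|=3$ with $|\alpha_{1}|=|\alpha_{3}|$, and demanding every odd vertex of the Dynkin diagram have an even neighbour) enter. The $i=j$ subcase of \eqref{eq:bssr} with $|\alpha_{i}|=\bar{1}$ is the most delicate, since the only available shift operator $\widetilde{h}_{i',1}$ produces a single scalar combination of $X^{\pm}(r+1,s)$ and $X^{\pm}(r,s+1)$; disentangling the two unknowns relies essentially on the odd-odd super-symmetry of $X^{\pm}$, and where that is still insufficient one must appeal to the quartic super Serre relations \eqref{eq:mpy9} together with the generalised form recorded in Remark \ref{rm:yangrelrew}.
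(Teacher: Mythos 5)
Your strategy is in the right spirit, and your treatment of \eqref{eq:bssr} for $i\neq j$ (reading $c_{ij}=0$ into the already-established \eqref{eq:SYrc4} so that the bracket depends only on $r+s$) is correct and arguably cleaner than the paper's. But the two hardest subcases are not actually closed by your argument. For $i=j$ with $|\alpha_i|=\bar 1$, the recursion $X^{\pm}(r+1,s)=-X^{\pm}(r,s+1)$ obtained from $[\widetilde h_{i'1},\cdot]$, combined with the symmetry $X^{\pm}(a,b)=X^{\pm}(b,a)$, gives $X^{\pm}(n,0)=(-1)^nX^{\pm}(n,0)$ along the level $r+s=n$; this forces vanishing only for odd $n$, and already at level $2$ the two unknowns $X^{\pm}(2,0),X^{\pm}(1,1)$ satisfy the single relation $X^{\pm}(2,0)+X^{\pm}(1,1)=0$ and are not determined. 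The appeal to \eqref{eq:mpy9} and Remark \ref{rm:yangrelrew} does not repair this: those relations involve the neighbouring generators $x_{j\pm1}^{\pm}$ and do not yield $\{x_{i,r}^{\pm},x_{i,s}^{\pm}\}=0$. The same obstruction reappears in your induction for \eqref{eq:cssr}: the $2\times 2$ system only gives $Y^{\pm}(r,s,t+1)=0$ and $Y^{\pm}(r+1,s,t)+Y^{\pm}(r,s+1,t)=0$, and in the $(r,s)$-plane the supplied base cases $(0,0),(1,0),(2,0),(1,1)$ together with this antisymmetric shift and the $r\leftrightarrow s$ symmetry leave a free parameter at level $r+s=4$ (the chain $Y^{\pm}(4,0,t)=-Y^{\pm}(3,1,t)=Y^{\pm}(2,2,t)$ is consistent without vanishing).

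The missing ingredient is the family of renormalized Cartan elements $\dbtilde h_{ij,r}$ of Lemmas \ref{lm:thfdpr} and \ref{lm:hwdpr}, which satisfy the exact shift identity $[\dbtilde h_{ij,r},x_{j,s}^{\pm}]=\pm(\alpha_i,\alpha_j)x_{j,r+s}^{\pm}$ for \emph{arbitrary} $r$ in a single step. Applying $[\dbtilde h_{i'i,r},\cdot]$ directly to $X^{\pm}(0,0)=0$ yields $X^{\pm}(r,0)+X^{\pm}(0,r)=2X^{\pm}(r,0)=0$ for every $r$ at once, with no intermediate levels to traverse, and a second application to $X^{\pm}(r,0)=0$ gives $X^{\pm}(r+s,0)+X^{\pm}(r,s)=X^{\pm}(r,s)=0$. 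Likewise for \eqref{eq:cssr} one first establishes the whole layer $X^{\pm}(r,0,t)=0$ for all $r,t$ by applying $\dbtilde h_{ii,r}$ to $X^{\pm}(0,0,t)$ (the spill-over into the third slot is a combination of the already-vanishing $X^{\pm}(0,0,z)$), and only then inducts on the middle index $s$, where the single operator $\widetilde h_{i1}$ suffices because the inductive hypothesis at level $s$ covers all $r$ and $t$ simultaneously and kills all but the one term $X^{\pm}(r,s+1,t)$. Without these arbitrary-shift operators the degree-one Cartan action accumulates more unknowns per level than relations, and the induction stalls exactly where your proposal hedges.
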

\begin{proof}
	Let $X^{\pm}(r,s)$ be the left-hand side of \eqref{eq:bssr}. We prove by induction on $r$ and $s \in \mathbb{N}_{0}$. The initial case $(r,s)=(0,0)$ is our initial assumption \eqref{eq:mpy7}. Note that
	\[ [ \widetilde{h}_{m1} , X^{\pm} (r,s) ] = \pm ( (\alpha_{m} , \alpha_{j}) X^{\pm} (r, s+1) + (\alpha_{m}, \alpha_{i}) X^{\pm} (r+1,s) ), \]
	where $m \in I$. Unless $i=j$ and $|\alpha_{i}|=\bar{1}$ we succesively apply $[\widetilde{h}_{i1}, \cdot]$ and $[\widetilde{h}_{j1}, \cdot]$ to get the result.

	If $i=j$ and $|\alpha_{i}|=\bar{1}$ we note that
	\[ [ \widetilde{h}_{i^{'}i,r} , X^{\pm} (0,0) ] = \pm (\alpha_{i^{'}} , \alpha_{i}) ( X^{\pm} (r, 0) + X^{\pm} (0,r) ) = \pm 2 (\alpha_{i^{'}} , \alpha_{i}) X^{\pm} (r, 0), \]
	where $|i-i^{'}|=1$ ($i^{'} \in I$). Thus $X^{\pm} (r, 0) = 0$ for all $r \in \mathbb{N}_{0}$. Now for an arbitrary $s \in \mathbb{N}_{0}$:
	\[ 0 = [ \dbtilde{h}_{i^{'}i,s}, X^{\pm}(r,0) ] = (\alpha_{i^{'}}, \alpha_{i}) X^{\pm}(r+s,0) + (\alpha_{i^{'}}, \alpha_{i}) X^{\pm}(r,s) = \]
	\[ (\alpha_{i^{'}}, \alpha_{i}) X^{\pm}(r,s) \Rightarrow X^{\pm}(r,s) = 0. \]
	By induction hypothesis the result follows.

	Let $X^{\pm}(r,s,t)$ be the left-hand side of \eqref{eq:cssr}. We prove by induction on $r$, $s$ and $t \in \mathbb{N}_{0}$. The initial case $(r,s,t)=(0,0,0)$ is our initial assumption \eqref{eq:mpy8}. We have proved in Lemma \ref{lm:fassrс} that $X^{\pm}(0,0,t)$ for all $t \in \mathbb{N}_{0}$. Note that for any $r,t \in \mathbb{N}_{0}$
	\[ 0 = [ \dbtilde{h}_{ii,r} , X^{\pm}(0,0,t) ] = \sum_{z=t}^{t+r} a_{z} X^{\pm}(0,0,z) + \]
	\[ (\alpha_{i}, \alpha_{i}) ( X^{\pm}(r,0,t) + X^{\pm}(0,r,t) ) = 2 (\alpha_{i}, \alpha_{i}) X^{\pm}(r,0,t) \Rightarrow X^{\pm}(r,0,t) = 0, \]
	where $a_z \in \Bbbk$ ($r \le z \le t+r$). Suppose that for any $r,t \in \mathbb{N}_{0}$ and for $0 \le l \le s$ ($s \in \mathbb{N}_{0}$), $X^{\pm}(r,s,t)=0$. Then
	\[ 0 = [\widetilde{h}_{i1} , X^{\pm}(r,s,t)] = (\alpha_{i}, \alpha_{j}) X^{\pm} ( r,s,t+1 ) + (\alpha_{i}, \alpha_{i}) ( X^{\pm}(r+1,s,t) + X^{\pm} (r,s+1,t) ) =  \]
	\[ (\alpha_{i}, \alpha_{i}) X^{\pm} (r,s+1,t) \Rightarrow X^{\pm} (r,s+1,t) = 0. \]
	Thus the result follows by induction hypothesis.

\end{proof}

\begin{lemma}
	The relation \eqref{eq:SYrc5} is satisfied for all $i,j \in I$ and $r,s \in \mathbb{N}_{0}$.
\end{lemma}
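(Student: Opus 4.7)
The relation \eqref{eq:SYrc5} involves only the single index $i$ (with $|\alpha_i| = \bar{1}$); the ``$j$'' appearing in the lemma's quantifier does not enter the relation itself, so the goal is to establish $[h_{i,a}, x_{i,k}^{\pm}] = 0$ for every $i \in I$ with $|\alpha_i| = \bar{1}$ and every $a, k \in \mathbb{N}_{0}$. The plan is to derive a functional equation for $F^{\pm}(a,k) := [h_{i,a}, x_{i,k}^{\pm}]$ via the super-Jacobi identity and then to deduce vanishing by an antisymmetry argument. Two ingredients from earlier lemmas are in hand: since $(\alpha_i, \alpha_i) = 0$ forces $c_{ii} = 0$, Lemma \ref{lm:bcssr} applied to \eqref{eq:bssr} gives $[x_{i,p}^{\pm}, x_{i,q}^{\pm}] = 0$ for all $p, q \in \mathbb{N}_{0}$; and Lemmas \ref{lm:cartelrelii} and \ref{lm:r233} applied to \eqref{eq:SYrc3} give $h_{i,a} = [x_{i,p}^{+}, x_{i,a-p}^{-}]$ for every $0 \le p \le a$.

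To handle $F^{+}$, I would write $h_{i,a} = [x_{i,p}^{+}, x_{i,a-p}^{-}]$ and apply super-Jacobi in the form $[[A,B],C] = [A,[B,C]] - (-1)^{|A||B|}[B,[A,C]]$ with $A = x_{i,p}^{+}$, $B = x_{i,a-p}^{-}$, $C = x_{i,k}^{+}$ (all odd). The $[B,[A,C]]$ term vanishes because $[x_{i,p}^{+}, x_{i,k}^{+}] = 0$, while the other term reduces via the graded commutativity $[x_{i,a-p}^{-}, x_{i,k}^{+}] = h_{i,a-p+k}$ to yield
\[ F^{+}(a,k) = -\,F^{+}(a+k-p,\, p) \qquad \text{for every } 0 \le p \le a. \]
Setting $p = 0$ gives $F^{+}(a,k) = -F^{+}(a+k,0)$, and the further specialization $k = 0$ forces $F^{+}(a,0) = -F^{+}(a,0)$, whence $F^{+}(a,0) = 0$ (the base field has characteristic zero); plugging back produces $F^{+}(a,k) = 0$ for all $a, k$.

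The symmetric computation for $F^{-}$ uses $C = x_{i,k}^{-}$: now $[x_{i,a-p}^{-}, x_{i,k}^{-}] = 0$ kills the first super-Jacobi term while $[x_{i,p}^{+}, x_{i,k}^{-}] = h_{i,p+k}$ yields $F^{-}(a,k) = -F^{-}(p+k,\, a-p)$ for every $0 \le p \le a$; setting $p = a$ and then $k = 0$ again forces $F^{-}(a,0) = 0$, hence $F^{-}(a,k) = 0$. The only real obstacle is careful bookkeeping of super-signs when unfolding super-Jacobi and confirming that \eqref{eq:bssr} and \eqref{eq:SYrc3} have already been established in the generality required, both of which are in hand by the preceding lemmas.
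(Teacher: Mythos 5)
Your proof is correct, and it takes a genuinely different route from the paper's. The paper argues by induction on $r+s$: for $s\ge 1$ it applies the raising operator $\widetilde{h}_{i'1}$ attached to an even neighbour $i'$ of $i$ (using Lemma \ref{lm:cartelrelinej} to kill $[h_{i,r+s-q},\widetilde{h}_{i'1}]$ and the induction hypothesis for the lower-degree bracket), and it handles the boundary case $s=0$ separately via $h_{i,q+1}=[x^{+}_{i,q+1},x^{-}_{i0}]$ together with $[x^{+},x^{+}]=0$. Your argument dispenses with the induction and with the even neighbour entirely: from $h_{i,a}=[x^{+}_{i,p},x^{-}_{i,a-p}]$ (Lemma \ref{lm:cartelrelii}), the super-Jacobi identity, $[x^{\pm}_{i,p},x^{\pm}_{i,q}]=0$ (Lemma \ref{lm:bcssr}, since $c_{ii}=0$ for odd $i$), and the fact that for two odd elements $[y,x]=[x,y]$, you get the clean functional equation $F^{\pm}(a,k)=-F^{\pm}(a+k-p,p)$, and the specialization $F^{\pm}(a,0)=-F^{\pm}(a,0)$ finishes it in characteristic zero. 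Your sign bookkeeping checks out: with $A,B,C$ all odd, $[[A,B],C]=[A,[B,C]]+[B,[A,C]]$, and $[x^{-}_{i,a-p},x^{+}_{i,k}]=[x^{+}_{i,k},x^{-}_{i,a-p}]=h_{i,a+k-p}$. The inputs you invoke (Lemmas \ref{lm:bcssr} and \ref{lm:cartelrelii}) are exactly those the paper itself cites at this point, so no new circularity is introduced; what your version buys is a shorter, induction-free argument that is also independent of the standing requirement that every odd vertex have an even neighbour (that requirement is of course still needed upstream, e.g.\ in the proof of Lemma \ref{lm:cartelrelii} itself). One cosmetic remark: Lemma \ref{lm:r233} only gives \eqref{eq:SYrc3} for $0\le r+s\le 3$, so the full-generality decomposition $h_{i,a}=[x^{+}_{i,p},x^{-}_{i,a-p}]$ rests on Lemma \ref{lm:cartelrelii} alone.
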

\begin{proof}
	We prove by induction on $r+s \in \mathbb{N}_{0}$. The case $0 \le r+s \le 1$ is our initial assumptions \eqref{eq:mpy2}, \eqref{eq:mpy6} and Lemma \ref{lm:h0eq}. Suppose that the result holds for all $1 \le l \le r+s$. Then by induction hypothesis and Lemma \ref{lm:cartelrelinej}
	\[ [ h_{i,r+s-q} , x_{i,q+1}^{\pm} ] = \pm ( \alpha_{i^{'}} , \alpha_{i} )^{-1} [ h_{i,r+s-q} , [ \widetilde{h}_{i^{'}1} , x_{iq}^{\pm}] ] = \]
	\[ \pm (-1) ( \alpha_{i^{'}} , \alpha_{i} )^{-1} ( [ \widetilde{h}_{i^{'}1} , [x_{iq}^{\pm} , h_{i,r+s-q}] ] + [ x_{iq}^{\pm} , [h_{i,r+s-q} , \widetilde{h}_{i^{'}1} ] ] ) = 0, \]
	where $0 \le q \le r+s$, $|i-i^{'}|=1$ ($i^{'} \in I$).
	Also by Lemmas \ref{lm:bcssr}, \ref{lm:cartelrelii} and \ref{lm:cartelrelinej}
	\[ [ h_{i,q+1} , x_{i,r+s-q}^{+} ] = (-1) ( [x_{i,q+1}^{+} , [ x_{i,r+s-q}^{+} , x_{i0}^{-} ] ] + [ x_{i0}^{-} , [ x_{i,r+s-q}^{+} , x_{i,q+1}^{+} ] ] ) =  \]
	\[ [ h_{i,r+s-q} ,x_{i,q+1}^{+} ] = (-1) ( \alpha_{i^{'}} , \alpha_{i} )^{-1} ( [ \widetilde{h}_{i^{'}1} , [x_{iq}^{+} , h_{i,r+s-q}] ] + [ x_{iq}^{+} , [h_{i,r+s-q} , \widetilde{h}_{i^{'}1} ] ] ) = 0, \]

	\[ [ h_{i,q+1} , x_{i,r+s-q}^{-} ] = (-1) ( [x_{i0}^{+} , [ x_{i,r+s-q}^{-} , x_{i,q+1}^{-} ] ] + [ x_{i,q+1}^{-} , [ x_{i,r+s-q}^{-} , x_{i0}^{+} ] ] ) =  \]
	\[ [ h_{i,r+s-q} ,  x_{i,q+1}^{-} ] = ( \alpha_{i^{'}} , \alpha_{i} )^{-1} ( [ \widetilde{h}_{i^{'}1} , [x_{iq}^{-} , h_{i,r+s-q}] ] + [ x_{iq}^{-} , [h_{i,r+s-q} , \widetilde{h}_{i^{'}1} ] ] ) = 0, \]
	where $0 \le q \le r+s$, $|i-i^{'}|=1$ ($i^{'} \in I$). The result follows by induction hypothesis.
\end{proof}

\begin{lemma}
	\label{lm:leqmp}
	The relation \eqref{eq:qssr} is satisfied for all $j \in I$ and $r,s \in \mathbb{N}_{0}$.
\end{lemma}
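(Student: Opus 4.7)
The plan is to prove $X^{\pm}(r,s) := [[x_{j-1,r}^{\pm}, x_{j,0}^{\pm}], [x_{j,0}^{\pm}, x_{j+1,s}^{\pm}]] = 0$ for all $r,s \in \mathbb{N}_{0}$ by induction on $r+s$. The base case $(r,s)=(0,0)$ is precisely the defining minimalistic relation \eqref{eq:mpy9}. Since \eqref{eq:SYrc2} has now been established for all indices in Lemmas \ref{lm:hxinej} and \ref{lm:cartelrelii}, the shift formula $[\widetilde{h}_{m,1}, x_{i,t}^{\pm}] = \pm(\alpha_{m},\alpha_{i}) x_{i,t+1}^{\pm}$ is available except when $m = i$ and $|\alpha_{i}| = \bar{1}$.

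For the inductive step, choose $m \neq j$ (thereby avoiding the super-exception, since $|\alpha_{j}|=\bar{1}$) and expand $[\widetilde{h}_{m,1}, X^{\pm}(r,s)]$ using the even super-derivation Leibniz rule:
\begin{equation*}
[\widetilde{h}_{m,1}, X^{\pm}(r,s)] = \pm(\alpha_{m},\alpha_{j-1}) X^{\pm}(r+1,s) \pm (\alpha_{m},\alpha_{j}) Z^{\pm}(r,s) \pm (\alpha_{m},\alpha_{j+1}) X^{\pm}(r,s+1),
\end{equation*}
where $Z^{\pm}(r,s) := [[x_{j-1,r}^{\pm}, x_{j,1}^{\pm}], [x_{j,0}^{\pm}, x_{j+1,s}^{\pm}]] + [[x_{j-1,r}^{\pm}, x_{j,0}^{\pm}], [x_{j,1}^{\pm}, x_{j+1,s}^{\pm}]]$ gathers the mixed middle-index terms. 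The core idea is to select $m$ so that $(\alpha_{m},\alpha_{j}) = 0$ and thus eliminate $Z^{\pm}(r,s)$. If $j-2 \in I$, taking $m = j-2$ yields $(\alpha_{j-2},\alpha_{j-1}) = \pm 1$ with $(\alpha_{j-2},\alpha_{j}) = (\alpha_{j-2},\alpha_{j+1}) = 0$, and the inductive hypothesis $X^{\pm}(r,s)=0$ forces $X^{\pm}(r+1,s)=0$. Symmetrically, if $j+2 \in I$ the choice $m=j+2$ forces $X^{\pm}(r,s+1)=0$. Since $|I| \ge 4$ under our constraints, one of $j \pm 2$ always lies in $I$.

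The main obstacle is the boundary case where only one of $j \pm 2$ belongs to $I$, i.e., $j=2$ or $j = m+n-2$. For $j=2$, the choice $m=4$ bumps $s$ and delivers $X^{\pm}(0,s) = 0$ for every $s$, but does not bump $r$. Here we invoke the Dynkin constraint: since $j=2$ is odd, vertex $1$ must be even, hence $|\alpha_{1}| = \bar{0}$, and we may apply $[\widetilde{h}_{1,1},\cdot]$, which produces
\begin{equation*}
\pm 2\, X^{\pm}(r+1,s) \pm (\alpha_{1},\alpha_{2})\, Z^{\pm}(r,s) = 0
\end{equation*}
(using $(\alpha_{1},\alpha_{3}) = 0$). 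Next, the already-established \eqref{eq:SYrc4} for $i\neq j$ (Lemma \ref{lm:SYrc34}) allows the substitutions $[x_{1,r}^{\pm}, x_{2,1}^{\pm}] = [x_{1,r+1}^{\pm}, x_{2,0}^{\pm}] \mp \frac{c_{1,2}\hbar}{2}\{x_{1,r}^{\pm}, x_{2,0}^{\pm}\}$ and its $(j,j+1)$ analogue, whereby $Z^{\pm}(r,s)$ is rewritten as $X^{\pm}(r+1,s) + X^{\pm}(r,s+1)$ plus anticommutator corrections. Substituting back gives a linear equation solvable for $X^{\pm}(r+1,s)$, using both the induction hypothesis and the already-shown $X^{\pm}(r,s+1)=0$. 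The case $j=m+n-2$ is handled symmetrically via the dual Dynkin constraint $|\alpha_{m+n-1}| = \bar{0}$.

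The hardest technical point is verifying that the anticommutator correction terms appearing in $Z^{\pm}(r,s)$ reduce, via the super-Jacobi identity and the relations \eqref{eq:bssr}, \eqref{eq:cssr}, \eqref{eq:SYrc3}, \eqref{eq:SYrc4} (all established for arbitrary indices in Lemmas \ref{lm:SYrc34} and \ref{lm:bcssr}), to expressions of strictly smaller total weight in $r+s$, so that they vanish by the inductive hypothesis together with the lower-order quartic vanishing $X^{\pm}(r',s')=0$ already secured.
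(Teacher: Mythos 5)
Your core inductive step --- applying $[\widetilde{h}_{j\pm2,1},\cdot]$ so that the pairings $(\alpha_{j\pm2},\alpha_{j})=0$ kill the mixed middle term $Z^{\pm}(r,s)$ outright --- is correct and is in fact a cleaner variant of what the paper does: the paper instead applies three operators $[\widetilde{h}_{m1},\cdot]$, $[\widetilde{h}_{n1},\cdot]$, $[\widetilde{h}_{k1},\cdot]$, treats $X^{\pm}(r+1,0,0,s)$, $X^{\pm}(r,1,0,s)+X^{\pm}(r,0,1,s)$ and $X^{\pm}(r,0,0,s+1)$ as three unknowns, and chooses $m,n,k$ (case by case on the parities of $\alpha_{j\pm1}$) so that the resulting $3\times3$ matrix of root pairings is invertible; the mixed term is then eliminated by linear algebra and never has to be computed. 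Where both $j-2$ and $j+2$ lie in $I$ your argument is complete and arguably more economical.

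However, there are two genuine gaps in exactly the situations where your shortcut is unavailable. First, the claim that ``$|I|\ge 4$ under our constraints'' is false: the constraints stated before Theorem \ref{th:mpy} exclude $|I|=3$ only when $|\alpha_{1}|=|\alpha_{3}|$, so the case $|I|=3$ with $|\alpha_{1}|\ne|\alpha_{3}|$ must be handled, and there $j=2$ and neither $j-2$ nor $j+2$ lies in $I$. (The paper covers it by taking $m=j-1$, $n=j$, $k=j+1$, for which $\det(A)=-((\alpha_{j-1},\alpha_{j-1})+(\alpha_{j+1},\alpha_{j+1}))\ne 0$ precisely because the two outer parities differ.) Second, in the boundary case $j=2$ with $|I|\ge 4$: (i) the standing Dynkin hypothesis only guarantees that \emph{some} neighbour of the odd vertex $j$ is even, so you may not conclude $|\alpha_{1}|=\bar{0}$; and (ii) even granting that, rewriting $Z^{\pm}(r,s)$ via \eqref{eq:SYrc4} produces anticommutator corrections such as $[\{x_{j-1,r}^{\pm},x_{j,0}^{\pm}\},[x_{j,0}^{\pm},x_{j+1,s}^{\pm}]]$, which are \emph{not} of the quartic form $X^{\pm}(r',s')$, so the assertion that they vanish ``by the inductive hypothesis together with the lower-order quartic vanishing'' is unsupported --- no cancellation mechanism is exhibited, and these terms carry the same total degree as $X^{\pm}(r+1,s)$ once the factor of $\hbar$ is counted. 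The fix is to abandon the explicit rewriting of $Z^{\pm}$ and, as in the paper, adjoin a third Cartan element (e.g.\ $m=j+2$ or $m=j-2$ together with $n=j$ and $k=j+1$, chosen according to the parities of $\alpha_{j\pm1}$) so that the system of three linear equations determines $X^{\pm}(r+1,s)$ and $X^{\pm}(r,s+1)$ without ever evaluating the mixed term.
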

\begin{proof}
	Let $X^{\pm}(r,0,0,s)$ be the left-hand side of \eqref{eq:qssr}. We prove by induction on $r$ and $s \in \mathbb{N}_{0}$. The initial case $(r,0,0,s)=(0,0,0,0)$ is our initial assumption \eqref{eq:mpy9}. Note that if we apply $[\widetilde{h}_{m1} , \cdot]$, $[\widetilde{h}_{n1} , \cdot]$ and $[\widetilde{h}_{k1} , \cdot]$ ($m,n,k \in I$) to $X^{\pm}(r,0,0,s)$ we get from \eqref{eq:h1eq}
	\[ 0 = (\alpha_{m}, \alpha_{j-1}) X^{\pm}(r+1,0,0,s) + (\alpha_{m}, \alpha_{j}) ( X^{\pm}(r,1,0,s) + X^{\pm}(r,0,1,s) ) + (\alpha_{m}, \alpha_{j+1}) X^{\pm}(r,0,0,s+1), \]
	\[ 0 = (\alpha_{n}, \alpha_{j-1}) X^{\pm}(r+1,0,0,s) + (\alpha_{n}, \alpha_{j}) ( X^{\pm}(r,1,0,s) + X^{\pm}(r,0,1,s) ) + (\alpha_{n}, \alpha_{j+1}) X^{\pm}(r,0,0,s+1), \]
	\[ 0 = (\alpha_{k}, \alpha_{j-1}) X^{\pm}(r+1,0,0,s) + (\alpha_{k}, \alpha_{j}) ( X^{\pm}(r,1,0,s) + X^{\pm}(r,0,1,s) ) + (\alpha_{k}, \alpha_{j+1}) X^{\pm}(r,0,0,s+1). \]
	In order to determine when the determinant of the matrix
	\[ A = \begin{pmatrix}(\alpha_{m},\alpha_{j-1}) & (\alpha_{m},\alpha_{j}) & (\alpha_{m},\alpha_{j+1})\\ (\alpha_{n},\alpha_{j-1}) & (\alpha_{n},\alpha_{j}) & (\alpha_{n},\alpha_{j+1}) \\ (\alpha_{k},\alpha_{j-1}) & (\alpha_{k},\alpha_{j}) & (\alpha_{k},\alpha_{j+1}) \end{pmatrix}\]
	is nonzero depending on the grading of roots it is sufficient to consider the following Dynkin (sub)diagrams:

		\begin{tikzpicture}[scale=2]
			\tikzset{/Dynkin diagram,root radius=.07cm}
			\dynkin[labels={\alpha_{j-1},\alpha_{j},\alpha_{j+1}}] A{xtx}
		\end{tikzpicture}

		First suppose that $|I|=3$. Then select $m=j-1, n=j, k=j+1$. Then $\det(A) = - ( ( \alpha_{j-1} , \alpha_{j-1}) + ( \alpha_{j+1} , \alpha_{j+1} ) )$. In cases $|\alpha_{j-1}|=|\alpha_{j+1}|$ we get $\det(A)=0$. That's why we don't consider these Dynkin diagrams. In other cases the determinant is nonzero.

		If $|I|>3$ then there could be only the following Dynkin (sub)diagrams:

		\begin{enumerate}
		\item
		\begin{tikzpicture}[scale=2]
			\tikzset{/Dynkin diagram,root radius=.07cm}
			\dynkin[labels={\alpha_{j-2},\alpha_{j-1},\alpha_{j},\alpha_{j+1}}] A{xxtx}
		\end{tikzpicture}

		$|\alpha_{j-1}|=|\alpha_{j+1}|=\bar{0} \Rightarrow m=j-2, n=j, k=j+1$;

		$|\alpha_{j-1}|=\bar{1}, |\alpha_{j+1}|=\bar{0} \Rightarrow m=j-1, n=j, k=j+1$;

		$|\alpha_{j-1}|=\bar{0}, |\alpha_{j+1}|=\bar{1} \Rightarrow m=j-1, n=j, k=j+1$;

		$|\alpha_{j-1}|=|\alpha_{j+1}|=\bar{1} \Rightarrow m=j-2, n=j, k=j+1$.

		\item
		\begin{tikzpicture}[scale=2]
			\tikzset{/Dynkin diagram,root radius=.07cm}
			\dynkin[labels={\alpha_{j-1},\alpha_{j},\alpha_{j+1},\alpha_{j+2}}] A{xtxx}
		\end{tikzpicture}

		$|\alpha_{j-1}|=|\alpha_{j+1}|=\bar{0} \Rightarrow m=j+2, n=j, k=j+1$;

		$|\alpha_{j-1}|=\bar{1}, |\alpha_{j+1}|=\bar{0} \Rightarrow m=j-1, n=j, k=j+1$;

		$|\alpha_{j-1}|=\bar{0}, |\alpha_{j+1}|=\bar{1} \Rightarrow m=j-1, n=j, k=j+1$;

		$|\alpha_{j-1}|=|\alpha_{j+1}|=\bar{1} \Rightarrow m=j+2, n=j, k=j+1$.

	\end{enumerate}
	When the determinant is nonzero, we have $X^{\pm}(r+1,0,0,s)=X^{\pm}(r,0,0,s+1)=0$. The result follows by induction hypothesis.

\end{proof}

\begin{proof}[Proof of Theorem \ref{th:mpy}]
	Proof of the theorem follows from Remark \ref{rm:addrel} and Lemmas \ref{lm:h0eq} - \ref{lm:leqmp}.
\end{proof}

\vspace{1cm}

\subsection{The quantum loop superalgebra}

\label{sect:QLS}

We will use the following definition of a quantum loop superalgebra in terms of a system of generators analogous to the Drinfeld new system of generators \cite{Y99, T19b}.

Let $U_{\hbar}(L\mathfrak{g}_d)$ be the unital, associative Hopf superalgebra over  $\Bbbk[[\hbar]]$ topologically generated by $\{E_{i,k,d}, F_{i,k,d}, H_{i,k,d}\}_{i \in I, k \in \mathbb{Z}}$ with the $\mathbb{Z}_2$-grading $|H_{i,r,d}|=0$, $|E_{i,r,d}|=|F_{i,r,d}|=|\alpha_{i,d}|$, and subject to the following defining relations:

\noindent 1) For any  $i, j \in I$, $r,s \in \mathbb{Z}$
\begin{equation}
	[H_{i,r}, H_{j,s}] = 0. \label{QL1}
\end{equation}
2) For any $i, j \in I$, $k \in \mathbb{Z}$
\begin{equation}
	[H_{i,0}, E_{j,k}] = c^d_{i,j}E_{j,k}, \quad [H_{i,0}, F_{j,k}] = -c^d_{i,j}F_{j,k}. \label{QL2}
\end{equation}
3) For any $i, j \in I$ and $r, k \in \mathbb{Z}\backslash\{0\}$
\begin{equation}
	[H_{i,r}, E_{j,k}] = \frac{[rc^d_{i,j}]_{q}}{r}E_{j,r+k}, \quad [H_{i,r}, F_{j,k}] = -\frac{[rc^d_{i,j}]_{q}}{r}F_{j,r+k}. \label{QL3}
\end{equation}
4) For any $i, j \in I$ and $k, l  \in \mathbb{Z}$ ($c_{ij}^{d} \ne 0$)
\begin{eqnarray}
	&E_{i,k+1}E_{j,l} - (-1)^{|\alpha_{i,d}||\alpha_{j,d}|} q^{c^d_{ij}}E_{j,l}E_{i,k+1} = (-1)^{|\alpha_{i,d}||\alpha_{j,d}|} q^{c^d_{ij}}E_{i,k}E_{j,l+1} -  E_{j,l+1}E_{i,k},  \nonumber \quad \\
	&F_{i,k+1}F_{j,l} - (-1)^{|\alpha_{i,d}||\alpha_{j,d}|} q^{-c^d_{ij}}F_{j,l}F_{i,k+1} =  (-1)^{|\alpha_{i,d}||\alpha_{j,d}|} q^{-c^d_{ij}}F_{i,k}F_{j,l+1} -  F_{j,l+1}F_{i,k}, \quad \label{QL4}
\end{eqnarray}
5) For any $i, j \in I$, $k, l  \in \mathbb{Z}$
\begin{equation}
	[E_{i,k}, F_{j,l}] = \delta_{i,j} \frac{\psi_{i, k+l} - \varphi_{i, k+l}}{q - q^{-1}}.
	\label{QL5}
\end{equation}
6) For any $i, j \in I$, $k, l  \in \mathbb{Z}$, if $c^d_{ij}=0$, then
\begin{equation}
	[E_{i,k}, E_{j,l}] = 0, \quad [F_{i,k}, F_{j,l}] = 0.
	\label{QL6}
\end{equation}
7) For any $i,j \in I$, $r,k,l \in \mathbb{Z}$, if $c^d_{ij} \ne 0$ and $|\alpha_{i,d}|=\bar{0}$, then
\begin{eqnarray}
	&[ E_{i,r}, [ E_{i,k}, E_{j,l} ]_{q^{c^d_{ij}}} ]_{q^{-c^d_{ij}}} + [ E_{i,k}, [ E_{i,r}, E_{j,l} ]_{q^{c^d_{ij}}} ]_{q^{-c^d_{ij}}} = 0, \nonumber \quad \\
	&[ F_{i,r}, [ F_{i,k}, F_{j,l} ]_{q^{-c^d_{ij}}} ]_{q^{c^d_{ij}}} + [ F_{i,k}, [ F_{i,r}, F_{j,l} ]_{q^{-c^d_{ij}}} ]_{q^{c^d_{ij}}} = 0.
	\label{QL7}
\end{eqnarray}
8) For any $i \in I$, $r,s,k,l \in \mathbb{Z}$, if $|\alpha_{i,d}|=\bar{1}$, then
\begin{eqnarray}
	&[[[E_{i-1,r}, E_{i,s}]_{q^{c^d_{ij}}} , E_{i+1,k}]_{q^{-c^d_{ij}}}, E_{i,l}] + [[[E_{i-1,r}, E_{i,l}]_{q^{c^d_{ij}}} , E_{i+1,k}]_{q^{-c^d_{ij}}}, E_{i,s}] = 0, \nonumber \quad \\
	&[[[F_{i-1,r}, F_{i,s}]_{q^{-c^d_{ij}}} , F_{i+1,k}]_{q^{c^d_{ij}}}, F_{i,l}] + [[[F_{i-1,r}, F_{i,l}]_{q^{-c^d_{ij}}} , F_{i+1,k}]_{q^{c^d_{ij}}}, F_{i,s}] = 0.
	\label{QL8}
\end{eqnarray}

Here the elements  $\psi_{i,r}, \varphi_{i,r}$ are defined by the following formulas:
\begin{equation}
	\psi_i(z) = \sum_{r \geq 0} \psi_{i,r}z^{-r} = \exp(\frac{\hbar}{2}H_{i,0}) \exp((q - q^{-1})\sum_{s \geq 1} H_{i,s} z^{-s}), \label{eq:qlelpsi}
\end{equation}
\begin{equation}
	\varphi_i(z) = \sum_{r \geq 0} \varphi_{i,-r}z^{r} = \exp(-\frac{\hbar}{2}H_{i,0}) \exp(-(q - q^{-1})\sum_{s \geq 1} H_{i,-s} z^{s}), \label{eq:qlelphi}
\end{equation}
with  $\psi_{i,-k} = \varphi_{i,k} = 0$ for every $k \geq 1$.

Also $[a,b]_{x} = ab - (-1)^{|a||b|} x b a$ for homogeneous $a$ and $b$.

\begin{remark}
	\begin{enumerate}
		\item Relations \eqref{QL4} are equivalent to
		\[ [ E_{i,k+1} , E_{j,l} ]_{q^{c_{ij}^d}} = - [ E_{j,l+1} , E_{i,k} ]_{q^{c_{ij}^{d}}}, \]
		and
		\[ [ F_{i,k+1} , F_{j,l} ]_{q^{-c_{ij}^d}} = - [ F_{j,l+1} , F_{i,k} ]_{q^{-c_{ij}^{d}}} \]
		respectively. If $c_{ij}^{d} = 0$ then the equations follow by \eqref{QL6}.
		\item Relations \eqref{QL7} are also valid for $|\alpha_{i,d}| = \bar{1}$, but in that case, they already follow from \eqref{QL6}.
		\item Relations \eqref{QL8} are also valid for $|\alpha_{i,d}| = \bar{0}$, but in that case, they already follow from eqs. \eqref{QL4}, \eqref{QL6} and \eqref{QL7}.
	\end{enumerate}
\end{remark}

Let $\widetilde{U}^{0}_{\hbar}(L\mathfrak{g}_d)$ be the polynomial super ring on the generators $ \{ H_{ir} \}_{i \in I, r \in \mathbb{Z}}$. Let $ U_{ \hbar}(L \mathfrak{b}_{+}) $ and $ U_{ \hbar}(L \mathfrak{b}_{-}) $ be the subalgebras generated respectively by $ \{ E_{ir} , H_{ir} \}_{i \in I, r \in \mathbb{Z}} $ and $ \{ F_{ir} , H_{ir} \}_{i \in I, r \in \mathbb{Z}} $. Denote by $U_{\hbar}(L \mathfrak{sl}(2|1)^{i})$ the subalgebra generated by $\{ E_{ir}, F_{ir}, H_{ir} \}_{r \in \mathbb{Z}}$ for a fixed $i \in I$.

Recall that $L \mathfrak{g} = \mathfrak{g} \otimes \Bbbk [z, z^{-1}]$. Let $\mathcal{J} \subset U_{\hbar}( L \mathfrak{g} )$ be the kernel of the composition
\[ U_{\hbar}( L \mathfrak{g} ) \xrightarrow{\hbar \to 0} U( L \mathfrak{g} ) \xrightarrow{z \to 1} U(\mathfrak{g}) \]
and let
\[ \widehat{ U_{\hbar}( L \mathfrak{g} ) } = \mathop{\lim_{\longleftarrow}}_{ n \in \mathbb{N} } U_{\hbar} ( L \mathfrak{g} ) / \mathcal{J}^{n} \]
be the completion of $U_{\hbar}( L \mathfrak{g} )$ with respect to the ideal $\mathcal{J}$.

\begin{proposition}
	$U_{\hbar}(L\mathfrak{g}_d)$ is generated by elements $\{ H_{i,0}, H_{i,\pm 1} , E_{i,0}, F_{i,0} \}_{i, \in I}$.
\end{proposition}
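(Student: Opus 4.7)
Let $A \subseteq U_\hbar(L\mathfrak{g}_d)$ be the subalgebra generated by $\{H_{i,0}, H_{i,\pm 1}, E_{i,0}, F_{i,0}\}_{i\in I}$. Since $U_\hbar(L\mathfrak{g}_d)$ is topologically generated by the full set $\{E_{j,k}, F_{j,k}, H_{j,k}\}_{j\in I, k\in\mathbb{Z}}$, it suffices to show that all of these already lie in $A$. The plan has two inductive steps: first produce all root vectors $E_{j,k}, F_{j,k}$, and then extract the higher Cartan generators $H_{i,r}$ from commutators of root vectors by decoding the exponential relations \eqref{eq:qlelpsi} and \eqref{eq:qlelphi}.

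\textbf{Step 1 (root vectors).} For each $j \in I$ I choose an auxiliary $i = i(j) \in I$ with $c^d_{i,j} \neq 0$: take $i = j$ if $|\alpha_{j,d}| = \bar 0$ (so $c^d_{jj} = \pm 2$); otherwise take $i = j\pm 1$, using that for adjacent simple roots of super type $A$ one has $(\alpha_{j\pm 1,d}, \alpha_{j,d}) = -(\epsilon_{j,d},\epsilon_{j,d}) = \pm 1 \neq 0$, which is possible whenever $|I| \ge 2$. Relation \eqref{QL3} with $r = \pm 1$ then reads
\[
[H_{i,\pm 1}, E_{j,k}] = [c^d_{i,j}]_q\, E_{j,k\pm 1},\qquad [H_{i,\pm 1}, F_{j,k}] = -[c^d_{i,j}]_q\, F_{j,k\pm 1},
\]
where I have used $[-n]_q/(-1) = [n]_q$. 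Since $[c^d_{i,j}]_q = c^d_{i,j} + O(\hbar^2)$ is a unit of $\Bbbk[[\hbar]]$, a straightforward induction on $|k|$ starting from the base case $k = 0$ places every $E_{j,k}$ and $F_{j,k}$ inside $A$.

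\textbf{Step 2 (Cartan generators).} For $r \geq 1$, relation \eqref{QL5} with $(k,l) = (r,0)$ together with $\varphi_{i,r} = 0$ gives $\psi_{i,r} = (q - q^{-1})[E_{i,r}, F_{i,0}] \in A$ by Step 1. Expanding \eqref{eq:qlelpsi} and comparing coefficients of $z^{-r}$ yields
\[
\psi_{i,r} = \psi_{i,0}\bigl((q-q^{-1})\, H_{i,r} + (q-q^{-1})^2\, Q_r(H_{i,1},\ldots,H_{i,r-1})\bigr)
\]
for an explicit polynomial $Q_r$, the key observation being that every summand in the expansion of $\exp((q-q^{-1})\sum_{s \geq 1} H_{i,s} z^{-s})$ of multiplicative degree $\geq 2$ in the $H_{i,s}$ carries at least two factors of $(q-q^{-1})$, and involves only indices $s \le r-1$. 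Since $\psi_{i,0} = \exp(\hbar H_{i,0}/2)$ is a unit in $A$, we can solve for
\[
H_{i,r} = \psi_{i,0}^{-1}\,[E_{i,r}, F_{i,0}] - (q-q^{-1})\, Q_r(H_{i,1},\ldots,H_{i,r-1}),
\]
which by induction on $r$ lies in $A$. The case $r \leq -1$ is handled symmetrically using $[E_{i,0}, F_{i,r}] = -\varphi_{i,r}/(q-q^{-1})$ and the expansion of $\varphi_i(z)$ from \eqref{eq:qlelphi}.

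\textbf{Main obstacle.} The delicate point is the division by $q - q^{-1}$ in Step 2: although $q - q^{-1}$ is not invertible in $\Bbbk[[\hbar]]$, the extraction of $H_{i,r}$ is legitimate because in the exponential expansion every nonlinear contribution carries an extra factor of $(q-q^{-1})$. Verifying this factorization carefully — and thereby confirming that $Q_r$ involves only $H_{i,s}$ with $s < r$, so that the induction closes — is where the main care is required. Once this is granted, Steps 1 and 2 combine to give $A = U_\hbar(L\mathfrak{g}_d)$.
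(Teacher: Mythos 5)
Your proposal is correct and follows essentially the same two-step route as the paper: first use \eqref{QL3} with $H_{i,\pm1}$ acting on $E_{j,0},F_{j,0}$ to generate all $E_{j,k},F_{j,k}$, then invert the exponential in \eqref{eq:qlelpsi}--\eqref{eq:qlelphi} via \eqref{QL5} to recover the higher $H_{i,r}$ inductively. Two points where your write-up is actually sharper than the paper's: (i) you make explicit the choice of an auxiliary index $i(j)$ with $c^d_{i,j}\neq 0$ (needed when $|\alpha_{j,d}|=\bar 1$, since then $c^d_{jj}=0$) and the invertibility of $[c^d_{i,j}]_q$, which the paper leaves implicit; and (ii) in Step 2 you correctly use $\varphi_{i,k}=0$ for $k\ge 1$, so that the identity $\psi_{i,r}=(q-q^{-1})[E_{i,r},F_{i,0}]$ has a common factor of $(q-q^{-1})$ on both sides and the cancellation is legitimate (by $\hbar$-torsion-freeness, which you should state explicitly), whereas the paper substitutes $\varphi_{i,0}$ for $\varphi_{i,k}$ and consequently its displayed formula for $H_{i,k}$ contains a spurious term $\frac{1}{q-q^{-1}}\exp(-\hbar H_{i,0})$ with the non-invertible element $q-q^{-1}$ in the denominator; your version removes this defect. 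The same remark applies to the negative modes, where the paper's $[E_{i,0},F_{i,l}]=\frac{\psi_{i,0}-\varphi_{i,l}}{q-q^{-1}}$ conflates $F_{i,l}$ and $F_{i,-l}$, while your $[E_{i,0},F_{i,r}]=-\varphi_{i,r}/(q-q^{-1})$ for $r\le -1$ is the correct reading of \eqref{QL5}.
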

\begin{proof}
	Note that using $H_{i,\pm 1}, E_{i,0}, F_{i,0}$ and equations \eqref{QL3} we can easy build elements $\{ E_{i,r}, F_{i,r} \}_{i \in I}^{r \in \mathbb{Z} \setminus \{0\}}$.
	
	Consider the equation \eqref{eq:qlelpsi}. We have
	\[ \sum_{r \geq 0} \psi_{i,r} z^{-r} = \exp(\frac{\hbar}{2}H_{i,0}) \exp((q - q^{-1})\sum_{s \geq 1} H_{i,s} z^{-s}) = \]
	\[ \exp(\frac{\hbar}{2}H_{i,0}) \sum_{n \ge 0} \frac{(q-q^{-1})^{n}}{n!} ( \sum_{s \ge 1} H_{is} z^{-s} )^{n} = \]
	\[ \exp(\frac{\hbar}{2}H_{i,0}) ( 1 + \sum_{n \ge 1} \frac{(q-q^{-1})^{n}}{n!} \sum_{m \ge n} \sum_{ \substack{ 1 \le r_{1}, r_{2}, ... , r_{n} \le m, \\ r_{1} + r_{2} + ... + r_{n} = m } } H_{i,r_{1}} H_{i,r_2} ... H_{i,r_n} z^{-m} ) = \]
	\[ \exp(\frac{\hbar}{2}H_{i,0}) ( 1 + \sum_{n \ge 1} \frac{(q-q^{-1})^{n}}{n!} \sum_{m \ge n} \sum_{ \substack{ 1 \le r_{1} \le r_{2} \le ... \le r_{n} \le m, \\ r_{1} + r_{2} + ... + r_{n} = m } } C(r_1, r_2, ... , r_{n}) H_{i,r_{1}} H_{i,r_2} ... H_{i,r_n} z^{-m} ) = \]
	\[ \exp(\frac{\hbar}{2}H_{i,0}) ( 1 + \sum_{n \ge 1} \sum_{ \substack{ \text{partition of } n \\ \lambda = \{ \lambda_{1} , \lambda_{2} , ... , \lambda_{m} \} , \\ \lambda_{1} \ge \lambda_{2} 
			\ge ... \ge \lambda_{m} \ge 1 } } \frac{(q-q^{-1})^{m}}{m!} C(\lambda_{1} , \lambda_{2} , ... , \lambda_{m} ) H_{i,\lambda_{1}} , H_{i,\lambda_{2}} , ... , H_{i,\lambda_{m}} z^{-n} ), \]
	where $C(r_1, r_2, ... , r_{n}) \in \mathbb{N}$ for $r_1, r_{2}, ... , r_{n} \in \mathbb{N}$. Thus
	\[ \psi_{i,0} = \exp(\frac{\hbar}{2}H_{i,0}), \]
	\[ \psi_{i,r} = \exp(\frac{\hbar}{2}H_{i,0}) \sum_{ \substack{ \text{partition of } r \\ \lambda = \{ \lambda_{1} , \lambda_{2} , ... , \lambda_{m} \} , \\ \lambda_{1} \ge \lambda_{2} \ge ... \ge \lambda_{m} \ge 1 } } \frac{(q-q^{-1})^{m}}{m!} C(\lambda_{1} , \lambda_{2} , ... , \lambda_{m} ) H_{i,\lambda_{1}} , H_{i,\lambda_{2}} , ... , H_{i,\lambda_{m}} \]
	for all $r \in \mathbb{N}$.
	
	Using the equation \eqref{eq:qlelphi} we get by analogy that
	\[ \varphi_{i,0} = \exp(-\frac{\hbar}{2}H_{i,0}), \]
	\[ \varphi_{i,-r} = \exp(-\frac{\hbar}{2}H_{i,0}) \sum_{ \substack{ \text{partition of } r \\ \lambda = \{ \lambda_{1} , \lambda_{2} , ... , \lambda_{m} \} , \\ \lambda_{1} \ge \lambda_{2} \ge ... \ge \lambda_{m} \ge 1 } } (-1)^{m} \frac{ (q-q^{-1})^{m}}{m!} C(\lambda_{1} , \lambda_{2} , ... , \lambda_{m} ) H_{i,-\lambda_{1}} , H_{i,-\lambda_{2}} , ... , H_{i,-\lambda_{m}} \]
	for all $r \in \mathbb{N}$.
	
	Recall the equation \eqref{QL5}. Thus for all $k \in \mathbb{N}$
	\[ [ E_{i,k} , F_{i,0} ] =  \frac{\psi_{i,k} - \varphi_{i,0} }{q- q^{-1}} \iff \]
	\[ \psi_{i,k} = (q- q^{-1}) [ E_{i,k} , F_{i,0} ] + \exp(-\frac{\hbar}{2}H_{i,0}) \iff \]
	\[ H_{i,k} =  \exp(-\frac{\hbar}{2}H_{i,0}) [ E_{i,k} , F_{i,0} ] + \frac{1}{q-q^{-1}} \exp(- \hbar H_{i,0}) + \]
	\[ (-1) \sum_{ \substack{ \text{partition of } k \\ \lambda = \{ \lambda_{1} , \lambda_{2} , ... , \lambda_{m} \} , \\ k > \lambda_{1} \ge \lambda_{2} \ge ... \ge \lambda_{m} \ge 1 } } \frac{(q-q^{-1})^{m-1}}{m!} C(\lambda_{1} , \lambda_{2} , ... , \lambda_{m} ) H_{i,\lambda_{1}} , H_{i,\lambda_{2}} , ... , H_{i,\lambda_{m}}. \]
	In this way we are able to build elements $\{ H_{i,r} \}_{i \in I}^{r \ge 2}$.
	
	On the other hand we have for all $l \in \mathbb{N}$
	\[ [ E_{i,0} , F_{i,l} ] =  \frac{\psi_{i,0} - \varphi_{i,l} }{q- q^{-1}} \iff \]
	\[ \varphi_{i,l} = \exp(\frac{\hbar}{2}H_{i,0}) - (q- q^{-1}) [ E_{i,0} , F_{i,l} ] \iff \]
	\[ H_{i,-l} = \exp(\frac{\hbar}{2}H_{i,0}) [ E_{i,0} , F_{i,l} ] -  \frac{1}{q-q^{-1}} \exp(\hbar H_{i,0}) + \]
	\[  \sum_{ \substack{ \text{partition of } l \\ \lambda = \{ \lambda_{1} , \lambda_{2} , ... , \lambda_{m} \} , \\ l > \lambda_{1} \ge \lambda_{2} \ge ... \ge \lambda_{m} \ge 1 } } (-1)^{m} \frac{(q-q^{-1})^{m-1}}{m!} C(\lambda_{1} , \lambda_{2} , ... , \lambda_{m} ) H_{i,-\lambda_{1}} , H_{i,-\lambda_{2}} , ... , H_{i,-\lambda_{m}}. \]
	In this way we are able to build elements $\{ H_{i,-r} \}_{i \in I}^{r \ge 2}$. The result follows.
\end{proof}

\vspace{1cm}

\section{Comultplication on Yangians and their completion}

In this Section we explicitly describe Hopf superalgebra structure on Yangians and their completions.

\subsection{Hopf superalgebra structure on Yangians}
\label{sub:hssy}

Let
\[ \Omega^{+} = \sum_{k \in I} h^{(k)} \otimes h^{(k)} + \sum_{ \alpha \in \Delta^{+}} x_{\alpha}^{-} \otimes x_{\alpha}^{+} \]
be the "half" of the Casimir operator $\Omega$ \eqref{eq:CE}. Let $\square$ be the operator defined by $\square(X) = X \otimes 1 + 1 \otimes X$. It satisfies $\square([X, Y]) = [\square(X), \square(Y)]$ for all $X, Y \in Y_{\hbar}(\mathfrak{g})$.

Note the following properties of $\Omega^{+}$ (\cite[Lemma 4.2]{GNW18} in not super case)
\begin{lemma}
	We have
	\[ [ \square(h_{i0}) , \Omega^{+} ] = 0, \]
	\[ [ \square(x_{i0}^{+}) , \Omega^{+} ] = - x_{i0}^{+} \otimes h_{i0},  \]
	\[ [ \square(x_{i0}^{-}) , \Omega^{+} ] = h_{i0} \otimes x_{i0}^{-}, \]
	for all $i \in I$.
\end{lemma}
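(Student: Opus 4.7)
The plan is to exploit the $\mathfrak{g}_d$-invariance of the full Casimir $\Omega_d$ stated in the preliminaries, namely $[\square(X), \Omega_d] = 0$ for all $X \in \mathfrak{g}_d$. Splitting
\[ \Omega_d = \Omega^+ + \Omega^-, \qquad \Omega^- := \Omega_d - \Omega^+ = \sum_{\alpha \in \Delta^+}(-1)^{|\alpha|}\, x_\alpha^+ \otimes x_\alpha^-, \]
reduces each of the three identities to computing $-[\square(X), \Omega^-]$; the advantage is that $\Omega^-$ contains no Cartan summand and is conveniently organized by weight.

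For $X = h_{i0}$ the computation will be immediate. Each summand $x_\alpha^+ \otimes x_\alpha^-$ of $\Omega^-$ carries weights $\alpha$ and $-\alpha$ in the two tensor slots, so
\[ [\square(h_{i0}),\, x_\alpha^+ \otimes x_\alpha^-] = (\alpha_i,\alpha)\, x_\alpha^+ \otimes x_\alpha^- - (\alpha_i,\alpha)\, x_\alpha^+ \otimes x_\alpha^- = 0, \]
which settles the first identity after invoking invariance.

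For $X = x_{i0}^+$ I will expand $[\square(x_{i0}^+), \Omega^-]$ using the standard rules $[x \otimes 1, a \otimes b] = [x,a] \otimes b$ and $[1 \otimes x, a \otimes b] = (-1)^{|x||a|}\, a \otimes [x,b]$, and then sort contributions by weight. Each $\alpha \in \Delta^+$ produces two pieces, of weights $(\alpha_i+\alpha,\,-\alpha)$ and $(\alpha,\,\alpha_i-\alpha)$ respectively. Re-indexing the first family via $\alpha' := \alpha_i + \alpha$ places both kinds of contribution into weight $(\alpha',\,\alpha_i - \alpha')$ precisely when $\alpha' - \alpha_i \in \Delta^+$; at each such $\alpha'$ the scalars in $[x_{i0}^+, x_{\alpha'-\alpha_i}^+] \in \Bbbk \cdot x_{\alpha'}^+$ and $[x_{i0}^+, x_{\alpha'}^-] \in \Bbbk \cdot x_{\alpha' - \alpha_i}^-$ are related by invariance of the bilinear form $\langle \cdot, \cdot \rangle$ together with the normalization $\langle e_\beta, f_\beta\rangle = 1$, and a short super-sign check will show that the two contributions cancel. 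Since $\alpha_i$ is simple, no $\alpha \in \Delta^+ \setminus \{\alpha_i\}$ satisfies $\alpha_i - \alpha \in \Delta^+$, which excludes any further stray term. The unique unpaired summand comes from $\alpha = \alpha_i$ in the second family and equals $x_{i0}^+ \otimes h_{i0}$, yielding $[\square(x_{i0}^+), \Omega^+] = -x_{i0}^+ \otimes h_{i0}$.

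The case $X = x_{i0}^-$ will be treated by the same weight-matching procedure with the $+$ and $-$ superscripts swapped, the unpaired $\alpha = \alpha_i$ term now producing $h_{i0} \otimes x_{i0}^-$. The only real obstacle throughout is bookkeeping: tracking the super-signs so that the pairwise cancellation from invariance of $\langle \cdot, \cdot \rangle$ goes through cleanly; no tools beyond those already developed in the preliminaries are required.
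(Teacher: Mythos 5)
Your proposal is correct, and it reaches the three identities by a route that differs from the paper's in its starting point. The paper never invokes the invariance $[\square(X),\Omega_d]=0$ of the full Casimir; instead it computes $[\,x_{i0}^{\pm}\otimes 1,\Omega^{+}]$ (respectively $[\,1\otimes x_{i0}^{\pm},\Omega^{+}]$) head-on, carrying the Cartan summand $\sum_k h^{(k)}\otimes h^{(k)}$ through the calculation via $\sum_k\langle h^{(k)},h_{i0}\rangle h^{(k)}=h_{i0}$ and re-indexing the root sum by $\alpha\mapsto\alpha\mp\alpha_i$, until the expression visibly equals the correction term minus the action on the other tensor slot. You instead use the stated $\mathfrak g_d$-invariance of $\Omega_d$ to trade $\Omega^{+}$ for $\Omega^{-}=\sum_{\alpha}(-1)^{|\alpha|}x_\alpha^{+}\otimes x_\alpha^{-}$, which has no Cartan part, and then run the same weight-matching: the two families of contributions at weight $(\alpha',\alpha_i-\alpha')$ cancel because invariance of $\langle\cdot,\cdot\rangle$ with the normalization $\langle e_\beta,f_\beta\rangle=1$ forces the structure constants of $[x_{i0}^{+},x_\beta^{+}]$ and $[x_{i0}^{+},x_{\alpha'}^{-}]$ to be opposite up to the sign $(-1)^{|\alpha_i||\beta|}$, and the simplicity of $\alpha_i$ rules out stray terms, leaving only the $\alpha=\alpha_i$ term $x_{i0}^{+}\otimes h_{i0}$ (resp.\ $h_{i0}\otimes x_{i0}^{-}$). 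Your version buys a slightly leaner computation (no Cartan bookkeeping) at the price of taking the invariance of $\Omega_d$ as a black box; the paper's version is self-contained in that it effectively re-derives the needed piece of invariance along the way. The sign check you defer does go through: with $|\alpha'|=|\alpha_i|+|\beta|$ one finds $(-1)^{|\alpha'|}(-1)^{|\alpha_i||\alpha'|}=(-1)^{|\beta|}(-1)^{|\alpha_i||\beta|}$, which is exactly what the cancellation requires.
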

\begin{proof}
	The first formula is a simple consequence of the definition:
	\[ [ \square(h_{i0}) , \Omega^{+} ] = \sum_{ \alpha \in \Delta^{+}} (-1) (\alpha_{i}, \alpha) x_{\alpha}^{-} \otimes x_{\alpha}^{+} + \sum_{ \alpha \in \Delta^{+}} (\alpha_{i}, \alpha) x_{\alpha}^{-} \otimes x_{\alpha}^{+} = 0. \]
	
	For the next formulas we use assumptions made in the end of Subsection \ref{subs:LS} and \cite[Lemmas 1.3, 2.4]{K90}. First consider the case $\pm= +$. Then
	\[ [ x_{i0}^{+} \otimes 1 , \Omega^{+} ] = \sum_{k \in I} [ x_{i0}^{+} , h^{(k)} ] \otimes h^{(k)} + \sum_{ \alpha \in \Delta^{+}} [ x_{i0}^{+} , x_{\alpha}^{-} ] \otimes x_{\alpha}^{+} = \]
	\[ - \sum_{k \in I} \langle h^{(k)} , h_{i0} \rangle x_{i0}^{+} \otimes h^{(k)} + h_{i0} \otimes x_{i0}^{+} +  \sum_{ \alpha \in \Delta^{+} \setminus \{ \alpha_{i} \} }  [ x_{i0}^{+} , x_{\alpha}^{-} ] \otimes x_{\alpha}^{+} = \]
	\[ - x_{i0}^{+} \otimes h_{i0} + h_{i0} \otimes x_{i0}^{+} - \sum_{ \alpha \in \Delta^{+} \setminus \{ \alpha_{i} \} } (-1)^{|\alpha - \alpha_{i}||\alpha_i|} [ x_{\alpha - \alpha_{i}}^{-} , x_{i0}^{+} ] \otimes x_{\alpha - \alpha_{i}}^{+} = \]
	\[ - x_{i0}^{+} \otimes h_{i0} + h_{i0} \otimes x_{i0}^{+} - \sum_{ \alpha \in \Delta^{+} \setminus \{ \alpha_{i} \} } (-1)^{|\alpha||\alpha_{i}|} x_{\alpha}^{-} \otimes [ x_{i0}^{+} , x_{\alpha}^{+} ] = \]
	\[ - x_{i0}^{+} \otimes h_{i0} - \sum_{k \in I} h^{(k)} \otimes [ x_{i0}^{+} , h^{(k)} ]  - \sum_{ \alpha \in \Delta^{+} } (-1)^{|\alpha||\alpha_{i}|} x_{\alpha}^{-} \otimes [ x_{i0}^{+} , x_{\alpha}^{+} ] = \]
	\[ - x_{i0}^{+} \otimes h_{i0} - [  1 \otimes x_{i0}^{+}  , \Omega^{+} ]. \]
	
	By analogy for the case $\pm = -$, we have
	\[ [ 1 \otimes x_{i0}^{-} , \Omega^{+} ] = \sum_{k \in I} h^{(k)} \otimes [ x_{i0}^{-} , h^{(k)} ] + \sum_{ \alpha \in \Delta^{+}} (-1)^{|\alpha||\alpha_{i}|} x_{\alpha}^{-} \otimes [ x_{i0}^{-} , x_{\alpha}^{+} ] = \]
	\[ \sum_{k \in I} h^{(k)} \otimes \langle h^{(k)}, h_{i0} \rangle x_{i0}^{-} -  x_{i0}^{-} \otimes h_{i0} + \sum_{ \alpha \in \Delta^{+} \setminus \{ \alpha_{i} \}} (-1)^{|\alpha||\alpha_{i}|} x_{\alpha}^{-} \otimes [ x_{i0}^{-} , x_{\alpha}^{+} ] = \]
	\[ h_{i0} \otimes x_{i0}^{-} - x_{i0}^{-} \otimes h_{i0} + \sum_{ \alpha \in \Delta^{+} \setminus \{ \alpha_{i} \}} (-1)^{|\alpha-\alpha_{i}||\alpha_{i}|} [ x_{\alpha - \alpha_{i}}^{-} , x_{i0}^{-} ] \otimes x_{\alpha - \alpha_{i}}^{+} = \]
	\[  h_{i0} \otimes x_{i0}^{-} - \sum_{k \in I} [ x_{i0}^{-} , h^{(k)} ] \otimes h^{(k)} - \sum_{ \alpha \in \Delta^{+}} [ x_{i0}^{-} , x_{\alpha}^{-} ] \otimes x_{\alpha}^{+} = \]
	\[ h_{i0} \otimes x_{i0}^{-} - [  x_{i0}^{-} \otimes 1  , \Omega^{+} ]. \]
\end{proof}

The counit $\epsilon: Y_{\hbar}(\mathfrak{g}) \to \Bbbk$ is defined on generators in the following way:
\begin{equation}
	\label{eq:epsdef}
	\epsilon(1) = 1, \; \epsilon(x) = 0 \text{ for } x \in \{ h_{ir} , x_{ir}^{\pm} \}^{r \ge 0}_{i \in I}.
\end{equation}

The comultiplication is defined on generators $\{ h_{ir} , x_{i0}^{\pm} \}^{r \in \{0,1\}}_{i \in I}$ by the following formulas (see \cite{S13}, \cite{S04} and \cite{S07}):
\begin{equation}
	\label{eq:comg}
	\Delta(x) = \square(x) \text{ for } x \in \mathfrak{g},
\end{equation}
\begin{eqnarray}
	\label{eq:comh1}
	&\Delta(h_{i1}) = \square(h_{i1}) + \hbar(  h_{i0} \otimes h_{i0} + [ h_{i0} \otimes 1, \Omega^{+}] ) =  \nonumber \quad \\
	& \square(h_{i1}) + \hbar( h_{i0} \otimes h_{i0} - \sum_{ \alpha \in \Delta^{+}} (\alpha_{i}, \alpha) x_{\alpha}^{-} \otimes x_{\alpha}^{+} ).
\end{eqnarray}
It follows that
\begin{equation}
	\label{eq:comyh1}
	\Delta(\widetilde{h}_{i1}) = \square(\widetilde{h}_{i1}) + \hbar [ h_{i0} \otimes 1 , \Omega^{+} ].
\end{equation}

Now we need the isomorphism $\tau:  U( \mathfrak{g}[s] ) \to Y_{\hbar}( \mathfrak{g}) / \hbar Y_{\hbar}( \mathfrak{g}) $ from Definition \ref{df:isomzerogr}.  Note that our formulas are motivated by the equation \eqref{eq:squebialgsupcon} which is satisfied for \eqref{eq:comg} and \eqref{eq:comh1}. Indeed, for \eqref{eq:comg} it follows trivially from \eqref{eq:cosuperbr1}; for \eqref{eq:comh1} we have from \eqref{eq:cosuperbr2}
\[ \delta( \tau^{-1} ( h_{i1} )) = \delta(h_{i} u) = \sum_{\alpha \in \Delta^{+}} (-1)^{|\alpha|} (\alpha_{i}, \alpha) x_{\alpha}^+ \otimes x^{-}_{\alpha} - \sum_{\alpha \in \Delta^{+}} (\alpha_{i}, \alpha) x_{\alpha}^{-} \otimes x_{\alpha}^{+}. \]
On the other hand,
\[ \delta( \tau^{-1} ( h_{i1} )) = \delta(h_{i} u) = \hbar^{-1} ( \Delta(h_{i1}) - \Delta^{op}(h_{i1}) ) \pmod \hbar = \]
\[ [ h_{i0} \otimes 1 , \Omega^{+} ] - [ 1 \otimes h_{i0} , \tau (\Omega^{+}) ] = [ h_{i0} \otimes 1 , \Omega^{+} + \tau(\Omega^{+}) ]= \]
\[ [ h_{i0} \otimes 1 , \Omega + \sum_{k \in I} h^{(k)} \otimes h^{(k)} ] = [ h_{i0} \otimes 1 , \Omega ] = \]
\[ \sum_{\alpha \in \Delta^{+}} (-1)^{|\alpha|} (\alpha_{i}, \alpha) x_{\alpha}^+ \otimes x^{-}_{\alpha} - \sum_{\alpha \in \Delta^{+}} (\alpha_{i}, \alpha) x_{\alpha}^{-} \otimes x_{\alpha}^{+}. \]

The following result works for any Lie superalgebra satisfying the assumptions of Theorem \ref{th:mpy}.

\begin{theorem}
	\label{th:comhomdef}
	The operator $\Delta$ defines a superalgebra homomorphism $\Delta: Y_{\hbar}(\mathfrak{g}) \to Y_{\hbar}(\mathfrak{g}) \otimes Y_{\hbar}(\mathfrak{g})$.
\end{theorem}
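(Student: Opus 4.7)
The plan is to exploit the minimalistic presentation established in Theorem \ref{th:mpy}. Since $Y_{\hbar}(\mathfrak{g})$ is generated by $\{h_{i,r}, x_{i,r}^{\pm}\}_{i \in I}^{r \in \{0,1\}}$ subject only to relations \eqref{eq:mpy1}--\eqref{eq:mpy9}, it suffices to (a) specify $\Delta$ on all these generators and (b) verify that each defining relation is preserved when passed through $\Delta$ into $Y_{\hbar}(\mathfrak{g})^{\otimes 2}$; the homomorphism property then extends $\Delta$ uniquely. The formulas \eqref{eq:comg} and \eqref{eq:comh1} already cover $\Delta(h_{i,r})$ for $r \in \{0,1\}$ and $\Delta(x_{i,0}^{\pm})$. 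To define $\Delta(x_{i,1}^{\pm})$, I use the recurrence \eqref{eq:rec1} as a forced prescription:
\[ \Delta(x_{i,1}^{\pm}) := \pm (c_{ii'})^{-1}\bigl[\Delta(\widetilde{h}_{i',1}),\,\Delta(x_{i,0}^{\pm})\bigr], \]
with $i' = i$ when $|\alpha_i| = \bar{0}$ and $|i-i'|=1$ when $|\alpha_i| = \bar{1}$.

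For the verifications I would group the relations by difficulty. Relations \eqref{eq:mpy7}, \eqref{eq:mpy8}, \eqref{eq:mpy9} and the $r=0$ cases of \eqref{eq:mpy1}, \eqref{eq:mpy2}, \eqref{eq:mpy3} involve only level-$0$ generators, where $\Delta = \square$ is the primitive coproduct on $U(\mathfrak{g}) \hookrightarrow Y_{\hbar}(\mathfrak{g})$, and so hold automatically because $\square$ is a superalgebra homomorphism. The remaining level-$1$ checks split naturally: the bracket $[\Delta(h_{i,0}), \Delta(x_{j,1}^{\pm})] = \pm c_{ij}\Delta(x_{j,1}^{\pm})$ in \eqref{eq:mpy2} follows from the already-proved identity $[\square(h_{i,0}),\Omega^+] = 0$ together with the definition of $\Delta(x_{j,1}^{\pm})$; relation \eqref{eq:mpy4} is essentially how $\Delta(x_{j,1}^{\pm})$ was built and reduces to the super Jacobi identity plus the brackets $[\square(x_{i,0}^{\pm}), \Omega^+]$ computed earlier; and \eqref{eq:mpy6} in the odd case uses the fact that the recurrence for odd $i$ employs $i' \neq i$, making $\Delta(x_{i,1}^{\pm})$ a super-bracket of objects that commute with $\Delta(h_{i,1})$ modulo the controlled correction term.

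The two substantive computations are the level-$1$ case of \eqref{eq:mpy3}, namely $[\Delta(x_{i,1}^{+}), \Delta(x_{j,0}^{-})] = \delta_{ij}\Delta(h_{i,1})$, and the Drinfeld relation \eqref{eq:mpy5}. For the former one expands both sides using \eqref{eq:comh1} and the recurrence; the leading $\hbar^{0}$-part gives $\delta_{ij}\square(h_{i,0})$, and matching the $\hbar^{1}$-correction with $\hbar(h_{i,0}\otimes h_{i,0} + [\square(h_{i,0}),\Omega^+])$ follows from the explicit bracket $[\square(x_{i,0}^{\pm}),\Omega^+] = \mp x_{i,0}^{\pm}\otimes h_{i,0}$ or $h_{i,0}\otimes x_{i,0}^{\pm}$ proved in the lemma immediately preceding the theorem. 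For the remaining $(1,1)$-case of \eqref{eq:mpy1} one must show $[\Delta(h_{i,1}), \Delta(h_{j,1})]=0$, which follows by applying $\mathrm{ad}_{\Delta(x_{j,0}^{-})}$ to the already-verified $(0,1)$-case of \eqref{eq:mpy3} and using the super Jacobi identity.

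The principal obstacle is \eqref{eq:mpy5}. Here both sides must be rewritten through the recursive definition of $\Delta(x_{i,1}^{\pm})$ and \eqref{eq:comh1}, yielding an identity in $U(\mathfrak{g})^{\otimes 2}[[\hbar]]$ modulo $\hbar^{2}$ that branches on the parity pair $(|\alpha_i|,|\alpha_j|)$ and the adjacency of $\alpha_i,\alpha_j$ in the Dynkin diagram. The delicate cases are those where both roots are odd, which is precisely where our standing assumption that every odd vertex has an even neighbor (used to select $i',j'$ in the recurrence) becomes indispensable, and where the sign bookkeeping for $(-1)^{|\alpha||\alpha'|}$ in the supersymmetric components of $\Omega^+$ must be tracked in full. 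I expect this case analysis, together with the exclusion of the degenerate configurations already excluded from Theorem \ref{th:mpy}, to occupy the bulk of the argument, mirroring the structure of the analogous proof in \cite{GNW18} but requiring super-symmetric adjustments throughout.
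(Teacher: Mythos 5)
Your overall skeleton (define $\Delta(x_{i,1}^{\pm})$ by the recurrence, then check each relation of the minimalistic presentation inside $Y_{\hbar}(\mathfrak{g})^{\otimes 2}$) matches the paper's strategy, and your treatment of the level-$0$ relations, of \eqref{eq:mpy2}, \eqref{eq:mpy4}, \eqref{eq:mpy6} and of the level-$1$ case of \eqref{eq:mpy3} is sound. However, there is a genuine gap in your treatment of the $(1,1)$ case of \eqref{eq:mpy1}, which is in fact the hardest relation in the paper's proof, not \eqref{eq:mpy5}. You propose to derive $[\Delta(h_{i,1}),\Delta(h_{j,1})]=0$ ``by applying $\mathrm{ad}_{\Delta(x_{j,0}^{-})}$ to the already-verified $(0,1)$-case of \eqref{eq:mpy3} and using the super Jacobi identity.'' This is circular: after the Jacobi identity you are left with the term $[\Delta(\widetilde{h}_{i,1}),\Delta(x_{j,1}^{+})]$, and to evaluate it you would need to know that $\Delta$ preserves the derived relation $[\widetilde{h}_{i,1},x_{j,1}^{+}]=(\alpha_i,\alpha_j)x_{j,2}^{+}$ — i.e.\ that $[\Delta(\widetilde{h}_{i,1}),\Delta(x_{j,1}^{+})]$ is proportional to $[\Delta(\widetilde{h}_{j',1}),\Delta(x_{j,1}^{+})]$ with the correct ratio of scalar products. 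That fact is only available once $\Delta$ is known to be a homomorphism, which is what is being proved. The images $\Delta(h_{i,1})$ are specific elements of $Y_{\hbar}(\mathfrak{g})^{\otimes 2}$ and their commutator must be computed directly from the explicit formula \eqref{eq:comyh1}.

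Concretely, $[\Delta(\widetilde{h}_{i,1}),\Delta(\widetilde{h}_{j,1})]=0$ reduces to the identity \eqref{eq:caryrelcom}: the $\hbar^{2}$ cross-term $[[h_{i0}\otimes 1,\Omega^{+}],[h_{j0}\otimes 1,\Omega^{+}]]$, a double sum over pairs of positive roots with coefficients $(\alpha_i,\alpha)(\alpha_j,\beta)$, must be matched against $\hbar$ times brackets of $\square(\widetilde{h}_{k,1})$ with $[h_{l0}\otimes 1,\Omega^{+}]$. The paper establishes this via the identities \eqref{eq:fccarcomf}--\eqref{eq:fccarcoms}, which require the elements $J(h_k)=\widetilde{h}_{k,1}+\widetilde{v}_k$ from \cite[Proposition 3.21]{GNW18}, the antisymmetrized coefficients $A_{ij,\beta,\gamma}$, and a page-long manipulation of products $x_{\beta}^{-}x_{\gamma}^{-}\otimes[x_{\beta}^{+},x_{\gamma}^{+}]$ with careful tracking of the signs $(-1)^{|\beta||\gamma|}$ coming from the super structure. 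None of this is subsumed by the Jacobi identity. Your emphasis on \eqref{eq:mpy5} as the principal obstacle is also misplaced relative to the paper: relations \eqref{eq:mpy1}--\eqref{eq:mpy5} at the levels other than $(1,1)$ are handled there by the same arguments as in the non-super case, while the genuinely new super computations are the odd-root relation \eqref{eq:mpy6} and the Cartan commutativity at level $(1,1)$. To repair the proposal you would need to supply the root-vector computation for \eqref{eq:fccarcomf}--\eqref{eq:fccarcoms} (or an equivalent direct verification of \eqref{eq:caryrelcom}).
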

\begin{proof}
	Obviously $\Delta$ is the linear $\mathbb{Z}_{2}$-graded even function by definition.
		
	It is enough to check the compatability of the relations listed in Theorem \ref{th:mpy}. First we deduce formulas for $\Delta(x_{i1}^{+})$ for all $i \in I$. From \eqref{eq:rec1} with $r=0$, we obtain:
	\[ \Delta(x_{i1}^{\pm}) = \pm (\alpha_{i}, \alpha_{i'})^{-1} [ \Delta(\widetilde{h}_{i' 1}) , \Delta(x_{i0}^{\pm})  ] = \]
	\[ \pm (\alpha_{i}, \alpha_{i'})^{-1} [ \square(\widetilde{h}_{i'1}) + \hbar [ h_{i'0} \otimes 1 , \Omega^{+} ] , \square( x_{i0}^{\pm} ) ] =  \]
	\[ \square( x_{i1}^{\pm} ) \pm (\alpha_{i}, \alpha_{i'})^{-1} \hbar [ [ h_{i' 0} \otimes 1 , \Omega^{+} ] , \square( x_{i0}^{\pm} ) ]. \]
	We consider the $+$ case first. Note that
	\[ [ [ h_{i' 0} \otimes 1 , \Omega^{+} ] , \square( x_{i0}^{+} ) ] = - [ [ 1 \otimes h_{i' 0} , \Omega^{+} ] , \square( x_{i0}^{+} ) ] = \]
	\[ - [ [ 1 \otimes h_{i' 0} , \square( x_{i0}^{+} ) ] , \Omega^{+} ] - [ 1 \otimes h_{i' 0} , [ \Omega^{+} , \square ( x_{i0}^{+} ) ] ] = \]
	\[ - ( \alpha_{i} , \alpha_{i'} ) [ 1 \otimes x_{i0}^{+} , \Omega^{+} ] - [ 1 \otimes h_{i' 0} , x_{i0}^{+} \otimes h_{i0} ] = - ( \alpha_{i} , \alpha_{i'} ) [ 1 \otimes x_{i0}^{+} , \Omega^{+} ]. \]
	Therefore
	\begin{equation}
		\Delta(x_{i1}^{+}) = \square( x_{i1}^{+} ) - \hbar [ 1 \otimes x_{i0}^{+} , \Omega^{+} ].
		\label{eq:comxbp}
	\end{equation}
	
	Similarly, we have
	\[ [ [ h_{i' 0} \otimes 1 , \Omega^{+} ] , \square( x_{i0}^{-} ) ] =  \]
	\[ [ h_{i' 0} \otimes 1 , [ \Omega^{+} , \square(x_{i0}^{-}) ] ] + [ \Omega^{+} , [ \square(x_{i0}^{-} ) , h_{i^{'} 0} \otimes 1 ] ] = \]
	\[ - [ h_{i' 0} \otimes 1 , h_{i0} \otimes x_{i0}^{-} ] -  (\alpha_{i'} , \alpha_{i}) [ x_{i0}^{-} \otimes 1 , \Omega^{+} ] = -  (\alpha_{i'} , \alpha_{i}) [ x_{i0}^{-} \otimes 1 , \Omega^{+} ]. \]
	Therefore
	\begin{equation}
		\Delta(x_{i1}^{-}) = \square( x_{i1}^{-} ) + \hbar [ x_{i0}^{-} \otimes 1 , \Omega^{+} ].
		\label{eq:comxbm}
	\end{equation}
	
	We do not need to check the relations involving  only $h_{i0}$ or $x_{i0}^{\pm}$. The compatability with relations \eqref{eq:mpy1} - \eqref{eq:mpy5} is proved in the same way as in \cite[Theorem 4.9: Part 1]{GNW18}. Let us consider \eqref{eq:mpy6} with $(r,s)=(1,0)$:
	\[ [ \Delta(h_{i1}) ,  \Delta(x_{i0}^{\pm}) ] = [ \square(h_{i1}) + \hbar(  h_{i0} \otimes h_{i0} + [ h_{i0} \otimes 1, \Omega^{+}] ) , \square( x_{i0}^{\pm} ) ] = \]
	\[ \square( [ h_{i1} , x_{i0}^{\pm} ] ) + \hbar ( [h_{i0} , x_{i0}^{\pm}] \otimes h_{i0} + h_{i0} \otimes [h_{i0} , x_{i0}^{\pm}] + [ [ h_{i0} \otimes 1, \Omega^{+}] , \square( x_{i0}^{\pm}) ] ) = \]
	\[ \hbar ( [ [ h_{i0} \otimes 1 , \square ( x_{i0}^{\pm} ) ] , \Omega^{+} ] + [ h_{i0} \otimes 1, [ \Omega^{+} ,  \square( x_{i0}^{\pm}) ] ] ) = \hbar [ h_{i0} \otimes 1, [ \Omega^{+} ,  \square( x_{i0}^{\pm}) ] ]. \]
	In the case $+$, the above is equal to
	\[ \hbar [ h_{i0} \otimes 1 , x_{i0}^{+} \otimes h_{i0} ] = \hbar [ h_{i0} , x_{i0}^{+} ] \otimes h_{i0} = 0. \]
	In the case $-$, we have
	\[ - \hbar [ h_{i0} \otimes 1, h_{i0} \otimes x_{i0}^{-} ] = 0. \]

	Consider \eqref{eq:mpy6} with $(r,s)=(1,1)$. Consider the case $+$:
	\[ [ \Delta(h_{i1}) ,  \Delta(x_{i1}^{+}) ] = [ \square(h_{i1}) + \hbar(  h_{i0} \otimes h_{i0} + [ h_{i0} \otimes 1, \Omega^{+}] ) , \square( x_{i1}^{+} ) - \hbar [ 1 \otimes x_{i0}^{+} , \Omega^{+} ] ] = \]
	\[ [ \square(h_{i1}) + \hbar h_{i0} \otimes h_{i0} , \square( x_{i1}^{+} ) - \hbar [ 1 \otimes x_{i0}^{+} , \Omega^{+} ] ] = \]
	\[ \square( [ h_{i1} , x_{i1}^{+} ] ) - \hbar^2 [ h_{i0} \otimes h_{i0} , [ 1 \otimes x_{i0}^{+} , \Omega^{+} ] ] = 0. \]
	Consider the case $-$:
	\[ [ \Delta(h_{i1}) ,  \Delta(x_{i1}^{-}) ] = [ \square(h_{i1}) + \hbar(  h_{i0} \otimes h_{i0} + [ h_{i0} \otimes 1, \Omega^{+}] ) ,  \square( x_{i1}^{-} ) + \hbar [ x_{i0}^{-} \otimes 1 , \Omega^{+} ] ] = \]
	\[ [ \square(h_{i1}) + \hbar  h_{i0} \otimes h_{i0}  ,  \square( x_{i1}^{-} ) + \hbar [ x_{i0}^{-} \otimes 1 , \Omega^{+} ] ] = \]
	\[ \square ( [ h_{i1} , x_{i1}^{-} ] ) + \hbar^2 [ h_{i0} \otimes h_{i0}  , [ x_{i0}^{-} \otimes 1 , \Omega^{+} ] ] = 0. \]
	
	Finally, we consider the relation \eqref{eq:mpy1} in the case $(r,s) = (1,1)$. It suffices to show that $ [ \widetilde{h}_{i1} , \widetilde{h}_{j1} ] = 0 $ for all $i, j \in I$. Since \eqref{eq:comyh1} and $ \square( [ \widetilde{h}_{i1} , \widetilde{h}_{j1} ] ) = 0 $, the latter relation reduces to
	\begin{equation}
		\label{eq:caryrelcom}
		\hbar^2 [ [ h_{i0} \otimes 1 , \Omega^{+} ] , [ h_{j0} , \Omega^{+} ] ] = \hbar ( [ \square( \widetilde{h}_{j1} ) , [ h_{i0} \otimes 1 , \Omega^{+} ] ] - [ \square( \widetilde{h}_{i1} ) , [ h_{j0} \otimes 1 , \Omega^{+} ] ] ).
	\end{equation}
	The left-hand side is the sum over $k \in \mathbb{N}$ of
	\[ \hbar^2 \sum_{ ht( \alpha + \beta ) = k } ( \alpha_{i} , \alpha ) ( \alpha_{j} , \beta ) [ x^{-}_{ \alpha } \otimes x^{+}_{ \alpha } , x_{ \beta}^{-} \otimes x_{ \beta}^{+} ] = \]
	\[ \frac{\hbar^2}{2} \sum_{ ht( \alpha + \beta ) = k } ( \alpha_{i} , \alpha ) ( \alpha_{j} , \beta ) (-1)^{ | \alpha| | \beta| } ( \{ x_{\alpha}^{-} , x_{\beta}^{-} \} \otimes [ x_{\alpha}^{+} , x_{\beta}^{+} ] + [ x_{\alpha}^{-} , x_{\beta}^{-} ] \otimes \{ x_{\alpha}^{+} , x_{\beta}^{+} \} ), \]
	where the sum $ \sum_{ ht( \alpha + \beta ) = k } $ is taken over all $ \alpha , \beta \in \Delta^{+} $ such that $ ht( \alpha + \beta ) = k  $. The right-hand side of \eqref{eq:caryrelcom} is the sum over $k \in \mathbb{N}$ of 
	\[ \hbar ( \sum_{ \alpha \in \Delta^{+}(k) } ( \alpha , \alpha_{j} ) [ \square ( \widetilde{h}_{i1} ) , x_{ \alpha}^{-} \otimes x_{ \alpha}^{+} ] - \sum_{ \alpha \in \Delta^{+}(k) } ( \alpha , \alpha_{i} ) [ \square ( \widetilde{h}_{j1} ) , x_{ \alpha}^{-} \otimes x_{ \alpha}^{+} ] ) , \]
	where $ \Delta^{+}(k) = \{ \alpha \in \Delta^{+} : ht( \alpha ) = k \}$. Therefore, \eqref{eq:caryrelcom} will hold if the following equalities are established for all $k \in \mathbb{N}$:
	\begin{equation}
		\label{eq:fccarcomf}
		\hbar \sum_{ ht( \alpha + \beta ) = k } (-1)^{ |\alpha| |\beta| } ( \alpha_{i} , \alpha ) ( \alpha_{j} , \beta ) \{ x_{ \alpha }^{-} , x_{ \beta }^{-} \} \otimes [ x_{ \alpha}^{+} , x_{ \beta}^{+} ] = 2  \sum_{ \alpha \in \Delta^{+}(k) } [ h_{ij} ( \alpha ) , x_{ \alpha}^{-} ] \otimes x_{ \alpha}^{+} ,
	\end{equation}	
	and
	\begin{equation}
		\label{eq:fccarcoms}
		\hbar \sum_{ ht( \alpha + \beta ) = k } (-1)^{ |\alpha| |\beta| } ( \alpha_{i} , \alpha ) ( \alpha_{j} , \beta ) [ x_{ \alpha }^{-} , x_{ \beta }^{-} ] \otimes \{ x_{ \alpha}^{+} , x_{ \beta}^{+} \} = 2  \sum_{ \alpha \in \Delta^{+}(k) } x_{ \alpha}^{-} \otimes [ h_{ij} ( \alpha ) , x_{ \alpha}^{+} ] ,
	\end{equation}	
	where $ h_{ij} ( \alpha ) = ( \alpha , \alpha_{j} ) \widetilde{h}_{i1} - ( \alpha , \alpha_{i} ) \widetilde{h}_{j1} $ for all $i, j \in I$ and $ \alpha \in \Delta^{+} $.
	
	Now we need the result proved in \cite[Proposition 3.21]{GNW18} (that is also true is super case as one may verify). Suppose that $ \alpha \in \Delta^{+}$. The result states that for all $k \in I$ we have 
	\[ [ J( h_k) , x_{ \alpha}^{ \pm} ] = [ h_k , J( x_{ \alpha }^{ \pm } ) ] = \pm ( \alpha_{k} , \alpha ) J( x_{ \alpha }^{ \pm } ) \]
	where $J(h_{k}) = \widetilde{h}_{k1} + \widetilde{v}_{k} $, $ \widetilde{v}_{k} = \frac{\hbar}{2} \sum_{ \alpha \in \Delta^{+} } ( \alpha , \alpha_{i} ) x_{\alpha}^{-} x_{\alpha}^{+}  $ and $ J( x_{ \alpha }^{ \pm } ) $ is an well-defined element in $Y_{\hbar}(\mathfrak{g})$. Consequently,
	\[ [ h_{ij} ( \alpha ) , x_{ \alpha }^{\mp} ] = [ ( \alpha , \alpha_{j} ) J(h_i) - ( \alpha , \alpha_{i} ) J( h_j ) , x_{ \alpha }^{ \mp } ] = [ x_{ \alpha}^{ \mp } , v_{ij} ( \alpha) ], \]
	where $ v_{ij} ( \alpha) = ( \alpha , \alpha_{j} ) \widetilde{v}_{i} - ( \alpha , \alpha_{i} ) \widetilde{v}_{j} $. We claim that
	\begin{equation}
		\label{eq:pva}
		[ v_{ij} ( \alpha) , x_{ \alpha}^{ - } ] = - \frac{\hbar}{2} \sum_{ \substack { \beta , \gamma \in \Delta^{+}, \\ \beta + \gamma = \alpha } } (-1)^{ |\beta| |\gamma| + |\alpha| ( |\beta| + |\gamma| ) } A_{ij, \beta , \gamma}  \langle x_{\alpha}^{ -} , [ x_{ \beta}^{ + } , x_{\gamma}^{ + } ]  \rangle x_{ \beta }^{ -} x_{ \gamma}^{ - },
	\end{equation}
	and
	\begin{equation}
		\label{eq:mva}
		[ v_{ij} ( \alpha) , x_{ \alpha}^{ + } ] = - \frac{\hbar}{2} \sum_{ \substack { \beta , \gamma \in \Delta^{+}, \\ \beta + \gamma = \alpha } } (-1)^{ |\beta| |\gamma| } A_{ij, \beta , \gamma}  \langle x_{\alpha}^{ +} , [ x_{ \beta}^{ - } , x_{\gamma}^{ - } ]  \rangle x_{ \beta }^{ +} x_{ \gamma}^{ + },
	\end{equation}
	where $A_{ij, \beta , \gamma} = ( \alpha_i , \beta ) ( \alpha_j , \gamma ) - ( \alpha_j , \beta ) ( \alpha_i , \gamma )$ for each $ \beta , \gamma \in \Delta^{+}$.
	
	First we prove \eqref{eq:pva}. By definition of $\widetilde{v}_{i}$, we have
	\[ [ v_{ij}( \alpha ) , x_{ \alpha}^{-} ] = \frac{\hbar}{2} \sum_{ \beta \in \Delta^{+}} A_{ij, \beta, \alpha} ( (-1)^{|\alpha| |\beta|} [ x_{ \beta}^{-} , x_{ \alpha}^{-} ] x_{ \beta}^{+} + x_{\beta}^{-} [ x_{\beta}^{+} , x_{\alpha}^{-} ]  ).  \]
	Consider $x_{\beta}^{-} [ x_{ \beta}^{+} , x_{\alpha}^{-} ]$. Assume first that $ \gamma = \beta - \alpha $ is positive, that is $ \gamma \in \Delta^{+}$. Then
	\[ x_{ \beta}^{-} [ x_{ \beta}^{+} , x_{ \alpha}^{-} ] = \langle [ x_{ \beta}^{+} , x_{ \alpha}^{-} ] , x_{ \gamma}^{-} \rangle x_{ \beta}^{-} x_{ \gamma}^{+} = (-1)^{ | \gamma| ( | \beta| + | \alpha| ) } \langle x_{ \gamma}^{-} , [ x_{ \beta}^{+} , x_{ \alpha}^{-} ] \rangle x_{ \beta}^{-} x_{ \gamma}^{+} . \] 
	If $\beta - \alpha$ is negative, we set instead $ \gamma = \alpha - \beta$ and deduce that
	\[ x_{ \beta}^{-} [ x_{ \beta}^{+} , x_{ \alpha}^{-} ] = \langle x_{ \gamma}^{+} , [ x_{ \beta}^{+} , x_{ \alpha}^{-} ] \rangle x_{ \beta}^{-} x_{ \gamma}^{-} = (-1)^{1+|\beta||\gamma|+|\alpha|( |\beta| + |\gamma| )} \langle x_{ \alpha}^{-} , [ x_{ \beta}^{+} , x_{ \gamma}^{+} ] \rangle x_{ \beta}^{-} x_{ \gamma}^{-}. \]
	We thus have
	\begin{equation}
		\label{eq:sumaijpmcase}
		\sum_{ \beta \in \Delta^{+}} A_{ij, \beta, \alpha} x_{\beta}^{-} [ x_{\beta}^{+} , x_{\alpha}^{-} ] = \sum_{ \substack { \beta, \gamma \in \Delta^{+} , \\ \gamma + \alpha = \beta } } A_{ij, \gamma , \beta} (-1)^{ | \gamma| ( | \beta| + | \alpha| ) } \langle x_{ \gamma}^{-} , [ x_{ \beta}^{+} , x_{ \alpha}^{-} ] \rangle x_{ \beta}^{-} x_{ \gamma}^{+} +  
	\end{equation}
    \[ (-1) \sum_{ \substack { \beta, \gamma \in \Delta^{+} , \\ \beta + \gamma = \alpha } } A_{ij, \beta , \gamma} (-1)^{|\beta||\gamma|+|\alpha|( |\beta| + |\gamma| )} \langle x_{ \alpha}^{-} , [ x_{ \beta}^{+} , x_{ \gamma}^{+} ] \rangle x_{ \beta}^{-} x_{ \gamma}^{-}, \]
    where we have used that $ A_{ij, \beta, \beta - \gamma} = A_{ij, \gamma, \beta} $ and $A_{ij, \beta, \beta + \gamma} = A_{ij, \beta, \gamma}$. Also we have
    \begin{equation}
    	\label{eq:sumaijpmcaset}
     \sum_{ \beta \in \Delta^{+} } A_{ij, \beta, \alpha} (-1)^{| \alpha| | \beta|} [ x_{ \beta}^{-} , x_{ \alpha}^{-} ] x_{ \beta}^{+} = \sum_{ \substack{ \beta, \gamma \in \Delta^{+} , \\ \beta + \alpha = \gamma } } A_{ij, \beta, \gamma - \beta} (-1)^{| \alpha| | \beta|} \langle x_{\gamma}^{+} , [ x_{ \beta}^{-} , x_{ \alpha}^{-} ] \rangle x_{\gamma}^{-} x_{ \beta}^{+} = 
	\end{equation}
    \[ \sum_{ \substack{ \beta, \gamma \in \Delta^{+} , \\ \gamma + \alpha = \beta } } A_{ij, \gamma, \beta} (-1)^{|\alpha| |\gamma|} \langle x_{\beta}^{+} , [ x_{ \gamma}^{-} , x_{ \alpha}^{-} ] \rangle x_{\beta}^{-} x_{\gamma}^{+} = \]
    \[ \sum_{ \substack{ \beta, \gamma \in \Delta^{+} , \\ \gamma + \alpha = \beta } } A_{ij, \gamma, \beta} (-1)^{1+ |\gamma| ( |\alpha| + |\beta| ) } \langle x_{\gamma}^{-} , [ x_{\beta}^{+} , x_{\alpha}^{-} ] \rangle x_{\beta}^{-} x_{\gamma}^{+}, \]
    where we have used that $A_{ij, \beta, \gamma - \beta} = A_{ij, \beta, \gamma}$. To obtain the second equation we have exchanged $\beta$ and $\gamma$. Thus if we add \eqref{eq:sumaijpmcase} and \eqref{eq:sumaijpmcaset} we prove \eqref{eq:pva}.
    
    Thus from \eqref{eq:pva} we have
    \[ 2 [ h_{ij} ( \alpha ) , x_{ \alpha}^{-} ] \otimes x_{ \alpha}^{+} = \hbar \sum_{ \substack { \beta , \gamma \in \Delta^{+}, \\ \beta + \gamma = \alpha } } (-1)^{ |\beta| |\gamma| + |\alpha| ( |\beta| + |\gamma| ) } A_{ij, \beta , \gamma}  \langle x_{\alpha}^{ -} , [ x_{ \beta}^{ + } , x_{\gamma}^{ + } ]  \rangle x_{ \beta }^{ -} x_{ \gamma}^{ - } \otimes x_{ \alpha}^{+} = \]
    \[ \hbar \sum_{ \substack { \beta , \gamma \in \Delta^{+}, \\ \beta + \gamma = \alpha } } (-1)^{ |\beta| |\gamma|  } A_{ij, \beta , \gamma}  x_{ \beta }^{ -} x_{ \gamma}^{ - } \otimes [ x_{ \beta}^{ + } , x_{\gamma}^{ + } ]. \]
    Then
    \begin{equation}
    	\label{eq:rhcpepm}
    	2 \sum_{ \alpha \in \Delta^{+}(k) } [ h_{ij} ( \alpha ) , x_{ \alpha}^{-} ] \otimes x_{ \alpha}^{+} = \frac{\hbar}{2} \sum_{ \alpha \in \Delta^{+}(k) } ( \sum_{ \substack { \beta , \gamma \in \Delta^{+}, \\ \beta + \gamma = \alpha } } (-1)^{ |\beta| |\gamma|  } A_{ij, \beta , \gamma}  x_{ \beta }^{ -} x_{ \gamma}^{ - } \otimes [ x_{ \beta}^{ + } , x_{\gamma}^{ + } ] +
    \end{equation}
    \[ \sum_{ \substack { \beta , \gamma \in \Delta^{+}, \\ \beta + \gamma = \alpha } } (-1)^{ |\beta| |\gamma|  } A_{ij, \beta , \gamma}  x_{ \gamma}^{ - } x_{ \beta }^{ -} \otimes [ x_{\gamma}^{ + } , x_{ \beta}^{ + } ] ) = \frac{\hbar}{2} \sum_{ \alpha \in \Delta^{+}(k) } \sum_{ \substack { \beta , \gamma \in \Delta^{+}, \\ \beta + \gamma = \alpha } } A_{ij, \beta , \gamma} \{  x_{ \gamma}^{ - } , x_{ \beta }^{ -} \} \otimes [ x_{ \beta}^{ + } , x_{\gamma}^{ + } ] . \]
    
    From the left-hand side of \eqref{eq:fccarcomf} we have
    \[ \frac{\hbar}{2} ( \sum_{ ht( \alpha + \beta ) = k } (-1)^{ |\alpha| |\beta| } ( \alpha_{i} , \alpha ) ( \alpha_{j} , \beta ) \{ x_{ \alpha }^{-} , x_{ \beta }^{-} \} \otimes [ x_{ \alpha}^{+} , x_{ \beta}^{+} ] + \]
    \[ \sum_{ ht( \alpha + \beta ) = k } (-1)^{ |\alpha| |\beta| } ( \alpha_{i} , \beta ) ( \alpha_{j} , \alpha ) \{ x_{ \beta }^{-} , x_{ \alpha }^{-} \} \otimes [ x_{ \beta}^{+} , x_{ \alpha}^{+} ] ) =  \]
    \[ \frac{\hbar}{2} \sum_{ ht( \alpha + \beta ) = k } A_{ij, \alpha, \beta} \{ x_{ \beta }^{-} , x_{ \alpha }^{-} \} \otimes [ x_{ \alpha}^{+} , x_{ \beta}^{+} ]. \]
    This coincides with the right-hand side of \eqref{eq:rhcpepm}. This completes the proof of the identity \eqref{eq:fccarcomf}.
    
    Now we prove \eqref{eq:mva}. By definition of $\widetilde{v}_{i}$, we have
    \[ [ v_{ij}( \alpha ) , x_{ \alpha}^{+} ] = \frac{\hbar}{2} \sum_{ \beta \in \Delta^{+}} A_{ij, \beta, \alpha} ( (-1)^{|\alpha| |\beta|} [ x_{ \beta}^{-} , x_{ \alpha}^{+} ] x_{ \beta}^{+} + x_{\beta}^{-} [ x_{\beta}^{+} , x_{\alpha}^{+} ]  ).  \]
    Consider $[ x_{ \beta}^{-} , x_{ \alpha}^{+} ] x_{ \beta}^{+}$. Assume first that $ \gamma = \alpha -\beta $ is positive, that is $ \gamma \in \Delta^{+}$. Then
    \[ [ x_{ \beta}^{-} , x_{ \alpha}^{+} ] x_{ \beta}^{+} = \langle [ x_{ \beta}^{-} , x_{ \alpha}^{+} ] , x_{\gamma}^{-} \rangle x_{\gamma}^{+} x_{ \beta}^{+} = (-1)^{1+|\alpha||\beta|} \langle x_{ \alpha}^{+} , [ x_{\beta}^{-} , x_{\gamma}^{-} ] \rangle x_{\gamma}^{+} x_{ \beta}^{+}. \]
    If $\alpha -\beta$ is negative, we set instead $ \gamma = \beta - \alpha $ and deduce that
    \[ [ x_{ \beta}^{-} , x_{ \alpha}^{+} ] x_{ \beta}^{+} = (-1)^{1+|\alpha||\beta|} \langle x_{\gamma}^{+} ,  [ x_{ \alpha}^{+} , x_{ \beta}^{-} ] \rangle x_{\gamma}^{-} x_{ \beta}^{+} = (-1)^{1+|\alpha||\beta|} \langle [ x_{\gamma}^{+} , x_{\alpha}^{+} ] , x_{\beta}^{-} \rangle x_{\gamma}^{-} x_{ \beta}^{+}. \] 
    We thus have
    \begin{equation}
    	\label{eq:sumaijmpcase}
    	\sum_{ \beta \in \Delta^{+}} A_{ij, \beta, \alpha} (-1)^{|\alpha| |\beta|} [ x_{ \beta}^{-} , x_{ \alpha}^{+} ] x_{ \beta}^{+} = (-1) \sum_{ \substack { \beta, \gamma \in \Delta^{+} , \\ \gamma + \beta = \alpha } } A_{ij, \beta, \gamma} \langle x_{ \alpha}^{+} , [ x_{\beta}^{-} , x_{\gamma}^{-} ] \rangle x_{\gamma}^{+} x_{ \beta}^{+} + 
    \end{equation}
    \[ (-1) \sum_{ \substack { \beta, \gamma \in \Delta^{+} , \\ \gamma + \alpha = \beta } } A_{ij, \gamma, \beta} \langle [ x_{\gamma}^{+} , x_{\alpha}^{+} ] , x_{\beta}^{-} \rangle x_{\gamma}^{-} x_{ \beta}^{+} = \]
    \[ (-1) \sum_{ \substack { \beta, \gamma \in \Delta^{+} , \\ \gamma + \beta = \alpha } }  A_{ij, \beta, \gamma} (-1)^{|\beta| |\gamma|} \langle x_{ \alpha}^{+} , [ x_{\beta}^{-} , x_{\gamma}^{-} ] \rangle x_{\beta}^{+} x_{ \gamma}^{+} - \sum_{ \substack { \beta, \gamma \in \Delta^{+} , \\ \gamma + \alpha = \beta } } A_{ij, \gamma, \beta} \langle [ x_{\gamma}^{+} , x_{\alpha}^{+} ] , x_{\beta}^{-} \rangle x_{\gamma}^{-} x_{ \beta}^{+}, \]
    where we have used that $A_{ij, \gamma, \beta} = - A_{ij, \beta, \gamma}$. To obtain the second equation we have exchanged $\beta$ and $\gamma$. Also we have as in the previous case
    \begin{equation}
    	\label{eq:sumaijmpcaset}
    	\sum_{ \beta \in \Delta^{+}} A_{ij, \beta, \alpha} x_{\beta}^{-} [ x_{\beta}^{+} , x_{\alpha}^{+} ] = \sum_{ \substack { \beta, \gamma \in \Delta^{+} , \\ \alpha + \beta = \gamma } } A_{ij, \beta, \gamma - \beta} \langle [ x_{\beta}^{+} , x_{\alpha}^{+} ] , x_{\gamma}^{-} \rangle x_{\beta}^{-} x_{\gamma}^{+} = 
    \end{equation}
    \[ \sum_{ \substack { \beta, \gamma \in \Delta^{+} , \\ \alpha + \gamma = \beta } } A_{ij, \gamma, \beta} \langle [ x_{\gamma}^{+} , x_{\alpha}^{+} ] , x_{\beta}^{-} \rangle x_{\gamma}^{-} x_{\beta}^{+}.  \]
    Thus if we add \eqref{eq:sumaijmpcase} and \eqref{eq:sumaijmpcaset} we prove \eqref{eq:mva}.
    
    Thus from \eqref{eq:mva} we have
    \[ 2 x_{ \alpha}^{-} \otimes [ h_{ij} ( \alpha ) , x_{ \alpha}^{+} ] = \hbar \sum_{ \substack { \beta , \gamma \in \Delta^{+}, \\ \beta + \gamma = \alpha } } x_{ \alpha}^{-} \otimes (-1)^{ |\beta| |\gamma| } A_{ij, \beta , \gamma}  \langle x_{\alpha}^{ +} , [ x_{ \beta}^{ - } , x_{\gamma}^{ - } ]  \rangle x_{ \beta }^{ +} x_{ \gamma}^{ + } = \]
    \[ \hbar \sum_{ \substack { \beta , \gamma \in \Delta^{+}, \\ \beta + \gamma = \alpha } } (-1)^{ |\beta| |\gamma| } A_{ij, \beta , \gamma} [ x_{ \beta}^{ - } , x_{\gamma}^{ - } ] \otimes x_{ \beta }^{ +} x_{ \gamma}^{ + }. \]
    Then
    \begin{equation}
    	\label{eq:rhcpepms}
    	2 \sum_{ \alpha \in \Delta^{+}(k) } x_{ \alpha}^{-} \otimes [ h_{ij} ( \alpha ) , x_{ \alpha}^{+} ] = \frac{\hbar}{2} \sum_{ \alpha \in \Delta^{+}(k) } ( \sum_{ \substack { \beta , \gamma \in \Delta^{+}, \\ \beta + \gamma = \alpha } } (-1)^{ |\beta| |\gamma| } A_{ij, \beta , \gamma} [ x_{ \beta}^{ - } , x_{\gamma}^{ - } ] \otimes x_{ \beta }^{ +} x_{ \gamma}^{ + } +
    \end{equation}
    \[ \sum_{ \substack { \beta , \gamma \in \Delta^{+}, \\ \beta + \gamma = \alpha } } (-1)^{ |\beta| |\gamma| } A_{ij, \gamma , \beta} [ x_{ \gamma}^{ - } , x_{\beta}^{ - } ] \otimes x_{ \gamma }^{ +} x_{ \beta}^{ + } ) = \frac{\hbar}{2} \sum_{ \alpha \in \Delta^{+}(k) } \sum_{ \substack { \beta , \gamma \in \Delta^{+}, \\ \beta + \gamma = \alpha } } A_{ij, \beta , \gamma} [ x_{ \beta}^{ - } , x_{\gamma}^{ - } ] \otimes \{ x_{ \gamma }^{ +} , x_{ \beta}^{ + } \}. \]
    
    From the left-hand side of \eqref{eq:fccarcoms} we have
    \[ \frac{\hbar}{2} ( \sum_{ ht( \alpha + \beta ) = k } (-1)^{ |\alpha| |\beta| } ( \alpha_{i} , \alpha ) ( \alpha_{j} , \beta ) [ x_{ \alpha }^{-} , x_{ \beta }^{-} ] \otimes \{ x_{ \alpha}^{+} , x_{ \beta}^{+} \} +  \]
    \[ \sum_{ ht( \alpha + \beta ) = k } (-1)^{ |\alpha| |\beta| } ( \alpha_{i} , \beta ) ( \alpha_{j} , \alpha ) [ x_{ \beta }^{-} , x_{ \alpha }^{-} ] \otimes \{ x_{ \beta}^{+} , x_{ \alpha}^{+} \} ) = \]
    \[ \frac{\hbar}{2} \sum_{ ht( \alpha + \beta ) = k } A_{ij, \alpha, \beta} [ x_{ \alpha }^{-} , x_{ \beta }^{-} ] \otimes \{ x_{ \beta}^{+} , x_{ \alpha}^{+} \} . \]
    This coincides with the right-hand side of \eqref{eq:rhcpepms}. This completes the proof of the identity \eqref{eq:fccarcoms}. Thus we have proved the theorem.
    
\end{proof}

\begin{remark}
	We need again the isomorphism $\tau:  U( \mathfrak{g}[s] ) \to Y_{\hbar}( \mathfrak{g}) / \hbar Y_{\hbar}( \mathfrak{g}) $ from Definition \ref{df:isomzerogr}. Note that by \eqref{eq:cosuperbr2}
	\[ \delta( \tau^{-1}( x_{i1}^{\pm} ) ) = \delta(x_{i0}^{\pm} u) = [ x_{i0}^{\pm} \otimes 1 , \Omega ]. \]
	On the other hand,
	\[ \delta( \tau^{-1}( x_{i1}^{+} ) ) = \delta(x_{i0}^{+} u) = \hbar^{-1} ( \Delta(x_{i1}^{+}) - \Delta^{op}(x_{i1}^{+}) ) \pmod \hbar = \]
	\[ -[ 1 \otimes x_{i0}^{+} , \Omega^{+} ] + [ x_{i0}^{+} \otimes 1 , \tau( \Omega^{+} ) ] = \]
	\[ [ x_{i0}^{+} \otimes 1 , \Omega^{+} ] + x_{i0}^{+} \otimes h_{i0} + [ x_{i0}^{+} \otimes 1, \tau( \Omega^{+} ) ] = \]
	\[ [ x_{i0}^{+} \otimes 1 , \Omega + \sum_{k \in I} h^{(k)} \otimes h^{(k)} ] + x_{i0}^{+} \otimes h_{i0} = \]
	\[ [ x_{i0}^{+} \otimes 1 , \Omega ] +  x_{i0}^{+} \otimes h_{i0} - \sum_{k \in I} \langle h^{(k)} , h_{i0} \rangle x_{i0}^{+} \otimes h^{(k)} = \]
	\[ [ x_{i0}^{+} \otimes 1 , \Omega ] +  x_{i0}^{+} \otimes h_{i0} -  x_{i0}^{+} \otimes h_{i0} = [ x_{i0}^{+} \otimes 1 , \Omega ]. \]
	Similarly,
	\[ \delta( \tau^{-1}( x_{i1}^{-} ) ) =  \delta(x_{i0}^{-} u) = \hbar^{-1} ( \Delta(x_{i1}^{-}) - \Delta^{op}(x_{i1}^{-}) ) \pmod \hbar = \]
	\[ [ x_{i0}^{-} \otimes 1 , \Omega^{+} ] - [ 1 \otimes x_{i0}^{-} , \tau (\Omega^{+}) ] = \]
	\[ [ x_{i0}^{-} \otimes 1 , \Omega^{+} + \tau( \Omega^{+} ) ] - x_{i0}^{-} \otimes h_{i0} = \]
	\[ [ x_{i0}^{-} \otimes 1 , \Omega] + \sum_{ k \in I } \langle h^{(k)} , h_{i0} \rangle x_{i0}^{-} \otimes h^{(k)} - x_{i0}^{-} \otimes h_{i0} = \]
	\[ [ x_{i0}^{-} \otimes 1 , \Omega] + x_{i0}^{-} \otimes h_{i0} - x_{i0}^{-} \otimes h_{i0} = [ x_{i0}^{-} \otimes 1 , \Omega]. \]
\end{remark}

In order to define the comultiplication on generators $\{ h_{ir} , x_{i1}^{\pm} , x_{ir}^{\pm} \}^{r \ge 2}_{i \in I}$ we use recurrent formulas \eqref{eq:rec1} and \eqref{eq:rec2}. Thus
\begin{equation}
	\label{eq:comrec1}
	\Delta(x_{i,r+1}^{\pm}) = \pm (c_{ij})^{-1} [ \Delta(h_{j1}) - \frac{\hbar}{2} \Delta^2(h_{j0}) , \Delta(x_{ir}^{\pm}) ],
\end{equation}
where $r \ge 0$; $i=j$, if $|\alpha_{i}|=\bar{0}$ and $|i-j|=1$, if $|\alpha_{i}|=\bar{1}$ ($i,j \in I$);
\begin{equation}
	\label{eq:comrec2}
	\Delta(h_{ir}) = [ \Delta(x_{ir}^{+}), \Delta(x_{i0}^{-}) ],
\end{equation}
where $r \ge 2$ and $i \in I$.

We need to show that $\Delta$ is indeed the comultiplication
\begin{lemma}
	\label{lm:comcoass}
	The operator $\Delta: Y_{\hbar}(\mathfrak{g}) \to Y_{\hbar}(\mathfrak{g}) \otimes Y_{\hbar}(\mathfrak{g})$ defines the comultiplication on $Y_{\hbar}(\mathfrak{g})$.
\end{lemma}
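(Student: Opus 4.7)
The plan is to verify directly the two defining properties of a comultiplication: coassociativity $(\Delta \otimes \id) \circ \Delta = (\id \otimes \Delta) \circ \Delta$ and the counit axiom $(\epsilon \otimes \id) \circ \Delta = \id = (\id \otimes \epsilon) \circ \Delta$. First I would note that $\epsilon$ from \eqref{eq:epsdef} is a well-defined superalgebra homomorphism, since substituting zero for every generator in each relation of Theorem \ref{th:mpy} yields $0=0$. Because $\Delta$ is already a superalgebra homomorphism by Theorem \ref{th:comhomdef}, every map appearing in the two axioms is a superalgebra homomorphism, so it suffices to check both axioms on the generating set $\{h_{i0}, h_{i1}, x_{i0}^{\pm}, x_{i1}^{\pm}\}_{i \in I}$ from Theorem \ref{th:mpy}.

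For $x \in \mathfrak{g}$, the coproduct \eqref{eq:comg} is primitive and both axioms are immediate. For $h_{i1}$, formula \eqref{eq:comh1} gives $\Delta(h_{i1}) = \square(h_{i1}) + \hbar\, h_{i0} \otimes h_{i0} + \hbar[h_{i0} \otimes 1, \Omega^{+}]$; the counit identity holds because $\epsilon$ annihilates $h_{i0}$ and every $h^{(k)}$ and $x_{\alpha}^{\pm}$ appearing in $\Omega^{+}$, leaving only $\square(h_{i1})$. For coassociativity the central computation is the action of $\Delta \otimes \id$ and $\id \otimes \Delta$ on $[h_{i0} \otimes 1, \Omega^{+}]$. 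Using the primitivity of $h_{i0}$ and of each root vector $x_{\alpha}^{\pm} \in \mathfrak{g}$, one obtains
\begin{equation*}
(\Delta \otimes \id)[h_{i0} \otimes 1, \Omega^{+}] = [h_{i0} \otimes 1 \otimes 1, \Omega^{+}_{13}] + [1 \otimes h_{i0} \otimes 1, \Omega^{+}_{23}],
\end{equation*}
\begin{equation*}
(\id \otimes \Delta)[h_{i0} \otimes 1, \Omega^{+}] = [h_{i0} \otimes 1 \otimes 1, \Omega^{+}_{12}] + [h_{i0} \otimes 1 \otimes 1, \Omega^{+}_{13}],
\end{equation*}
where $\Omega^{+}_{ab}$ denotes $\Omega^{+}$ placed in tensor positions $a$ and $b$ (with $1$ in the third slot). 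Combined with the obvious coassociativity of the $\square(h_{i1})$ and $h_{i0} \otimes h_{i0}$ pieces, both triple coproducts collapse to the same totally symmetric expression running over the three tensor positions, which proves coassociativity on $h_{i1}$.

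The generators $x_{i1}^{\pm}$ are handled analogously using the explicit formulas \eqref{eq:comxbp}--\eqref{eq:comxbm} derived inside the proof of Theorem \ref{th:comhomdef}: the counit axiom follows again from $\epsilon$ vanishing on every summand of $\Omega^{+}$, while coassociativity reduces to the parallel identities for $(\Delta \otimes \id)[1 \otimes x_{i0}^{+}, \Omega^{+}]$ versus $(\id \otimes \Delta)[1 \otimes x_{i0}^{+}, \Omega^{+}]$ (and the analogue with $[x_{i0}^{-} \otimes 1, \Omega^{+}]$), again exploiting only the primitivity of $x_{i0}^{\pm}$ and of the root vectors. The main obstacle is purely combinatorial: carefully tracking the tensor positions and the signs coming from the graded tensor product in these expansions; no relation in $Y_{\hbar}(\mathfrak{g})$ beyond primitivity on $\mathfrak{g}$ and the homomorphism property established in Theorem \ref{th:comhomdef} is required.
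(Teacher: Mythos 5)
Your proposal is correct and takes essentially the same approach as the paper's proof: reduce both comultiplication axioms to the generators using the homomorphism property from Theorem \ref{th:comhomdef}, dispose of $\mathfrak{g}$ by primitivity, and verify coassociativity and the counit identity on $h_{i1}$ by expanding the correction term $\hbar\bigl(h_{i0}\otimes h_{i0}+[h_{i0}\otimes 1,\Omega^{+}]\bigr)$. Your explicit tracking of the $\Omega^{+}_{ab}$ contributions is, if anything, more careful than the paper's displayed computation, and the additional check on $x_{i1}^{\pm}$ is redundant (they are generated from $\mathfrak{g}$ and $\widetilde{h}_{i'1}$) but harmless.
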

\begin{proof}
	Since $Y_{\hbar}(\mathfrak{g})$ is generated by $\mathfrak{g}$ and $h_{i1}$ we need only to verify that defining equations of a comultiplication are satisfied for $h_{i1}$. Indeed, this follows by Theorem \ref{th:comhomdef}. Thus we have
	\[ ( id \otimes \Delta ) \Delta ( h_{i1} ) = ( id \otimes \Delta ) ( \square(h_{i1}) +  \hbar(  h_{i0} \otimes h_{i0} + [ h_{i0} \otimes 1, \Omega^{+}] ) ) = \] 
	\[ h_{i1} \otimes 1 \otimes 1 + 1 \otimes h_{i1} \otimes 1 +  1 \otimes 1 \otimes h_{i1} + ( id \otimes \Delta ) ( \hbar(  h_{i0} \otimes h_{i0} + [ h_{i0} \otimes 1, \Omega^{+}] ) ) = \]
	\[ ( \Delta \otimes id ) ( \square(h_{i1}) ) + ( \Delta \otimes id ) ( \hbar(  h_{i0} \otimes h_{i0} + [ h_{i0} \otimes 1, \Omega^{+}] ) ) = \]
	\[ ( \Delta \otimes id ) \Delta ( h_{i1} ). \]
	Moreover,
	\[ ( id \otimes \epsilon ) \Delta ( h_{i1} ) = h_{i1} \otimes 1, \;  ( \epsilon \otimes id ) \Delta ( h_{i1} ) = 1 \otimes h_{i1}. \]
	The result follows.
\end{proof}

The antipode $S$ is defined on generators $\{ h_{ir} , x_{ir}^{\pm} \}^{r \in \{0,1\}}_{i \in I}$ in the following way:
\begin{equation}
	\label{eq:antipodeY1}
	S(x) = -x \text{\; for \;} x \in \mathfrak{g},
\end{equation}
\begin{equation}
	S(h_{i1}) =	-h_{i1} + \hbar ( h_{i0}^2 + \sum_{ \alpha \in \Delta^{+}} (-1)^{1+|\alpha|} (\alpha_{i}, \alpha) x_{\alpha}^{-} x_{\alpha}^{+} ).
\end{equation}
Indeed, the result follows from the relation $\mu \circ (S \otimes \epsilon) \circ \Delta = \mu \circ (\epsilon \otimes S) \circ \Delta = \nu \circ \epsilon$, formulas \eqref{eq:comg}, \eqref{eq:comh1} and Theorem \ref{th:comhomdef}. Here $\mu : Y_{\hbar}(\mathfrak{g}) \otimes Y_{\hbar}(\mathfrak{g}) \to Y_{\hbar}(\mathfrak{g}) $ is the multiplication operator and $\nu : \Bbbk \to Z(Y_{\hbar}(\mathfrak{g}))$ is a ring homomorphism in the center of $Y_{\hbar}(\mathfrak{g})$.

In order to define the antipode on generators $\{ h_{ir} , x_{i1}^{\pm} , x_{ir}^{\pm} \}^{r \ge 2}_{i \in I}$ we use recurrent formulas \eqref{eq:rec1} and \eqref{eq:rec2}. Thus
\begin{equation}
	S(x_{i,r+1}^{\pm}) = \mp (c_{ij})^{-1} [ S(h_{j1}) - \frac{\hbar}{2} S^2(h_{j0}) , S(x_{ir}^{\pm}) ],
\end{equation}
where $r \ge 0$; $i=j$ if $|\alpha_{i}|=\bar{0}$ and $|i-j|=1$ if $|\alpha_{i}|=\bar{1}$ ($i,j \in I$);
\begin{equation}
	\label{eq:antipodeY2}
	S(h_{ir}) = - [ S(x_{ir}^{+}), S(x_{i0}^{-}) ],
\end{equation}
where $r \ge 2$ and $i \in I$.

\vspace{1cm}

\subsection{Hopf superalgebra structure on completion of Yangian}

\label{lb:HSSCY}

Recall Subsection \ref{sub:compl}.

$Y_{\hbar}(\mathfrak{g})$ is the $\mathbb{N}_{0}$-graded superalgebra where we set $\text{deg}(h_{i,r})= \text{deg}(x_{i,r}^{{\pm}}) =r$ for all $r \in \mathbb{N}_{0}$ and $\text{deg}(\hbar)=1$. Then $Y_{\hbar}(\mathfrak{g}) = \bigoplus_{n \in \mathbb{N}_{0}} Y_{\hbar}(\mathfrak{g})_{n}$, where $Y_{\hbar}(\mathfrak{g})_{0} = \Bbbk$ and $Y_{\hbar}(\mathfrak{g})_{n}$ is a subsuperalgebra generated by elements of degree $n$. Then we have a descending filtration of two-sided $\mathbb{Z}_2$-graded ideals $\{F_{i}\}_{i \in \mathbb{N}}$, where $F_{i} = \bigoplus_{n \ge i} Y_{\hbar}(\mathfrak{g})_{n}$. Note that $Y_{\hbar}(\mathfrak{g})/ F_{i}$ is a superalgebra for all $i \in \mathbb{N}$. We define the completion
\[ \widehat{Y_{\hbar}(\mathfrak{g})} := \widehat{Y_{\hbar}(\mathfrak{g})}_{\{F_{i}\}_{i \in \mathbb{N}}} = \mathop{\lim_{\longleftarrow}}_{ i \in \mathbb{N} } Y_{\hbar}(\mathfrak{g})/ F_{i}. \]
Let $ \iota : Y_{\hbar}(\mathfrak{g}) \to \widehat{Y_{\hbar}(\mathfrak{g})} $ be the homomorphism of superalgebras given by $X \to ( q_{n}(X) )_{n \in \mathbb{N}_{0} }$ for all $X \in Y_{\hbar}(\mathfrak{g})$. Note that $\iota$ is injective: if $X \in Ker( \iota )$ then $X \in \cap_{n \in \mathbb{N}} F_{n} = \{ 0 \} $, i. e. $\widehat{Y_{\hbar}(\mathfrak{g})}$ is a separable $\Bbbk[[ \hbar ]]$-module. It is easy to see from the definition of a completion that the following result holds. For each $n \in \mathbb{N}_{0}$, denote by $q_n$ the natural quotient map which is given by $q_n : Y_{\hbar}(\mathfrak{g}) \to Y_{\hbar}(\mathfrak{g}) / F_{n+1}$.

\begin{lemma}
	\label{lm:isomcompl}
	The embedding $ \iota $ extends to an isomorphism of superalgebras
	\[ \Phi : \prod_{k=0}^{ \infty }  Y_{\hbar}(\mathfrak{g})_{k} \to \widehat{Y_{\hbar}(\mathfrak{g})}, \; \sum_{k=0}^{ \infty } X_{k} \to ( \sum_{k=0}^{ n } q_{n} (X_{k}) )_{n \in \mathbb{N}_{0}}, \]
	where $X_k \in Y_{\hbar}(\mathfrak{g})_{k}$ for all $k \in \mathbb{N}_{0}$.
\end{lemma}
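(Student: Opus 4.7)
The plan is to verify that $\Phi$ is well-defined, a superalgebra homomorphism, and bijective. The key structural observation is that the $\mathbb{N}_{0}$-grading $Y_{\hbar}(\mathfrak{g}) = \bigoplus_{k \ge 0} Y_{\hbar}(\mathfrak{g})_{k}$ together with $F_{i} = \bigoplus_{n \ge i} Y_{\hbar}(\mathfrak{g})_{n}$ yields natural $\Bbbk$-linear isomorphisms $Y_{\hbar}(\mathfrak{g}) / F_{n+1} \cong \bigoplus_{k=0}^{n} Y_{\hbar}(\mathfrak{g})_{k}$ that are compatible with the quotient maps $\phi_{m}^{n}$ in the obvious way. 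This decomposition is what drives both the homomorphism and surjectivity arguments.

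First I would check well-definedness and the homomorphism property. For any formal sum $\sum_{k=0}^{\infty} X_{k}$ with $X_{k} \in Y_{\hbar}(\mathfrak{g})_{k}$, the fact that $X_{k} \in F_{n+1}$ whenever $k > n$ implies $q_{n}(X_{k}) = 0$, so each partial sum $\sum_{k=0}^{n} q_{n}(X_{k})$ is finite; compatibility $\phi_{m}^{n}(\sum_{k=0}^{n} q_{n}(X_{k})) = \sum_{k=0}^{m} q_{m}(X_{k})$ for $m \le n$ then places the sequence in $\widehat{Y_{\hbar}(\mathfrak{g})}$. Multiplicativity reduces to the identity $(\sum_{k=0}^{n} q_{n}(X_{k}))(\sum_{l=0}^{n} q_{n}(Y_{l})) = \sum_{j=0}^{n} q_{n}(\sum_{k+l=j} X_{k} Y_{l})$ in $Y_{\hbar}(\mathfrak{g}) / F_{n+1}$, which holds because products across the boundary $k+l > n$ lie in $F_{n+1}$ by grading; the evenness of $\Phi$ and its compatibility with the $\mathbb{Z}_{2}$-grading are automatic since each $X_{k}$ is homogeneous.

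For injectivity I would argue by induction: if $\Phi(\sum_{k} X_{k}) = 0$, then $\sum_{k=0}^{n} X_{k} \in F_{n+1}$ for every $n$, and since $F_{n+1} \cap Y_{\hbar}(\mathfrak{g})_{n} = \{0\}$, peeling off one degree at a time yields $X_{n} = 0$ for all $n$. For surjectivity, given $(\bar{a}_{n}) \in \widehat{Y_{\hbar}(\mathfrak{g})}$, I would construct the preimage inductively using the natural decomposition above: set $X_{0} := \bar{a}_{0} \in Y_{\hbar}(\mathfrak{g})_{0}$, and having produced $X_{0}, \ldots, X_{n-1}$ with $\sum_{k=0}^{n-1} q_{n-1}(X_{k}) = \bar{a}_{n-1}$, the compatibility $\phi_{n-1}^{n}(\bar{a}_{n}) = \bar{a}_{n-1}$ forces the difference $\bar{a}_{n} - \sum_{k=0}^{n-1} q_{n}(X_{k})$ to lie in the degree-$n$ component, defining $X_{n} \in Y_{\hbar}(\mathfrak{g})_{n}$ uniquely. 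Then $\Phi(\sum_{k} X_{k}) = (\bar{a}_{n})$ by construction. No step is a serious obstacle; the only point requiring care is the natural identification $Y_{\hbar}(\mathfrak{g}) / F_{n+1} \cong \bigoplus_{k=0}^{n} Y_{\hbar}(\mathfrak{g})_{k}$, which relies on the graded structure introduced for $Y_{\hbar}(\mathfrak{g})$ and the explicit form of the filtration $\{F_{i}\}_{i \in \mathbb{N}}$.
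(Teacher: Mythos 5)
Your proof is correct and is exactly the standard verification that the paper leaves to the reader (the paper merely asserts the lemma as "easy to see from the definition of a completion"). The key identification $Y_{\hbar}(\mathfrak{g})/F_{n+1} \cong \bigoplus_{k=0}^{n} Y_{\hbar}(\mathfrak{g})_{k}$, the vanishing $q_{n}(X_{k})=0$ for $k>n$, and the inductive peeling argument for bijectivity are all sound and complete the argument as intended.
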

Thus we shall view the elements of  $\widehat{Y_{\hbar}(\mathfrak{g})}$ as infinite series $\sum_{k=0}^{ \infty } X_{k}$ with $X_k \in Y_{\hbar}(\mathfrak{g})_{k}$ for all $k \in \mathbb{N}_{0}$. Note that $\Phi$ preserves the $\mathbb{Z}_{2}$ grading (it is sufficient to check that generators preserve their grading under this homomorphism).

Note that $Y_{\hbar}(\mathfrak{g}) \otimes Y_{\hbar}(\mathfrak{g})$ is a $\mathbb{N}_{0}$-graded ring as $Y_{\hbar}(\mathfrak{g}) \otimes Y_{\hbar}(\mathfrak{g}) = \bigoplus_{n \in \mathbb{N}_{0}} (Y_{\hbar}(\mathfrak{g}) \otimes Y_{\hbar}(\mathfrak{g}) )_{n}$, where $(Y_{\hbar}(\mathfrak{g}) \otimes Y_{\hbar}(\mathfrak{g}) )_{n} = \bigoplus_{ \substack{ r, s \in \mathbb{N}_{0}, \\ r+s=n } } Y_{\hbar}(\mathfrak{g})_{r} \otimes Y_{\hbar}(\mathfrak{g})_{s}$. Then we have a descending filtration of two-sided $\mathbb{Z}_2$-graded ideals $\{F_{i}\}_{i \in \mathbb{N}}$, where $F_{i} = \bigoplus_{n \ge i} (Y_{\hbar}(\mathfrak{g}) \otimes Y_{\hbar}(\mathfrak{g}) )_{n}$. We define the completion
\[ Y_{\hbar}(\mathfrak{g}) \widehat{\otimes} Y_{\hbar}(\mathfrak{g}) := \mathop{\lim_{\longleftarrow}}_{ i \in \mathbb{N} } Y_{\hbar}(\mathfrak{g}) \otimes Y_{\hbar}(\mathfrak{g}) / F_{i}. \]
As in Lemma \ref{lm:isomcompl}, we can identify $Y_{\hbar}(\mathfrak{g}) \widehat{\otimes} Y_{\hbar}(\mathfrak{g})$ with the direct product
\[ \prod_{k=0}^{ \infty } (Y_{\hbar}(\mathfrak{g}) \otimes Y_{\hbar}(\mathfrak{g}) )_{k} = \prod_{k=0}^{ \infty } \left ( \bigoplus_{ \substack{ r, s \in \mathbb{N}_{0}, \\ r+s=k } } Y_{\hbar}(\mathfrak{g})_{r} \otimes Y_{\hbar}(\mathfrak{g})_{s} \right ) . \]
We proceed in the same manner to define  the completed $n$-th tensor power $Y_{\hbar}(\mathfrak{g})^{ \widehat{\otimes} n} = \widehat{\bigotimes}_{k=1}^{n} Y_{\hbar}(\mathfrak{g})$ for $ 3 \le n \in \mathbb{N}$.

We show that $\widehat{Y_{\hbar}(\mathfrak{g})}$ is the Hopf superalgebra using the structure defined on $Y_{\hbar}(\mathfrak{g})$ in Subsection \ref{sub:hssy}. Define a counit $\epsilon: \widehat{Y_{\hbar}(\mathfrak{g})} \to \Bbbk $ on generators as in the equation \eqref{eq:epsdef}. Note that the element $ \Omega^{+} $ can be viewed as an element of $Y_{\hbar}(\mathfrak{g}) \widehat{\otimes} Y_{\hbar}(\mathfrak{g})$, and therefore we may define the function
\[ \Delta : \{ x_{i0}^{ \pm } , h_{i0}, h_{i1} \;\ | \; i \in I \} \to Y_{\hbar}(\mathfrak{g}) \widehat{\otimes} Y_{\hbar}(\mathfrak{g}) \]
exactly as $\Delta$ has been defined in \eqref{eq:comg} and \eqref{eq:comh1}.

\begin{proposition}
	\label{pr:homcomcY}
	Assume that $\mathfrak{g}$ satisfies constraints listed in Theorem \ref{th:mpy}. The function $\Delta$ extends to an superalgebra homomorphism $ \Delta : \widehat{Y_{\hbar}(\mathfrak{g})} \to Y_{\hbar}(\mathfrak{g}) \widehat{\otimes} Y_{\hbar}(\mathfrak{g}) $.
\end{proposition}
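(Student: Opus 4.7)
The plan is to verify that the comultiplication $\Delta : Y_{\hbar}(\mathfrak{g}) \to Y_{\hbar}(\mathfrak{g}) \otimes Y_{\hbar}(\mathfrak{g})$ constructed in Theorem \ref{th:comhomdef} is continuous with respect to the inverse-limit topologies on source and target, and then extend by continuity. Concretely, I would first check that $\Delta$ preserves the $\mathbb{N}_0$-grading recalled at the start of Subsection \ref{lb:HSSCY}, namely that $\Delta(Y_{\hbar}(\mathfrak{g})_n) \subseteq (Y_{\hbar}(\mathfrak{g}) \otimes Y_{\hbar}(\mathfrak{g}))_n$ for every $n \in \mathbb{N}_0$. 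It suffices to check this on the generating set $\{h_{i0}, x_{i0}^{\pm}, h_{i1}\}_{i \in I}$: the images $\square(h_{i0})$ and $\square(x_{i0}^{\pm})$ are visibly in degree $0$, while in the formula \eqref{eq:comh1} for $\Delta(h_{i1})$ every summand has total degree $1$ because $\deg(\hbar) = 1$, $\Omega^{+} \in \mathfrak{g} \otimes \mathfrak{g}$ has degree $0$, and $\square(h_{i1})$ lies in degree $1$. The same observation applies to \eqref{eq:comxbp} and \eqref{eq:comxbm}.

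Next I would propagate the grading preservation to all generators through the recursive formulas \eqref{eq:comrec1} and \eqref{eq:comrec2}: the bracket $[\,\Delta(\widetilde{h}_{j1}),\,\cdot\,]$ increases total degree by exactly $1$, matching the jump from $x_{ir}^{\pm}$ to $x_{i,r+1}^{\pm}$, and similarly for $h_{ir}$. Since $\Delta$ is a superalgebra homomorphism by Theorem \ref{th:comhomdef}, preservation of grading on generators together with $\Delta(\hbar) = \hbar$ implies $\Delta(Y_{\hbar}(\mathfrak{g})_n) \subseteq (Y_{\hbar}(\mathfrak{g}) \otimes Y_{\hbar}(\mathfrak{g}))_n$ for all $n$.

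Consequently $\Delta(F_n) \subseteq \widetilde{F}_n$, where $F_n = \bigoplus_{k \ge n} Y_{\hbar}(\mathfrak{g})_k$ and $\widetilde{F}_n = \bigoplus_{k \ge n} (Y_{\hbar}(\mathfrak{g}) \otimes Y_{\hbar}(\mathfrak{g}))_k$, so $\Delta$ induces a compatible family of maps $Y_{\hbar}(\mathfrak{g})/F_n \to (Y_{\hbar}(\mathfrak{g}) \otimes Y_{\hbar}(\mathfrak{g}))/\widetilde{F}_n$. Passing to the inverse limit via Lemma \ref{lm:isomcompl} and its analogue for $Y_{\hbar}(\mathfrak{g}) \widehat{\otimes} Y_{\hbar}(\mathfrak{g})$, this family yields a well-defined map
\[
\Delta : \widehat{Y_{\hbar}(\mathfrak{g})} \to Y_{\hbar}(\mathfrak{g}) \widehat{\otimes} Y_{\hbar}(\mathfrak{g}), \qquad \sum_{k \ge 0} X_k \longmapsto \sum_{k \ge 0} \Delta(X_k),
\]
where $X_k \in Y_{\hbar}(\mathfrak{g})_k$. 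The extended $\Delta$ agrees with the original on $Y_{\hbar}(\mathfrak{g}) \subset \widehat{Y_{\hbar}(\mathfrak{g})}$.

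Finally, I would verify that this extension is a superalgebra homomorphism. For $X = \sum_k X_k$ and $Y = \sum_l Y_l$ in $\widehat{Y_{\hbar}(\mathfrak{g})}$, the multiplication in the completion satisfies $XY = \sum_n \sum_{k + l = n} X_k Y_l$, and the homogeneous summands $X_k Y_l$ lie in $Y_{\hbar}(\mathfrak{g})_{k + l}$. Applying $\Delta$ componentwise and using the homomorphism property on each $Y_{\hbar}(\mathfrak{g})$-level from Theorem \ref{th:comhomdef} gives $\Delta(XY) = \Delta(X) \Delta(Y)$. I expect the main (and essentially only) obstacle to be the bookkeeping of the grading preservation for the recursively defined generators; once that is in place the extension is entirely formal from the universal property of inverse limits and the continuity of multiplication in both completions.
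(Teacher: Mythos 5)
Your proposal is correct, but it takes a genuinely different route from the paper. The paper's proof of Proposition \ref{pr:homcomcY} re-verifies the defining relations \eqref{eq:mpy1}--\eqref{eq:mpy6} of the minimalistic presentation with the target replaced by $Y_{\hbar}(\mathfrak{g}) \widehat{\otimes} Y_{\hbar}(\mathfrak{g})$, and invokes \cite[Proposition 5.18]{GNW18} for the extra considerations needed for the Cartan relation \eqref{eq:mpy1} with $(r,s)=(1,1)$; this mirrors the affine Kac--Moody setting of \cite{GNW18}, where $\Omega^{+}$ is an infinite sum over $\Delta^{+}$ and the coproduct only exists with values in the completion, so there is no uncompleted homomorphism to extend. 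You instead exploit the fact that in the finite-dimensional type $A$ setting Theorem \ref{th:comhomdef} already furnishes a homomorphism $\Delta : Y_{\hbar}(\mathfrak{g}) \to Y_{\hbar}(\mathfrak{g}) \otimes Y_{\hbar}(\mathfrak{g})$ into the uncompleted tensor product, check that it preserves the $\mathbb{N}_0$-grading on the generators $\{h_{i0}, x_{i0}^{\pm}, h_{i1}\}$ (and, via \eqref{eq:comrec1}--\eqref{eq:comrec2}, on all generators), and extend by continuity through the filtrations $\{F_n\}$ and $\{\widetilde{F}_n\}$. Your degree bookkeeping is right: $\deg(\hbar)=1$, $\Omega^{+} \in \mathfrak{g} \otimes \mathfrak{g}$ sits in degree $0$, and every summand of \eqref{eq:comh1}, \eqref{eq:comxbp}, \eqref{eq:comxbm} has the correct total degree, so $\Delta(F_n) \subseteq \widetilde{F}_n$ and the inverse-limit extension is formal. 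What your approach buys is economy --- no relation needs to be re-checked and the reference to \cite[Proposition 5.18]{GNW18} is avoided; what it costs is generality --- it relies essentially on $\Delta^{+}$ being finite and would not carry over to the affine situation the paper's argument is modeled on. One presentational point worth making explicit if you write this up: you should note that $Y_{\hbar}(\mathfrak{g}) \otimes Y_{\hbar}(\mathfrak{g})$ embeds into $Y_{\hbar}(\mathfrak{g}) \widehat{\otimes} Y_{\hbar}(\mathfrak{g})$ compatibly with the gradings (the analogue of the map $\iota$ of Subsection \ref{lb:HSSCY}), since that is what lets you read the finite-level coproduct as taking values in the completed tensor product before passing to the limit.
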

\begin{proof}
	It follows from Lemma \ref{lm:isomcompl} that relations \eqref{eq:mpy1} - \eqref{eq:mpy6} can be proved without modifications as in Theorem \ref{th:comhomdef} when $Y_{\hbar}(\mathfrak{g}) \otimes Y_{\hbar}(\mathfrak{g})$ is replaced with $Y_{\hbar}(\mathfrak{g}) \widehat{\otimes} Y_{\hbar}(\mathfrak{g})$. To show that the formula \eqref{eq:mpy1} for $(r,s)=(1,1)$ holds we need some extra considerations that are the same as in the \cite[Proposition 5.18]{GNW18}.
\end{proof}

Using Propostion \ref{pr:homcomcY}, formulas \eqref{eq:comrec1}, \eqref{eq:comrec2} and the same considerations as in Lemma \ref{lm:comcoass} it is easy to show that $\Delta$ is the comultiplication on $\widehat{Y_{\hbar}(\mathfrak{g})}$. We define the antipode $S : \widehat{Y_{\hbar}(\mathfrak{g})} \to \widehat{Y_{\hbar}(\mathfrak{g})}$ by the same arguments as in Subsection \ref{sub:hssy}. Thus the antipode acts on generators in the same way as in equations \eqref{eq:antipodeY1} - \eqref{eq:antipodeY2}.

\vspace{1cm}

\section{Isomorphism}

In this section we construct an explicit homomorphism of superalgebras
\[ \Phi : U_{\hbar}(L\mathfrak{g}_d) \to \widehat{Y_{\hbar}(\mathfrak{g}_{d})}, \]
where $d \in D$, $\widehat{Y_{\hbar}(\mathfrak{g})}$ is the completion of $Y_{\hbar}(\mathfrak{g})$ introduced in Subsection \ref{lb:HSSCY}, and show that it induces an isomorphism of completed superalgebras. We use  approach described in \cite{GT13} where non super case is proved. Also the result is proved in \cite{S18} for $\mathfrak{sl}(1|1)$ and stated without complete proof in general case for Lie superalgebras of type A. We give the complete proof for this case.

The proof is organized as follows: in order to not repeat technical details given in \ref{lb:HSSCY} when super and non super cases are actually the same we skip some results or just state them without proof. But in doing so we give explicit proofs when super case differs significantly from the non super one.

In order to prove the result we need to assume that the PBW theorem holds for $Y_{\hbar}(\mathfrak{g}_{d})$.
The assumption is required in the following results: Corollary \ref{cl:Bseq} and Lemmas \ref{lm:ql5comp} - \ref{lm:ql8comp}.

\vspace{1cm}

\subsection{PBW theorem for super Yangians}

The PBW theorem was proved in \cite{T19b} for $Y_{\hbar}(\mathfrak{g}_{d})$ for all $d \in D$. For the convenience of the reader, we remind it here. We fix $d \in D$ and ommit it. Pick any total ordering $\preceq$ on $ \Delta^{+} \times \mathbb{N}_{0} $. For every $( \beta, r) \in \Delta^{+} \times \mathbb{N}_{0} $, we choose:
\begin{enumerate}
	\item a decomposition $\beta = \alpha_{i_1} + ... + \alpha_{i_p} $ such that $[ ... [ e_{\alpha_{i_1}} , e_{\alpha_{i_2}} ] , ... , e_{\alpha_{i_p}} ]$ is a nonzero root vector $e_{\beta}$ of $\mathfrak{g}$ (here, $e_{\alpha_{i}}$ denotes the standart Chevalley generator of $\mathfrak{g}$);
	\item a decomposition $r = r_1 + ... + r_p$ with $r_i \in \mathbb{N}_{0}$.
\end{enumerate}
Define the PBW basis elements $x_{\beta , r}^{\pm}$ of $Y_{\hbar}(\mathfrak{g})$ via
\[ x_{\beta, r }^{ \pm} := [ ... [ [ x_{i_1, r_1}^{\pm} , x_{i_2, r_2}^{\pm} ] , x_{i_3, r_3}^{\pm} ] , ... , x_{i_p, r_p}^{\pm} ]. \]
Let $H$ denote the set of all functions $h : \Delta^{+} \times \mathbb{N}_{0} \to \mathbb{N}_{0}$ with finite support and such that $h( \beta, r) \le 1$ if $|\beta|= \bar{1}$ (we set here $|\pm ( \alpha_j + ... + \alpha_{i} )| := |\alpha_j| + ... + |\alpha_{i}| \in \mathbb{Z}_2$). The monomials
\[ x_{h}^{\pm} := \prod_{ ( \beta, r) \in \Delta^{+} \times \mathbb{N} }^{ \to } (x_{ \beta, r }^{\pm})^{h( \beta, r)} \text{  with  } h \in H \]
will be called the ordered PBW monomials of $Y_{\hbar}^{\pm}(\mathfrak{g})$. Recall notations given in Subsection \ref{sect:DSY}. Here, the arrow $\to$ over the product sign refers to a total order $\preceq$ on $ \Delta^{+} \times \mathbb{N}_{0} $.

\begin{theorem}
	\label{th:pbwyang}
	\begin{enumerate}
		\item The ordered PBW monomials $ \{ x_{h}^{\pm} \}_{h \in H}$ form a $ \Bbbk $-basis of $Y_{\hbar}^{\pm}(\mathfrak{g})$. In other words $Y_{\hbar}^{\pm}(\mathfrak{g})$ is the superalgebra generated by elements $\{ x_{ir}^{\pm} \}_{i \in I, r \in \mathbb{N}}$ subject to the relations \eqref{eq:SYrc4}, \eqref{eq:bssr} - \eqref{eq:qssr};
		\item $Y_{\hbar}^{0}(\mathfrak{g})$ is a polynomial superalgebra in the generators $\{ h_{ir} \}_{i \in I, r \in \mathbb{N}_0}$;
		\item the products of ordered PBW monomials $ \{ x_{h}^{-} \}_{h \in H} $, $ \{ x_{h^{'}}^{+} \}_{h^{'} \in H} $, and the ordered (in any way) monomials in $ \{ h_{ir} \}_{i \in I, r \in \mathbb{N}_0} $ form a $\Bbbk$-basis of $Y_{\hbar}(\mathfrak{g})$.
	\end{enumerate}
\end{theorem}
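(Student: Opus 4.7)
The plan is to establish this via a standard filtration-and-associated-graded argument, reducing the statement to the classical PBW theorem for the universal enveloping superalgebra $U(\mathfrak{g}[t])$ of the current superalgebra. Introduce the ascending filtration on $Y_{\hbar}(\mathfrak{g})$ determined by placing $h_{i,r}$, $x_{i,r}^{\pm}$ in filtered degree $r$ and $\hbar$ in degree $1$; this is compatible with the defining relations \eqref{eq:Cer}--\eqref{eq:qssr} since every one of them is homogeneous under this assignment (the right-hand side of \eqref{eq:SYrc2}, \eqref{eq:SYrc4} uses $\hbar$ to match the drop in degree caused by the bracket). Let $\bar{x}_{i,r}^{\pm}, \bar{h}_{i,r}$ denote the symbols in $\mathrm{gr}\, Y_{\hbar}(\mathfrak{g})$.

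First I would construct an algebra homomorphism $\Psi : U(\mathfrak{g}[t]) \twoheadrightarrow \mathrm{gr}\, Y_{\hbar}(\mathfrak{g})$ sending $h_{i}\otimes t^{r} \mapsto \bar{h}_{i,r}$ and $e_{i}\otimes t^{r}\mapsto \bar{x}_{i,r}^{+}$, $f_{i}\otimes t^{r}\mapsto \bar{x}_{i,r}^{-}$. To see this is well defined one has to verify the defining relations of $U(\mathfrak{g}[t])$ (those of $\mathfrak{g}$ together with $\Bbbk[t]$-linearity) in the associated graded; each of them is the principal symbol of one of the Yangian relations of Section \ref{sect:DSY}, and the $\hbar$-correction terms vanish upon passing to $\mathrm{gr}$. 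Surjectivity is clear since the listed symbols generate. The usual PBW theorem for $U(\mathfrak{g}[t])$ (which in the super setting holds because $\mathfrak{g}[t]$ is a Lie superalgebra and the classical super PBW theorem applies) gives a basis of ordered monomials in root-vector modes $\{e_{\beta}\otimes t^r, h_i\otimes t^r, f_{\beta}\otimes t^r\}$, with odd modes appearing to power at most $1$.

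Next I would prove the spanning part of (1)--(3): any product of generators of $Y_{\hbar}^{\pm}(\mathfrak{g})$ (respectively $Y_{\hbar}^{0}(\mathfrak{g})$, $Y_{\hbar}(\mathfrak{g})$) can be rewritten as a linear combination of ordered PBW monomials. For $Y_{\hbar}^{\pm}(\mathfrak{g})$ one uses \eqref{eq:SYrc4} and the iterated-bracket definition of $x_{\beta,r}^{\pm}$ to swap adjacent factors at the cost of terms that are either ordered or of strictly lower filtered degree, exactly as in the non-super case treated in \cite{GNW18,GT13}; at the odd simple root positions one uses \eqref{eq:SYrc5} and the quartic Serre relation \eqref{eq:qssr} (in the form stated in Remark \ref{rm:yangrelrew}) to dispose of repeated odd factors and to reduce nested brackets. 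For $Y_{\hbar}^{0}(\mathfrak{g})$ the relation \eqref{eq:Cer} reduces everything to commutative monomials. Triangularity of $Y_{\hbar}(\mathfrak{g}) = Y_{\hbar}^{-}(\mathfrak{g})\cdot Y_{\hbar}^{0}(\mathfrak{g})\cdot Y_{\hbar}^{+}(\mathfrak{g})$ follows from moving all $x^{-}$'s to the left and all $x^{+}$'s to the right, using \eqref{eq:SYrc3} which produces only $h$'s when a $+$ passes a $-$.

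Finally the linear independence: the spanning statement shows that $\Psi$ factors through a surjection from the corresponding ordered monomials in $U(\mathfrak{g}[t])$ onto $\mathrm{gr}\,Y_{\hbar}(\mathfrak{g})$, and these monomials are linearly independent in $U(\mathfrak{g}[t])$ by the classical super PBW. Since the filtration is exhaustive and separated, independence in the associated graded implies independence in $Y_{\hbar}(\mathfrak{g})$ itself, which gives (1), (2) and (3) simultaneously. The main obstacle I expect is the careful execution of the reordering procedure in $Y_{\hbar}^{\pm}(\mathfrak{g})$ at odd simple roots: the bracket of two odd modes is a symmetric anticommutator rather than an antisymmetric one, so one must track signs in \eqref{eq:SYrc4} and combine the quartic relation \eqref{eq:qssr} with \eqref{eq:bssr} to ensure that the reordering terminates and produces the correct ordered monomials with $h(\beta,r)\le 1$ for $|\beta|=\bar{1}$. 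Once this combinatorial reduction is set up, the filtration argument and the classical super PBW for $U(\mathfrak{g}[t])$ do the rest.
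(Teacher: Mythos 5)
The paper does not actually prove Theorem \ref{th:pbwyang}: it is quoted from \cite{T19b}, where it is established via a shuffle algebra realization of $Y_{\hbar}^{\pm}(\mathfrak{g})$ (the super analogue of the role the RTT/Gauss-decomposition realization plays in Levendorskii's non-super argument). So any complete argument you give is necessarily a different route from the one the paper relies on; the question is whether your filtration argument closes.

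It does not, and the gap is at the decisive step. Your reordering argument (spanning) and the construction of the surjection $\Psi : U(\mathfrak{g}[t]) \twoheadrightarrow \mathrm{gr}\, Y_{\hbar}(\mathfrak{g})$ are fine in outline, but they only bound the size of $Y_{\hbar}(\mathfrak{g})$ from above. In the final paragraph you argue that the ordered monomials are linearly independent in $U(\mathfrak{g}[t])$ ``by the classical super PBW'' and conclude independence of their images in $\mathrm{gr}\, Y_{\hbar}(\mathfrak{g})$. Linear independence does not push forward along a surjection: what you need is that $\Psi$ is \emph{injective}, i.e.\ that the Yangian relations impose no further collapse, and that is precisely the hard half of any PBW theorem. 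It cannot be extracted from the defining relations by rewriting alone; one must exhibit an independent model in which the ordered monomials are visibly independent --- faithful representations, an RTT or Gauss-decomposition realization, or the shuffle algebra embedding of \cite{T19b}. As written, your proof is circular at this point. A secondary issue: by assigning $\deg(\hbar)=1$ you make every defining relation homogeneous, so your ``filtration'' is a grading and $\mathrm{gr}\, Y_{\hbar}(\mathfrak{g}) = Y_{\hbar}(\mathfrak{g})$; the map to $U(\mathfrak{g}[t])$ is then not well defined, since the $\hbar$-correction terms do not vanish in the associated graded. The correct degeneration is the quotient $Y_{\hbar}(\mathfrak{g})/\hbar Y_{\hbar}(\mathfrak{g})$ (or the filtration with $\deg(\hbar)=0$), and showing that this quotient is all of $U(\mathfrak{g}[t])$ rather than a proper quotient is again equivalent to the injectivity you are missing.
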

As an important corollary, we obtain
\begin{corollary}
	\begin{enumerate}
		\item $Y_{\hbar}^{\ge 0}(\mathfrak{g})$ (resp. $Y_{\hbar}^{\le 0}(\mathfrak{g})$) is the superalgebra generated by elements $ \{ h_{ir}, x^{+}_{ir} \}_{i \in I, r \in \mathbb{N}_{0}} $ (resp. $\{ h_{ir}, x^{-}_{ir} \}_{i \in I, r \in \mathbb{N}_{0}}$) subject to the relations \eqref{eq:Cer} - \eqref{eq:SYrc1}, \eqref{eq:SYrc2} - \eqref{eq:qssr};
		\item the multiplication map
		\[ m : Y_{\hbar}^{-}(\mathfrak{g}) \otimes Y_{\hbar}^{0}(\mathfrak{g}) \otimes Y_{\hbar}^{+}(\mathfrak{g}) \to Y_{\hbar}(\mathfrak{g}) \]
		is an isomorphism of $\Bbbk$-vector superspaces.
	\end{enumerate}
    \label{cl:pbwcl}
\end{corollary}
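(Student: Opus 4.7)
The plan is to derive both statements directly from Theorem \ref{th:pbwyang}. I will treat part (2) first, since it is nearly immediate, and then bootstrap to part (1).

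For part (2), I would argue as follows. By parts (1) and (2) of Theorem \ref{th:pbwyang}, the ordered PBW monomials $\{x_h^{-}\}_{h\in H}$ form a $\Bbbk$-basis of $Y_{\hbar}^{-}(\mathfrak{g})$, the ordered PBW monomials $\{x_{h'}^{+}\}_{h'\in H}$ form a $\Bbbk$-basis of $Y_{\hbar}^{+}(\mathfrak{g})$, and ordered monomials in the generators $\{h_{i,r}\}$ form a $\Bbbk$-basis of the polynomial superalgebra $Y_{\hbar}^{0}(\mathfrak{g})$. Consequently, a $\Bbbk$-basis of the tensor product $Y_{\hbar}^{-}(\mathfrak{g}) \otimes Y_{\hbar}^{0}(\mathfrak{g}) \otimes Y_{\hbar}^{+}(\mathfrak{g})$ is furnished by the pure tensors $x_h^{-} \otimes p \otimes x_{h'}^{+}$, where $h,h' \in H$ and $p$ ranges over ordered monomials in the $h_{i,r}$. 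The multiplication map $m$ sends this basis bijectively to the basis of $Y_{\hbar}(\mathfrak{g})$ furnished by part (3) of Theorem \ref{th:pbwyang}, hence is an isomorphism of $\Bbbk$-vector superspaces (it is $\mathbb{Z}_2$-graded by construction since each generator carries a fixed parity).

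For part (1), I would handle $Y_{\hbar}^{\ge 0}(\mathfrak{g})$ (the $\le 0$ case being entirely analogous). Let $\widetilde{Y}^{\ge 0}$ denote the abstract unital associative superalgebra over $\Bbbk[[\hbar]]$ generated by symbols $\{\widetilde{h}_{i,r}, \widetilde{x}_{i,r}^{+}\}_{i \in I, r \in \mathbb{N}_0}$, with the appropriate $\mathbb{Z}_2$-grading, subject only to those relations among \eqref{eq:Cer}--\eqref{eq:qssr} that involve only $h_{i,r}$'s and $x_{i,r}^{+}$'s. Sending each tilde-generator to the corresponding element of $Y_{\hbar}^{\ge 0}(\mathfrak{g}) \subset Y_{\hbar}(\mathfrak{g})$ defines a surjective homomorphism $\pi: \widetilde{Y}^{\ge 0} \twoheadrightarrow Y_{\hbar}^{\ge 0}(\mathfrak{g})$; I need to show $\pi$ is injective. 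The strategy is to produce, inside $\widetilde{Y}^{\ge 0}$, a $\Bbbk$-linear spanning set consisting of products $\widetilde{x}_{h'}^{+} \cdot \widetilde{p}$, where $\widetilde{x}_{h'}^{+}$ are the ordered PBW monomials (built from the $\widetilde{x}_{i,r}^{+}$ using the same nested-bracket recipe as in the definition preceding Theorem \ref{th:pbwyang}) and $\widetilde{p}$ are ordered monomials in the $\widetilde{h}_{i,r}$. The fact that such products span follows by using the defining relations to normal-order any word: relations \eqref{eq:Cer} handle reordering of the Cartan generators, relations \eqref{eq:SYrc1}--\eqref{eq:SYrc2} allow one to push every $\widetilde{h}_{i,r}$ to the right past every $\widetilde{x}_{j,s}^{+}$ at the cost of lower-degree corrections, and relations \eqref{eq:SYrc4}, \eqref{eq:bssr}--\eqref{eq:qssr} allow reordering of the raising generators into the chosen PBW order. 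This is the same argument used in the proof of Theorem \ref{th:pbwyang} for $Y_{\hbar}(\mathfrak{g})$, restricted to the positive Borel part. Finally, the image under $\pi$ of this spanning set is precisely the basis of $Y_{\hbar}^{\ge 0}(\mathfrak{g})$ provided by parts (1)--(2) of Theorem \ref{th:pbwyang} combined with part (2) of the present corollary, so the spanning set is linearly independent in $\widetilde{Y}^{\ge 0}$ and $\pi$ is an isomorphism.

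The main obstacle I expect is the bookkeeping in the normal-ordering argument that establishes the spanning claim for $\widetilde{Y}^{\ge 0}$: one must carefully verify that the inductive scheme on total degree (or on a PBW-type filtration where $\mathrm{deg}(\widetilde{h}_{i,r}) = \mathrm{deg}(\widetilde{x}_{i,r}^{+}) = r$ and $\mathrm{deg}(\hbar) = 1$) terminates, using only the relations listed, without implicitly appealing to any relation that involves $x^{-}_{i,r}$. Since all of the Serre-type relations \eqref{eq:bssr}--\eqref{eq:qssr} are already purely among $+$-generators, and the mixed commutator relations \eqref{eq:SYrc3} are the only ones containing both $+$ and $-$ generators (and thus are intentionally excluded from the presentation of $\widetilde{Y}^{\ge 0}$), the restriction is self-consistent, and the argument carries over verbatim from the proof of Theorem \ref{th:pbwyang}.
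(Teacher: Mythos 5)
Your proposal is correct and follows the same route the paper intends: the paper states this corollary immediately after Theorem \ref{th:pbwyang} with no written proof, treating both parts as direct consequences of the PBW basis, and your argument is the standard filling-in of that derivation (part (2) from the basis statement in Theorem \ref{th:pbwyang}(3), part (1) by the surjection-plus-spanning-set argument whose injectivity is detected by mapping into $Y_{\hbar}(\mathfrak{g})$ and invoking linear independence of the PBW monomials). The only point worth keeping an eye on is the normal-ordering step for the Cartan part, where \eqref{eq:SYrc2} together with \eqref{eq:SYrc1} and \eqref{eq:SYrc5} must be used inductively to move $h_{i,r}$ past $x_{j,s}^{+}$; this works exactly as you describe.
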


\vspace{1cm}

\subsection{Auxiliary operators and algebraic structures}

We introduce functions and structures that are needed in the following. We use notations introduced in Subsections \ref{sub:compl}, \ref{sect:DSY} and \ref{lb:HSSCY}.

Fix $i \in I$. Denote by $\sigma_{i}^{+}: Y^{ \ge 0 }_{\hbar}(\mathfrak{g}) \to Y^{ \ge 0 }_{\hbar}(\mathfrak{g})$ (resp. $\sigma_{i}^{-}: Y^{ \le 0 }_{\hbar}(\mathfrak{g}) \to Y^{ \le 0 }_{\hbar}(\mathfrak{g})$) superalgebra homomorphisms
\begin{equation}
	\label{eq:sigmadef}
	\sigma^{ \pm }_{i}( h_{jr} ) = h_{jr}, \; \sigma^{ \pm }_{i}(x_{jr}^{\pm}) = x_{j,r+\delta_{i j}}^{\pm},
\end{equation}
where $j \in I$ and $r \in \mathbb{N}_{0}$. 

Fix $i \in I$. Denote by
\[ \xi_{i}(u) = 1 + \hbar \sum_{r \ge 0} h_{ir} u^{-r-1} \in Y_{\hbar}(\mathfrak{g})[[u^{-1}]]. \]

For any $i \in I$, define the formal power series
\[ t_i(u) = \hbar \sum_{r \ge 0} t_{ir} u^{-r-1} \in Y^{0}_{\hbar}(\mathfrak{g})[[u^{-1}]] \]
by
\begin{equation}
	\label{eq:tlogdef}
	t_{i}(u) = \log( \xi_i (u)) = \log( 1 + \hbar \sum_{r \ge 0} h_{ir} u^{-r-1} ).
\end{equation}
Since $t_{i}(u)$ is invertible, $\{ t_{ir} \}_{i \in I, r \in \mathbb{N}_{0}}$ is another system of generators of $Y^{0}_{\hbar}(\mathfrak{g})$. Moreover, notice that $t_{i0} = h_{i0}$ and $t_{ir} = h_{ir} \pmod{\hbar} $ for any $r \ge 1$. Also $\deg(t_{ir}) = r$.

Fix $i \in I$. We introduce the following formal power series
\begin{equation}
	\label{eq:borinv}
	B_{i}(v) = B(t_i(u)) = \hbar \sum_{r \ge 0} t_{ir} \frac{v^r}{r!} \in Y^{0}_{\hbar}(\mathfrak{g})[[v]].
\end{equation}

\begin{proposition}
	\emph{\cite{GT13}}
	\label{pr:lamdef}
	There are operators $ \{ \lambda_{i; s}^{ \pm} \}_{i \in I, s \in \mathbb{N}_{0}} $ on $ Y^{0}_{\hbar}(\mathfrak{g}) $ such that the following holds
	\begin{enumerate}
		\item for any $ h \in Y^{0}_{\hbar}(\mathfrak{g}) $, the elements $ \lambda_{i; s}^{ \pm}(h) \in Y^{0}_{\hbar}(\mathfrak{g}) $ are uniquely determined by the requirement that, for any $m \in \mathbb{N}_{0}$,
		\[ x_{im}^{ \pm } h = \sum_{s \ge 0} \lambda_{i; s}^{ \pm}(h) x_{i, m+s}^{ \pm}; \]
		\item for any $h, \eta \in Y^{0}_{\hbar}(\mathfrak{g})$,
		\[ \lambda_{i; s}^{ \pm } ( h \eta ) = \sum_{k+l=s} \lambda_{i; k}^{ \pm}(h) \lambda_{i; l}^{ \pm}( \eta); \]
		\item the operator $ \lambda_{i;s} : Y^{0}_{\hbar}(\mathfrak{g}) \to Y^{0}_{\hbar}(\mathfrak{g}) $ is homogeneous of degree $-s$;
		\item let $ \lambda_{i}^{ \pm } : Y^{0}_{\hbar}(\mathfrak{g}) \to Y^{0}_{\hbar}(\mathfrak{g})[v] $ be given by
		\begin{equation}
			\label{eq:lambdadefhom}
			\lambda_{i}^{ \pm }(v)(h) = \sum_{s \ge 0} \lambda_{i; s}^{ \pm}(h) v^s.
		\end{equation}
		Then $\lambda_{i}^{ \pm }(v)$ is a superalgebra homomorphism of degree $0$;
		\item $\lambda_{i_1}^{ \epsilon_1 }(v_1)$ and $\lambda_{i_2}^{ \epsilon_2 }(v_2)$ commute for any $i_1, i_2 \in I$ and $\epsilon_1, \epsilon_2 \in \{ \pm \}$;
		\item for any $i \in I$,
		\[ \lambda_{i}^{+} (v) \lambda_{i}^{-}(v) = id_{Y^{0}_{\hbar}(\mathfrak{g})} = \lambda_{i}^{-}(v) \lambda_{i}^{+} (v). \]
	\end{enumerate}
\end{proposition}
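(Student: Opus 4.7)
The plan is to construct the operators $\lambda_i^\pm(v)$ on the generators of $Y^0_\hbar(\mathfrak{g})$ and extend them by multiplicativity, then verify each of the six properties in turn. Since $Y^0_\hbar(\mathfrak{g})$ is a polynomial superalgebra in $\{h_{jr}\}_{j \in I,\,r \in \mathbb{N}_0}$ by Theorem \ref{th:pbwyang}(2), any superalgebra homomorphism out of it is determined by its values on these generators. It is convenient to package property (1) using the generating series $x_i^\pm(z) := \sum_{m \ge 0} x_{im}^\pm z^{-m-1}$ and $\Lambda_i^\pm(v) := \sum_{s \ge 0} \lambda_{i;s}^\pm v^s$, so that (1) becomes the conjugation identity $x_i^\pm(z)\cdot h = \Lambda_i^\pm(z)(h)\cdot x_i^\pm(z)$ in $Y_\hbar(\mathfrak{g})[[z^{-1}]]$.

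First I would define $\lambda_{i;s}^\pm(h_{jr})$ by induction on $r$: from \eqref{eq:SYrc1} one reads off $\lambda_{i;0}^\pm(h_{j0}) = h_{j0} \mp c^d_{ij}$ and $\lambda_{i;s}^\pm(h_{j0}) = 0$ for $s \ge 1$. The step from $r$ to $r+1$ is obtained by rearranging \eqref{eq:SYrc2} (and using \eqref{eq:SYrc5} when $i = j$ with $|\alpha_i| = \bar{1}$) to express $x_{im}^\pm h_{j,r+1}$ as a finite sum $\sum_s P^\pm_{i,j,r,s}(h_{j0},\ldots,h_{j,r+1})\cdot x_{i,m+s}^\pm$ with explicit polynomial coefficients. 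Extending to all of $Y^0_\hbar(\mathfrak{g})$ by the product rule (2) defines $\Lambda_i^\pm(v)$ as a superalgebra homomorphism of degree $0$, which is property (4). Its compatibility with (1) on a product $h\eta$ then follows from
\[ x_i^\pm(z)(h\eta) = \Lambda_i^\pm(z)(h)\cdot x_i^\pm(z)\,\eta = \Lambda_i^\pm(z)(h)\,\Lambda_i^\pm(z)(\eta)\cdot x_i^\pm(z). \]
Uniqueness of the coefficients $\lambda_{i;s}^\pm(h)$ follows from the right-$Y^0_\hbar$-linear independence of the family $\{x_{i,m+s}^\pm\}_{s\ge 0}$, itself a consequence of the PBW decomposition (Corollary \ref{cl:pbwcl}). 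Property (3) is transparent: each application of \eqref{eq:SYrc2} trades a degree-$(r+1)$ generator on the left for a degree-$r$ generator multiplied by one extra factor of $\hbar$ on the right, so the degree in $v$ exactly tracks the degree drop.

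The remaining properties (5) and (6) are the main obstacle. For (5), since both $\Lambda_{i_1}^{\epsilon_1}(v_1)$ and $\Lambda_{i_2}^{\epsilon_2}(v_2)$ are $v_1$-, $v_2$-coefficient-wise superalgebra endomorphisms of $Y^0_\hbar(\mathfrak{g})$, their commutator is again determined by its values on the generators $h_{jr}$, and the required commutativity unfolds to an identity in $\{h_{j'r'}\}$ that can be checked from the explicit recursion in step two; morally this reflects the symmetry of \eqref{eq:SYrc2} under exchange of the two index pairs and the vanishing of the Cartan commutators $[h_{ir},h_{js}]=0$. For (6) the strategy is to first check $\Lambda_i^+(v)\Lambda_i^-(v)(h_{j0}) = \Lambda_i^+(v)(h_{j0}+c^d_{ij}) = h_{j0}$ directly, and then prove by induction on $r$ that $\Lambda_i^+(v)\Lambda_i^-(v)$ acts as $\id$ on $h_{jr}$ using both the recursive definition from step two and the commutativity of the various $\Lambda$'s from (5); the opposite identity $\Lambda_i^-(v)\Lambda_i^+(v)=\id$ is obtained symmetrically. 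The hardest bookkeeping lies in (6), where the $+/-$ sign discrepancies in \eqref{eq:SYrc2} must be tracked through the full recursion to ensure that the cross-cancellations at order $v^s$ ($s\ge 1$) close up as required.
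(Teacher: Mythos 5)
The paper offers no proof of this proposition: it is quoted from \cite{GT13}, and the only internal evidence of the intended argument is the apparatus set up around it, namely the logarithmic generators $t_{ir}$ of \eqref{eq:tlogdef}, the series $B_i(v)$ of \eqref{eq:borinv}, and the scalar-multiplier formula of Lemma \ref{lm:lgG}. Measured against that, your construction and your treatment of properties (1)--(4) are essentially correct and follow the standard route: the recursion extracted from \eqref{eq:SYrc1}, \eqref{eq:SYrc2} and \eqref{eq:SYrc5} produces finite, $m$-independent families $\lambda^{\pm}_{i;s}(h_{jr})$; multiplicative extension is legitimate because $Y^{0}_{\hbar}(\mathfrak{g})$ is free polynomial on the $h_{jr}$ by Theorem \ref{th:pbwyang}(2); uniqueness follows from the triangular decomposition; and the degree count in (3) is a direct consequence of the homogeneity of the defining identity.

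The gap is in (5) and (6), which you do not actually prove: you assert they ``can be checked from the explicit recursion'' and ``morally reflect'' a symmetry, but the route you sketch is not workable as stated. In the coordinates $h_{jr}$ the recursion gives, e.g., $\lambda^{\pm}_{i;0}(h_{j,r+1}) = h_{j,r+1}\mp\tfrac{c_{ij}\hbar}{2}\bigl(h_{jr}+\lambda^{\pm}_{i;0}(h_{jr})\bigr)$, so $\lambda^{\pm}_{i;s}(h_{jr})$ is a nested nonlinear polynomial in $h_{j0},\dots,h_{jr}$, and nothing in your outline controls the compositions of such expressions well enough to force $[\lambda^{\epsilon_1}_{i_1}(v_1),\lambda^{\epsilon_2}_{i_2}(v_2)]=0$ or $\lambda^{+}_{i}(v)\lambda^{-}_{i}(v)=\mathrm{id}$; note also that (5) cannot be extracted from rewriting $x^{\epsilon_1}_{i_1,m}x^{\epsilon_2}_{i_2,n}h$ in two ways, since the two $x$'s do not commute. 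The missing idea is to compute the action on the generators $t_{jr}$, i.e.\ on $t_j(u)=\log\xi_j(u)$: multiplicativity together with the commutation relations yields $\lambda^{\pm}_{i}(v)\bigl(\xi_j(u)\bigr)=\xi_j(u)\,\rho^{\pm}_{ij}(u,v)$ with $\rho^{\pm}_{ij}$ a \emph{scalar} series satisfying $\rho^{+}_{ij}\rho^{-}_{ij}=1$, hence $\lambda^{\pm}_{i}(v)(t_{jr})=t_{jr}+(\text{central constant})$. Each $\lambda^{\pm}_{i}(v)$ is then an affine shift of the free polynomial generators by scalars, so any two of them commute and the two composites in (6) are the identity; this is exactly the form in which the paper later uses these operators in Lemma \ref{lm:lgG}. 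You should prove this scalar-shift formula (by induction on $r$, or from \eqref{eq:thx}) instead of attempting the bookkeeping in the $h_{jr}$ coordinates.
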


Consider the formal power series
\begin{equation}
	\label{eq:Gdef}
	G(v) = \log( \frac{v}{ e^{v/2} - e^{-v/2} } ) \in v \mathbb{Q}[[v]].
\end{equation}
Fix $i \in I$. Define $\gamma_i (v) \in \widehat{Y^{0}_{i}(\mathfrak{g})[v]}_{+} $ by
\[ \gamma_i (v) = \hbar \sum_{r \ge 0} \frac{t_{ir}}{r!} ( - \partial_v )^{r+1} G(v), \]
where $\partial_v \in End_{\mathbb{Q}}( \mathbb{Q}[[v]])$ is the derivative with respect to $v$.

Fix $i \in I$. Define $g_{i}^{\pm} (v) \in \widehat{Y^{0}_{\hbar}(\mathfrak{g})[v]} $ by
\begin{equation}
	\label{eq:gipm}
	g_i^{\pm}(v) = \sum_{m \ge 0} g_{im}^{\pm} v^m.
\end{equation}

We need the following
\begin{lemma}
	\emph{\cite{GT13}}
	\label{lm:invgpm}
	Let $\{ g_{i}^{\pm}(u) \}_{i \in I} \subset \widehat{Y^{0}_{\hbar}(\mathfrak{g})[u]}$ be elements satisfying condition \eqref{eq:bcomp} of Theorem \ref{th:thcondhom}. Then,
	\[ g_{i}^{\pm}(u) = \frac{1}{d_{i}^{ \pm}} \mod \widehat{Y^{0}_{\hbar}(\mathfrak{g})[u]}_{+}, \]
	where $\{ d_{i}^{ \pm} \}_{i \in I} \in \Bbbk^{*}$ satisfy $d_{i}^{+} d_{i}^{-} = d_i$ for each $i \in I$. In particular, each $g_{i}^{ \pm}(u)$ is invertible.
\end{lemma}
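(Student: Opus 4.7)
The plan is to extract scalar information from the compatibility relation \eqref{eq:bcomp} of Theorem \ref{th:thcondhom} by reducing everything modulo the positive-degree ideal $\widehat{Y^{0}_{\hbar}(\mathfrak{g})[u]}_{+}$. Recall that, with respect to the $\mathbb{N}_{0}$-grading introduced in Subsection \ref{lb:HSSCY}, one has $\deg(t_{ir}) = r$, $\deg(\hbar) = 1$ and $\deg(u) = 1$, so the quotient $\widehat{Y^{0}_{\hbar}(\mathfrak{g})[u]} / \widehat{Y^{0}_{\hbar}(\mathfrak{g})[u]}_{+}$ is canonically isomorphic to $\Bbbk$ via the projection onto the degree-zero part. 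I would therefore write
\[ g_i^{\pm}(u) = c_i^{\pm} + R_i^{\pm}(u), \qquad c_i^{\pm} \in \Bbbk, \quad R_i^{\pm}(u) \in \widehat{Y^{0}_{\hbar}(\mathfrak{g})[u]}_{+}, \]
so that $c_i^{\pm}$ is the image of $g_i^{\pm}(u)$ in this quotient.

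Next, I would apply the quotient map to both sides of the compatibility condition \eqref{eq:bcomp}. Because every term in the positive part is killed, the condition degenerates into a purely scalar identity of the shape $c_i^{+}\, c_i^{-} = d_i^{-1}$, where $d_i \in \Bbbk^{\ast}$ is the scalar extracted from the right-hand side of \eqref{eq:bcomp}. In particular, $c_i^{+}, c_i^{-} \in \Bbbk^{\ast}$, so I may set $d_i^{\pm} := (c_i^{\pm})^{-1}$, which gives both $g_i^{\pm}(u) \equiv 1/d_i^{\pm} \pmod{\widehat{Y^{0}_{\hbar}(\mathfrak{g})[u]}_{+}}$ and the factorisation $d_i^{+} d_i^{-} = d_i$ for free.

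For the final assertion, I would observe that the inverse-limit topology on $\widehat{Y^{0}_{\hbar}(\mathfrak{g})[u]}$ makes the positive-degree ideal a topologically nilpotent space, so the factorisation
\[ g_i^{\pm}(u) = c_i^{\pm}\bigl(1 + (c_i^{\pm})^{-1} R_i^{\pm}(u)\bigr) \]
combined with $c_i^{\pm} \in \Bbbk^{\ast}$ and $(c_i^{\pm})^{-1} R_i^{\pm}(u) \in \widehat{Y^{0}_{\hbar}(\mathfrak{g})[u]}_{+}$ lets me invert $g_i^{\pm}(u)$ by the convergent geometric series $\sum_{n \ge 0} (-1)^n \bigl((c_i^{\pm})^{-1} R_i^{\pm}(u)\bigr)^n$, multiplied by $(c_i^{\pm})^{-1}$ on the outside.

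The step I expect to be the most delicate is the first reduction: one must verify that the specific form of \eqref{eq:bcomp} is such that its image under the quotient by $\widehat{Y^{0}_{\hbar}(\mathfrak{g})[u]}_{+}$ really does yield a non-degenerate scalar equation whose right-hand side is a nonzero element of $\Bbbk$ (so that $d_i \in \Bbbk^{\ast}$). Once that leading-term calculation is carried out honestly, the definitions of $d_i^{\pm}$ and the invertibility are essentially formal consequences of working in a graded completion with scalar degree-zero piece.
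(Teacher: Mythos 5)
The paper gives no proof of this lemma at all---it is imported verbatim from \cite{GT13}---so your argument can only be measured against the standard one, whose outline (reduce \eqref{eq:bcomp} modulo $\widehat{Y^{0}_{\hbar}(\mathfrak{g})[u]}_{+}$, read off an identity in the degree-zero quotient, then invert by a geometric series in the graded completion) you have correctly identified; the final invertibility step is fine as written. The genuine gap is in the first step, and it sits exactly where the content of the lemma lies. The degree-zero component of $Y^{0}_{\hbar}(\mathfrak{g})[u]$ is not $\Bbbk$: since $\deg(h_{i,r})=r$, the elements $h_{j0}=t_{j0}$ all have degree $0$, so the quotient by the positive-degree part is the polynomial ring $\Bbbk[h_{j0}\,:\,j\in I]$. (The sentence ``$Y_{\hbar}(\mathfrak{g})_{0}=\Bbbk$'' in Subsection \ref{lb:HSSCY} is an artifact of the general template of Subsection \ref{sub:compl} and is not consistent with the stated degree assignment.) Consequently, writing $g_{i}^{\pm}(u)=c_{i}^{\pm}+R_{i}^{\pm}(u)$ with $c_{i}^{\pm}\in\Bbbk$ assumes precisely what the lemma asserts: a priori $c_{i}^{\pm}$ is only a polynomial in the $h_{j0}$, and the whole point is to show it is a nonzero scalar.

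The missing computation is the actual reduction of \eqref{eq:bcomp}. Modulo the positive part, $e^{ku}\equiv 1$; the operator $\lambda_{i}^{+}(u)$ restricts to the shift automorphism $\lambda^{+}_{i;0}$ of $\Bbbk[h_{j0}]$ (sending $h_{j0}\mapsto h_{j0}-(\alpha_{j},\alpha_{i})$, hence fixing scalars); the substitution $u^{m}=h_{im}$ contributes only through its $m=0$ term $h_{i0}$; and the degree-zero part of $\Phi^{0}\bigl(\tfrac{\psi_{ik}-\varphi_{ik}}{q-q^{-1}}\bigr)$ is $\tfrac{1}{d_{i}}\,h_{i0}$ (with $d_{i}=1$ for the Cartan data used here), which is \emph{not} a scalar. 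The reduced identity therefore reads $c_{i}^{+}\,\lambda^{+}_{i;0}(c_{i}^{-})\,h_{i0}=\tfrac{1}{d_{i}}\,h_{i0}$ inside the integral domain $\Bbbk[h_{j0}]$; cancelling $h_{i0}$ gives $c_{i}^{+}\,\lambda^{+}_{i;0}(c_{i}^{-})=\tfrac{1}{d_{i}}$, and only at this point can one argue that a product of two polynomials equal to a nonzero constant forces both factors to be units, i.e.\ nonzero scalars, yielding $c_{i}^{\pm}=1/d_{i}^{\pm}$ with $d_{i}^{+}d_{i}^{-}=d_{i}$. You flagged this leading-term verification as the delicate step but did not carry it out, and your framing (quotient $\cong\Bbbk$, ``purely scalar identity'') makes the argument circular on the main point rather than merely incomplete.
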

Note that for our choise of Cartan matrices we have $d_{i}^{+}=d_{i}^{-}=d_{i}=1$ for $g_{i}^{\pm}(u)$ for all $i \in I$.

Fix $i \in I$. Define $g_{i}(v) \in \widehat{Y^{0}_{\hbar}(\mathfrak{g})[v]} $ by
\begin{equation}
	\label{eq:giunvsol}
	g_{i}(v) = ( \frac{\hbar}{q - q^{-1}} )^{\frac{1}{2}} \exp(\frac{\gamma_i(v)}{2}).
\end{equation}

We shall need the following
\begin{lemma}
	\emph{\cite{GT13}}
	\label{lm:lgG}
	Let $i,j \in I$, and set $a = c_{ij} / 2$. Then,
	\[ \lambda_{i}^{ \pm}(u)( g_{j}(v) ) = g_{j}(v) \exp( \pm \frac{G(v - u + a \hbar) - G(v - u - a \hbar)}{2} ). \]
\end{lemma}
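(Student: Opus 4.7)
The plan is to reduce the identity to a shift formula for $\gamma_j(v)$, derive the key commutation relation between $x_i^{\pm}(u)$ and the Cartan series $\xi_j(v)$ from the Yangian relations, and then match the result against the shifts of $G$ via the Borel-transform structure built into the definition of $\gamma_j$.

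Since $\lambda_i^{\pm}(u)$ is a superalgebra homomorphism by Proposition \ref{pr:lamdef}, the scalar factor $(\hbar/(q-q^{-1}))^{1/2}$ is fixed, and $Y_{\hbar}^0(\mathfrak{g})$ is commutative so that $\exp$ commutes with $\lambda_i^{\pm}(u)$, the claim reduces to
\[\lambda_i^{\pm}(u)(\gamma_j(v)) - \gamma_j(v) = \pm\bigl(G(v-u+a\hbar) - G(v-u-a\hbar)\bigr).\]
The generating-function form of Proposition \ref{pr:lamdef}(1) reads $x_i^{\pm}(u)\,h = \lambda_i^{\pm}(u)(h)\,x_i^{\pm}(u)$ with $x_i^{\pm}(u) = \sum_{m\ge 0}x_{im}^{\pm}u^{-m-1}$. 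Combining this with the Yangian relations \eqref{eq:SYrc1}--\eqref{eq:SYrc2} by induction on the degree in $h_{jr}$, I would derive
\[x_i^{\pm}(u)\,\xi_j(v) = \frac{u-v\pm a\hbar}{u-v\mp a\hbar}\,\xi_j(v)\,x_i^{\pm}(u),\]
packaged as an identity of formal series in $v^{-1}$. This holds trivially in the case $c_{ij}=0$ (by \eqref{eq:bssr}) and in the exceptional case $i=j$ with $|\alpha_i|=\bar{1}$ (by \eqref{eq:SYrc5}), where $c_{ii}=0$ forces $a=0$ so that both sides reduce to $\xi_j(v)\,x_i^{\pm}(u)$. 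Taking logarithms using \eqref{eq:tlogdef} yields
\[\lambda_i^{\pm}(u)(t_j(v)) - t_j(v) = \log\frac{u-v\pm a\hbar}{u-v\mp a\hbar},\]
which encodes the shifts $\lambda_i^{\pm}(u)(t_{jr}) - t_{jr}$ for every $r\in\mathbb{N}_0$.

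It remains to convert this $t_j$-shift into the claimed $\gamma_j$-shift. Differentiating the definition of $\gamma_j(v)$ gives $\gamma_j(v) = (-\partial_v)\bigl[B_j(-\partial_v)G(v)\bigr]$, where $B_j$ is the Borel transform of $t_j$ in the sense of \eqref{eq:borinv}. Thus $\lambda_i^{\pm}(u)(\gamma_j(v)) - \gamma_j(v)$ is obtained by substituting the Borel transform of $\log((u-v\pm a\hbar)/(u-v\mp a\hbar))$ for $B_j(-\partial_v)$ and applying $-\partial_v$ to $G(v)$. The definition \eqref{eq:Gdef} of $G$ is arranged so that this Borel--Laplace computation produces exactly $\pm(G(v-u+a\hbar) - G(v-u-a\hbar))$; intuitively, the Borel transform converts the $\pm a\hbar$-shift in the rational function to the corresponding shift in the argument of $G$. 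The main obstacle is the rigorous derivation of the $x_i^{\pm}(u)\,\xi_j(v)$ commutation from \eqref{eq:SYrc2} by induction on the Cartan degree; the super sign conventions enter only through the scalar $a = c_{ij}/2$ and so do not alter the multiplicative form of the resulting rational function. Once this commutation is established, the final identification of the two sides is a purely formal computation with power series in $u$, $v$, and $\hbar$.
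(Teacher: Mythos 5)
The paper itself gives no proof of this lemma --- it is imported verbatim from \cite{GT13} --- so there is nothing internal to compare against; your outline reconstructs essentially the argument of the cited source: reduce to the additive shift of $\gamma_j(v)$, compute the action of $\lambda_i^{\pm}(u)$ on $\xi_j(v)$ (equivalently on $t_j(v)$), and convert the resulting logarithmic shift of the coefficients $t_{jr}$ into argument shifts of $G$ via $\sum_{r\ge 0}\frac{c^{r+1}}{(r+1)!}(-\partial_v)^{r+1}G(v)=G(v-c)-G(v)$. The reductions are sound: $Y^{0}_{\hbar}(\mathfrak{g})$ is a (super)commutative polynomial algebra, $\lambda_i^{\pm}(u)$ is an algebra homomorphism by Proposition \ref{pr:lamdef}, and in the degenerate case $i=j$ with $|\alpha_i|=\bar{1}$ one has $c_{ii}=0$, so $a=0$ and both sides of the claimed identity coincide, consistent with \eqref{eq:SYrc5}. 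Your final shift computation also checks out, including the overall sign $\pm$. The one step that needs repair is the intermediate identity $x_i^{\pm}(u)\,\xi_j(v)=\frac{u-v\pm a\hbar}{u-v\mp a\hbar}\,\xi_j(v)\,x_i^{\pm}(u)$: as a literal identity of generating series it is false, because $x_{im}^{\pm}h=\sum_{s}\lambda_{i;s}^{\pm}(h)x_{i,m+s}^{\pm}$ only produces modes $x_{ik}^{\pm}$ with $k\ge m$, while multiplying the series $\frac{u-v\pm a\hbar}{u-v\mp a\hbar}\,\xi_j(v)\,x_i^{\pm}(u)$ out reintroduces lower modes; the naive generating-function form of \eqref{eq:SYrc2} carries truncation (boundary) terms. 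The correct clean statement --- and the one your logarithm step actually uses --- is $\lambda_i^{\pm}(u)(\xi_j(v))=\frac{v-u\mp a\hbar}{v-u\pm a\hbar}\,\xi_j(v)$, an identity in $Y^{0}_{\hbar}(\mathfrak{g})[u][[v^{-1}]]$, which is what must be extracted from \eqref{eq:SYrc1}, \eqref{eq:SYrc2} and the defining property of the $\lambda_{i;s}^{\pm}$ in Proposition \ref{pr:lamdef}. With that reformulation (and the elementary check of the leading coefficient $\lambda_i^{\pm}(u)(h_{j0})=h_{j0}\mp 2a$ as a sanity test of signs), your argument is complete and agrees with the proof in \cite{GT13}.
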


Fix $i \in I$ and $m \ge 2$. We shall use the following notation
\begin{enumerate}
	\item for a vector superspace $V$ and an operator $T \in End(V)$, $T_{(i)} \in End(V^{\otimes m})$ is defined as
	\[ T_{(i)} = id^{\otimes i-1} \otimes T \otimes id^{m-i}; \]
	\item for an superalgebra $A$, $ad^{(m)} : A^{\otimes m} \to End(A)$ is defined as
	\[ ad^{(m)} ( a_1 \otimes ... \otimes a_m ) = ad(a_1) \circ ... \circ ad(a_m), \]
\end{enumerate}
where $ad: A \to End(A)$ is the adjoint action defined by $ad(a)(b) = [a,b]$ for all $a, b \in A$.

\begin{proposition}
	\label{pr:y46eqc}
	\begin{enumerate}
		\item \label{eq:rel1} The relation \eqref{eq:SYrc4} for $i \ne j$ is equivalent to the requirement that the following holds for any $A(v_1, v_2) \in \Bbbk[[v_1, v_2]]$
		\[ A( \sigma_{i}^{ \pm} , \sigma_{j}^{ \pm} )( \sigma_{i}^{ \pm} - \sigma_{j}^{ \pm} \mp a \hbar ) x_{i0}^{ \pm} x_{j0}^{ \pm} = (-1)^{|\alpha_{i}||\alpha_{j}|} A( \sigma_{i}^{ \pm}, \sigma_{j}^{ \pm})( \sigma_{i}^{ \pm} - \sigma_{j}^{ \pm} \pm a \hbar ) x_{j0}^{ \pm} x_{i0}^{ \pm}, \]
		where $a = c_{ij} / 2$;
		\item \label{eq:rel2} the relation \eqref{eq:SYrc4} for $i = j$ is equivalent to the requirement that the following holds for any $B(v_1, v_2) \in \Bbbk[[v_1, v_2]]$ such that $B(v_1, v_2) = B(v_2, v_1)$
		\begin{equation}
			\label{y4ieqjreleq}
			\mu( B( \sigma_{i, (1)}^{ \pm}  , \sigma_{i, (2)}^{ \pm} ) ( \sigma_{i, (1)}^{ \pm} - \sigma_{i, (2)}^{ \pm} \mp a \hbar )  x_{i0}^{ \pm} \otimes x_{i0}^{ \pm}  ) = 0,
		\end{equation}
		where $\mu: Y_{\hbar}(\mathfrak{g})^{ \otimes 2} \to Y_{\hbar}(\mathfrak{g})$ is the multiplication, $a = c_{ii} / 2$ and $|\alpha_{i}| = \bar{0}$;
		\item \label{eq:rel3} the relations \eqref{eq:bssr} and \eqref{eq:cssr} are equivalent to the requirement that the following holds for any $i \ne j$ and $A \in \Bbbk[v_1, ..., v_m ]^{\mathfrak{S}_{m}}$ with $m = 1 + |c_{ij}| $
		\[ ad^{(m)} ( A( \sigma_{i, (1)}^{ \pm} , ... , \sigma_{i, (m)}^{ \pm} ) ( x_{i0}^{ \pm} )^{ \otimes m} ) x_{jl}^{ \pm} = 0, \]
		where $\Bbbk[v_1, ..., v_m ]^{\mathfrak{S}_{m}}$ is a subalgebra of elements invariant under action of symmetric group $\mathfrak{S}_{m}$ on polynomial superalgebra $\Bbbk[v_1, ..., v_m ]$, where $|v_{i}| = \bar{0}$ for all $i \in \{1,2,...,m\}$; $|\alpha_{i}| = \bar{0}$, if $|c_{ij}|=1$;
		\item \label{eq:rel4} the relation \eqref{eq:bssr} for $i \ne j$ is equivalent to the requirement that the following holds for any $A(v_1, v_2) \in \Bbbk[[v_1, v_2]]$
		\[ A(\sigma_{i}^{ \pm} , \sigma_{j}^{ \pm}) x_{i0}^{\pm} x_{j0}^{\pm} = (-1)^{|\alpha_{i}||\alpha_{j}|} A(\sigma_{i}^{ \pm} , \sigma_{j}^{ \pm}) x_{j0}^{\pm} x_{i0}^{\pm}, \]
		where $c_{ij} = 0$;
		\item \label{eq:rel40} the relation \eqref{eq:bssr} for $i = j$ is equivalent to the requirement that the following holds for any $B(v_1, v_2) \in \Bbbk[[v_1, v_2]]$ such that $B(v_1, v_2) = B(v_2, v_1)$
		\[ \mu ( B( \sigma_{i,(1)}^{\pm } , \sigma_{i,(2)}^{\pm } ) x_{i0}^{\pm} \otimes x_{i0}^{\pm} ) = 0, \]
		where $c_{ii} = 0$ and $\mu: Y_{\hbar}(\mathfrak{g})^{ \otimes 2} \to Y_{\hbar}(\mathfrak{g})$ is the multiplication;
		\item \label{eq:rel5} the relation \eqref{eq:cssr} for $r=s=t=0$ is equivalent to the requirement that 
		\[ \mu (\mu \otimes \id) (\sigma_{i,(1)}^{ \pm} - \sigma_{j,(2)}^{ \pm } \pm a \hbar) x_{i0}^{\pm} \otimes x_{j0}^{\pm} \otimes x_{i0}^{\pm} = \mu (\mu \otimes \id) (\sigma_{i,(3)}^{ \pm} - \sigma_{j,(2)}^{ \pm } \mp a \hbar) x_{i0}^{\pm} \otimes x_{j0}^{\pm} \otimes x_{i0}^{\pm};   \]
	    \item \label{eq:rel6} the relation \eqref{eq:qssr} for $r=s=0$ is equivalent to the requirement that 
		\[ \mu (\mu \otimes \id) (\mu \otimes \id \otimes \id) \circ \]
		\[ ( \sigma_{j-1,(2)}^{\pm}- \sigma_{j,(1)}^{\pm} \mp a \hbar) ( \sigma_{j,(1)}^{\pm}- \sigma_{j+1,(3)}^{\pm} \mp a \hbar) ( \sigma_{j-1,(2)}^{\pm}- \sigma_{j,(4)}^{\pm} \pm a \hbar  ) (\sigma_{j,(4)}^{\pm}- \sigma_{j+1,(3)}^{\pm} \mp a \hbar  )  \]
		\[ x_{j0}^{\pm} \otimes x_{j-1,0}^{\pm} \otimes x_{j+1,0}^{\pm} \otimes x_{j0}^{\pm} = 0. \]
	\end{enumerate}
\end{proposition}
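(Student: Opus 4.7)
The plan is to establish each of the seven equivalences by a single unifying principle: each of the listed defining relations \eqref{eq:SYrc4}--\eqref{eq:qssr} constitutes an infinite family of identities indexed by nonnegative integers, and the shift endomorphisms $\sigma_i^\pm$ of \eqref{eq:sigmadef} translate between members of that family. Since $\sigma_i^\pm$ is a superalgebra homomorphism sending $x_{j,r}^\pm$ to $x_{j,r+\delta_{ij}}^\pm$, substituting a monomial $A = v_1^r v_2^s$ in the operator statements recovers the Yangian relation at indices $(r,s)$; the operator formulation with arbitrary $A \in \Bbbk[[v_1,v_2]]$ then follows by $\Bbbk[[\hbar]]$-linearity and continuity in the $\hbar$-adic topology. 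The same principle extends to the multi-tensor formulations.

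For parts (1) and (4), I would expand $(\sigma_i^\pm - \sigma_j^\pm \mp a\hbar)\,x_{i0}^\pm x_{j0}^\pm$ and its reversed counterpart and check that equality at $A=1$ is precisely \eqref{eq:SYrc4} at $(r,s)=(0,0)$ (resp.\ \eqref{eq:bssr} at $(r,s)=(0,0)$). Parts (2) and (5) treat the case $i=j$; here the right-hand side of \eqref{eq:SYrc4} involves the anticommutator $\{x_{i,r}^\pm, x_{i,s}^\pm\}$, which is symmetric in $(r,s)$, so antisymmetric test polynomials produce no independent identity and one must restrict to symmetric $B$. The symmetric monomials $v_1^r v_2^s + v_1^s v_2^r$ are then in bijection with the distinct members of the family. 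Part (6) follows by expanding the three-tensor expression, applying $\mu \circ (\mu \otimes \id)$, and using \eqref{eq:SYrc4} to identify the result with \eqref{eq:cssr} at $(r,s,t)=(0,0,0)$: the $\pm a\hbar$ shifts in the operator product absorb precisely the correction terms produced when one reorders via the quadratic relation \eqref{eq:SYrc4}.

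The main obstacle lies in parts (3) and (7), involving $ad^{(m)}$ on tensor powers. For part (3), the tensor $(x_{i0}^\pm)^{\otimes m}$ is $\mathfrak{S}_m$-invariant, so only symmetric $A$ contributes after application of $ad^{(m)}$. The hypothesis "$|\alpha_i|=\bar{0}$ if $|c_{ij}|=1$" is essential here: it ensures that $ad(x_{i,r}^\pm)$ acts as an ordinary (even-parity) derivation, so that the iterated bracket expansions carry no super-sign complications. The technical core is to verify that the sum produced by $ad^{(m)}$ against a symmetric polynomial in the shift operators matches, term by term, the symmetrization of the nested bracket appearing in \eqref{eq:bssr} (for $m=1$) or \eqref{eq:cssr} (for $m=2$). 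Part (7) is analogous in four tensor factors: the four-factor operator product with its mixed $\pm a\hbar$ shifts encodes the correction terms produced when reordering via \eqref{eq:SYrc4}, so that upon full multiplication one recovers the left-hand side of \eqref{eq:qssr} at $(r,s)=(0,0)$. Tracking these cancellations in the super setting, and verifying the bijection between symmetric shift polynomials and the indexed family of Serre-type relations, is the main combinatorial work; once this correspondence is set up, the remaining steps reduce to direct expansions.
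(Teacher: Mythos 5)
Your proposal follows essentially the same route as the paper: the key observation in both is that applying the monomial $A=v_1^r v_2^s$ (or its symmetrization when $i=j$) to the operator identity recovers the defining relation at indices $(r,s)$, with the general $A$ following by linearity, and the higher Serre-type equivalences (6) and (7) are then derived by reordering via part (1) so that the $\pm a\hbar$ shifts absorb the correction terms; the paper likewise defers parts (2) and (3) to the non-super argument of Gautam--Toledano Laredo, which your sketch of the $ad^{(m)}$ case is consistent with. No substantive divergence or gap.
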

\begin{proof}
	We prove \eqref{eq:rel1}. The relation \eqref{eq:SYrc4} can be rewritten as
	\[ \sigma_{i}^{\pm r} \sigma_{j}^{\pm s} ( \sigma_{i}^{\pm} - \sigma_{j}^{\pm} \mp a 
	\hbar ) ( x_{i0}^{\pm} x_{j0}^{\pm} ) = (-1)^{|\alpha_{i}||\alpha_{j}|} \sigma_{i}^{\pm r} \sigma_{j}^{\pm s} ( \sigma_{i}^{\pm} - \sigma_{j}^{\pm} \pm a 
	\hbar ) ( x_{j0}^{\pm} x_{i0}^{\pm} ). \]
	
	Relations \eqref{eq:rel2} and \eqref{eq:rel3} are proved as in \emph{\cite{GT13}}.
	
	We prove \eqref{eq:rel4}. The relation \eqref{eq:bssr} for $i \ne j$ can be rewritten as
	\[ \sigma_{i}^{\pm r} \sigma_{j}^{\pm s} ( x_{i0}^{\pm} x_{j0}^{\pm} ) = (-1)^{|\alpha_{i}||\alpha_{j}|} \sigma_{i}^{\pm r} \sigma_{j}^{\pm s} ( x_{j0}^{\pm} x_{i0}^{\pm} ).  \]
	
	We prove \eqref{eq:rel40}. The relation \eqref{eq:bssr} for $i = j$ can be rewritten as
	\[ \mu( \sigma_{i,(1)}^{\pm r} \sigma_{i,(2)}^{\pm s} ( x_{i0}^{\pm} \otimes x_{i0}^{\pm} ) ) = - \mu( \sigma_{i,(1)}^{\pm s} \sigma_{i,(2)}^{\pm r} ( x_{i0}^{\pm} \otimes x_{i0}^{\pm} ) ) \iff  \]
	\[ \mu( ( \sigma_{i,(1)}^{\pm r} \sigma_{i,(2)}^{\pm s} + \sigma_{i,(1)}^{\pm s} \sigma_{i,(2)}^{\pm r} ) ( x_{i0}^{\pm} \otimes x_{i0}^{\pm} ) ) = 0. \]
	
	We prove \eqref{eq:rel5}. The relation \eqref{eq:cssr} for $r=s=t=0$ can be rewritten as
	\[ x_{i0}^{\pm 2} x_{j0}^{\pm} - 2 x_{i0}^{\pm} x_{j0}^{\pm} x_{i0}^{\pm} + x_{j0}^{\pm} x_{i0}^{\pm 2} = 0. \]
	Using \eqref{eq:rel1} we get
	\[ x_{i0}^{\pm} \frac{\sigma_{i}^{ \pm} - \sigma_{j}^{ \pm } \pm a \hbar}{\sigma_{i}^{ \pm} - \sigma_{j}^{ \pm } \mp a \hbar} ( x_{j0}^{\pm}  x_{i0}^{\pm} ) - 2 x_{i0}^{\pm} x_{j0}^{\pm} x_{i0}^{\pm} + \frac{\sigma_{i}^{ \pm} - \sigma_{j}^{ \pm } \mp a \hbar}{\sigma_{i}^{ \pm} - \sigma_{j}^{ \pm } \pm a \hbar} ( x_{i0}^{\pm} x_{j0}^{\pm} ) x_{i0}^{\pm } = 0 \iff  \]
	\[ x_{i0}^{\pm} ( \frac{1}{\sigma_{i}^{ \pm} - \sigma_{j}^{ \pm } \mp a \hbar} ) (x_{j0}^{\pm} x_{i0}^{\pm}) = ( \frac{1}{\sigma_{i}^{ \pm} - \sigma_{j}^{ \pm } \pm a \hbar} ) ( x_{i0}^{\pm} x_{j0}^{\pm} ) x_{i0}^{\pm} \iff \]
	\[ \mu (\mu \otimes \id) ( \frac{1}{\sigma_{i,(3)}^{ \pm} - \sigma_{j,(2)}^{ \pm } \mp a \hbar} - \frac{1}{\sigma_{i,(1)}^{ \pm} - \sigma_{j,(2)}^{ \pm } \pm a \hbar} ) ( x_{i0}^{\pm} \otimes x_{j0}^{\pm} \otimes x_{i0}^{\pm} ) = 0 \iff \]
	\[ \mu (\mu \otimes \id) (\sigma_{i,(1)}^{ \pm} - \sigma_{j,(2)}^{ \pm } \pm a \hbar) x_{i0}^{\pm} \otimes x_{j0}^{\pm} \otimes x_{i0}^{\pm} = \]
	\[ \mu (\mu \otimes \id) (\sigma_{i,(3)}^{ \pm} - \sigma_{j,(2)}^{ \pm } \mp a \hbar) x_{i0}^{\pm} \otimes x_{j0}^{\pm} \otimes x_{i0}^{\pm}. \]
	
	We prove \eqref{eq:rel6}. Using \eqref{eq:rel1} by direct computations the relation \eqref{eq:qssr} for $r=s=0$ can be rewritten as	
	\[ ( \frac{ \pm 2 a \hbar}{ \sigma_{j-1}^{\pm}- \sigma_{j}^{\pm} \mp a \hbar} + \frac{\sigma_{j}^{\pm}- \sigma_{j+1}^{\pm} \mp a \hbar}{ \sigma_{j}^{\pm}- \sigma_{j+1}^{\pm} \pm a \hbar} ) ( x_{j,0}^{\pm} x_{j-1,0}^{\pm} ) x_{j+1,0}^{\pm} x_{j,0}^{\pm} + \]
	\[ x_{j,0}^{\pm} ( \frac{\mp 2 a \hbar}{ \sigma_{j-1}^{\pm}- \sigma_{j}^{\pm} \pm a \hbar} + \frac{\sigma_{j}^{\pm}- \sigma_{j+1}^{\pm} \pm a \hbar}{ \sigma_{j}^{\pm}- \sigma_{j+1}^{\pm} \mp a \hbar} ) ( x_{j-1,0}^{\pm} x_{j+1,0}^{\pm} x_{j,0}^{\pm} ) = 0 \iff \]
	
	\[ \mu (\mu \otimes \id) (\mu \otimes \id \otimes \id) \circ \]
	\[ ( ( \pm 2 a \hbar ( \sigma_{j,(1)}^{\pm}- \sigma_{j+1,(3)}^{\pm} \pm a \hbar ) + ( \sigma_{j-1,(2)}^{\pm}- \sigma_{j,(1)}^{\pm} \mp a \hbar) ( \sigma_{j,(1)}^{\pm}- \sigma_{j+1,(3)}^{\pm} \mp a \hbar) ) \cdot \]
	\[ ( \sigma_{j-1,(2)}^{\pm}- \sigma_{j,(4)}^{\pm} \pm a \hbar  ) (\sigma_{j,(4)}^{\pm}- \sigma_{j+1,(3)}^{\pm} \mp a \hbar  ) + \]
	\[ ( \mp 2 a \hbar ( \sigma_{j,(4)}^{\pm}- \sigma_{j+1,(3)}^{\pm} \mp a \hbar ) + ( \sigma_{j-1,(2)}^{\pm}- \sigma_{j,(4)}^{\pm} \pm a \hbar) ( \sigma_{j,(4)}^{\pm}- \sigma_{j+1,(3)}^{\pm} \pm a \hbar) ) \cdot \]
	\[ ( \sigma_{j-1,(2)}^{\pm}- \sigma_{j,(1)}^{\pm} \mp a \hbar  ) (\sigma_{j,(1)}^{\pm}- \sigma_{j+1,(3)}^{\pm} \pm a \hbar  ) ) \]
	\[ x_{j0}^{\pm} \otimes x_{j-1,0}^{\pm} \otimes x_{j+1,0}^{\pm} \otimes x_{j0}^{\pm} = 0. \]
	
	Using \eqref{eq:rel5} we have
	\[ \mu (\mu \otimes \id) (\mu \otimes \id \otimes \id) \circ \]
	\[ ( (\sigma_{j,(1)}^{\pm}- \sigma_{j+1,(3)}^{\pm} \pm a \hbar) - (\sigma_{j,(4)}^{\pm}- \sigma_{j+1,(3)}^{\pm} \mp a \hbar) ) x_{j0}^{\pm} \otimes x_{j-1,0}^{\pm} \otimes x_{j+1,0}^{\pm} \otimes x_{j0}^{\pm} = 0,  \]
	\[ \mu (\mu \otimes \id) (\mu \otimes \id \otimes \id) \circ \]
	\[ ( (\sigma_{j-1,(2)}^{\pm}- \sigma_{j,(1)}^{\pm} \mp a \hbar) - (\sigma_{j-1,(2)}^{\pm}- \sigma_{j,(4)}^{\pm} \pm a \hbar) ) x_{j0}^{\pm} \otimes x_{j-1,0}^{\pm} \otimes x_{j+1,0}^{\pm} \otimes x_{j0}^{\pm} = 0, \]
	\[ \mu (\mu \otimes \id) (\mu \otimes \id \otimes \id) \circ \]
	\[ ( (\sigma_{j,(1)}^{\pm}- \sigma_{j+1,(3)}^{\pm} \mp a \hbar) - (\sigma_{j,(4)}^{\pm}- \sigma_{j+1,(3)}^{\pm} \pm a \hbar) ) x_{j0}^{\pm} \otimes x_{j-1,0}^{\pm} \otimes x_{j+1,0}^{\pm} \otimes x_{j0}^{\pm} = 0.  \]
	Thus
	\[ \mu (\mu \otimes \id) (\mu \otimes \id \otimes \id) \circ \]
	\[ ( \sigma_{j-1,(2)}^{\pm}- \sigma_{j,(1)}^{\pm} \mp a \hbar) ( \sigma_{j,(1)}^{\pm}- \sigma_{j+1,(3)}^{\pm} \mp a \hbar) ( \sigma_{j-1,(2)}^{\pm}- \sigma_{j,(4)}^{\pm} \pm a \hbar  ) (\sigma_{j,(4)}^{\pm}- \sigma_{j+1,(3)}^{\pm} \mp a \hbar  )  \]
	\[ x_{j0}^{\pm} \otimes x_{j-1,0}^{\pm} \otimes x_{j+1,0}^{\pm} \otimes x_{j0}^{\pm} = 0. \]
\end{proof}

\begin{corollary}
	\emph{\cite{GT13}}
	\label{cl:Bseq}
	If \eqref{y4ieqjreleq} holds for some $B \in \Bbbk[[v_1, v_2]]$, then $B(v_1, v_2) = B(v_2, v_1)$.
\end{corollary}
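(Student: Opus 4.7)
The plan is to reduce the claim to a PBW-based linear-independence computation inside $Y_{\hbar}^{\pm}(\mathfrak{g})$. First I would invoke Proposition \ref{pr:y46eqc}(\ref{eq:rel2}): since the relation \eqref{eq:SYrc4} in the case $i=j$, $|\alpha_{i}|=\bar{0}$ is already part of the defining presentation of $Y_{\hbar}(\mathfrak{g})$, the equation \eqref{y4ieqjreleq} automatically holds for every symmetric polynomial. Writing any $B$ as $B = B^{\mathrm{sym}} + B^{\mathrm{asym}}$ with $B^{\mathrm{sym}}(v_{1},v_{2}) = \tfrac{1}{2}(B(v_{1},v_{2})+B(v_{2},v_{1}))$ and $B^{\mathrm{asym}}(v_{1},v_{2}) = \tfrac{1}{2}(B(v_{1},v_{2})-B(v_{2},v_{1}))$, this reduces the corollary to the following claim: if $B$ is antisymmetric and satisfies \eqref{y4ieqjreleq}, then $B = 0$.

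Next I would write things out explicitly. Setting $B(v_{1},v_{2}) = \sum_{r,s \ge 0} b_{rs} v_{1}^{r} v_{2}^{s}$ with $b_{sr} = -b_{rs}$, and
\[ E_{r,s} := x_{i,r+1}^{\pm} x_{is}^{\pm} - x_{ir}^{\pm} x_{i,s+1}^{\pm} \mp a\hbar\, x_{ir}^{\pm} x_{is}^{\pm}, \]
equation \eqref{y4ieqjreleq} becomes $\sum_{r,s} b_{rs} E_{r,s} = 0$. A short computation using relation \eqref{eq:SYrc4} with $i=j$ (expanding the supercommutators and anticommutators) yields $E_{s,r}=-E_{r,s}$, and in particular $E_{r,r}=0$. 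Combined with antisymmetry of $b$, the equation reorganises to $\sum_{r<s} b_{rs} E_{r,s} = 0$ (up to a harmless factor of two).

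Finally I would extract coefficients via Theorem \ref{th:pbwyang}. The PBW theorem says that the monomials $\{ x_{ip}^{\pm} x_{iq}^{\pm} \}_{p \le q}$ are part of a $\Bbbk[[\hbar]]$-basis of $Y_{\hbar}^{\pm}(\mathfrak{g})$, and in particular are $\Bbbk[[\hbar]]$-linearly independent. For each pair $p < q$ I would read off the coefficient of $x_{ip}^{\pm} x_{iq}^{\pm}$ in $\sum_{r<s} b_{rs} E_{r,s}$: the first two summands of each $E_{r,s}$ contribute only at $\hbar$-order zero, while the $\hbar^{1}$-contribution to the coefficient of $x_{ip}^{\pm} x_{iq}^{\pm}$ arises solely from the third summand $\mp a\hbar\, x_{ir}^{\pm} x_{is}^{\pm}$ with $(r,s)=(p,q)$, yielding $\mp a\, b_{pq}$. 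Since $a \ne 0$, vanishing of this $\hbar^{1}$-coefficient forces $b_{pq}=0$ for all $p<q$, whence $B = 0$ and thus $B(v_{1},v_{2}) = B(v_{2},v_{1})$.

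The main obstacle is the bookkeeping at the last step: because $b_{rs} \in \Bbbk$ carries no $\hbar$-dependence while $Y_{\hbar}^{\pm}(\mathfrak{g})$ is a $\Bbbk[[\hbar]]$-module, one must justify that the $\hbar^{0}$ and $\hbar^{1}$ parts of the coefficient of each PBW basis element decouple and may be read off separately. This is exactly where the authors' standing assumption that the PBW theorem \ref{th:pbwyang} holds for $Y_{\hbar}(\mathfrak{g}_{d})$ is used in an essential way, as without it the expansion in the quadratic basis $\{ x_{ip}^{\pm} x_{iq}^{\pm} \}_{p \le q}$ would not be unique.
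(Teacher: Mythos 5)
Your argument is correct, and it is worth noting that the paper itself does not prove this corollary at all --- it is imported from \cite{GT13} with a bare citation --- so what you have written is a genuine self-contained derivation rather than a variant of an in-paper proof. The reduction to antisymmetric $B$ via Proposition \ref{pr:y46eqc}(\ref{eq:rel2}) is exactly right, as is the identity $E_{s,r}=-E_{r,s}$ (adding $E_{r,s}$ and $E_{s,r}$ reproduces precisely relation \eqref{eq:SYrc4} for $i=j$, $|\alpha_i|=\bar{0}$, with matching signs), which collapses the hypothesis to $\sum_{r<s} b_{rs}E_{r,s}=0$. The one point you flag at the end should be resolved explicitly rather than left as an ``obstacle'': since the total order $\preceq$ on $\Delta^{+}\times\mathbb{N}_{0}$ in Theorem \ref{th:pbwyang} is freely chosen, take $(\alpha_i,m)\preceq(\alpha_i,n)$ iff $m\le n$; then for $r<s$ all three monomials $x^{\pm}_{i,r+1}x^{\pm}_{i,s}$, $x^{\pm}_{i,r}x^{\pm}_{i,s+1}$, $x^{\pm}_{i,r}x^{\pm}_{i,s}$ are \emph{already} ordered PBW monomials, no reordering (and hence no spurious $\hbar$-corrections) is needed, and the coefficient of each basis element lies in $\Bbbk[\hbar]$ with the $\hbar^{0}$ and $\hbar^{1}$ parts vanishing separately, exactly as you use. (One also works degree by degree in the $\mathbb{N}_{0}$-grading, since $B$ is a power series and $E_{r,s}$ is homogeneous of degree $r+s+1$, so each graded piece is a finite sum.) As a side remark, the $\hbar^{1}$ extraction is not even strictly necessary: the $\hbar^{0}$ coefficients give $b_{p-1,q}=b_{p,q-1}$ in the interior and $b_{0,s}=b_{q-1,q}=b_{q-2,q}=0$ on the boundary, which already forces $b\equiv 0$ by telescoping along anti-diagonals; but your route through the $\mp a\hbar\,b_{pq}$ term, with $a=c_{ii}/2\ne 0$, is cleaner and equally valid.
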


\vspace{1cm}

\subsection{Homomorphism from quantum loop superalgebra to super Yangian}

In order to build a homomorphism we impose additional restrictions on Dynkin diagrams: we require that every odd vertex has only even verteces as neighbors. This condition is needed in order Theorem \ref{th:gsercindtrue} to be true. So from now on in this section we suppose that this agreement holds. 

\begin{definition}
	\label{phidef}
	Define a function
	\[ \Phi : \{ H_{ir}, E_{ir}, F_{ir} \}_{i \in I, r \in \mathbb{Z}} \to \widehat{Y_{\hbar}(\mathfrak{g})} \]
	by
	\begin{equation}
		\Phi( H_{i0} ) = t_{i0}
	\end{equation}
	and for $r \in \mathbb{Z}^{*}$
	\begin{equation}
		\Phi( H_{ir} ) = \frac{\hbar}{q - q^{-1}} \sum_{k \ge 0} t_{ik} \frac{r^k}{k!} = \frac{B_{i}(r)}{q - q^{-1}},
	\end{equation}
	where we've used \eqref{eq:tlogdef} and \eqref{eq:borinv}. Define further for $r \in \mathbb{Z}$
	\begin{equation}
		\label{eq:eir}
		\Phi(E_{ir}) = e^{r \sigma_{i}^{+}} g_{i}^{+}( \sigma_{i}^{+} ) x_{i0}^{+},
	\end{equation}
	\begin{equation}
		\label{eq:fir}
		\Phi(F_{ir}) = e^{r \sigma_{i}^{-}} g_{i}^{-}( \sigma_{i}^{-} ) x_{i0}^{-},
	\end{equation}
	where we've used \eqref{eq:sigmadef} and \eqref{eq:gipm}.
\end{definition}

The above fucntion extends to a homomorphism of superlagebras $ \Phi^{0} : \widetilde{U}^{0}_{\hbar}(L\mathfrak{g}) \to Y^{0}_{\hbar}(\mathfrak{g}) $.

\begin{proposition}
	\emph{\cite{GT13}}
	\label{pr:ql2ql3com}
	The function $\Phi$ is compatible with relations \eqref{QL2} and \eqref{QL3} if and only if equations \eqref{eq:eir} and \eqref{eq:fir} hold.
\end{proposition}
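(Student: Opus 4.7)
The plan is to verify both directions of the biconditional by direct computation, exploiting the operators $\sigma_i^\pm$ and $\lambda_i^\pm$ from Proposition \ref{pr:lamdef} together with the explicit form $\Phi(H_{ir}) = B_i(r)/(q-q^{-1})$.

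For the $(\Leftarrow)$ direction, I would begin with \eqref{QL2}. Expanding
\[ \Phi(E_{jk}) = e^{k\sigma_j^+}g_j^+(\sigma_j^+)x_{j0}^+ = \sum_{m,n\ge 0}\tfrac{k^m}{m!}\,g_{jn}^+\,x_{j,m+n}^+, \]
observe that $\Phi(H_{i0})=t_{i0}=h_{i0}$ commutes with $Y^0_\hbar(\mathfrak{g})$ by the Cartan relations (Lemmas \ref{lm:cel02}, \ref{lm:cartelrelii}, \ref{lm:cartelrelinej}) and satisfies $[h_{i0},x_{j,p}^+]=c_{ij}^d x_{j,p}^+$ for every $p$ by Lemma \ref{lm:h0eq}. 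Factoring out $c_{ij}^d$ gives $[\Phi(H_{i0}),\Phi(E_{jk})] = c_{ij}^d\Phi(E_{jk})$; the computation for $F$ is symmetric.

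For \eqref{QL3}, the key identity is the commutation rule from Proposition \ref{pr:lamdef}(1), namely $x_{jp}^+ h = \lambda_j^+(\sigma_j^+)(h)\cdot x_{jp}^+$ in the appropriate formal sense. Applied to $h=B_i(r)\in Y^0_\hbar(\mathfrak{g})$ one gets $[\Phi(H_{ir}),x_{jp}^+]=(q-q^{-1})^{-1}(B_i(r)-\lambda_j^+(\sigma_j^+)(B_i(r)))x_{jp}^+$. The action of $\lambda_j^+(v)$ on $t_i(u)$ is the same logarithmic shift as in Lemma \ref{lm:lgG}: one has a formula of the shape $\lambda_j^+(v)(t_i(u))-t_i(u)=\log\!\bigl(\tfrac{u-v+c_{ij}^d\hbar/2}{u-v-c_{ij}^d\hbar/2}\bigr)$. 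Substituting $u=r$ in $B_i(r)$ and exponentiating this identity converts the commutator into the factor $\tfrac{[rc_{ij}^d]_q}{r}$ times the shift operator $e^{r\sigma_j^+}$; summing against the $g_{jn}^+$ coefficients yields $[\Phi(H_{ir}),\Phi(E_{jk})]=\tfrac{[rc_{ij}^d]_q}{r}\Phi(E_{j,r+k})$, and analogously for $F$ with $\sigma_j^-$.

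For the $(\Rightarrow)$ direction, suppose $\Phi$ respects \eqref{QL2} and \eqref{QL3}. Compatibility with \eqref{QL2} forces $\Phi(E_{j0})$ to lie in the $c_{ij}^d$-weight space of $\mathrm{ad}(h_{i0})$ for every $i\in I$; by the PBW description of $Y^{\ge 0}_\hbar(\mathfrak{g})$ (Corollary \ref{cl:pbwcl}) this weight space consists of $Y^0_\hbar(\mathfrak{g})$-combinations of $\{x_{jn}^+\}_{n\ge 0}$, so one may write $\Phi(E_{j0})=\sum_n g_{jn}^+ x_{jn}^+ = g_j^+(\sigma_j^+)x_{j0}^+$ for a uniquely determined series $g_j^+(v)\in\widehat{Y^0_\hbar(\mathfrak{g})[v]}$. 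Relation \eqref{QL3} with $k=0$ then determines $\Phi(E_{jr})$ recursively from $\Phi(E_{j0})$: the computation of the preceding paragraph shows that the right–hand side is necessarily $e^{r\sigma_j^+}\Phi(E_{j0})$, giving \eqref{eq:eir}. The same argument applied to $F$ produces \eqref{eq:fir}.

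The main obstacle is the verification of \eqref{QL3} in the $(\Leftarrow)$ direction: one must match the formal power series obtained from $\lambda_j^+(v)$ acting on $B_i(u)$ with the quantum integer $\tfrac{[rc_{ij}^d]_q}{r}$. Once the identification of $\lambda_j^+(v)(t_i(u))-t_i(u)$ as a logarithm of a ratio of linear factors is in hand (this is the analogue, for $t_i$ instead of $g_j$, of Lemma \ref{lm:lgG}), the evaluation at $u=r$ and the exponentiation match by construction, so all other computations reduce to formal bookkeeping.
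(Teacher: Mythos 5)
The paper does not actually prove this proposition --- it is stated with the citation \cite{GT13} and no argument is given --- so there is no in-paper proof to compare against; your computation is essentially the Gautam--Toledano Laredo mechanism that the citation points to. Your $(\Leftarrow)$ direction is sound: $[h, x_{jp}^{+}] = \bigl(h - \lambda_j^{+}(\sigma_j^{+})(h)\bigr)x_{jp}^{+}$ follows from Proposition \ref{pr:lamdef}(1), commutativity of $Y^{0}_{\hbar}(\mathfrak{g})$ lets you pull $\Phi(H_{ir})$ past the coefficients $g^{+}_{jn}$, and the identity $\lambda_j^{+}(v)(t_i(u)) - t_i(u) = \log\bigl(\tfrac{u-v- a\hbar}{u-v+ a\hbar}\bigr)$ (with $a = c_{ij}^{d}/2$; fix your sign) is the legitimate citable input. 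One point of precision: ``substituting $u=r$'' is not a substitution in $t_i(u)$ but the Borel-type transform $u^{-k-1}\mapsto r^{k}/k!$ implicit in \eqref{eq:tlogdef} versus \eqref{eq:borinv}; under this transform $\log\tfrac{u-w_1}{u-w_2}$ goes to $-\tfrac{e^{w_1 r}-e^{w_2 r}}{r}$, which is exactly what produces $\tfrac{[rc_{ij}^{d}]_q}{r}e^{r\sigma_j^{+}}$ after dividing by $q-q^{-1}$. You should also note that the excluded super case $i=j$, $|\alpha_{i}|=\bar{1}$ degenerates consistently: there $c_{ii}^{d}=0$, \eqref{eq:mpy6} gives $[h_{ir},x_{is}^{\pm}]=0$, and $[rc_{ii}^{d}]_q/r=0$, so both sides of \eqref{QL3} vanish.

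The genuine gap is in your $(\Rightarrow)$ direction. The claim that, by Corollary \ref{cl:pbwcl}, the $\alpha_j$-weight space of $\mathrm{ad}(h_{i0})$ ``consists of $Y^{0}_{\hbar}(\mathfrak{g})$-combinations of $\{x_{jn}^{+}\}$'' is false for the full Yangian: by the triangular decomposition of Corollary \ref{cl:pbwcl}(2), elements such as $x_{\beta,r}^{-}\,h\,x_{\gamma,s}^{+}$ with $\gamma-\beta=\alpha_j$ (e.g.\ $f_{\alpha_{j'}}\,e_{\alpha_{j'}+\alpha_j}$) also have weight $\alpha_j$. The statement you want is only true inside $Y^{\ge 0}_{\hbar}(\mathfrak{g})$, where the weight-$\alpha_j$ part of $Y^{+}_{\hbar}(\mathfrak{g})$ is indeed spanned by the $x_{jn}^{+}$ because $\alpha_j$ admits no other decomposition into positive roots. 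So the converse direction requires the a priori ansatz $\Phi(E_{jr})\in\widehat{Y^{\ge 0}_{\hbar}(\mathfrak{g})}$ (and $\Phi(F_{jr})\in\widehat{Y^{\le 0}_{\hbar}(\mathfrak{g})}$), which is implicit in Definition \ref{phidef} and in \cite{GT13} but is not a consequence of \eqref{QL2} alone. Either state that ansatz explicitly before invoking PBW, or supply an additional argument ruling out the mixed PBW terms; as written the step does not go through.
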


\begin{theorem}
	\label{th:thcondhom}
	The function $\Phi$ extends to a homomorphism of superalgebras ${U}_{\hbar}(L\mathfrak{g}) \to \widehat{Y_{\hbar}(\mathfrak{g})}$ if and only if the following conditions hold
	\begin{enumerate}
		\item for any $i, j \in I$
		\begin{equation}
			\label{eq:acomp}
			g_{i}^{+}(u) \lambda_{i}^{+}(u) ( g_{j}^{-} (v) ) = g_{j}^{-} (v) \lambda_{j}^{-}(v) ( g_{i}^{+}(u) );
		\end{equation}
		\item for any $i \in I$ and $k \in \mathbb{Z}$
		\begin{equation}
			\label{eq:bcomp}
			e^{ku} g_{i}^{+}(u) \lambda_{i}^{+}(u) ( g_{i}^{-}(u) ) \Big\rvert_{u^m = h_{im}} = \Phi^{0} ( \frac{\psi_{ik} - \phi_{ik}}{q-q^{-1}} );
		\end{equation}
		\item for any $i, j \in I$ such that $c_{ij} \ne 0$ ($a = c_{ij} / 2$)
		\begin{equation}
			\label{eq:ccomp}
			g_{i}^{ \pm}(u) \lambda_{i}^{ \pm} (u) ( g_{j}^{ \pm} (v) ) ( \frac{ (-1)^{|\alpha_i||\alpha_j|} e^{u} - e^{v \pm a \hbar}}{u-v \mp a \hbar} ) = g_{j}^{ \pm}(v) \lambda_{j}^{ \pm} (v) ( g_{i}^{ \pm} (u) ) ( \frac{ e^{v} - (-1)^{|\alpha_i||\alpha_j|} e^{u \pm a \hbar} }{v-u \mp a \hbar} );
		\end{equation}
		\item for any $i, j \in I$ such that $c_{ij} = 0$
		\begin{equation}
			\label{eq:qcomp}
			g_{i}^{ \pm}(u) \lambda_{i}^{ \pm} (u) ( g_{j}^{ \pm} (v) ) = g_{j}^{ \pm}(v) \lambda_{j}^{ \pm} (v) ( g_{i}^{ \pm} (u) );
		\end{equation}
	\end{enumerate}
	We've used notations \eqref{eq:lambdadefhom} and \eqref{eq:gipm}.
\end{theorem}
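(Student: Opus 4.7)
The plan is to check that $\Phi$ respects each defining relation \eqref{QL1}--\eqref{QL8} of $U_\hbar(L\mathfrak{g})$, identifying which of the four conditions \eqref{eq:acomp}--\eqref{eq:qcomp} is equivalent to each, and then deducing the Serre relations from the quadratic conditions together with Proposition~\ref{pr:y46eqc}. Relations \eqref{QL1}, \eqref{QL2}, \eqref{QL3} are immediate: the first holds because $\Phi(H_{ir}) \in Y_\hbar^0(\mathfrak{g})$, which is a commutative polynomial superalgebra by Theorem~\ref{th:pbwyang}(2); the latter two are equivalent to formulas \eqref{eq:eir}, \eqref{eq:fir} by Proposition~\ref{pr:ql2ql3com}.

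Next I would handle \eqref{QL5}. Using Proposition~\ref{pr:lamdef}, the key rewriting rule $h\cdot x_{im}^{\pm} = \sum_s x_{i,m+s}^{\pm}\cdot(\lambda^{\mp}_{i;s}(h))$ (obtained by inverting statement (6) of that proposition) lets one move the Cartan factor $g_j^-(\sigma_j^-)$ past $x_{i0}^+$ inside the product $\Phi(E_{ik})\Phi(F_{jl})$. For $i\ne j$, after this move the remaining commutator $[x_{i0}^+,x_{j0}^-]=0$, which is exactly \eqref{eq:acomp} (both sides become zero only when the intertwining holds, whence necessity as well). For $i=j$, one isolates the coefficient of $x_{i,r}^{+}x_{i,s}^{-}$ and matches against $(\psi_{i,k+l}-\varphi_{i,k+l})/(q-q^{-1})$ expanded via \eqref{eq:qlelpsi}, \eqref{eq:qlelphi}; the resulting identity is precisely \eqref{eq:bcomp}. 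The relations \eqref{QL4} and \eqref{QL6} are then dealt with analogously: by Proposition~\ref{pr:y46eqc}(1),(4) they translate into polynomial identities in $\sigma_{i}^{\pm},\sigma_j^{\pm}$ applied to $x_{i0}^{\pm}x_{j0}^{\pm}$, and after commuting $g_i^{\pm}(\sigma_i^{\pm})$ past $g_j^{\pm}(\sigma_j^{\pm})$ via Lemma~\ref{lm:lgG}, the condition on the prefactor reduces exactly to \eqref{eq:ccomp} (resp.\ \eqref{eq:qcomp}).

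The main obstacle will be the cubic and quartic Serre relations \eqref{QL7}, \eqref{QL8}. I would argue that these do \emph{not} produce new conditions beyond \eqref{eq:acomp}--\eqref{eq:qcomp}. By Proposition~\ref{pr:y46eqc}(3),(5),(6), the corresponding Serre relations in the Yangian admit symmetric-polynomial reformulations in the operators $\sigma_{i,(k)}^{\pm}$. Applying $\Phi$ to \eqref{QL7}, \eqref{QL8} and using the definitions \eqref{eq:eir}, \eqref{eq:fir}, one pulls the scalar $g_i^{\pm}(\sigma_i^{\pm})$ factors across the $\sigma$-polynomial prefactors (they merely rescale by an invertible series, by Lemma~\ref{lm:invgpm}), reducing the desired vanishing to the Yangian-side Serre relations. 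Here the PBW assumption of Theorem~\ref{th:pbwyang} and the invertibility of $g_i^{\pm}$ are essential to cancel nonzero factors. Corollary~\ref{cl:Bseq} (plus its analogue for the cubic/quartic cases) guarantees that the resulting identities are satisfied precisely when the Yangian Serre relations hold, which they do by construction.

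Finally, for necessity, I would run these derivations in reverse, specializing \eqref{QL4}, \eqref{QL5}, \eqref{QL6} to generators of lowest degree, extracting the coefficients of $x_{i0}^{\pm}$ and $x_{j0}^{\pm}$ in $\widehat{Y_\hbar(\mathfrak{g})}$, and appealing to the PBW theorem (Theorem~\ref{th:pbwyang}, Corollary~\ref{cl:pbwcl}) to deduce that the prefactors themselves must satisfy \eqref{eq:acomp}--\eqref{eq:qcomp}. Combined with sufficiency, this yields the claimed biconditional.
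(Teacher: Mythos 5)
Your proposal follows essentially the same route as the paper: the paper likewise disposes of \eqref{QL1}--\eqref{QL3} by construction and Proposition~\ref{pr:ql2ql3com}, matches \eqref{QL5}, \eqref{QL4}, \eqref{QL6} with conditions \eqref{eq:acomp}--\eqref{eq:bcomp}, \eqref{eq:ccomp}, \eqref{eq:qcomp} respectively via the $\lambda$-operator intertwining and Proposition~\ref{pr:y46eqc}, and shows that the Serre relations \eqref{QL7}, \eqref{QL8} impose no new conditions once \eqref{eq:ccomp} holds. One small correction: the commutation of the Cartan prefactors in the general equivalence rests on Proposition~\ref{pr:lamdef}, not Lemma~\ref{lm:lgG}, which concerns only the specific solution $g_i(v)$ of \eqref{eq:giunvsol} and enters later in Theorem~\ref{th:gsercindtrue}.
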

\begin{proof}
	By construction and Proposition \ref{pr:ql2ql3com}, $\Phi$ is compatible with the relations \eqref{QL1} - \eqref{QL3}. The result then follows from Lemmas \ref{lm:ql5comp}, \ref{lm:ql4comp}, \ref{lm:ql6comp}, \ref{lm:ql7comp} and \ref{lm:ql8comp} below. 
\end{proof}

\begin{corollary}
	The homomorphism of superalgebras $\Phi$ has the following properties:
	\begin{enumerate}
		\item it restricts to a homomorphism of superalgebras $U_{\hbar}(L \mathfrak{sl}(2|1)^{i}) \to \widehat{Y_{\hbar}(\mathfrak{sl}(2|1)^{i})}$ for a fixed $i \in I$;
		\item it restricts to a homomorphism of superalgebras $U_{ \hbar}(L \mathfrak{b}_{ \pm}) \to \widehat{Y_{\hbar}( \mathfrak{b}_{ \pm})}$. 
	\end{enumerate}
\end{corollary}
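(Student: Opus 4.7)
The plan is to leverage the fact that Theorem \ref{th:thcondhom} has already established $\Phi$ as a superalgebra homomorphism, so that both statements of the corollary reduce to image-containment checks on generators. Since $U_{\hbar}(L\mathfrak{b}_{+})$ is, by definition, generated by $\{H_{j,r}, E_{j,r}\}_{j\in I,\, r\in\mathbb{Z}}$ and $\Phi$ is multiplicative, it is enough to verify that these generators map into $\widehat{Y_{\hbar}(\mathfrak{b}_{+})}$, viewed as a subalgebra of $\widehat{Y_{\hbar}(\mathfrak{g})}$ via the graded inclusion $Y_{\hbar}(\mathfrak{b}_{+}) \hookrightarrow Y_{\hbar}(\mathfrak{g})$. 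Inspecting Definition \ref{phidef}: $\Phi(H_{j,r})$ is, by \eqref{eq:tlogdef}, a polynomial in the Cartan generators $\{t_{j,k}\}_{k\in\mathbb{N}_{0}}$ and hence lies in $Y^{0}_{\hbar}(\mathfrak{g}) \subset Y_{\hbar}(\mathfrak{b}_{+})$, while $\Phi(E_{j,r}) = e^{r\sigma_{j}^{+}} g_{j}^{+}(\sigma_{j}^{+}) x_{j,0}^{+}$ expands (by \eqref{eq:gipm} and the shift action $\sigma_{j}^{+}$ of \eqref{eq:sigmadef}) as a formal series $\sum_{m\ge 0}\beta_{m}\, x_{j,m}^{+}$ with $\beta_{m}\in Y^{0}_{\hbar}(\mathfrak{g})$, manifestly in $\widehat{Y_{\hbar}(\mathfrak{b}_{+})}$. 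The $\mathfrak{b}_{-}$ case is symmetric, with $F_{j,r}$ and $\sigma_{j}^{-}$ in place of $E_{j,r}$ and $\sigma_{j}^{+}$.

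For part (1), the same expansion is needed but with the stronger requirement that, for a fixed index $i$, the coefficients in $\Phi(E_{i,r})$ and $\Phi(F_{i,r})$ belong to the single-index Cartan subalgebra $Y^{0}_{i}(\mathfrak{g})$ rather than merely to $Y^{0}_{\hbar}(\mathfrak{g})$. The decisive observation is that the coefficients $g_{i,m}^{\pm}$ of $g_{i}^{\pm}(u)$ in \eqref{eq:gipm} themselves lie in $Y^{0}_{i}(\mathfrak{g})$. This is visible from \eqref{eq:giunvsol}, since $\gamma_{i}(v)$ is built solely out of $\{t_{i,r}\}_{r\in\mathbb{N}_{0}}$, together with the natural factorization of $g_{i}$ into $g_{i}^{\pm}$. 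Granting this, $\Phi(H_{i,r})$, $\Phi(E_{i,r})$, and $\Phi(F_{i,r})$ are each a formal series whose coefficients come from $Y^{0}_{i}(\mathfrak{g})$ acting on powers of $x_{i,0}^{\pm}$, and hence belong to $\widehat{Y_{\hbar}(\mathfrak{sl}(2|1)^{i})}$.

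The main obstacle is precisely the last bullet: confirming that one can (and does) choose the power series $g_{i}^{\pm}(u)$ inside $\widehat{Y^{0}_{i}(\mathfrak{g})[u]}$ rather than inside the larger $\widehat{Y^{0}_{\hbar}(\mathfrak{g})[u]}$. The route I would take is to observe that when the compatibility condition \eqref{eq:bcomp} is restricted to a fixed index $i$, it only constrains the $i$-component of $\Phi^{0}$ and can be solved iteratively with $\{t_{i,r}\}_{r\in\mathbb{N}_{0}}$ alone; combined with Lemma \ref{lm:invgpm} to pin down the constant term as $1/d_{i}^{\pm} = 1$, this yields $g_{i}^{\pm}(u) \in 1 + \widehat{Y^{0}_{i}(\mathfrak{g})[u]}_{+}$. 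Once this localization of the coefficients is established, both parts of the corollary follow immediately from the expansions already described.
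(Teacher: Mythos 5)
Your proposal is correct and follows essentially the same route as the paper, whose entire proof is the one line ``The result follows from Definition \ref{phidef}'': one simply checks that the images of the generators $H_{i,r}$, $E_{i,r}$, $F_{i,r}$ under $\Phi$, as given in Definition \ref{phidef}, land in the appropriate completed subalgebras. You additionally make explicit the only point the paper leaves implicit, namely that for part (1) the coefficients of $g_{i}^{\pm}(u)$ must lie in $Y^{0}_{i}(\mathfrak{g})$ rather than just $Y^{0}_{\hbar}(\mathfrak{g})$, which is indeed immediate for the explicit choice $g_{i}^{\pm}=g_{i}$ of \eqref{eq:giunvsol} since $\gamma_{i}(v)$ is built from $\{t_{i,r}\}$ alone.
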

\begin{proof}
	The result follows from Definition \ref{phidef}.
\end{proof}

\begin{lemma}
	\emph{\cite{GT13}}
	\label{lm:ql5comp}
	$\Phi$ is compatible with the relation \eqref{QL5} if and only if \eqref{eq:acomp} and \eqref{eq:bcomp} hold.
\end{lemma}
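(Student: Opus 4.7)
The plan is to compute $[\Phi(E_{i,k}),\Phi(F_{j,l})]$ directly from Definition \ref{phidef} and to match the result against $\delta_{ij}\,\Phi^{0}\!\left(\frac{\psi_{i,k+l}-\varphi_{i,k+l}}{q-q^{-1}}\right)$. First I would unpack each factor: using $\Phi(E_{i,k}) = e^{k\sigma_i^+}g_i^+(\sigma_i^+)\,x_{i0}^+$ and the fact that $\sigma_i^+$ is the identity on $Y_{\hbar}^{0}(\mathfrak{g})$, one obtains a formal series $\sum_m a_{i,m}^+(k)\,x_{i,m}^+$ with coefficients in $Y_{\hbar}^{0}(\mathfrak{g})$, and analogously $\Phi(F_{j,l}) = \sum_n b_{j,n}^-(l)\,x_{j,n}^-$ with coefficients in $Y_\hbar^0(\mathfrak{g})$. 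To collect the commutator into tractable form I must move these $x_{i,m}^+$ past the Cartan coefficients $b_{j,n}^-(l)$ coming from the $F$-factor, which is exactly what the operators $\lambda_i^{\pm}$ of Proposition \ref{pr:lamdef} are designed for, via $x_{i,m}^+ h = \sum_s \lambda_{i;s}^+(h)\,x_{i,m+s}^+$.

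Encoding the $\sigma_i^+$-shifts by a formal parameter $u$ on the left factor and the $\sigma_j^-$-shifts by $v$ on the right, I expect $\Phi(E_{i,k})\Phi(F_{j,l})$ to be represented by the generating expression
\[ e^{ku+lv}\,g_i^+(u)\,\lambda_i^+(u)\bigl(g_j^-(v)\bigr)\cdot\bigl(x_i^+(u)\,x_j^-(v)\bigr), \]
while $(-1)^{|\alpha_i||\alpha_j|}\Phi(F_{j,l})\Phi(E_{i,k})$ is represented by the companion series with coefficient $g_j^-(v)\,\lambda_j^-(v)\bigl(g_i^+(u)\bigr)$ against $x_j^-(v)\,x_i^+(u)$. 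Subtracting, one is left with an expression whose only ``mixed'' contribution is $[x_{i,m}^+,x_{j,n}^-]$. For $i\ne j$, this supercommutator vanishes by \eqref{eq:SYrc3}, so the right-hand side of \eqref{QL5} (which is zero) is reproduced iff the two Cartan-valued coefficients agree; this is precisely the identity \eqref{eq:acomp}. For $i=j$, the relation $[x_{i,m}^+,x_{i,n}^-]=h_{i,m+n}$ forces the $(m,n)$ summation to survive only along the diagonal $m+n$, collapsing the bivariate generating series to a univariate residue in $u$; after the substitution $u^r\mapsto h_{ir}$, the resulting identity is exactly \eqref{eq:bcomp} applied at index $k+l$.

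The main technical obstacle will be, in the $i=j$ case, verifying that the off-diagonal pieces—terms picking up $h_{i,m+n}$ with $m+n$ not matching $k+l$—cancel, so that what remains is a genuine single-variable specialization. This cancellation rests on part~(6) of Proposition \ref{pr:lamdef}, namely $\lambda_i^+(u)\lambda_i^-(u)=\id$, together with the invertibility of $g_i^{\pm}(u)$ from Lemma \ref{lm:invgpm}. Super-signs enter only through the definition of the supercommutator itself: since each $\lambda_i^{\pm}$ is of degree zero and each $g_i^{\pm}(u)\in\widehat{Y_\hbar^0(\mathfrak{g})[u]}$ lies in the even part, the sign bookkeeping mirrors the non-super argument of \cite{GT13} and introduces no new obstruction beyond the factor $(-1)^{|\alpha_i||\alpha_j|}$ recorded above. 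The converse implication is immediate by reversing the reductions: if both \eqref{eq:acomp} and \eqref{eq:bcomp} hold, the generating-function identities above force $\Phi$ to respect \eqref{QL5}.
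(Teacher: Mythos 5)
Your argument is correct and is essentially the proof from \cite{GT13} that the paper cites for this lemma without reproducing it: expand $\Phi(E_{i,k})\Phi(F_{j,l})$ and $\Phi(F_{j,l})\Phi(E_{i,k})$ into PBW-ordered form by pushing the Cartan coefficients through with the operators $\lambda_i^{\pm}$, use the triangular decomposition of Corollary \ref{cl:pbwcl} to split off the $x^{-}x^{+}$ part (whose vanishing is exactly \eqref{eq:acomp}) from the purely Cartan part arising via $[x^{+}_{i,a},x^{-}_{i,b}]=h_{i,a+b}$, and identify the latter with \eqref{eq:bcomp} at index $k+l$. The one correction: the ``main technical obstacle'' you describe is spurious --- since $Y^{0}_{\hbar}(\mathfrak{g})$ is commutative and $h_{i,a+b}$ depends only on $a+b$, the substitution $u^{a}v^{b}\mapsto h_{i,a+b}$ applied to $e^{ku+lv}g_i^{+}(u)\lambda_i^{+}(u)(g_i^{-}(v))$ automatically factors through setting $v=u$ and then $u^{N}\mapsto h_{i,N}$, so there are no off-diagonal terms to cancel and neither part (6) of Proposition \ref{pr:lamdef} nor Lemma \ref{lm:invgpm} is needed at this step.
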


\begin{lemma}
	\label{lm:ql4comp}
	$\Phi$ is compatible with the relation \eqref{QL4} if and only if \eqref{eq:ccomp} holds.
\end{lemma}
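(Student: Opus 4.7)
The plan is to apply $\Phi$ to both sides of \eqref{QL4}, bring every product $\Phi(E_{i,r})\Phi(E_{j,s})$ and $\Phi(E_{j,s})\Phi(E_{i,r})$ to a common normal form, and then use the Yangian swap rule from Proposition~\ref{pr:y46eqc}~\eqref{eq:rel1} to reduce the resulting identity to a condition on the Cartan factors $g_i^{\pm}$ and $\lambda_i^{\pm}$. The computation for $\Phi(F_{i,r})$ is strictly parallel and produces the $-$ case of \eqref{eq:ccomp}, so I focus on the $E$ case.

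First I would establish the normal-form identity
\[
\Phi(E_{i,k})\Phi(E_{j,l}) \;=\; e^{ku}e^{lv}\, g_i^{+}(u)\,\lambda_i^{+}(u)(g_j^{+}(v)) \cdot x_{i0}^{+} x_{j0}^{+},
\]
where $u = \sigma_i^{+}$ and $v = \sigma_j^{+}$ are viewed as formal shift operators: $u$ increments the lower index of every $x_{i,\bullet}^{+}$ and $v$ does the same for $x_{j,\bullet}^{+}$, while both fix $Y_{\hbar}^{0}(\mathfrak{g})$ pointwise. This derivation uses only that $g_i^{+}(\sigma_i^{+}) x_{i0}^{+} = \sum_m g_{im}^{+} x_{i,m}^{+}$ together with the commutation formula $x_{i,m}^{+} h = \sum_s \lambda_{i;s}^{+}(h)\, x_{i,m+s}^{+}$ from Proposition~\ref{pr:lamdef}, which moves the Cartan element $g_{j,q}^{+}$ past $x_{i,\bullet}^{+}$ at the cost of a factor $\lambda_i^{+}(u)$. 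A parallel computation yields $\Phi(E_{j,l})\Phi(E_{i,k}) = e^{ku}e^{lv}\, g_j^{+}(v)\lambda_j^{+}(v)(g_i^{+}(u)) \cdot x_{j0}^{+} x_{i0}^{+}$.

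Next, by Proposition~\ref{pr:y46eqc}~\eqref{eq:rel1} with $a = c_{ij}/2$ one has $x_{j0}^{+} x_{i0}^{+} = (-1)^{|\alpha_i||\alpha_j|}\, \frac{u-v-a\hbar}{u-v+a\hbar}\, x_{i0}^{+} x_{j0}^{+}$, so every $\Phi$-image appearing in \eqref{QL4} can be written in the single normal form $(\cdots)\cdot x_{i0}^{+} x_{j0}^{+}$. Substituting into \eqref{QL4} and using $q^{c_{ij}^{d}} = e^{a\hbar}$, the universal prefactor $e^{ku}e^{lv}$ drops out (equivalently, the relation need only be checked at $k=l=0$), and after clearing $(u-v+a\hbar)$ the compatibility becomes
\[
(u-v+a\hbar)\bigl((-1)^{|\alpha_i||\alpha_j|}e^{u} - e^{v+a\hbar}\bigr) g_i^{+}(u)\lambda_i^{+}(u)(g_j^{+}(v)) \cdot x_{i0}^{+}x_{j0}^{+}
\]
\[
= (u-v-a\hbar)\bigl((-1)^{|\alpha_i||\alpha_j|} e^{u+a\hbar} - e^{v}\bigr) g_j^{+}(v)\lambda_j^{+}(v)(g_i^{+}(u))\cdot x_{i0}^{+}x_{j0}^{+},
\]
which is precisely \eqref{eq:ccomp} applied to $x_{i0}^{+}x_{j0}^{+}$.

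For the $(\Rightarrow)$ implication I would upgrade this last identity to an operator identity in $\widehat{Y_{\hbar}^{0}(\mathfrak{g})[[u,v]]}$ using the PBW theorem (Theorem~\ref{th:pbwyang}): writing $P(u,v) = \sum P_{mn} u^m v^n$ for the difference of the two sides, the equation $P(u,v)\cdot x_{i0}^{+} x_{j0}^{+} = 0$ unpacks coefficient-by-coefficient into $P_{mn}\cdot x_{i,m}^{+} x_{j,n}^{+} = 0$, and the PBW-independence of the family $\{x_{i,m}^{+} x_{j,n}^{+}\}$ over $Y_{\hbar}^{0}(\mathfrak{g})$ forces $P_{mn} = 0$ for every $(m,n)$. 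The converse direction just reverses the manipulations. The main obstacle is bookkeeping: the super-sign $(-1)^{|\alpha_i||\alpha_j|}$ enters both from \eqref{QL4} itself and from the swap rule, and the formal rational functions $(u-v\pm a\hbar)^{-1}$ must be handled consistently as power-series inverses (so that clearing them, and verifying the formal cancellations between the four terms, does not introduce spurious poles).
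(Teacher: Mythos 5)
For $i \ne j$ your argument is essentially the paper's proof: the same normal form $\Phi(X^{\pm}_{ir})\Phi(X^{\pm}_{js}) = e^{r\sigma_i^{\pm}}e^{s\sigma_j^{\pm}}g_i^{\pm}(\sigma_i^{\pm})\lambda_i^{\pm}(\sigma_i^{\pm})(g_j^{\pm}(\sigma_j^{\pm}))x_{i0}^{\pm}x_{j0}^{\pm}$, the same use of \eqref{eq:rel1} of Proposition~\ref{pr:y46eqc} to convert $x_{j0}^{\pm}x_{i0}^{\pm}$ back into $x_{i0}^{\pm}x_{j0}^{\pm}$, and the same appeal to the PBW Theorem~\ref{th:pbwyang} to strip the common right factor; your sign bookkeeping does reproduce \eqref{eq:ccomp}.

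The genuine gap is the diagonal case $i=j$. Relation \eqref{QL4} is imposed for all $i,j$ with $c_{ij}^{d}\ne 0$, which includes $i=j$ with $|\alpha_{i}|=\bar{0}$, and \eqref{eq:ccomp} is likewise stated for $i=j$; the equivalence must therefore be proved there as well, and your mechanism breaks down in that case. With $i=j$ the two shift operators $u=\sigma_i^{\pm}$ and $v=\sigma_j^{\pm}$ coincide, so the expression $g_i^{\pm}(u)\lambda_i^{\pm}(u)(g_j^{\pm}(v))\,x_{i0}^{\pm}x_{j0}^{\pm}$ no longer records which of the two factors of $x_{i0}^{\pm}x_{i0}^{\pm}$ each exponential is shifting, and the swap rule \eqref{eq:rel1} you rely on is only available for $i\ne j$. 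The paper instead rewrites $\Phi(X^{\pm}_{ir})\Phi(X^{\pm}_{is})$ as $\mu\bigl(e^{r\sigma_{i,(1)}^{\pm}}e^{s\sigma_{i,(2)}^{\pm}}g_i^{\pm}(\sigma_{i,(1)}^{\pm})\lambda_i^{\pm}(\sigma_{i,(1)}^{\pm})(g_i^{\pm}(\sigma_{i,(2)}^{\pm}))\,x_{i0}^{\pm}\otimes x_{i0}^{\pm}\bigr)$ with distinguished operators $\sigma_{i,(1)}^{\pm},\sigma_{i,(2)}^{\pm}$ on the tensor square, and then combines the symmetric swap rule \eqref{eq:rel2} of Proposition~\ref{pr:y46eqc} with Corollary~\ref{cl:Bseq} (any series $B$ killed by this construction is automatically symmetric) to extract \eqref{eq:ccomp} at $i=j$; without that input the ``only if'' direction is not established on the diagonal. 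A smaller point: the left-$Y_{\hbar}^{0}(\mathfrak{g})$-independence of the family $\{x_{i,m}^{\pm}x_{j,n}^{\pm}\}$ used in your coefficient extraction is not immediate, since these products are not ordered PBW monomials; it should be routed through the triangular decomposition of Corollary~\ref{cl:pbwcl}, though the paper is equally terse on this step.
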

\begin{proof}
	In order to give simultaneously one proof for both cases we set $X^{+}_{ir} = E_{ir}$ and $X^{-}_{ir} = F_{ir}$ for any $i \in I$ and $r \in \mathbb{Z}$. Compatibility with \eqref{QL4} reads
	\[ \Phi( X^{ \pm}_{i,k+1} ) \Phi( X^{ \pm}_{j,l} ) - (-1)^{|\alpha_{i}||\alpha_{j}|} q^{\pm c_{ij}} \Phi ( X^{ \pm}_{i,k} ) \Phi ( X^{ \pm}_{j,l+1} ) = \]
	\[ (-1)^{|\alpha_{i}||\alpha_{j}|} q^{\pm c_{ij}} \Phi ( X^{ \pm}_{j,l} ) \Phi( X^{ \pm}_{i,k+1} ) - \Phi( X^{ \pm}_{j,l+1} ) \Phi ( X^{ \pm}_{i,k} ). \]
	
	Set $a = c_{ij} / 2$, so that $q^{\pm c_{ij}} = e^{\pm a \hbar}$ for all $i,j \in I$.
	
	Assume first $i \ne j$. Since
	\[ \Phi( X^{\pm}_{ir} ) \Phi( X^{ \pm}_{js} ) = e^{r \sigma_{i}^{ \pm}} e^{s \sigma_{j}^{ \pm}} g_{i}^{ \pm}( \sigma_{i}^{ \pm}) \lambda_{i}^{ \pm}( \sigma_{i}^{ \pm}) ( g_{j}^{ \pm}( \sigma_{j}^{ \pm} ) ) x_{i0}^{ \pm} x_{j0}^{ \pm}  \]
	the above reduces to
	\[ e^{k \sigma_{i}^{ \pm} } e^{ l \sigma_{j}^{ \pm} } g_{i}^{ \pm} ( \sigma_{i}^{ \pm} ) \lambda_{i}^{ \pm} ( \sigma_{i}^{ \pm} ) ( g_{j}^{ \pm} ( \sigma_{j}^{ \pm} ) ) ( e^{ \sigma_{i}^{\pm}} - (-1)^{|\alpha_{i}||\alpha_{j}|} e^{\sigma_{j}^{\pm} \pm a \hbar} ) x_{i0}^{\pm} x_{j0}^{\pm} = \]
	\[ e^{k \sigma_{i}^{ \pm} } e^{ l \sigma_{j}^{ \pm} } g_{j}^{ \pm} ( \sigma_{j}^{ \pm} ) \lambda_{j}^{ \pm} ( \sigma_{j}^{ \pm} ) ( g_{i}^{ \pm} ( \sigma_{i}^{ \pm} ) ) (  (-1)^{|\alpha_{i}||\alpha_{j}|} e^{\sigma_{i}^{\pm} \pm a \hbar } - e^{\sigma_{j}^{+}} ) x_{j0}^{\pm} x_{i0}^{\pm}. \]
	Using \eqref{eq:rel1} of Proposition \ref{pr:y46eqc}, we get
	\[ ( e^{ \sigma_{i}^{\pm}} - (-1)^{|\alpha_{i}||\alpha_{j}|} e^{\sigma_{j}^{\pm} \pm a \hbar} ) x_{i0}^{\pm} x_{j0}^{\pm} = \frac{e^{ \sigma_{i}^{\pm}} - (-1)^{|\alpha_{i}||\alpha_{j}|} e^{\sigma_{j}^{\pm} \pm a \hbar}}{ \sigma_{i}^{\pm} - \sigma_{j}^{\pm} \mp a \hbar } ( \sigma_{i}^{\pm} - \sigma_{j}^{\pm} \mp a \hbar ) x_{i0}^{\pm} x_{j0}^{\pm} = \]
	\[ (-1)^{|\alpha_{i}||\alpha_{j}|}  \frac{e^{ \sigma_{i}^{\pm}} - (-1)^{|\alpha_{i}||\alpha_{j}|} e^{\sigma_{j}^{\pm} \pm a \hbar}}{ \sigma_{i}^{\pm} - \sigma_{j}^{\pm} \mp a \hbar } ( \sigma_{i}^{\pm} - \sigma_{j}^{\pm} \pm a \hbar ) x_{j0}^{\pm} x_{i0}^{\pm} =  \]
	\[ \frac{ (-1)^{|\alpha_{i}||\alpha_{j}|} e^{ \sigma_{i}^{\pm}} - e^{\sigma_{j}^{\pm} \pm a \hbar}}{ \sigma_{i}^{\pm} - \sigma_{j}^{\pm} \mp a \hbar } ( \sigma_{i}^{\pm} - \sigma_{j}^{\pm} \pm a \hbar ) x_{j0}^{\pm} x_{i0}^{\pm}. \]
	On the other hand,
	\[ ( (-1)^{|\alpha_{i}||\alpha_{j}|} e^{\sigma_{i}^{\pm} \pm a \hbar } - e^{\sigma_{j}^{\pm}} ) x_{j0}^{\pm} x_{i0}^{\pm} = \frac{ (-1)^{|\alpha_{i}||\alpha_{j}|} e^{\sigma_{i}^{\pm} \pm a \hbar } - e^{\sigma_{j}^{\pm}}}{\sigma_{i}^{\pm} - \sigma_{j}^{\pm} \pm a \hbar} ( \sigma_{i}^{\pm} - \sigma_{j}^{\pm} \pm a \hbar ) x_{j0}^{\pm} x_{i0}^{\pm} = \]
	\[ \frac{ e^{\sigma_{j}^{\pm}} - (-1)^{|\alpha_{i}||\alpha_{j}|} e^{\sigma_{i}^{\pm} \pm a \hbar } }{\sigma_{j}^{\pm} - \sigma_{i}^{\pm} \mp a \hbar} ( \sigma_{i}^{\pm} - \sigma_{j}^{\pm} \pm a \hbar ) x_{j0}^{\pm} x_{i0}^{\pm}. \]
	The PBW Theorem \ref{th:pbwyang} shows that the above is equivalent to \eqref{eq:ccomp}.
	
	Assume now that $i = j$. Recall that in this case $|\alpha_{i}| = \bar{0}$ as $c_{ii} \ne 0$ by \eqref{QL4}. Then
	\[ \Phi( X^{\pm}_{ir} ) \Phi ( X^{\pm}_{is} ) = ( e^{r \sigma_{i}^{\pm}} g_{i}^{\pm} ( \sigma_{i}^{\pm} ) x_{i0}^{\pm} ) ( e^{s \sigma_{i}^{\pm}} g_{i}^{\pm} ( \sigma_{i}^{\pm} ) x_{i0}^{\pm} ) = \]
	\[ \mu( e^{r \sigma_{i,(1)}^{\pm} } e^{s \sigma_{i,(2)}^{\pm}} g_{i}^{\pm} ( \sigma_{i,(1)}^{\pm} ) \lambda_{i}^{\pm} ( \sigma_{i,(1)}^{\pm} ) ( g_{i}^{\pm} ( \sigma_{i, (2)}^{\pm} ) ) x_{i0}^{\pm} \otimes x_{i0}^{\pm} ). \]
	The compatibility with \eqref{QL4} therefore reduces to
	\[ \mu( e^{k \sigma_{i,(1)}^{\pm} } e^{l \sigma_{i,(2)}^{\pm}} g_{i}^{\pm} ( \sigma_{i,(1)}^{\pm} ) \lambda_{i}^{\pm} ( \sigma_{i,(1)}^{\pm} ) ( g_{i}^{\pm} ( \sigma_{i, (2)}^{\pm} ) ) ( e^{\sigma_{i, (1)}^{\pm} } - e^{\sigma_{i, (2)}^{\pm} \pm a \hbar} ) x_{i0}^{\pm} \otimes x_{i0}^{\pm} ) = \]
	\[ \mu( e^{l \sigma_{i,(1)}^{\pm} } e^{k \sigma_{i,(2)}^{\pm}} g_{i}^{\pm} ( \sigma_{i,(1)}^{\pm} ) \lambda_{i}^{\pm} ( \sigma_{i,(1)}^{\pm} ) ( g_{i}^{\pm} ( \sigma_{i, (2)}^{\pm} ) ) ( e^{\sigma_{i,(2)}^{\pm} \pm a \hbar } -  e^{\sigma_{i,(1)}^{+}} ) x_{i0}^{\pm} \otimes x_{i0}^{\pm} ) \]
	that is, to
	\[ \mu ( ( e^{k \sigma_{i,(1)}^{\pm} } e^{l \sigma_{i,(2)}^{\pm}} + e^{l \sigma_{i,(1)}^{\pm} } e^{k \sigma_{i,(2)}^{\pm}} ) g_{i}^{\pm} ( \sigma_{i,(1)}^{\pm} ) \lambda_{i}^{\pm} ( \sigma_{i,(1)}^{\pm} ) ( g_{i}^{\pm} ( \sigma_{i, (2)}^{\pm} ) ) \cdot \]
	\[ ( e^{\sigma_{i, (1)}^{\pm} } - e^{\sigma_{i, (2)}^{\pm} \pm a \hbar} ) x_{i0}^{\pm} \otimes x_{i0}^{\pm} ) = 0. \]
	By \eqref{eq:rel2} of Proposition \ref{pr:y46eqc} and Corollary \ref{cl:Bseq}, this equation is equivalent to the requirement that
	\[ \mu ( ( e^{k \sigma_{i,(1)}^{\pm} } e^{l \sigma_{i,(2)}^{\pm}} + e^{l \sigma_{i,(1)}^{\pm} } e^{k \sigma_{i,(2)}^{\pm}} ) g_{i}^{\pm} ( \sigma_{i,(1)}^{\pm} ) \lambda_{i}^{\pm} ( \sigma_{i,(1)}^{\pm} ) ( g_{i}^{\pm} ( \sigma_{i, (2)}^{\pm} ) ) \frac{e^{\sigma_{i, (1)}^{\pm} } -  e^{\sigma_{i, (2)}^{\pm} \pm a \hbar}}{\sigma_{i,(1)}^{\pm} - \sigma_{i,(2)}^{\pm} \mp a \hbar} \]
	\[ (\sigma_{i,(1)}^{\pm} - \sigma_{i,(2)}^{\pm} \mp a \hbar) x_{i0}^{\pm} \otimes x_{i0}^{\pm} ) =  \]
	\[ \mu ( ( e^{k \sigma_{i,(2)}^{\pm} } e^{l \sigma_{i,(1)}^{\pm}} + e^{l \sigma_{i,(2)}^{\pm} } e^{k \sigma_{i,(1)}^{\pm}} ) g_{i}^{\pm} ( \sigma_{i,(2)}^{\pm} ) \lambda_{i}^{\pm} ( \sigma_{i,(2)}^{\pm} ) ( g_{i}^{\pm} ( \sigma_{i, (1)}^{\pm} ) ) \frac{e^{\sigma_{i, (2)}^{\pm} } -  e^{\sigma_{i, (1)}^{\pm} \pm a \hbar}}{\sigma_{i,(2)}^{\pm} - \sigma_{i,(1)}^{\pm} \mp a \hbar} \]
	\[ (\sigma_{i,(1)}^{\pm} - \sigma_{i,(2)}^{\pm} \mp a \hbar) x_{i0}^{\pm} \otimes x_{i0}^{\pm} ) =  \]
	\[ \mu ( ( e^{k \sigma_{i,(1)}^{\pm}} e^{l \sigma_{i,(2)}^{\pm} } + e^{l \sigma_{i,(1)}^{\pm}} e^{k \sigma_{i,(2)}^{\pm} } ) g_{i}^{\pm} ( \sigma_{i,(2)}^{\pm} ) \lambda_{i}^{\pm} ( \sigma_{i,(2)}^{\pm} ) ( g_{i}^{\pm} ( \sigma_{i, (1)}^{\pm} ) ) \frac{ e^{\sigma_{i, (2)}^{\pm} } - e^{\sigma_{i, (1)}^{\pm} \pm a \hbar}}{\sigma_{i,(2)}^{\pm} - \sigma_{i,(1)}^{\pm} \mp a \hbar} \]
	\[ (\sigma_{i,(1)}^{\pm} - \sigma_{i,(2)}^{\pm} \mp a \hbar) x_{i0}^{\pm} \otimes x_{i0}^{\pm} ) = 0. \]
	
	Note that we don't change indices in $(\sigma_{i,(1)}^{\pm} - \sigma_{i,(2)}^{\pm} \mp a \hbar) x_{i0}^{\pm} \otimes x_{i0}^{\pm}$. Thus we get condition \eqref{eq:ccomp} for $i=j$.
\end{proof}

\begin{lemma}
	\label{lm:ql6comp}
	$\Phi$ is compatible with the relation \eqref{QL6} if and only if \eqref{eq:qcomp} holds.
\end{lemma}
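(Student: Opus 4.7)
The plan is to prove the equivalence in two cases, $i\neq j$ and $i=j$, following the template established in Lemma \ref{lm:ql4comp} and using the appropriate parts of Proposition \ref{pr:y46eqc}. The $F$-generator case is strictly analogous, with $\sigma^+$ replaced by $\sigma^-$, so I will only describe the $E$-case explicitly.

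Consider first $i\neq j$ with $c_{ij}=0$. Unpacking the definitions gives
\[
\Phi(E_{ik})\Phi(E_{jl}) = e^{k\sigma_i^+}e^{l\sigma_j^+}\, g_i^+(\sigma_i^+)\lambda_i^+(\sigma_i^+)(g_j^+(\sigma_j^+))\, x_{i0}^+ x_{j0}^+,
\]
and similarly for $\Phi(E_{jl})\Phi(E_{ik})$ with $i,j$ exchanged. Since $\sigma_i^+$ and $\sigma_j^+$ commute, the exponential prefactors coincide. Applying part~\eqref{eq:rel4} of Proposition \ref{pr:y46eqc} (which is exactly the hypothesis $c_{ij}=0$) to move $x_{j0}^+x_{i0}^+$ past $x_{i0}^+x_{j0}^+$ at the cost of a sign $(-1)^{|\alpha_i||\alpha_j|}$, the supercommutator condition $[\Phi(E_{ik}),\Phi(E_{jl})]=0$ reduces to
\[
e^{k\sigma_i^+}e^{l\sigma_j^+}\bigl(g_i^+(\sigma_i^+)\lambda_i^+(\sigma_i^+)(g_j^+(\sigma_j^+)) - g_j^+(\sigma_j^+)\lambda_j^+(\sigma_j^+)(g_i^+(\sigma_i^+))\bigr)x_{i0}^+x_{j0}^+=0
\]
for all $k,l\in\mathbb{Z}$. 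By the PBW Theorem \ref{th:pbwyang}, varying $k,l$ this is equivalent to the vanishing of the coefficient, which is precisely \eqref{eq:qcomp} in this case.

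Now consider $i=j$ with $c_{ii}=0$, which forces $|\alpha_i|=\bar{1}$, so the supercommutator $[\Phi(E_{ik}),\Phi(E_{il})]=0$ becomes the anticommutator relation $\Phi(E_{ik})\Phi(E_{il})+\Phi(E_{il})\Phi(E_{ik})=0$. Writing this via $\mu$ as in the proof of Lemma \ref{lm:ql4comp} yields
\[
\mu\bigl(\bigl(e^{k\sigma_{i,(1)}^+}e^{l\sigma_{i,(2)}^+}+e^{l\sigma_{i,(1)}^+}e^{k\sigma_{i,(2)}^+}\bigr)F(\sigma_{i,(1)}^+,\sigma_{i,(2)}^+)\, x_{i0}^+\otimes x_{i0}^+\bigr)=0,
\]
where $F(u_1,u_2):=g_i^+(u_1)\lambda_i^+(u_1)(g_i^+(u_2))$. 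Decompose $F=F_s+F_a$ into symmetric and antisymmetric parts under $u_1\leftrightarrow u_2$. Since the exponential prefactor is symmetric, its product with $F_s$ is symmetric and is killed by $\mu(\,\cdot\, x_{i0}^+\otimes x_{i0}^+)$ by part~\eqref{eq:rel40} of Proposition \ref{pr:y46eqc}. The surviving antisymmetric piece must also vanish for all $k,l$; specializing $k=l=0$ gives $2\,\mu(F_a(\sigma_{i,(1)}^+,\sigma_{i,(2)}^+)x_{i0}^+\otimes x_{i0}^+)=0$, and a PBW-type argument (analogous to Corollary \ref{cl:Bseq}, using the linear independence of ordered monomials $x_{ir}^+x_{is}^+$ with $r<s$) forces $F_a=0$, i.e., $F(u_1,u_2)=F(u_2,u_1)$, which is \eqref{eq:qcomp} for $i=j$.

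The main subtlety lies in the $i=j$ case, which is genuinely new compared to \cite{GT13}: in the non-super setting $c_{ii}=2\neq 0$, so this case never arose. Here the interplay between the anticommutator (from $|\alpha_i|=\bar 1$) and the Yangian relation $(x_{i0}^+)^2=0$ via part~\eqref{eq:rel40} of Proposition \ref{pr:y46eqc} is essential; fortunately, the necessary equivalence has been prepared in advance, so the argument reduces to extracting the antisymmetric part of a generating function identity, and the rest is bookkeeping.
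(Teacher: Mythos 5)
Your proof is correct and follows essentially the same route as the paper: the $i\neq j$ case via part \eqref{eq:rel4} of Proposition \ref{pr:y46eqc} and the PBW theorem, and the $i=j$ case (where $c_{ii}=0$ forces $|\alpha_i|=\bar 1$) via part \eqref{eq:rel40}. Your explicit symmetric/antisymmetric decomposition of $F$ and the appeal to a Corollary-\ref{cl:Bseq}-type argument to kill $F_a$ is just a more carefully articulated version of the step the paper states tersely when it concludes that the coefficient must be symmetric.
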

\begin{proof}
	In order to give simultaneously one proof for both cases we set $X^{+}_{ir} = E_{ir}$ and $X^{-}_{ir} = F_{ir}$ for any $i \in I$ and $r \in \mathbb{Z}$. Compatibility with \eqref{QL6} reads
	\[ \Phi( X^{ \pm}_{i,k} ) \Phi( X^{ \pm}_{j,l} ) - (-1)^{|\alpha_{i}||\alpha_{j}|} \Phi ( X^{ \pm}_{j,l} ) \Phi ( X^{ \pm}_{i,k} ) = 0. \]
	
	Since
	\[ \Phi( X^{\pm}_{ir} ) \Phi( X^{ \pm}_{js} ) = e^{r \sigma_{i}^{ \pm}} e^{s \sigma_{j}^{ \pm}} g_{i}^{ \pm}( \sigma_{i}^{ \pm}) \lambda_{i}^{ \pm}( \sigma_{i}^{ \pm}) ( g_{j}^{ \pm}( \sigma_{j}^{ \pm} ) ) x_{i0}^{ \pm} x_{j0}^{ \pm}  \]
	the above reduces to for $i \ne j$
	\[ e^{k \sigma_{i}^{ \pm} } e^{ l \sigma_{j}^{ \pm} } g_{i}^{ \pm} ( \sigma_{i}^{ \pm} ) \lambda_{i}^{ \pm} ( \sigma_{i}^{ \pm} ) ( g_{j}^{ \pm} ( \sigma_{j}^{ \pm} ) ) x_{i0}^{\pm} x_{j0}^{\pm} = \]
	\[ (-1)^{|\alpha_{i}||\alpha_{j}|} e^{k \sigma_{i}^{ \pm} } e^{ l \sigma_{j}^{ \pm} } g_{j}^{ \pm} ( \sigma_{j}^{ \pm} ) \lambda_{j}^{ \pm} ( \sigma_{j}^{ \pm} ) ( g_{i}^{ \pm} ( \sigma_{i}^{ \pm} ) ) x_{j0}^{\pm} x_{i0}^{\pm} \iff \]
	\[ g_{i}^{ \pm} ( \sigma_{i}^{ \pm} ) \lambda_{i}^{ \pm} ( \sigma_{i}^{ \pm} ) ( g_{j}^{ \pm} ( \sigma_{j}^{ \pm} ) ) x_{i0}^{\pm} x_{j0}^{\pm} =  \]
	\[ (-1)^{|\alpha_{i}||\alpha_{j}|} g_{j}^{ \pm} ( \sigma_{j}^{ \pm} ) \lambda_{j}^{ \pm} ( \sigma_{j}^{ \pm} ) ( g_{i}^{ \pm} ( \sigma_{i}^{ \pm} ) ) x_{j0}^{\pm} x_{i0}^{\pm}. \]
	By \eqref{eq:rel4} of Proposition \ref{pr:y46eqc} as $c_{ij} = 0$ we have
	\[ g_{i}^{ \pm} ( \sigma_{i}^{ \pm} ) \lambda_{i}^{ \pm} ( \sigma_{i}^{ \pm} ) ( g_{j}^{ \pm} ( \sigma_{j}^{ \pm} ) ) x_{i0}^{\pm} x_{j0}^{\pm} =  \]
	\[ (-1)^{|\alpha_{i}||\alpha_{j}|} g_{i}^{ \pm} ( \sigma_{i}^{ \pm} ) \lambda_{i}^{ \pm} ( \sigma_{i}^{ \pm} ) ( g_{j}^{ \pm} ( \sigma_{j}^{ \pm} ) ) x_{j0}^{\pm} x_{i0}^{\pm} = \]
	\[ (-1)^{|\alpha_{i}||\alpha_{j}|} g_{j}^{ \pm} ( \sigma_{j}^{ \pm} ) \lambda_{j}^{ \pm} ( \sigma_{j}^{ \pm} ) ( g_{i}^{ \pm} ( \sigma_{i}^{ \pm} ) ) x_{j0}^{\pm} x_{i0}^{\pm}. \]
	The PBW Theorem \ref{th:pbwyang} shows that the above is equivalent to \eqref{eq:qcomp}.
	
	Suppose now that $i = j$. Recall that in this case $c_{ii} = 0$ and $|\alpha_{i}| = \bar{1}$. Then 
	\[ \Phi( X^{\pm}_{ir} ) \Phi( X^{ \pm}_{is} ) = \]
	\[ \mu( e^{r \sigma_{i,(1)}^{\pm} } e^{s \sigma_{i,(2)}^{\pm}} g_{i}^{\pm} ( \sigma_{i,(1)}^{\pm} ) \lambda_{i}^{\pm} ( \sigma_{i,(1)}^{\pm} ) ( g_{i}^{\pm} ( \sigma_{i, (2)}^{\pm} ) ) x_{i0}^{\pm} \otimes x_{i0}^{\pm} ). \]
	The compatibility with \eqref{QL6} therefore reduces to
	\[ \mu( e^{k \sigma_{i,(1)}^{\pm} } e^{l \sigma_{i,(2)}^{\pm}} g_{i}^{\pm} ( \sigma_{i,(1)}^{\pm} ) \lambda_{i}^{\pm} ( \sigma_{i,(1)}^{\pm} ) ( g_{i}^{\pm} ( \sigma_{i, (2)}^{\pm} ) )  x_{i0}^{\pm} \otimes x_{i0}^{\pm} ) = \]
	\[ - \mu( e^{l \sigma_{i,(1)}^{\pm} } e^{k \sigma_{i,(2)}^{\pm}} g_{i}^{\pm} ( \sigma_{i,(1)}^{\pm} ) \lambda_{i}^{\pm} ( \sigma_{i,(1)}^{\pm} ) ( g_{i}^{\pm} ( \sigma_{i, (2)}^{\pm} ) ) x_{i0}^{\pm} \otimes x_{i0}^{\pm} ), \]
	that is to
	\[ \mu ( ( e^{k \sigma_{i,(1)}^{\pm} } e^{l \sigma_{i,(2)}^{\pm}} + e^{l \sigma_{i,(1)}^{\pm} } e^{k \sigma_{i,(2)}^{\pm}} ) g_{i}^{\pm} ( \sigma_{i,(1)}^{\pm} ) \lambda_{i}^{\pm} ( \sigma_{i,(1)}^{\pm} ) ( g_{i}^{\pm} ( \sigma_{i, (2)}^{\pm} ) ) x_{i0}^{\pm} \otimes x_{i0}^{\pm} ) = 0. \]
	By \eqref{eq:rel40} of Proposition \ref{pr:y46eqc}, this equation is equivalent to the requirement that
	\[ \mu ( ( e^{k \sigma_{i,(1)}^{\pm} } e^{l \sigma_{i,(2)}^{\pm}} + e^{l \sigma_{i,(1)}^{\pm} } e^{k \sigma_{i,(2)}^{\pm}} ) g_{i}^{\pm} ( \sigma_{i,(1)}^{\pm} ) \lambda_{i}^{\pm} ( \sigma_{i,(1)}^{\pm} ) ( g_{i}^{\pm} ( \sigma_{i, (2)}^{\pm} ) ) x_{i0}^{\pm} \otimes x_{i0}^{\pm} ) = \]
	\[ \mu ( ( e^{k \sigma_{i,(2)}^{\pm} } e^{l \sigma_{i,(1)}^{\pm}} + e^{l \sigma_{i,(2)}^{\pm} } e^{k \sigma_{i,(1)}^{\pm}} ) g_{i}^{\pm} ( \sigma_{i,(2)}^{\pm} ) \lambda_{i}^{\pm} ( \sigma_{i,(2)}^{\pm} ) ( g_{i}^{\pm} ( \sigma_{i, (1)}^{\pm} ) ) x_{i0}^{\pm} \otimes x_{i0}^{\pm} ) = 0. \]
	Thus
	\[ g_{i}^{\pm} ( \sigma_{i,(1)}^{\pm} ) \lambda_{i}^{\pm} ( \sigma_{i,(1)}^{\pm} ) ( g_{i}^{\pm} ( \sigma_{i, (2)}^{\pm} ) ) = g_{i}^{\pm} ( \sigma_{i,(2)}^{\pm} ) \lambda_{i}^{\pm} ( \sigma_{i,(2)}^{\pm} ) ( g_{i}^{\pm} ( \sigma_{i, (1)}^{\pm} ) ) \]
	and we get condition \eqref{eq:qcomp} for $i=j$.
\end{proof}

\begin{lemma}
	\label{lm:ql7comp}
	$\Phi$ is compatible with the relation \eqref{QL7} by construction.
\end{lemma}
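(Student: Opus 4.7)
The plan is to unpack the image of \eqref{QL7} under $\Phi$ and reduce it to the cubic Serre relation \eqref{eq:cssr} of $Y_{\hbar}(\mathfrak{g})$ in the reformulation of Proposition \ref{pr:y46eqc}\eqref{eq:rel3}. Setting $a = c_{ij}^d/2$, and using that $|\alpha_{i,d}| = \bar{0}$ so all relevant sign factors are trivial, I first replace each $\Phi(E_{i,s})$ by $e^{s\sigma_i^+} g_i^+(\sigma_i^+) x_{i0}^+$ and each $\Phi(E_{j,l})$ by $e^{l\sigma_j^+} g_j^+(\sigma_j^+) x_{j0}^+$, and express the triple product as
\[
\Phi(E_{i,s})\Phi(E_{i,t})\Phi(E_{j,l}) = \mu^{(3)}\bigl(e^{s\sigma_{i,(1)}^+} e^{t\sigma_{i,(2)}^+} e^{l\sigma_{j,(3)}^+}\, \mathbf{g}^{+}(\sigma_{i,(1)}^+,\sigma_{i,(2)}^+,\sigma_{j,(3)}^+)\cdot x_{i0}^+\otimes x_{i0}^+\otimes x_{j0}^+\bigr),
\]
where $\mathbf{g}^{+}$ is the product of $g_i^+$-, $g_j^+$-factors dressed by $\lambda_i^+, \lambda_j^+$ via Proposition \ref{pr:lamdef}. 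After expanding all six summands appearing in \eqref{QL7} by definition of $[\,,\,]_{q^{\pm c_{ij}^d}} = [\,,\,]_{e^{\pm 2a\hbar}}$, each summand takes this $\mu^{(3)}$-form but with one of three orderings of $x_{i0}^+, x_{j0}^+, x_{i0}^+$ in the tensor product.

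Next I normalize all six terms to a common ordering $x_{i0}^+\otimes x_{j0}^+\otimes x_{i0}^+$ using Proposition \ref{pr:y46eqc}\eqref{eq:rel1}, which rewrites each factor $x_{j0}^+ x_{i0}^+$ in terms of $x_{i0}^+ x_{j0}^+$ up to the rational function $(\sigma_i^+-\sigma_j^+ + a\hbar)/(\sigma_i^+-\sigma_j^+ - a\hbar)$. Simultaneously the compatibility \eqref{eq:ccomp} (already established in Lemma \ref{lm:ql4comp}) allows me to interchange the order of the $g_i^+$ and $g_j^+$ factors at the cost of the same rational factor. A direct bookkeeping shows that the $q^{\pm c_{ij}^d}$-factors appearing in the definitions of $[\,,\,]_{q^{\pm c_{ij}^d}}$ combine with the numerators $e^{\sigma_i^+}-e^{\sigma_j^+\pm a\hbar}$ and with the denominators $\sigma_i^+-\sigma_j^+\mp a\hbar$ so as to cancel the residual $\sigma_{j,(2)}^+$-dependence between the two halves of \eqref{QL7}. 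The result is that the image of the whole left-hand side of \eqref{QL7} becomes
\[
\mu^{(3)}\bigl(A(\sigma_{i,(1)}^+,\sigma_{i,(3)}^+)\cdot \mathbf{g}^{+}_{\mathrm{std}}\cdot \operatorname{ad}(x_{i0}^+)\operatorname{ad}(x_{i0}^+) \, x_{j0}^+\bigr),
\]
for an explicit $A\in \Bbbk[[v_1,v_2]]$ that is symmetric in its two arguments, the symmetry being forced by the fact that the two terms of \eqref{QL7} are interchanged by $r\leftrightarrow k$.

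Finally, by Proposition \ref{pr:y46eqc}\eqref{eq:rel3} applied with $m=1+|c_{ij}^d|=2$, the cubic Serre relation \eqref{eq:cssr} of $Y_{\hbar}(\mathfrak{g})$ is precisely the statement that
\[
\operatorname{ad}^{(2)}\!\bigl(B(\sigma_{i,(1)}^+,\sigma_{i,(2)}^+)(x_{i0}^+)^{\otimes 2}\bigr)\, x_{j,l}^{+} = 0
\]
for every symmetric $B$. Since the coefficient $A\cdot\mathbf{g}^{+}_{\mathrm{std}}$ extracted above is symmetric in $\sigma_{i,(1)}^+$ and $\sigma_{i,(3)}^+$, the expression vanishes, establishing compatibility with \eqref{QL7}; the argument for the $F$-variant is identical with $\sigma_i^+$ replaced by $\sigma_i^-$ throughout.

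The main obstacle is the bookkeeping in the middle step: one must verify carefully that after normal-ordering to $x_{i0}^+\otimes x_{j0}^+\otimes x_{i0}^+$ and substituting \eqref{eq:ccomp}, the spurious rational factors $(\sigma_i^+-\sigma_j^+ \mp a\hbar)^{-1}$ and the exponentials $e^{\pm a\hbar}$ cancel so that the remaining kernel $A$ is indeed polynomial and symmetric. This symmetry is exactly the analogue of the classical fact that the $q$-Serre relation in $U_q$ is dictated by the $q$-binomial coefficients, and it explains why the lemma can be stated as holding \emph{by construction}: the rational functions $g_i^\pm$ were chosen in \eqref{eq:giunvsol} so that precisely these cancellations occur.
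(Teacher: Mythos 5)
Your opening moves coincide with the paper's: factor each generator as $\Phi(E_{i,r}) = e^{r\sigma_i^{+}}g_i^{+}(\sigma_i^{+})x_{i0}^{+}$, pull the symmetrized exponential prefactors out of the nested $q$-brackets to reduce to $r=k=l=0$, and then reorder using \eqref{eq:rel1} of Proposition \ref{pr:y46eqc} together with \eqref{eq:ccomp}. The gap is in your endgame. You claim that after normal-ordering to $x_{i0}^{+}\otimes x_{j0}^{+}\otimes x_{i0}^{+}$ the residual $\sigma_{j,(2)}^{+}$-dependence cancels and one is left with a \emph{symmetric polynomial} kernel $A(\sigma_{i,(1)}^{+},\sigma_{i,(3)}^{+})$ to which \eqref{eq:rel3} applies. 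Neither assertion survives the computation. Carrying out the reordering, the image of \eqref{QL7} reduces to
\[ \mu(\mu\otimes\id)\bigl(e^{\sigma_{i,(1)}^{+}-\sigma_{j,(2)}^{+}+a\hbar}-e^{\sigma_{i,(3)}^{+}-\sigma_{j,(2)}^{+}-a\hbar}\bigr)\bigl(x_{i0}^{+}\otimes x_{j0}^{+}\otimes x_{i0}^{+}\bigr)=0, \]
in which the $\sigma_{j,(2)}^{+}$-dependence persists and is essential, and the two summands are distinguished both by the slot in which $\sigma_i$ acts and by the \emph{opposite} signs of the shift $a\hbar$; the kernel is therefore not of the symmetric form demanded by \eqref{eq:rel3}. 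The identity that disposes of this expression is \eqref{eq:rel5}: the operators $\sigma_{i,(1)}^{\pm}-\sigma_{j,(2)}^{\pm}\pm a\hbar$ and $\sigma_{i,(3)}^{\pm}-\sigma_{j,(2)}^{\pm}\mp a\hbar$ agree on $\mu(\mu\otimes\id)(x_{i0}^{\pm}\otimes x_{j0}^{\pm}\otimes x_{i0}^{\pm})$, hence (since the two factors commute, by induction on powers) so do their exponentials.

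There is also a structural reason your route cannot land directly on \eqref{eq:rel3}: the $q$-Serre expression $X^{2}Z-(q^{c_{ij}}+q^{-c_{ij}})XZX+ZX^{2}$ differs from the classical expression $X^{2}Z-2XZX+ZX^{2}$ arising from $\operatorname{ad}^{(2)}$ by the term $(q^{c_{ij}}+q^{-c_{ij}}-2)XZX$, which is $O(\hbar^{2})$ and supported on the middle ordering only; such a defect cannot be absorbed into a kernel acting uniformly on the two $x_{i0}^{+}$-slots of an $\operatorname{ad}^{(2)}$-expression, and must instead be soaked up by the $\pm a\hbar$ shifts that \eqref{eq:rel5} is built to handle. (A smaller inconsistency: you normal-order to $x_{i0}^{+}\otimes x_{j0}^{+}\otimes x_{i0}^{+}$ but then state the result as $\operatorname{ad}(x_{i0}^{+})\operatorname{ad}(x_{i0}^{+})x_{j0}^{+}$, which expands over all three orderings.) The cancellation you defer to ``direct bookkeeping'' is precisely the content of the lemma, and as stated it does not go through.
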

\begin{proof}
	In order to give simultaneously one proof for both cases we set $X^{+}_{ir} = E_{ir}$ and $X^{-}_{ir} = F_{ir}$ for any $i \in I$ and $r \in \mathbb{Z}$. Compatibility with \eqref{QL7} reads
	\begin{equation}
		\label{eq:trisers}
		[ \Phi( X^{\pm}_{i,r} ), [ \Phi( X^{\pm}_{i,k} ), \Phi( X^{\pm}_{j,l} ) ]_{q^{\pm c_{ij}}} ]_{q^{\mp c_{ij}}} + [ \Phi( X^{\pm}_{i,k} ), [ \Phi( X^{\pm}_{i,r} ), \Phi( X^{\pm}_{j,l} ) ]_{q^{\pm c_{ij}}} ]_{q^{\mp c_{ij}}} = 0.
	\end{equation}
	
	Set $a = c_{ij} / 2$, so that $q^{\pm c_{ij}} = e^{\pm a \hbar}$ for all $i,j \in I$. Denote by $u = \sigma_{i}^{ \pm}$ and $v = \sigma_{j}^{ \pm}$.
	
	Since
	\[ \Phi( X^{\pm}_{it} ) \Phi( X^{ \pm}_{js} ) = e^{t u} e^{s v} g_{i}^{ \pm}( u) \lambda_{i}^{ \pm}( u) ( g_{j}^{ \pm}( v ) ) x_{i0}^{ \pm} x_{j0}^{ \pm}  \]
	we have
	\[ [ \Phi( X^{\pm}_{i,t} ), \Phi( X^{\pm}_{j,s} ) ]_{e^{\pm a \hbar}} = e^{t u} e^{s v} [ \Phi( X^{\pm}_{i,0} ), \Phi( X^{\pm}_{j,0} ) ]_{e^{\pm a \hbar}}. \]
	Thus
	\[ [ \Phi( X^{\pm}_{i,z} ) , [ \Phi( X^{\pm}_{i,t} ), \Phi( X^{\pm}_{j,s} ) ]_{e^{\pm a \hbar}} ]_{e^{\mp a \hbar}} = \]
	\[ \mu ( e^{z u_{(1)}} e^{t u_{(2)}} e^{s v_{(2)}}  \Phi( X^{\pm}_{i,0} ) \otimes [ \Phi( X^{\pm}_{i,0} ), \Phi( X^{\pm}_{j,0} ) ]_{e^{\pm a \hbar}} + \]
	\[ (-1) e^{\mp a \hbar} e^{z u_{(2)}} e^{t u_{(1)}} e^{s v_{(1)}} [ \Phi( X^{\pm}_{i,0} ), \Phi( X^{\pm}_{j,0} ) ]_{e^{\pm a \hbar}} ]_{e^{\mp a \hbar}} \otimes \Phi( X^{\pm}_{i,0} ) ).  \]
	
	In this way the equation \eqref{eq:trisers} gets the following form
	\[ \mu ( ( e^{r u_{(1)}} e^{k u_{(2)}} + e^{k u_{(1)}} e^{r u_{(2)}} ) e^{l v_{(2)}} \Phi( X^{\pm}_{i,0} ) \otimes [ \Phi( X^{\pm}_{i,0} ), \Phi( X^{\pm}_{j,0} ) ]_{e^{\pm a \hbar}} + \]
	\[ (-1) e^{\mp a \hbar} ( e^{k u_{(1)}} e^{r u_{(2)}} + e^{r u_{(1)}} e^{k u_{(2)}} ) e^{l v_{(1)}}  [ \Phi( X^{\pm}_{i,0} ), \Phi( X^{\pm}_{j,0} ) ]_{e^{\pm a \hbar}} ]_{e^{\mp a \hbar}} \otimes \Phi( X^{\pm}_{i,0} ) ) = 0. \]
	It is easy to see that this equation is in fact
	\[ [ \Phi( X^{\pm}_{i,0} ) , [ \Phi( X^{\pm}_{i,0} ), \Phi( X^{\pm}_{j,l} ) ]_{e^{\pm a \hbar}} ]_{e^{\mp a \hbar}} = 0 \iff \]
	\[ [ \Phi( X^{\pm}_{i,0} ) , [ \Phi( X^{\pm}_{i,0} ), \Phi( X^{\pm}_{j,0} ) ]_{e^{\pm a \hbar}} ]_{e^{\mp a \hbar}} = 0. \]
	The equivalence follows by the definition of the function $\Phi$. Thus we need to prove that
	\[ 	0 = g_{i}^{ \pm}( u ) x_{i0}^{ \pm} g_{i}^{ \pm}(u) x_{i0}^{ \pm} g_{j}^{ \pm}( v ) x_{j0}^{ \pm} + \]
	\[ (-1) ( e^{\pm \hbar a} + e^{\mp \hbar a} ) g_{i}^{ \pm}(u) x_{i0}^{ \pm} g_{j}^{ \pm}( v ) x_{j0}^{ \pm} g_{i}^{ \pm}( u) x_{i0}^{ \pm} + \]
	\[ g_{j}^{ \pm}( v ) x_{j0}^{ \pm} g_{i}^{ \pm}( u) x_{i0}^{ \pm} g_{i}^{ \pm}( u) x_{i0}^{ \pm}. \]
	
	By \eqref{eq:rel1} of Proposition \ref{pr:y46eqc} and \eqref{eq:ccomp} of Theorem \ref{th:thcondhom} we have
	\[ g_{i}^{ \pm}( u ) \lambda_{i}^{ \pm} (u) ( g_{j}^{ \pm} ( v ) ) x_{i0}^{\pm} x_{j0}^{\pm} =  \]
	\[ g_{j}^{ \pm}( v ) \lambda_{j}^{ \pm} (v) ( g_{i}^{ \pm} ( u ) ) ( \frac{e^{u \pm a \hbar } - e^{v} }{ e^{u } - e^{v \pm a \hbar } } ) x_{j0}^{\pm} x_{i0}^{\pm}. \]
	Thus
	\[ 0 = g_{i}^{ \pm}( u ) x_{i0}^{ \pm} ( \frac{e^{u \pm a \hbar } - e^{v} }{ e^{u } - e^{v \pm a \hbar } } - e^{\pm \hbar a} ) ( g_{j}^{ \pm}( v ) x_{j0}^{ \pm} g_{i}^{ \pm}( u) x_{i0}^{ \pm} )+ \]
	\[ ( \frac{ e^{u } - e^{v \pm a \hbar } }{e^{u \pm a \hbar } - e^{v} } - e^{\mp \hbar a} ) ( g_{i}^{ \pm}( u) x_{i0}^{ \pm} g_{j}^{ \pm}( v ) x_{j0}^{ \pm} ) g_{i}^{ \pm}( u) x_{i0}^{ \pm} \iff \]
	\[ g_{i}^{\pm}( u ) x_{i0}^{ \pm} \frac{1}{e^{u-v \mp a \hbar} - 1} ( g_{j}^{\pm}( v ) x_{j0}^{ \pm} g_{i}^{\pm}( u) x_{i0}^{ \pm} ) = \]
	\[ \frac{1}{e^{u-v \pm a \hbar} - 1} ( g_{i}^{\pm}( u) x_{i0}^{ \pm} g_{j}^{\pm}( v ) x_{j0}^{ \pm} ) g_{i}^{\pm}( u) x_{i0}^{ \pm} \iff \]
	\[ \mu (\mu \otimes \id) ( g_{i}^{\pm} ( u_{(1)} ) \lambda_{i}^{\pm} ( u_{(1)} ) ( g_{j}^{\pm} ( v_{(2)} )  ) \lambda_{i}^{\pm} (u_{(1)}) ( \lambda_{j}^{\pm} ( v_{(2)} ) ( g_{i}^{\pm} ( u_{(3)} )  ) ) \circ \]
	\[  ( \frac{1}{e^{u_{(3)}-v_{(2)} \mp a \hbar} - 1} - \frac{1}{e^{u_{(1)}-v_{(2)} \pm a \hbar} - 1} ) ( x_{i0}^{\pm} \otimes x_{j0}^{\pm} \otimes x_{i0}^{\pm} ) ) = 0 \iff \]
	\[ \mu (\mu \otimes \id) (e^{u_{(1)}-v_{(2)} \pm a \hbar} - e^{u_{(3)}-v_{(2)} \mp a \hbar} ) ( x_{i0}^{\pm} \otimes x_{j0}^{\pm} \otimes x_{i0}^{\pm} ) = 0 \iff \]
	\[ \mu (\mu \otimes \id) \sum_{n \ge 0} \frac{(u_{(1)}-v_{(2)} \pm a \hbar)^{n} - (u_{(3)}-v_{(2)} \mp a \hbar)^{n}}{n!} ( x_{i0}^{\pm} \otimes x_{j0}^{\pm} \otimes x_{i0}^{\pm} ) = 0. \]
	By \eqref{eq:rel5} in Proposition \ref{pr:y46eqc}, the easy to verify by direct computations fact that
	\[  (u_{(1)}-v_{(2)} \pm a \hbar) (u_{(3)}-v_{(2)} \mp a \hbar) ( x_{i0}^{\pm} \otimes x_{j0}^{\pm} \otimes x_{i0}^{\pm} ) = \]
	\[ (u_{(3)}-v_{(2)} \mp a \hbar) (u_{(1)}-v_{(2)} \pm a \hbar) ( x_{i0}^{\pm} \otimes x_{j0}^{\pm} \otimes x_{i0}^{\pm} ) \]
	and the mathematical induction
	\[ \mu (\mu \otimes \id) ( (u_{(1)}-v_{(2)} \pm a \hbar)^{n} - (u_{(3)}-v_{(2)} \mp a \hbar)^{n} ) ( x_{i0}^{\pm} \otimes x_{j0}^{\pm} \otimes x_{i0}^{\pm} ) = 0 \]
	for all $n \in \mathbb{N}_{0}$.	The result follows.	
\end{proof}

\begin{lemma}
	\label{lm:ql8comp}
	$\Phi$ is compatible with the relation \eqref{QL8} by construction.
\end{lemma}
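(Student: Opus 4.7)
The plan is to follow the same template used in the proof of Lemma \ref{lm:ql7comp} for the cubic quantum Serre relation \eqref{QL7}, adapted to the quartic case. I set $u_{p}=\sigma_{p}^{\pm}$ for $p\in\{i-1,i,i+1\}$ and $a=c_{i,i\pm 1}/2$ (recall that $c_{ii}^{d}=0$ since $|\alpha_{i,d}|=\bar{1}$, so both relevant $q$-commutator shifts come from the neighboring roots). Using \eqref{eq:eir}, each factor $\Phi(E_{p,t})$ equals $e^{t\,u_{p}}g_{p}^{+}(u_{p})x_{p,0}^{+}$, so all the parameters $r,s,k,l$ come out as an overall exponential prefactor
\[
e^{r\,u_{i-1,(1)}}\,e^{s\,u_{i,(2)}}\,e^{k\,u_{i+1,(3)}}\,e^{l\,u_{i,(4)}}+e^{r\,u_{i-1,(1)}}\,e^{l\,u_{i,(2)}}\,e^{k\,u_{i+1,(3)}}\,e^{s\,u_{i,(4)}}
\]
which (after bringing the two summands into a common labelling of the four tensor slots by swapping the roles of positions $(2)$ and $(4)$) becomes a nonzero scalar power series times the object we need to vanish at $r=s=k=l=0$. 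Thus the whole compatibility reduces to the single identity
\[
[[[\Phi(E_{i-1,0}),\Phi(E_{i,0})]_{e^{a\hbar}},\Phi(E_{i+1,0})]_{e^{-a\hbar}},\Phi(E_{i,0})]+\bigl(\text{symmetric term in slots }(2),(4)\bigr)=0.
\]

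Next I would expand each $q$-commutator via the exact identity used in Lemma \ref{lm:ql7comp}: if $\Phi(E_{p,0})\Phi(E_{p',0})=g_{p}^{+}(u_{p})\lambda_{p}^{+}(u_{p})(g_{p'}^{+}(u_{p'}))\,x_{p,0}^{+}x_{p',0}^{+}$, then combining \eqref{eq:rel1} of Proposition \ref{pr:y46eqc} with \eqref{eq:ccomp} of Theorem \ref{th:thcondhom} yields a clean rewriting
\[
[\Phi(E_{p,0}),\Phi(E_{p',0})]_{e^{\pm a\hbar}}
=g_{p}^{+}(u_{p})\lambda_{p}^{+}(u_{p})(g_{p'}^{+}(u_{p'}))\,\bigl(e^{u_{p}}-(-1)^{|\alpha_{p}||\alpha_{p'}|}e^{u_{p'}\pm a\hbar}\bigr)\,\bigl[\text{multiplied }x\text{-part}\bigr],
\]
and (as in the proof of Lemma \ref{lm:ql7comp}) the rational factors $(e^{u}-e^{v\pm a\hbar})/(u-v\mp a\hbar)$ always conspire to pull out an overall product $g_{i-1}^{+}g_{i}^{+}g_{i+1}^{+}g_{i}^{+}$ (composed with the appropriate $\lambda$'s), leaving only a product of the linear factors $(u_{p}-u_{p'}\mp a\hbar)$ acting on $x_{i-1,0}^{+}\otimes x_{i,0}^{+}\otimes x_{i+1,0}^{+}\otimes x_{i,0}^{+}$. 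By construction this product is precisely the operator appearing on the left-hand side of \eqref{eq:rel6} in Proposition \ref{pr:y46eqc}, namely
\[
(u_{i-1,(2)}-u_{i,(1)}\mp a\hbar)(u_{i,(1)}-u_{i+1,(3)}\mp a\hbar)(u_{i-1,(2)}-u_{i,(4)}\pm a\hbar)(u_{i,(4)}-u_{i+1,(3)}\mp a\hbar),
\]
applied to $x_{i,0}^{+}\otimes x_{i-1,0}^{+}\otimes x_{i+1,0}^{+}\otimes x_{i,0}^{+}$. By \eqref{eq:rel6}, this expression vanishes in $Y_{\hbar}(\mathfrak{g})$, so the desired identity holds. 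The $F$-case is completely analogous after swapping every $\pm$ with $\mp$ and replacing $\sigma^{+}$, $x^{+}$, $g^{+}$ by $\sigma^{-}$, $x^{-}$, $g^{-}$.

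The main obstacle is the bookkeeping: carefully tracking the four $\lambda$-actions and the rational factors so that, after the various $q$-commutators are expanded, the denominators cancel exactly and one is left with the pure polynomial operator of \eqref{eq:rel6}. In particular one must use the same commutation trick exploited at the end of the proof of Lemma \ref{lm:ql7comp} (commuting $(u_{(p)}-u_{(p')}\pm a\hbar)$'s that act on different tensor slots and induction on the degree) to push the $q$-exponentials through the multiplication $\mu(\mu\otimes\id)(\mu\otimes\id\otimes\id)$. The constraint, introduced before Definition \ref{phidef}, that every odd vertex has only even neighbors guarantees that $|\alpha_{i\pm 1,d}|=\bar{0}$ so that the sign factors $(-1)^{|\alpha_{i}||\alpha_{i\pm 1}|}$ are trivial and the exponents $\pm a\hbar$ on both sides match the shifts present in \eqref{eq:rel6}; this is what makes the identification with the Yangian quartic Serre relation work.
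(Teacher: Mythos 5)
Your proposal follows essentially the same route as the paper's proof: pull the $r,s,k,l$ out as exponential prefactors to reduce to the $r=s=k=l=0$ case, expand the $q$-commutators via \eqref{eq:rel1} of Proposition \ref{pr:y46eqc} together with \eqref{eq:ccomp}, and conclude from \eqref{eq:rel5} and \eqref{eq:rel6} with induction on the degree. The only point worth flagging is that after the cancellations one is left not with the single quartic product of linear factors but with a power series whose general term is $(\,\cdot\,)^{n_1}(\,\cdot\,)^{n_2}(\,\cdot\,)^{n_3}(\,\cdot\,)^{n_4}$ with all $n_i\ge 1$ (obtained after first killing the remaining terms via \eqref{eq:rel5}), and each such monomial is then annihilated by repeated use of \eqref{eq:rel6} — which is exactly the "induction on the degree" you anticipate in your closing paragraph, so no genuine gap remains.
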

\begin{proof}
	In order to give simultaneously one proof for both cases we set $X^{+}_{ir} = E_{ir}$ and $X^{-}_{ir} = F_{ir}$ for any $i \in I$ and $r \in \mathbb{Z}$. Set $a = c_{ij} / 2$, so that $q^{\pm c_{ij}} = e^{\pm a \hbar}$ for all $i,j \in I$. Denote by $v = \sigma_{i-1}^{ \pm}$, $u = \sigma_{i}^{ \pm}$ and $z =  \sigma_{i+1}^{ \pm}$.
	
	Taking into account the proof of Lemma \ref{lm:ql7comp} we have
	\[ [[[ \Phi( X^{\pm}_{i-1,r} ), \Phi ( X^{\pm}_{i,s} )]_{q^{\pm c^d_{ij}}} , \Phi( X^{\pm}_{i+1,k} )]_{q^{\mp c^d_{ij}}}, \Phi( X^{\pm}_{i,l} )] = \]
	\[ \mu ( e^{r v_{(1)}} e^{s u_{(1)}} e^{k z_{(1)}} e^{l u_{(2)}} [[ \Phi( X^{\pm}_{i-1,0} ), \Phi ( X^{\pm}_{i,0} )]_{e^{\pm a \hbar }} , \Phi( X^{\pm}_{i+1,0} )]_{e^{\mp a \hbar}} \otimes \Phi( X^{\pm}_{i,0} ) + \]
	\[ (-1)^{|\alpha_{i-1}| + |\alpha_{i+1}|} e^{r v_{(2)}} e^{s u_{(2)}} e^{k z_{(2)}} e^{l u_{(1)}} \Phi( X^{\pm}_{i,0} ) \otimes [[ \Phi( X^{\pm}_{i-1,0} ), \Phi ( X^{\pm}_{i,0} )]_{e^{\pm a \hbar }} , \Phi( X^{\pm}_{i+1,0} )]_{e^{\mp a \hbar}} ). \]
	
	Thus the compatibility with \eqref{QL8} reads
	\[ \mu ( e^{r v_{(1)}} e^{k z_{(1)}} ( e^{s u_{(1)}} e^{l u_{(2)}} + e^{l u_{(1)}} e^{s u_{(2)}} ) [[ \Phi( X^{\pm}_{i-1,0} ), \Phi ( X^{\pm}_{i,0} )]_{e^{\pm a \hbar }} , \Phi( X^{\pm}_{i+1,0} )]_{e^{\mp a \hbar}} \otimes \Phi( X^{\pm}_{i,0} ) + \]
	\[ (-1)^{|\alpha_{i-1}| + |\alpha_{i+1}|} e^{r v_{(2)}} e^{k z_{(2)}} ( e^{s u_{(2)}} e^{l u_{(1)}} + e^{l u_{(2)}} e^{s u_{(1)}} ) \cdot \]
	\[ \Phi( X^{\pm}_{i,0} ) \otimes [[ \Phi( X^{\pm}_{i-1,0} ), \Phi ( X^{\pm}_{i,0} )]_{e^{\pm a \hbar }} , \Phi( X^{\pm}_{i+1,0} )]_{e^{\mp a \hbar}} ) = 0. \]
	It is easy to see that this equation is in fact
	\[ [[[ \Phi( X^{\pm}_{i-1,r} ), \Phi ( X^{\pm}_{i,0} )]_{e^{\pm a \hbar}} , \Phi( X^{\pm}_{i+1,k} )]_{e^{\mp a \hbar}}, \Phi( X^{\pm}_{i,0} )] = 0 \iff \]
	\[ [[[ \Phi( X^{\pm}_{i-1,0} ), \Phi ( X^{\pm}_{i,0} )]_{e^{\pm a \hbar}} , \Phi( X^{\pm}_{i+1,0} )]_{e^{\mp a \hbar}}, \Phi( X^{\pm}_{i,0} )] = 0. \]
	The equivalence follows by the definition of the function $\Phi$. By \eqref{eq:rel1} in Proposition \ref{pr:y46eqc} and \eqref{eq:ccomp} in Theorem \ref{th:thcondhom} and direct computations we get the equivalent form of the last equation
	\[ ( \frac{ e^{\pm a \hbar } - e^{\mp a \hbar } }{ e^{v - u \mp a \hbar } - 1 } + e^{ \pm a \hbar } \frac{ e^{u - z \mp a \hbar} -1 }{  e^{u - z \pm a \hbar } - 1 }  ) ( g_{i}^{\pm}(u) x_{i0}^{\pm} g_{i-1}^{\pm}(v) x_{i-1,0}^{\pm} g_{i+1}^{\pm}(z) x_{i+1,0}^{\pm}) g_{i}^{\pm}(u) x_{i0}^{\pm} + \]
	\[ g_{i}^{\pm}(u) x_{i0}^{\pm} ( \frac{ e^{\mp a \hbar } - e^{\pm a \hbar } }{e^{v - u \pm a \hbar } - 1 } + e^{ \mp a \hbar } \frac{e^{u - z \pm a \hbar } - 1 }{ e^{u - z \mp a \hbar } - 1 }  )  ( g_{i-1}^{\pm}(v) x_{i-1,0}^{\pm} g_{i+1}^{\pm}(z) x_{i+1,0}^{\pm} g_{i}^{\pm}(u) x_{i0}^{\pm}) = 0 \iff \]
	
	\[ \mu (\mu \otimes \id) (\mu \otimes \id \otimes \id) g_{i}^{\pm} ( u_{(1)} ) \lambda_{i}^{\pm} ( u_{(1)} ) ( g_{i-1}^{\pm} ( v_{(2)} )  ) \circ \]
	\[ \lambda_{i}^{\pm} (u_{(1)}) ( \lambda_{i-1}^{\pm} ( v_{(2)} ) ( g_{i+1}^{\pm} ( z_{(3)} )  ) ) \lambda_{i}^{\pm} (u_{(1)}) ( \lambda_{i-1}^{\pm} ( v_{(2)} ) (  \lambda_{i+1}^{\pm} ( z_{(3)} ) ( g_{i+1}^{\pm} (u_{(4)}) )  ) ) \circ \]
	\[ ( ( ( e^{\pm a \hbar } - e^{\mp a \hbar } ) ( e^{u_{(1)} - z_{(3)} \pm a \hbar } - 1 ) + e^{ \pm a \hbar } ( e^{u_{(1)} - z_{(3)} \mp a \hbar} -1 ) ( e^{v_{(2)} - u_{(1)} \mp a \hbar } - 1 ) ) \cdot \]
	\[ ( e^{v_{(2)} - u_{(4)} \pm a \hbar } - 1  ) ( e^{u_{(4)} - z_{(3)} \mp a \hbar } - 1 ) + \]
	\[ ( ( e^{\mp a \hbar } - e^{\pm a \hbar } ) ( e^{u_{(4)} - z_{(3)} \mp a \hbar } - 1 ) + e^{ \mp a \hbar } ( e^{u_{(4)} - z_{(3)} \pm a \hbar} -1 ) ( e^{v_{(2)} - u_{(4)} \pm a \hbar } - 1 ) ) \cdot \]
	\[ ( e^{v_{(2)} - u_{(1)} \mp a \hbar } - 1  ) ( e^{u_{(1)} - z_{(3)} \pm a \hbar } - 1 ) ) \]
	\[ x_{i0}^{\pm} \otimes x_{i-1,0}^{\pm} \otimes x_{i+1,0}^{\pm} \otimes x_{i0}^{\pm} = 0. \]
	By \eqref{eq:rel5} in Proposition \ref{pr:y46eqc} and the mathematical induction we get
	\[  \sum_{ n \ge 0, n_1, n_2, n_3, n_4 \ge 1} \frac{ (a \hbar)^{n} (( \pm )^{n} + ( \mp )^{n})}{n! n_{1}! n_3! n_4!} \cdot \]
	\[ ( ( 1 - ( -1 )^{n})  (u_{(1) } - z_{(3)} \pm a \hbar)^{n_1} + ( u_{(1)} - z_{(3) } \mp a \hbar  )^{n_1} \frac{ ( v_{(2)}- u_{(1)} \mp a \hbar  )^{n_2}  }{ n_{2}! }  ) \circ  \]
	\[ ( v_{(2)} -u_{(4)} \pm a \hbar )^{n_3}  ( u_{(4)} -z_{(3)} \mp a \hbar )^{n_4} \]
	\[ x_{i0}^{\pm} \otimes x_{i-1,0}^{\pm} \otimes x_{i+1,0}^{\pm} \otimes x_{i0}^{\pm} = 0 \iff \]
	\[  \sum_{ n \in 2 \mathbb{N}_{0}, n_1, n_2, n_3, n_4 \ge 1} \frac{ 2 (a \hbar)^{n} }{n! n_{1}! n_{2}! n_3! n_4!} \cdot \]
	\[  ( u_{(1)} - z_{(3) } \mp a \hbar  )^{n_1} ( v_{(2)}- u_{(1)} \mp a \hbar  )^{n_2} ( v_{(2)} -u_{(4)} \pm a \hbar )^{n_3}  ( u_{(4)} -z_{(3)} \mp a \hbar )^{n_4} \]
	\[ x_{i0}^{\pm} \otimes x_{i-1,0}^{\pm} \otimes x_{i+1,0}^{\pm} \otimes x_{i0}^{\pm} = 0. \]
	By \eqref{eq:rel6} in Proposition \ref{pr:y46eqc} and the mathematical induction we get for all $n_1, n_2, n_3, n_4 \ge 1$ that
	\[  ( u_{(1)} - z_{(3) } \mp a \hbar  )^{n_1} ( v_{(2)}- u_{(1)} \mp a \hbar  )^{n_2} ( v_{(2)} -u_{(4)} \pm a \hbar )^{n_3}  ( u_{(4)} -z_{(3)} \mp a \hbar )^{n_4} \]
	\[ x_{i0}^{\pm} \otimes x_{i-1,0}^{\pm} \otimes x_{i+1,0}^{\pm} \otimes x_{i0}^{\pm} = 0. \]
\end{proof}

Now we are ready to prove
\begin{theorem}
	\label{th:gsercindtrue}
	The series $g_{i}^{\pm}(v) = g_{i}(v)$ (see \eqref{eq:gipm} and \eqref{eq:giunvsol}) satisfy the conditions \eqref{eq:acomp} - \eqref{eq:qcomp} of Theorem \ref{th:thcondhom}, and therefore give rise to a homomorphism of superalgebras $\Phi: {U}_{\hbar}(L\mathfrak{g}) \to \widehat{Y_{\hbar}(\mathfrak{g})} $.
\end{theorem}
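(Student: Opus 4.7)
The plan is to verify each of the four conditions \eqref{eq:acomp}--\eqref{eq:qcomp} by reducing them, via Lemma \ref{lm:lgG}, to elementary identities for the generating function $G(v) = \log\!\bigl(\tfrac{v}{e^{v/2}-e^{-v/2}}\bigr)$. The central observation, which eliminates the super signs, is the Dynkin-diagram hypothesis that every odd vertex has only even neighbours: if $c_{ij}\neq 0$ and $i\ne j$, then at least one of $\alpha_i,\alpha_j$ is even, so $(-1)^{|\alpha_i||\alpha_j|}=1$; and if $i=j$ with $|\alpha_i|=\bar 1$ we are always in the case $c_{ii}=0$, which falls under \eqref{eq:qcomp} rather than \eqref{eq:ccomp}. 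A second elementary observation is that $G$ is even: $G(-v)=G(v)$.

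First I would dispose of \eqref{eq:qcomp}. When $c_{ij}=0$, Lemma \ref{lm:lgG} with $a=0$ gives $\lambda_i^{\pm}(u)(g_j(v))=g_j(v)$ and $\lambda_j^{\pm}(v)(g_i(u))=g_i(u)$, so both sides reduce to $g_i(u)\,g_j(v)$, which coincide by commutativity of $Y_{\hbar}^{0}(\mathfrak g)$. This covers in particular the case $i=j$ with $|\alpha_i|=\bar 1$. For \eqref{eq:acomp}, applying Lemma \ref{lm:lgG} and using the evenness of $G$, one finds
\[
\lambda_i^{+}(u)(g_j^{-}(v)) \;=\; g_j(v)\exp\!\Bigl(-\tfrac{G(u-v+a\hbar)-G(u-v-a\hbar)}{2}\Bigr)
\;=\; \lambda_j^{-}(v)(g_i^{+}(u))\cdot g_j(v)/g_i(u),
\]
and the desired equality again follows from the commutativity of $Y_{\hbar}^{0}(\mathfrak g)$.

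For \eqref{eq:ccomp}, I would substitute the formula from Lemma \ref{lm:lgG} on both sides and note that, after cancelling the common factor $g_i(u)g_j(v)$, the identity to be checked becomes
\[
\exp\!\Bigl(\pm\tfrac{G(v-u+a\hbar)-G(v-u-a\hbar)}{2}\Bigr)\!\cdot\!\tfrac{e^{u}-e^{v\pm a\hbar}}{u-v\mp a\hbar}
=\exp\!\Bigl(\pm\tfrac{G(u-v+a\hbar)-G(u-v-a\hbar)}{2}\Bigr)\!\cdot\!\tfrac{e^{v}-e^{u\pm a\hbar}}{v-u\mp a\hbar},
\]
with $(-1)^{|\alpha_i||\alpha_j|}=1$ by the Dynkin-diagram hypothesis. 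Using the evenness of $G$, the two exponentials are reciprocals, so the identity reduces to the functional equation
$\exp\!\bigl(G(u-v+a\hbar)-G(u-v-a\hbar)\bigr)=\tfrac{(e^{u}-e^{v+a\hbar})(v-u-a\hbar)}{(u-v-a\hbar)(e^{v}-e^{u+a\hbar})}$ (and its $-$ analogue), which is verified by direct manipulation using $e^{x}-e^{y}=e^{(x+y)/2}(e^{(x-y)/2}-e^{-(x-y)/2})$ and the definition of $G$.

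The main obstacle is \eqref{eq:bcomp}, since it requires comparing the explicit formal series $g_i^{+}(u)\lambda_i^{+}(u)(g_i^{-}(u))$ to the image of $\tfrac{\psi_{i,k}-\varphi_{i,k}}{q-q^{-1}}$, which is defined by the non-trivial exponentials \eqref{eq:qlelpsi}, \eqref{eq:qlelphi} of the $H_{i,s}$. Setting $v=u$ in Lemma \ref{lm:lgG} and using that $G$ is even collapses the $\lambda$-factor, so that $g_i^{+}(u)\lambda_i^{+}(u)(g_i^{-}(u))=g_i(u)^{2}=\tfrac{\hbar}{q-q^{-1}}\exp(\gamma_i(u))$. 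I would then compute $\Phi^{0}(\psi_{i,k})$ and $\Phi^{0}(\varphi_{i,k})$ by expanding the exponentials in \eqref{eq:qlelpsi}, \eqref{eq:qlelphi} using $\Phi^{0}(H_{i,r})=\tfrac{B_i(r)}{q-q^{-1}}$ for $r\neq 0$ (where $B_i(r)=\hbar\sum_{s\ge 0}t_{i,s}r^{s}/s!$), and match the resulting expansion against the coefficient of $u^{-k-1}$ in $\tfrac{e^{ku}\hbar}{q-q^{-1}}\exp(\gamma_i(u))$ after the substitution $u^{m}\mapsto h_{i,m}$. This is a direct but rather delicate calculation, and is essentially the super analogue of the computation carried out in \cite{GT13} in the non-super setting; since the super signs have already been eliminated by the Dynkin-diagram hypothesis, the non-super computation transfers verbatim. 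Once \eqref{eq:acomp}--\eqref{eq:qcomp} are verified, Theorem \ref{th:thcondhom} produces the required homomorphism $\Phi$.
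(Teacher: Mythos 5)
Your proposal is correct and follows essentially the same route as the paper: conditions \eqref{eq:ccomp} and \eqref{eq:qcomp} are reduced via Lemma \ref{lm:lgG} and the evenness of $G$ to the same elementary functional identity for $G$, with the Dynkin-diagram restriction killing the sign $(-1)^{|\alpha_i||\alpha_j|}$ exactly as in the paper's argument. For \eqref{eq:acomp} and \eqref{eq:bcomp} the paper simply defers to \cite{GT13}, whereas you sketch the (correct) computations explicitly; this is only a difference in level of detail, not of method.
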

\begin{proof}
	The proof of conditions \eqref{eq:acomp} and \eqref{eq:bcomp} is literally the same as in \cite{GT13}. We prove the condition \eqref{eq:ccomp}.
	
	Let $i, j \in I$ and set $a = c_{ij} / 2$. We need to prove that
	\[ g_{i}(u) \lambda_{i}^{ \pm} (u) ( g_{j} (v) ) ( \frac{ (-1)^{|\alpha_i||\alpha_j|} e^{u} - e^{v \pm a \hbar}}{u-v \mp a \hbar} ) = g_{j}(v) \lambda_{j}^{ \pm} (v) ( g_{i} (u) ) ( \frac{  e^{v} - (-1)^{|\alpha_i||\alpha_j|} e^{u \pm a \hbar} }{v-u \mp a \hbar} ). \]
	By Lemmas \ref{lm:lgG}, \ref{lm:invgpm} and the fact that $G$ is even (see \eqref{eq:Gdef}), we get the following equivalent assertion
	\begin{equation}
		\label{eq:GEfrc}
		\exp( G( v - u \pm a \hbar) - G( v - u \mp a \hbar ) ) \frac{ (-1)^{|\alpha_i||\alpha_j|} e^{u} - e^{v \pm a \hbar}}{u-v \mp a \hbar} = \frac{ e^{v} - (-1)^{|\alpha_i||\alpha_j|} e^{u \pm a \hbar} }{v-u \mp a \hbar}.
	\end{equation}
	Using \eqref{eq:Gdef} we get
	\[ ( \frac{v-u \pm a \hbar}{e^{v \pm a \hbar} - e^u } ) ( \frac{e^v - e^{u \pm a \hbar}}{v-u \mp a \hbar} ) ( \frac{ (-1)^{|\alpha_i||\alpha_j|} e^{u} - e^{v \pm a \hbar}}{u-v \mp a \hbar} ) = \]
	\[ (\frac{e^v - e^{u \pm a \hbar}}{v-u \mp a \hbar}) (\frac{ (-1)^{|\alpha_i||\alpha_j|} e^{u} - e^{v \pm a \hbar}}{e^u - e^{v \pm a \hbar}} ) = \frac{ e^{v} - (-1)^{|\alpha_i||\alpha_j|} e^{u \pm a \hbar} }{v-u \mp a \hbar}. \]
	The equation holds as $|\alpha_{i}| = \bar{0}$ or $|\alpha_{j}| = \bar{0}$, if $i \ne  j$, according to our requirements for Dynkin diagrams (see reasoning before Definition \ref{phidef}); $|\alpha_{i}| = \bar{0}$, if $i = j$. Thus the left side of the equation \eqref{eq:GEfrc} coincides with the right side. The result follows.
	
	We prove the condition \eqref{eq:qcomp}. Let $i, j \in I$. We need to prove that
	\[ g_{i} ( u ) \lambda_{i}^{ \pm} ( u ) ( g_{j} ( v ) ) = g_{j} ( v ) \lambda_{j}^{ \pm} ( v ) ( g_{i} ( u ) ). \]
	By Lemmas \ref{lm:lgG}, \ref{lm:invgpm} and the fact that $G$ is even (see \eqref{eq:Gdef}), we get the following equivalent assertion
	\[ \exp( G( v - u ) - G( v - u ) ) = 1, \]
	which is trivially true.
	
\end{proof}

\vspace{1cm}

\subsection{Explicit isomorphism of completions}

We prove in this subsection that homomorphism $\Phi: {U}_{\hbar}(L\mathfrak{g}) \to \widehat{Y_{\hbar}(\mathfrak{g})}$ extends to an isomorphism of completed superalgebras. Recall previous considerations about completions in Subsections \ref{sub:compl}, \ref{sect:QLS} and \ref{lb:HSSCY}. The results that have the same proof in non-super and super cases are given without proof.

\begin{definition}
	\emph{\cite{GT13}}
	\label{df:isomzerogr}
	Let $\{ e_{i}, f_{i}, h_{i}\}_{i \in I}$ be the generators of $\mathfrak{g}$. Define by $\eta: U( \mathfrak{g}[z, z^{-1}] ) \to U_{\hbar}(L \mathfrak{g}) / \hbar U_{\hbar}(L \mathfrak{g}) $ the isomorphism of superalgebras
	\[ \eta( e_{i} \otimes z^{k} ) = E_{i,k}, \; \eta( f_{i} \otimes z^{k} ) = F_{i,k}, \; \eta( h_{i} \otimes z^{k} ) = H_{i,k} \]
	for all $i \in I$ and $k \in \mathbb{Z}$.
	
	Define by $\tau:  U( \mathfrak{g}[s] ) \to Y_{\hbar}( \mathfrak{g}) / \hbar Y_{\hbar}( \mathfrak{g}) $ the isomorphism of superalgebras
	\[ \tau( e_{i} \otimes s^{r} ) = x^{+}_{i,r}, \; \tau( f_{i} \otimes s^{r} ) = x^{-}_{i,r}, \; \tau( h_{i} \otimes s^{r} ) = h_{i,r} \]
	for all $i \in I$ and $r \in \mathbb{N}_{0}$.
\end{definition}

The following result is well known.
\begin{proposition}
	\emph{\cite{GT13}}
	Let $\Phi: {U}_{\hbar}(L\mathfrak{g}) \to \widehat{Y_{\hbar}(\mathfrak{g})}$ be the homomorphism given by the Theorem \ref{th:gsercindtrue}. Then the specialization of $\Phi$ at $\hbar=0$ is the homomorphism
	\[ \exp^{*} : U( \mathfrak{g}[z, z^{-1}] ) \to U( \mathfrak{g}[[s]] ) \subset \widehat{U(\mathfrak{g}[s])} \]
	given on $\mathfrak{g}[z, z^{-1}]$ by $\exp^{*}(X \otimes z^{k}) = X \otimes e^{ks}$, where $X \in \mathfrak{g}$ and $k \in \mathbb{Z}$.
\end{proposition}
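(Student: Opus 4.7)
The plan is to compute $\Phi \pmod \hbar$ on each generator of $U_{\hbar}(L\mathfrak{g})$ and compare directly with $\exp^{*} \circ \eta^{-1}$. First I would record the following three simplifications valid as $\hbar \to 0$. Since $q = e^{\hbar/2}$, we have $q - q^{-1} = \hbar + O(\hbar^{3})$, so $\tfrac{\hbar}{q - q^{-1}} \equiv 1 \pmod \hbar$. Next, from \eqref{eq:tlogdef} and the expansion $\log(1 + x) = x - \tfrac{x^{2}}{2} + \cdots$, each auxiliary generator satisfies $t_{ir} \equiv h_{ir} \pmod \hbar$ for every $r \in \mathbb{N}_{0}$. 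Finally, the series $\gamma_{i}(v) = \hbar \sum_{r \ge 0} \tfrac{t_{ir}}{r!}(-\partial_{v})^{r+1} G(v)$ carries an overall factor of $\hbar$, so $\exp(\gamma_{i}(v)/2) \equiv 1 \pmod \hbar$; combined with $(\tfrac{\hbar}{q - q^{-1}})^{1/2} \equiv 1 \pmod \hbar$, formula \eqref{eq:giunvsol} gives $g_{i}(v) \equiv 1 \pmod \hbar$ as an element of $\widehat{Y^{0}_{\hbar}(\mathfrak{g})[v]}$.

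Next I would compute the specialization on the three families of generators. For the Cartan part, $\Phi(H_{i,0}) = t_{i,0} = h_{i,0}$, and for $r \in \mathbb{Z}^{*}$,
\[
\Phi(H_{i,r}) \;=\; \frac{\hbar}{q-q^{-1}} \sum_{k \ge 0} t_{ik}\,\frac{r^{k}}{k!} \;\equiv\; \sum_{k \ge 0} h_{ik}\,\frac{r^{k}}{k!} \pmod \hbar.
\]
Applying $\tau^{-1}$ from Definition \ref{df:isomzerogr}, this image equals $\sum_{k \ge 0}\tfrac{r^{k}}{k!}\,h_{i}\otimes s^{k} = h_{i}\otimes e^{rs}$, matching $\exp^{*}(h_{i}\otimes z^{r})$. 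For the raising generators, \eqref{eq:eir} together with $g_{i}^{+}(v) = g_{i}(v) \equiv 1 \pmod \hbar$ gives
\[
\Phi(E_{i,r}) \;=\; e^{r\sigma_{i}^{+}}\,g_{i}(\sigma_{i}^{+})\,x_{i0}^{+} \;\equiv\; \sum_{k \ge 0} \frac{r^{k}}{k!}\,(\sigma_{i}^{+})^{k}(x_{i0}^{+}) \;=\; \sum_{k \ge 0} \frac{r^{k}}{k!}\,x_{ik}^{+} \pmod \hbar,
\]
using the definition \eqref{eq:sigmadef} of $\sigma_{i}^{+}$. Under $\tau^{-1}$ this is $e_{i}\otimes e^{rs}$, matching $\exp^{*}(e_{i}\otimes z^{r})$. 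An identical computation using \eqref{eq:fir} handles the $F_{i,r}$ and yields $f_{i}\otimes e^{rs}$.

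The last step is to assemble these pointwise matchings into an identity of homomorphisms. Since $\eta$ and $\tau$ are superalgebra isomorphisms and both $\Phi \pmod \hbar$ and $\tau \circ \exp^{*} \circ \eta^{-1}$ are superalgebra homomorphisms, agreement on the generating set $\{H_{i,r}, E_{i,r}, F_{i,r}\}_{i\in I,\, r\in\mathbb{Z}}$ of $U_{\hbar}(L\mathfrak{g})/\hbar\,U_{\hbar}(L\mathfrak{g})$ suffices. The principal technical point—and the only real obstacle—is ensuring that the equalities above, which are \emph{a priori} between infinite series in the completion $\widehat{Y_{\hbar}(\mathfrak{g})}$, descend to the quotient $\widehat{Y_{\hbar}(\mathfrak{g})}/\hbar \equiv \widehat{U(\mathfrak{g}[s])}$; this requires verifying that the reduction $\pmod \hbar$ is continuous with respect to the filtration $\{F_{i}\}_{i\in\mathbb{N}}$ of Subsection \ref{lb:HSSCY}, which follows because $\deg(t_{ik}) = \deg(h_{ik}) = k$, $\deg(\sigma_{i}^{+}) = 1$, and $\deg(\hbar) = 1$, so every series considered converges termwise in the inverse limit topology. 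The result then follows.
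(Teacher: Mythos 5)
Your proposal is correct. The paper itself gives no proof of this proposition --- it is imported verbatim from \cite{GT13} with the remark that it is ``well known'' --- and your generator-by-generator computation (reducing $\tfrac{\hbar}{q-q^{-1}}$, $t_{ir}$, and $g_i(v)$ modulo $\hbar$, then matching $\Phi(H_{i,r})$, $\Phi(E_{i,r})$, $\Phi(F_{i,r})$ against $X\otimes e^{rs}$ under $\tau$) is exactly the standard argument one would extract from that reference; the closing remark about termwise convergence in the graded completion correctly disposes of the only delicate point.
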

Thus we have
\begin{corollary}
	\label{cl:isspec}
	Homomorphism $\exp^{*}$ can be extended to the isomorphism
	\[ \widehat{\exp^{*}} : \widehat{U( \mathfrak{g}[z, z^{-1}] )} \to \widehat{U(\mathfrak{g}[s])}. \]
\end{corollary}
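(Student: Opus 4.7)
The plan is to verify that $\exp^{*}$ is continuous with respect to the two filtrations (the $\mathcal{J}$-adic on the source and the grading filtration $\{F_{n}\}$ on the target) and to construct a continuous two-sided inverse. Recall that at $\hbar=0$ the ideal $\mathcal{J}$ becomes the augmentation ideal $\mathcal{J}_{0}\subset U(\mathfrak{g}[z,z^{-1}])$ corresponding to $z\mapsto 1$; it is the two-sided ideal generated by $\{X\otimes(z^{k}-1)\mid X\in\mathfrak{g},\, k\in\mathbb{Z}\}$.

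First I would check continuity. Under $\exp^{*}$, a generator $X\otimes(z^{k}-1)$ maps to $X\otimes(e^{ks}-1)=X\otimes\bigl(ks+(ks)^{2}/2!+\ldots\bigr)$, which lies in $F_{1}=U(\mathfrak{g}[s])_{+}$. Since $\exp^{*}$ is a superalgebra homomorphism and $F_{1}^{n}\subseteq F_{n}$, it follows that $\exp^{*}(\mathcal{J}_{0}^{n})\subseteq F_{n}$, so $\exp^{*}$ extends uniquely to a continuous superalgebra homomorphism $\widehat{\exp^{*}}\colon\widehat{U(\mathfrak{g}[z,z^{-1}])}\to\widehat{U(\mathfrak{g}[s])}$.

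Next I would construct the inverse. The series
\[ \log(z):=\sum_{n\geq 1}\frac{(-1)^{n+1}}{n}(z-1)^{n} \]
converges in the $\mathcal{J}_{0}$-adic topology to an element of $\widehat{U(\mathfrak{g}[z,z^{-1}])}$. Define a $\Bbbk$-linear map $\mathfrak{g}[s]\to\widehat{U(\mathfrak{g}[z,z^{-1}])}$ by $X\otimes s^{n}\mapsto X\otimes\log(z)^{n}$. The identity $[X\otimes f(z),Y\otimes g(z)]=[X,Y]\otimes f(z)g(z)$, valid for Laurent polynomials $f,g$, extends by continuity to formal series in $(z-1)$, so the map preserves the super Lie bracket. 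By the universal property of $U(\mathfrak{g}[s])$ this extends to a superalgebra homomorphism $\log^{*}\colon U(\mathfrak{g}[s])\to\widehat{U(\mathfrak{g}[z,z^{-1}])}$. Since $X\otimes\log(z)^{k}\in\widehat{\mathcal{J}_{0}^{k}}$, and by PBW $F_{n}$ is spanned by ordered monomials in $\{X\otimes s^{k}\}$ of total $s$-degree at least $n$, one gets $\log^{*}(F_{n})\subseteq\widehat{\mathcal{J}_{0}^{n}}$; hence $\log^{*}$ extends continuously to $\widehat{\log^{*}}\colon\widehat{U(\mathfrak{g}[s])}\to\widehat{U(\mathfrak{g}[z,z^{-1}])}$.

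Finally, using the formal identities $\log(e^{s})=s$ in $\Bbbk[[s]]$ and $e^{\log(z)}=z$ in $\widehat{\Bbbk[z,z^{-1}]}_{(z-1)}$, one checks on generators that $\widehat{\log^{*}}\circ\widehat{\exp^{*}}(X\otimes z^{k})=X\otimes e^{k\log(z)}=X\otimes z^{k}$ and $\widehat{\exp^{*}}\circ\widehat{\log^{*}}(X\otimes s^{n})=X\otimes(\log(e^{s}))^{n}=X\otimes s^{n}$, and these relations propagate to arbitrary elements by density and continuity. Hence $\widehat{\exp^{*}}$ is an isomorphism of completed superalgebras. The main technical step is the continuity of $\log^{*}$, i.e.\ the PBW-type estimate $\log^{*}(F_{n})\subseteq\widehat{\mathcal{J}_{0}^{n}}$; since $s$ is even and the super structure on $\mathfrak{g}[s]$ and $\mathfrak{g}[z,z^{-1}]$ is inherited unchanged from $\mathfrak{g}$, no additional sign complications arise and the argument proceeds as in the non-super case of \cite{GT13}.
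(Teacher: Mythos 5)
Your proof is correct and takes essentially the same route as the paper: the inverse is constructed as $\widehat{\log^{*}}$ sending $X\otimes s^{r}$ to $X\otimes(\log(z))^{r}$, and the two compositions are verified on generators. You additionally spell out the continuity estimates $\exp^{*}(\mathcal{J}_{0}^{\,n})\subseteq F_{n}$ and $\log^{*}(F_{n})\subseteq\widehat{\mathcal{J}_{0}^{\,n}}$ (the latter resting on $X\otimes(z-1)^{n}\in\mathcal{J}_{0}^{\,n}$, which holds since $\mathfrak{g}=[\mathfrak{g},\mathfrak{g}]$), details the paper's proof leaves implicit.
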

\begin{proof}
	It is sufficient to prove that the homomorphism $\exp^{*}$ has the inverse on generators of $U( \mathfrak{g}[z, z^{-1}] )$. Also note that as in $U( \mathfrak{g}[z, z^{-1}] )$ exists $z^{-1}$ it is reasonable to consider the element $\log(z) \in U( \mathfrak{g}[[z, z^{-1}]] ) $ ($U( \mathfrak{g}[[z, z^{-1}]] ) \subset \widehat{U( \mathfrak{g}[z, z^{-1}] )}$) defined in Subsection \ref{sb:ques}. Now it is easy to see that the homomorphism 
	\[\widehat{\log^{*}} : \widehat{U(\mathfrak{g}[s])} \to \widehat{U( \mathfrak{g}[z, z^{-1}] )} \]
	given on $\mathfrak{g}[s]$ by $\widehat{\log^{*}}(X \otimes s^r) = X \otimes (\log(z))^{r}$, where $X \in \mathfrak{g}$ and $r \in \mathbb{N}_{0}$. Thus
	\[ \widehat{\log^{*}} \circ \widehat{\exp^{*}} ( X \otimes z^{k} ) = \widehat{\log^{*}} ( X \otimes e^{ks} ) = X \otimes e^{k \log(z)} = X \otimes z^{k} \]
	for $X \in \mathfrak{g}$, $k \in \mathbb{Z}$ and
	\[ \widehat{\exp^{*}} \circ \widehat{\log^{*}} ( X \otimes s^r ) = \widehat{\exp^{*}} ( X \otimes (\log(z))^{r} ) = X \otimes (\log(e^{s}))^{r} = X \otimes s^r \]
	for $X \in \mathfrak{g}$, $r \in \mathbb{N}_{0}$. The result follows.
\end{proof}

Finally we need
\begin{proposition}
	\emph{\cite{GT13}}
	\label{pr:fduu}
	\begin{enumerate}
		\item $\widehat{U_{\hbar}( L \mathfrak{g} )}$ is a flat deformation of $\widehat{U( \mathfrak{g}[z, z^{-1}] )}$ over $\Bbbk[[\hbar]]$;
		\item $\widehat{Y_{\hbar}(\mathfrak{g})}$ is a flat deformation of $\widehat{U(\mathfrak{g}[s])}$ over $\Bbbk[[\hbar]]$.
	\end{enumerate}
\end{proposition}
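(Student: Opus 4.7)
The plan is to deduce both statements from the super PBW theorems for the two algebras, combined with a careful analysis of how the defining filtrations interact with multiplication by $\hbar$. For part (2), the Yangian case, Theorem \ref{th:pbwyang} together with Corollary \ref{cl:pbwcl} supplies an explicit ordered PBW basis of $Y_{\hbar}(\mathfrak{g})$, and under the specialization isomorphism $\tau: U(\mathfrak{g}[s]) \to Y_{\hbar}(\mathfrak{g}) / \hbar Y_{\hbar}(\mathfrak{g})$ of Definition \ref{df:isomzerogr} this basis reduces modulo $\hbar$ to the PBW basis of $U(\mathfrak{g}[s])$. Consequently $Y_{\hbar}(\mathfrak{g}) \cong U(\mathfrak{g}[s]) \otimes_{\Bbbk} \Bbbk[[\hbar]]$ as $\Bbbk[[\hbar]]$-modules, establishing topological freeness before completion.

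To pass to completions, I would use the grading of Subsection \ref{lb:HSSCY}, which places $h_{i,r}, x_{i,r}^{\pm}$ in degree $r$ and $\hbar$ in degree $1$. Combining this grading with the PBW identification above decomposes each homogeneous piece $Y_{\hbar}(\mathfrak{g})_k$ as a direct sum $\bigoplus_{n=0}^{k} \hbar^{n} \cdot U(\mathfrak{g}[s])_{k-n}$. Applying Lemma \ref{lm:isomcompl} and reorganizing an element $\sum_{k \ge 0} X_{k} \in \widehat{Y_{\hbar}(\mathfrak{g})}$ by powers of $\hbar$ yields a $\Bbbk[[\hbar]]$-module isomorphism
\[ \widehat{Y_{\hbar}(\mathfrak{g})} \;\cong\; \widehat{U(\mathfrak{g}[s])}[[\hbar]], \]
which simultaneously exhibits topological freeness and gives $\widehat{Y_{\hbar}(\mathfrak{g})}/\hbar\widehat{Y_{\hbar}(\mathfrak{g})} \cong \widehat{U(\mathfrak{g}[s])}$, as required.

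For part (1), the approach is parallel but now relies on PBW for $U_{\hbar}(L\mathfrak{g})$ (in the super setting, from \cite{Y99,T19b}) and the isomorphism $\eta$ of Definition \ref{df:isomzerogr}. The key new subtlety is that the completion is $\mathcal{J}$-adic, where $\mathcal{J}$ is defined by two consecutive specializations ($\hbar \to 0$ followed by $z \to 1$), rather than associated to an $\mathbb{N}_{0}$-grading with $\hbar$ in positive degree. I would verify an Artin--Rees-type compatibility: $\mathcal{J}^{n} \cap \hbar \, U_{\hbar}(L\mathfrak{g}) = \hbar \, \mathcal{J}^{n}$ for all $n$ large enough, so that the natural short exact sequence
\[ 0 \to \hbar \, U_{\hbar}(L\mathfrak{g})/\mathcal{J}^{n} \to U_{\hbar}(L\mathfrak{g})/\mathcal{J}^{n} \to U(L\mathfrak{g})/\overline{\mathcal{J}}^{n} \to 0 \]
remains exact after taking inverse limits over $n$. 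Granted this, one obtains that $\widehat{U_{\hbar}(L\mathfrak{g})}$ is $\hbar$-torsion free and that its reduction modulo $\hbar$ is $\widehat{U(L\mathfrak{g})}$, which together with topological freeness (deduced again from PBW applied termwise) yields the flat-deformation statement.

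The main obstacle is precisely the filtration compatibility in case (1); unlike the Yangian, where the grading with $\deg(\hbar)=1$ allows the clean regrouping $\sum_{k} X_{k} = \sum_{n} \hbar^{n} \tilde{Y}_{n}$, the $\mathcal{J}$-adic filtration mixes the $\hbar$-degree with the $z$-degree non-trivially. Verifying the Artin--Rees compatibility in the super setting is essentially the same manipulation as in \cite{GT13}, since the $\mathbb{Z}_{2}$-grading affects only sign conventions and not the filtration-theoretic content; consequently once the super-PBW theorem is in hand the remainder of the argument goes through verbatim.
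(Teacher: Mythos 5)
The paper itself offers no proof of Proposition \ref{pr:fduu}: the statement is simply quoted from \cite{GT13} with the implicit claim that the argument survives the passage to the super setting, so your sketch is doing strictly more work than the text does. Your route is, in outline, the one of \cite{GT13}, and for part (2) it is sound: the super PBW theorem (Theorem \ref{th:pbwyang}, Corollary \ref{cl:pbwcl}) gives a graded basis of $Y_{\hbar}(\mathfrak{g})$ whose reduction modulo $\hbar$ is a PBW basis of $U(\mathfrak{g}[s])$, the grading of Subsection \ref{lb:HSSCY} with $\deg(\hbar)=1$ splits each $Y_{\hbar}(\mathfrak{g})_{k}$ as $\bigoplus_{n=0}^{k}\hbar^{n}\,U(\mathfrak{g}[s])_{k-n}$, and regrouping the direct product of Lemma \ref{lm:isomcompl} by powers of $\hbar$ yields $\widehat{Y_{\hbar}(\mathfrak{g})}\cong\widehat{U(\mathfrak{g}[s])}[[\hbar]]$, which gives both topological freeness and the identification of the reduction modulo $\hbar$.

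For part (1) your strategy --- take the short exact sequences modulo $\mathcal{J}^{n}$, pass to inverse limits using surjectivity of the transition maps, and control the kernels by an Artin--Rees compatibility --- is the correct one, but the identity you propose to verify, $\mathcal{J}^{n}\cap\hbar\,U_{\hbar}(L\mathfrak{g})=\hbar\,\mathcal{J}^{n}$, is false for every $n$, not just small $n$: since $\hbar\in\mathcal{J}$ one has $\hbar^{n}\in\mathcal{J}^{n}\cap\hbar\,U_{\hbar}(L\mathfrak{g})$, whereas $\hbar^{n}\in\hbar\,\mathcal{J}^{n}$ would force $\hbar^{n-1}\in\mathcal{J}^{n}$ by $\hbar$-torsion-freeness of $U_{\hbar}(L\mathfrak{g})$, contradicting the fact that $\hbar^{n-1}$ has $\mathcal{J}$-adic valuation $n-1$. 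What you actually need is the shifted containment $\hbar\,\mathcal{J}^{n-1}\subseteq\mathcal{J}^{n}\cap\hbar\,U_{\hbar}(L\mathfrak{g})\subseteq\hbar\,\mathcal{J}^{n-c}$ for a fixed $c$, which follows from the description $\mathcal{J}^{n}=\sum_{a+b=n}\hbar^{a}\mathcal{K}^{b}$, where $\mathcal{K}$ is a graded lift of $\overline{\mathcal{J}}=\ker\bigl(U(L\mathfrak{g})\to U(\mathfrak{g})\bigr)$ furnished by the super PBW basis of \cite{T19b}; this makes the filtrations $\{\mathcal{J}^{n}\}$ and $\{(\mathcal{J}^{n}:\hbar)\}$ cofinal, identifies the limit of the kernels with $\hbar\,\widehat{U_{\hbar}(L\mathfrak{g})}$, and the rest of your argument then goes through. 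So the proposal is a faithful reconstruction of the cited proof with one fixable but load-bearing slip at the crux of part (1), and it would be worth stating explicitly that the only genuinely new input in the super case is the PBW theorem itself, which the paper already imports from \cite{T19b}.
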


Recall definition of the $\mathbb{Z}_{2}$-graded ideal $\mathcal{J}$ given in Subsection \ref{sect:QLS}. Now we are ready to prove
\begin{theorem}
	Let $\Phi: {U}_{\hbar}(L\mathfrak{g}) \to \widehat{Y_{\hbar}(\mathfrak{g})}$ be a homomorphism defined by the Theorem \ref{th:gsercindtrue}. Then
	\begin{enumerate}
		\item $\Phi$ maps $\mathcal{J}$ to the $\mathbb{Z}_{2}$-graded ideal $\widehat{Y_{\hbar}(\mathfrak{g})}_{+} = \prod_{n \ge 1} Y_{\hbar}(\mathfrak{g})_{n}$;
		\item the corresponding homomorphism
		\[ \widehat{\Phi} : \widehat{U_{\hbar}( L \mathfrak{g} )} \to \widehat{Y_{\hbar}(\mathfrak{g})} \]
		is an isomorphism.
	\end{enumerate}
	\label{th:cisomsyls}
\end{theorem}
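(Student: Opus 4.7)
The plan is to exploit the flat deformation framework of Proposition \ref{pr:fduu} together with the classical isomorphism $\widehat{\exp^{*}}$ of Corollary \ref{cl:isspec}. Part (1) will be reduced to understanding $\Phi$ modulo $\hbar$, and part (2) will then follow from a Nakayama-type lifting argument applied to the $\hbar$-completion.

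To prove part (1), I would fix $X \in \mathcal{J}$ and let $\bar{X} \in U(L\mathfrak{g})$ denote its image modulo $\hbar$. By the proposition preceding Corollary \ref{cl:isspec}, the specialization of $\Phi$ at $\hbar = 0$ is $\exp^{*}$, so $\Phi(X) \bmod \hbar$ equals $\exp^{*}(\bar{X})$, viewed inside $\widehat{U(\mathfrak{g}[s])}$. Because $X \in \mathcal{J}$, the element $\bar{X}$ is annihilated by $z \mapsto 1$ on $U(L\mathfrak{g})$; on the image side this corresponds to setting $s = 0$, since $\exp^{*}$ sends $Y \otimes z^{k} \mapsto Y \otimes e^{ks}$ and $e^{ks}|_{s = 0} = 1$. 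Hence $\exp^{*}(\bar{X}) \in \widehat{U(\mathfrak{g}[s])}_{+}$. Writing $\Phi(X) = c + Y_{+}$ with $c \in Y_{\hbar}(\mathfrak{g})_{0}$ and $Y_{+} \in \widehat{Y_{\hbar}(\mathfrak{g})}_{+}$, the reduction modulo $\hbar$ is compatible with the $+$-decomposition by Proposition \ref{pr:fduu}, so the constant part $c$ must project to zero in the classical quotient, forcing $c = 0$ and $\Phi(X) \in \widehat{Y_{\hbar}(\mathfrak{g})}_{+}$.

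For part (2), the containment established in part (1), together with multiplicativity of $\Phi$, gives $\Phi(\mathcal{J}^{n}) \subseteq (\widehat{Y_{\hbar}(\mathfrak{g})}_{+})^{n} \subseteq F_{n}$, so $\Phi$ will be continuous for the $\mathcal{J}$-adic and $\{F_{n}\}$-adic topologies and will extend by continuity to a $\Bbbk[[\hbar]]$-algebra homomorphism $\widehat{\Phi}$. The next step is to observe that its reduction modulo $\hbar$ agrees on generators with $\exp^{*}$, hence coincides with the isomorphism $\widehat{\exp^{*}}$ of Corollary \ref{cl:isspec}. By Proposition \ref{pr:fduu} both $\widehat{U_{\hbar}(L\mathfrak{g})}$ and $\widehat{Y_{\hbar}(\mathfrak{g})}$ are topologically free $\Bbbk[[\hbar]]$-modules, hence $\hbar$-torsion-free and $\hbar$-adically complete. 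A standard deformation-theoretic argument will then finish the proof: $\hbar$-torsion-freeness upgrades injectivity modulo $\hbar$ to injectivity, while $\hbar$-adic completeness yields surjectivity by successive approximation---given $b$, one picks $a_{0}$ with $\widehat{\Phi}(a_{0}) \equiv b \pmod{\hbar}$, iterates the construction on $(b - \widehat{\Phi}(a_{0}))/\hbar$, and sums the resulting $\hbar$-adically convergent series.

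The main obstacle, which deserves careful bookkeeping, is identifying the $\hbar = 0$ reductions of $\widehat{U_{\hbar}(L\mathfrak{g})}$, $\widehat{Y_{\hbar}(\mathfrak{g})}$ and their $+$-ideals with the corresponding classical completions $\widehat{U(L\mathfrak{g})}$, $\widehat{U(\mathfrak{g}[s])}$ and their augmentation ideals, and checking that under these identifications $\widehat{\Phi} \bmod \hbar$ is literally $\widehat{\exp^{*}}$. These compatibilities are encoded in Proposition \ref{pr:fduu} but need to be spelled out explicitly so that the Nakayama-style lifting remains valid for completions taken with respect to $\mathcal{J}$ and $\{F_{n}\}$ rather than a naive $\hbar$-adic one.
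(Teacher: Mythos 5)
Your proposal is correct and follows essentially the same route as the paper: part (1) is exactly the specialization-at-$\hbar=0$ argument that the paper delegates to Theorem 6.2 of \cite{GT13} (reducing $\Phi(X)$ modulo $\hbar$ to $\exp^{*}(\bar{X})$, which lies in the augmentation ideal because $e^{ks}\vert_{s=0}=1$, and then killing the degree-zero scalar), and part (2) is the flat-deformation/Nakayama lifting via Proposition \ref{pr:fduu} and Corollary \ref{cl:isspec} that the paper invokes in one line. You have simply spelled out the details the paper leaves implicit; no gap.
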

\begin{proof}
	The first part is proved as in Theorem 6.2 in \cite{GT13}. The second follows from Proposition \ref{pr:fduu} and the fact that, by Corollary \ref{cl:isspec}, the specialization of $\widehat{\Phi}$ at $\hbar=0$ coincides with the isomorphism $\widehat{\exp^{*}} : \widehat{U( \mathfrak{g}[z, z^{-1}] )} \to \widehat{U(\mathfrak{g}[s])}$.
\end{proof}

\vspace{1cm}

\section{Classification of Hopf superalgebra strunctures on Drinfeld super Yangian}

Recall Subsections \ref{subs:LS}, \ref{sb:WGD} and \ref{sect:DSY} for notations. The main task is to classify Hopf superalgebra structures on $Y_{\hbar}(\mathfrak{g}_{d})$ for $d \in D$. The way we act is very close to that in \cite{MS21}, i. e. the plan is to describe all isomorphic superalgebras of the form $Y_{\hbar}(\mathfrak{g}_{d})$ ($d \in D$) and to determine which of them are isomorphic as Hopf superalgebras. The latter are classified by Dynkin diagrams. There is only one limitation that we have to take into account, namely the restictions imposed on Dynkin diagrams in Subsection \ref{sb:mpsy} (see Theorem \ref{th:mpy}) in order to consider the minimalistic presentation and to define in Subsection \ref{sub:hssy} the Hopf superalgebra structure on a Drinfeld super Yangian. The results that we use below nonetheless remain true even for those Drinfeld super Yangians which don't satisfy our restrictions on associated Dynkin diagrams if we suppose that they have the Hopf superalgebra structure given by equations \eqref{eq:comg} - \eqref{eq:comh1}.

\subsection{Isomorphisms of Drinfeld super Yangians}

We introduce a family of superalgebra isomorphisms needed to classify Hopf superalgebra structures on Drinfeld super Yangians.

Consider a Lie superalgebra $\mathfrak{g}_{d} = \mathfrak{sl}(V^{d})$, where $n_{+} = \text{dim}(V_{\bar{0}}^d)$, $n_{-} = \text{dim}(V_{\bar{1}}^d)$ for $d \in D$. Recall the result proved in \cite{T19b} (see also Subsection \ref{subs:LS} for notations), namely
\begin{theorem}
	\emph{\cite{T19b}}
	The superalgebra $Y_{\hbar}(\mathfrak{g}_{d})$ for all $d \in D$ depends only on $(n_{+}, n_{-})$ up to an isomorphism of superalgebras.
	\label{th:SYD}
\end{theorem}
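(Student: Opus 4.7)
The plan is to reduce the classification of $Y_\hbar(\mathfrak{g}_d)$ with fixed $(n_+, n_-)$ to a sequence of elementary odd-reflection transformations of the Dynkin diagram, and to exhibit an explicit superalgebra isomorphism for each step. First I would use the complete Weyl group $W_c(n_+, n_-)$ of Subsection \ref{sb:WGD}: by Lemma \ref{lm:udecperm}, any permutation of the basis $\mathcal{B}_{st}$ factors uniquely as a product of adjacent transpositions, and any two root systems $P_d, P_{d'} \in P(n_+, n_-)$ differ by such a permutation of the parity pattern on $\{\epsilon_i\}$. It therefore suffices to produce an isomorphism $Y_\hbar(\mathfrak{g}_{d'}) \cong Y_\hbar(\mathfrak{g}_d)$ whenever $d'$ is obtained from $d$ by swapping two consecutive basis vectors $\epsilon_{i,d}, \epsilon_{i+1,d}$; the case of equal parity corresponds to a classical diagram automorphism which extends trivially to the Yangian level, so the interesting case is when the two parities differ, i.e.\ $\alpha_{i,d}$ is odd.

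Next, for such an odd reflection $\sigma_i$ I would build the isomorphism $\Psi$ directly on the degree-zero Drinfeld generators, exploiting the minimalistic presentation of Theorem \ref{th:mpy}. Generators $\{x^{\pm}_{j,0,d'}, h_{j,0,d'}\}$ for $j \notin \{i-1, i, i+1\}$ are mapped to their namesakes in $Y_\hbar(\mathfrak{g}_d)$. For $j \in \{i-1,i,i+1\}$ I would use the standard Yamane-type odd reflection formulas: the new odd simple root satisfies $\alpha_{i,d'} = -\alpha_{i,d}$, corresponding to $x^\pm_{i,0,d'} \mapsto x^\mp_{i,0,d}$ and $h_{i,0,d'} \mapsto -h_{i,0,d}$, while the two neighbours pick up $\alpha_{i,d}$, so $x^\pm_{i\pm1,0,d'}$ is sent to a supercommutator of the form $\pm[x^\pm_{i\pm1,0,d}, x^\pm_{i,0,d}]$ and $h_{i\pm1,0,d'}$ to the appropriate linear combination of $h_{i\pm1,0,d}$ and $h_{i,0,d}$ dictated by the new Cartan pairing. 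The degree-one generators $h_{j,1,d'}, x^\pm_{j,1,d'}$ are then produced by the recursions \eqref{eq:rec1} and \eqref{eq:rec2}, which is legitimate precisely because the minimalistic presentation of Theorem \ref{th:mpy} identifies $Y_\hbar(\mathfrak{g}_{d'})$ with the algebra on $\{h_{j,r,d'}, x^\pm_{j,r,d'}\}_{j\in I}^{r\in\{0,1\}}$ subject to the finite list of relations \eqref{eq:mpy1}--\eqref{eq:mpy9}.

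The hard part will be verifying that $\Psi$ respects all these defining relations. The Cartan-type relations \eqref{eq:mpy2}--\eqref{eq:mpy4} follow essentially from a direct computation using the transformed Cartan matrix of $\mathfrak{g}_{d'}$ together with the super Jacobi identity \eqref{eq:sje}. The cubic and quartic super Serre relations \eqref{eq:mpy8} and \eqref{eq:mpy9} are the genuine obstacle: rewriting \eqref{eq:mpy9} for $\mathfrak{g}_{d'}$ in terms of the images of the generators inside $Y_\hbar(\mathfrak{g}_d)$ produces nested commutators that must be shown to vanish, and the natural strategy mirrors the combinatorial arguments used in the proof of Theorem \ref{th:mpy} (compare Lemmas \ref{lm:fassrс} and \ref{lm:leqmp}), augmented by the $\hbar$-correction terms coming from \eqref{eq:mpy5}. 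Since $\sigma_i$ is involutive, the same construction applied to the pair $(d', d)$ produces $\Psi^{-1}$, so $\Psi$ is in fact an isomorphism. Composing these odd-reflection isomorphisms according to the $W_c(n_+, n_-)$-decomposition of the permutation sending $d$ to $d'$ then yields $Y_\hbar(\mathfrak{g}_d) \cong Y_\hbar(\mathfrak{g}_{d'})$ for every pair of Dynkin diagrams sharing the invariants $(n_+, n_-)$.
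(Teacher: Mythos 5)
The paper does not actually prove Theorem \ref{th:SYD}: it is quoted from \cite{T19b}, where the statement is obtained by identifying the Drinfeld presentation of $Y_{\hbar}(\mathfrak{g}_d)$ for \emph{every} parity sequence with one and the same RTT (equivalently, shuffle-algebra) realization via Gauss decomposition; since the RTT superalgebra depends only on $(n_{+},n_{-})$, the isomorphism follows without any generator-by-generator verification. Your proposal --- decomposing the permutation into adjacent transpositions via Lemma \ref{lm:udecperm} and realizing each odd transposition by an explicit ``odd reflection'' isomorphism in the minimalistic presentation --- is therefore a genuinely different route, and in principle an attractive one because it would stay entirely inside the Drinfeld presentation. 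However, as written it has concrete gaps that prevent it from being a proof.

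First, the minimalistic presentation of Theorem \ref{th:mpy} has $\{h_{j,1}\}_{j\in I}$ among its generators, and the recursions \eqref{eq:rec1}--\eqref{eq:rec2} cannot be started until $\Psi(h_{j,1,d'})$ (equivalently $\Psi(\widetilde{h}_{j,1,d'})$) is specified; you never do this, and for the reflected node and its neighbours this is exactly where the difficulty sits, since \eqref{eq:mpy4}--\eqref{eq:mpy6} then have to be checked against composite images such as $\Psi(x^{+}_{i\pm 1,0,d'})=[x^{+}_{i\pm 1,0,d},x^{+}_{i,0,d}]$, whose degree-one analogues acquire $\hbar$-corrections from \eqref{eq:mpy5} that a purely classical odd-reflection ansatz does not see. (Even at degree zero your sign conventions are inconsistent: $x^{\pm}_{i,0,d'}\mapsto x^{\mp}_{i,0,d}$ forces $\Psi(h_{i,0,d'})=[x^{-}_{i,0,d},x^{+}_{i,0,d}]=+h_{i,0,d}$, not $-h_{i,0,d}$.) Second, and more structurally, Theorem \ref{th:mpy} is only established under the restrictions listed before its statement ($|I|\ne 1$, the excluded $|I|=3$ case, and every odd vertex having an even neighbour), whereas Theorem \ref{th:SYD} is asserted for \emph{all} $d\in D$; a chain of adjacent transpositions connecting two admissible diagrams will in general pass through diagrams violating these constraints (an odd reflection can create an odd vertex with two odd neighbours), so the minimalistic presentation is not available at the intermediate steps and the induction breaks. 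To salvage the strategy one would have to either verify the reflection isomorphism against the full list of relations \eqref{eq:Cer}--\eqref{eq:qssr} with all $r,s$, or route the argument through the RTT realization as \cite{T19b} does.
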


Recall Subsection \ref{sb:WGD}. Denote by $S_z$ the symmetric group of degree $z$ ($z \in \mathbb{N}$). Fix the characteristic $(m, n)$. Let $Y_{\hbar}(m|n) := Y_{\hbar}(\mathfrak{g}_{a})$, where $a \in D$ corresponds to the distinguished Dynkin diagram $D_{a}$ and $\mathfrak{g}_{a} = \mathfrak{sl}(m|n)$. Describe the distinguished Dynkin diagram associated with $P_{st}(m, n)$ (which gives rise to the Drinfeld super Yangian $Y_{\hbar}(m|n)$ in the unique way through the Lie superalgebra $\mathfrak{sl}(m|n)$) via
\begin{equation}
	\label{eq:stdddcons}
	D_{a} = ( \epsilon_{1,a} - \epsilon_{2,a}, ... , \epsilon_{i-1,a} - \epsilon_{i,a} , \epsilon_{i,a} - \epsilon_{i+1,a}, \epsilon_{i+1,a} - \epsilon_{i+2,a} , ... , \epsilon_{m+n-1,a} - \epsilon_{m+n,a} ).
\end{equation}
By Theorem \ref{th:SYD} Dynkin diagrams of isomorphic Drinfeld super Yangians have the form for $b \in D$
\begin{equation}
	D_{b} = ( \epsilon_{ \sigma(1),b} - \epsilon_{ \sigma( 2 ),b}, ... , \epsilon_{ \sigma(i),b} - \epsilon_{ \sigma(i+1),b}, ... , \epsilon_{ \sigma(m + n -1),b} - \epsilon_{ \sigma(m + n),b} ),
	\label{eq:isomdsupalg}
\end{equation}
where $\sigma$ is an arbitrary element of the symmetric group $S_{m+n}$. In order to move from one Drinfeld super Yangian to another it is natural to use the complete Weyl group $W_{c}( m, n )$, i. e. using simple reflections we are able to go through superalgebras associated with Dynkin diagrams described by the formula \eqref{eq:isomdsupalg}. Recall that $W_{c}( m, n )$ is in fact the pair $(S_{m+n}, \rho: S_{m+n} \to Aut_{\Bbbk}(P_{st}(m, n)) $. Fix $i \in I$ and apply the image of $\sigma_{i} \in W_{c}( m, n ) $ to $P_{st}(m, n)$ associated with the Dynkin diagram $D_{a}$ \eqref{eq:stdddcons}. Then we get the Dynkin diagram for $z \in D$
\begin{equation}
	D_{z} = ( \epsilon_{1,z} - \epsilon_{2,z}, ... , \epsilon_{i-1,z} - \epsilon_{i+1,z} , \epsilon_{i+1,z} - \epsilon_{i,z}, \epsilon_{i,z} - \epsilon_{i+2,z} , ... , \epsilon_{m+n-1,z} - \epsilon_{m+n,z} ).
\label{eq:fdch}
\end{equation}
Consider the case $|\sigma_{i}| = \bar{0}$. Let $L_{a} = \{ i \in I \; | \; |\sigma_{i}| = \bar{0} \}$. Then 
\begin{equation}
	( \epsilon_{j,a} - \epsilon_{j+1,a}, \epsilon_{j+1,a} - \epsilon_{j+2,a} ) = ( \epsilon_{\sigma_{i}(j),a} - \epsilon_{\sigma_{i}(j+1),a}, \epsilon_{\sigma_{i}(j+1),a} - \epsilon_{\sigma_{i}(j+2),a} ) =
	\label{eq:fdch1}
\end{equation}
\[ ( \epsilon_{j,z} - \epsilon_{j+1,z}, \epsilon_{j+1,z} - \epsilon_{j+2,z} ) \]
for all $j \in I$;
\begin{equation}
	|\epsilon_{j,a} - \epsilon_{j+1,a}| = |\epsilon_{\sigma_{i}(j),a} - \epsilon_{\sigma_{i}(j+1),a}| = 	|\epsilon_{j,z} - \epsilon_{j+1,z}|	
	\label{eq:fdch2}
\end{equation}	
for all $j \in I$. Then associate with a Dynkin diagram $D_{z}$ \eqref{eq:fdch} which satisfies equations \eqref{eq:fdch1} - \eqref{eq:fdch2} a Drinfeld super Yangian $Y_{\hbar}(\mathfrak{g}_{z})$ for an appropriate $z \in D$. Recall that $Y_{\hbar}(m|n) = Y_{\hbar}(\mathfrak{g}_{a})$ for an appropriate $a \in D$. Using this information we give
\begin{definition}
	\label{df:lbhsisomeven}
	 Define a $\mathbb{Z}_{2}$-graded even mapping $\Xi_{i}: Y_{\hbar}(m|n) \to Y_{\hbar}(\mathfrak{g}_{z})$ by
	\[ \Xi_{i} ( h_{j,r,a} ) = h_{j,r,z}, \; \Xi_{i} ( x^{\pm}_{j,r,a} ) = x^{\pm}_{j,r,z} \]
	for all $j \in I$ and for all $r \in \mathbb{N}_{0}$.
\end{definition}

\begin{lemma}
	The function $\Xi_{i}$ given by Definition \ref{df:lbhsisomeven} is the isomorphism of superalgebras.
	\label{lm:srwgsaeref}
\end{lemma}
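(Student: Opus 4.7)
The strategy is to invoke the minimalistic presentation of Theorem \ref{th:mpy}: the defining relations of a Drinfeld super Yangian associated with $d\in D$, viewed through its generators $\{h_{j,r,d}, x_{j,r,d}^{\pm}\}^{r\in\{0,1\}}_{j\in I}$, depend on the Cartan data only via the matrix entries $c^{d}_{jk} = (\alpha_{j,d}, \alpha_{k,d})$ and the parities $|\alpha_{j,d}|$. If I can show $c^{a}_{jk} = c^{z}_{jk}$ and $|\alpha_{j,a}| = |\alpha_{j,z}|$ for all $j,k \in I$, then every relation \eqref{eq:mpy1}--\eqref{eq:mpy9} of $Y_{\hbar}(m|n)$ is mapped to the literally identical relation of $Y_{\hbar}(\mathfrak{g}_{z})$ under $\Xi_{i}$.

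First I would unpack equations \eqref{eq:fdch1} and \eqref{eq:fdch2}: in the even case $|\sigma_{i}| = \bar{0}$ they say that each pair of consecutive simple roots $(\alpha_{j,a}, \alpha_{j+1,a})$ agrees with $(\alpha_{j,z}, \alpha_{j+1,z})$, both as elements carrying a $\mathbb{Z}_{2}$-grading and with respect to the bilinear form. Combined with the type-$A$ observation that $(\alpha_{j,d}, \alpha_{k,d}) = 0$ for $|j-k|>1$, this promotes the consecutive-pair agreement to the full equalities $c^{a}_{jk} = c^{z}_{jk}$ and $|\alpha_{j,a}| = |\alpha_{j,z}|$ for all $j,k \in I$. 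Because the restriction "every odd vertex has an even neighbor" required before Theorem \ref{th:mpy} depends only on these data, $D_{z}$ inherits it from $D_{a}$, so the minimalistic presentation is available on both sides.

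Next, using that presentation, each of the relations \eqref{eq:mpy1}--\eqref{eq:mpy9} defining $Y_{\hbar}(m|n)$ goes, under the substitution $h_{j,r,a} \mapsto h_{j,r,z}$, $x^{\pm}_{j,r,a} \mapsto x^{\pm}_{j,r,z}$, to the corresponding relation of $Y_{\hbar}(\mathfrak{g}_{z})$ with identical coefficients; hence $\Xi_{i}$ extends uniquely to a homomorphism of $\mathbb{Z}_{2}$-graded associative superalgebras. The symmetric assignment $h_{j,r,z} \mapsto h_{j,r,a}$, $x^{\pm}_{j,r,z} \mapsto x^{\pm}_{j,r,a}$ gives, by the same argument (since $\sigma_{i}^{2} = \id_{S_{m+n}}$ and the Cartan-data agreement is symmetric in $a$ and $z$), a two-sided inverse homomorphism, so $\Xi_{i}$ is an isomorphism.

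The main subtlety is not the algebraic content but the bookkeeping: one has to be careful about how the permutation $\sigma_{i}$ transports indices through the isomorphism $\zeta_{z} : P_{z} \to P_{st}(n_{+}, n_{-})$ from Remark \ref{rm:cwgfadd}, and verify that the physical swap of the two basis vectors $\epsilon_{i,a}$ and $\epsilon_{i+1,a}$ (which share a common parity because $|\sigma_{i}| = \bar{0}$) leaves the abstract Cartan matrix and parity function on the simple roots indexed by $j \in I$ unchanged. Once this identification is made, the isomorphism statement reduces to the tautology that two presentations with identical generators, identical gradings, and identical relations define isomorphic superalgebras.
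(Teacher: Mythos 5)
Your argument is correct and rests on the same core observation as the paper's proof: equations \eqref{eq:fdch1}--\eqref{eq:fdch2} force the Cartan matrices and root parities of $D_{a}$ and $D_{z}$ to coincide, so the generator-to-generator substitution carries each defining relation to the literally identical relation, and the evident inverse assignment shows bijectivity. The only (cosmetic) difference is that you check the relations of the minimalistic presentation of Theorem \ref{th:mpy} while the paper checks the full list \eqref{eq:Cer}--\eqref{eq:qssr} directly; if you take your route you should add the one-line remark that the resulting homomorphism agrees with Definition \ref{df:lbhsisomeven} on the generators with $r\ge 2$, which holds because the recursions \eqref{eq:rec1} and \eqref{eq:rec2} have identical coefficients on both sides.
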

\begin{proof}
	Recall defining relations \eqref{eq:Cer} - \eqref{eq:qssr}. It follows from equations \eqref{eq:fdch1} - \eqref{eq:fdch2} that $\mathfrak{g}_{a} \cong \Xi_{i}( \mathfrak{g}_{a} ) = \mathfrak{g}_{z}$ as Lie superalgebras. In particular their associated Cartan matrices are equal. Thus it is easy to see that $\Xi_{i}$ is a homomorphism of superalgebras and is a bijective function.
\end{proof}

As all Drinfeld super Yangians described by \eqref{eq:isomdsupalg} are isomorphic as superalgebras to $Y_{\hbar}(m|n)$ it is clear that the family of superalgebra isomorphisms given by Definition \eqref{df:lbhsisomeven} can be considered as a family of automorphisms if we set $z = a$. We are also interested if there is a well-known algebraic structure behind a subring $ \langle \{ \Xi_{i} : Y_{\hbar}(m|n) \to Y_{\hbar}(m|n)  \}_{i \in L_{a}} \rangle $ of the ring of endomorphisms $End_{\Bbbk[\hbar]}(Y_{\hbar}(m|n))$. Recall that $I_{\bar{0}} = \{1,2,..., m\}$ and $I_{\bar{1}} = \{m+1,m+2,..., m+n\}$. We prove
\begin{lemma}
	\[ ( \Xi_{i} )^2 = \id \]
	for all $i \in I_{\bar{0}} \setminus \{m\}$ and $i \in  I_{\bar{1}} \setminus \{m+n\}$ respectively;
	\[ \Xi_{i} \Xi_{j} = \Xi_{j} \Xi_{i} \text{ for } |i-j| > 1 \]
	for $i, j \in I_{\bar{0}} \setminus \{m\}$ and $i, j \in  I_{\bar{1}} \setminus \{m+n\}$ respectively;
	\[ ( \Xi_{i} \Xi_{i+1} )^{3} = \id \]
	for $i \in I_{\bar{0}} \setminus \{m-1, m\}$ and $i \in  I_{\bar{1}} \setminus \{m+n-1,m+n\}$ respectively;
	\[ \Xi_{i} \Xi_{j} = \Xi_{j} \Xi_{i} \]
	for all $i \in I_{\bar{0}} \setminus \{ m\}$ and $j \in  I_{\bar{1}} \setminus \{m+n\}$.
	\label{lm:defrellbr}
\end{lemma}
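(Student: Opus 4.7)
The plan is to reduce each Coxeter-type identity among the $\Xi_i$'s to the corresponding identity in the Weyl group $W(n_+, n_-)$ established in Subsection \ref{sub:WGdef}. Each $\Xi_i$ is a superalgebra isomorphism fully determined by its values on the generators $\{h_{j,r,a}, x^\pm_{j,r,a}\}_{j \in I, r \in \mathbb{N}_0}$, and the way it moves the ``source'' Dynkin diagram $D_a$ to $D_z$ is precisely the action of the simple reflection $\sigma_i$ (with $|\sigma_i|=\bar 0$) on the basis $\{\epsilon_{k,a}\}$ of $P_{st}(m,n)$ (cf.\ equations \eqref{eq:fdch}--\eqref{eq:fdch2}). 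Because all $Y_\hbar(\mathfrak g_z)$ are canonically identified through the generator-preserving maps of Theorem \ref{th:SYD}, and because $\Xi_i$ sends the generator with label $j$ to the generator with the same label $j$, every iterated composition $\Xi_{i_k}\circ\cdots\circ\Xi_{i_1}$ will be governed, at the level of root systems, by the Weyl-group product $\sigma_{i_k}\cdots\sigma_{i_1}$.

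First I would verify the quadratic relation $(\Xi_i)^2=\id$: one application of $\Xi_i$ produces $D_z$ with $\epsilon_{i,a}$ and $\epsilon_{i+1,a}$ exchanged, a second application undoes this exchange because $\sigma_i^2=\id_{S_{m+n}}$, and label-preservation then forces $(\Xi_i)^2(h_{j,r,a})=h_{j,r,a}$ and $(\Xi_i)^2(x^\pm_{j,r,a})=x^\pm_{j,r,a}$. The commutation relations $\Xi_i\Xi_j=\Xi_j\Xi_i$ (both in the even-indexed case $|i-j|>1$ and in the mixed even/odd case) come from the fact that the underlying simple reflections act on disjoint pairs of basis vectors of $P_{st}(m,n)$: the two resulting Dynkin diagrams are literally identical, and the two induced superalgebra maps agree on every generator. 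The braid relation $(\Xi_i\Xi_{i+1})^3=\id$ is the same idea iterated: three applications of $\sigma_i\sigma_{i+1}$ return the identity in $W(n_+,n_-)$, the Dynkin diagram returns to $D_a$, and preservation of generator labels yields the identity map on all generators.

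The main delicate point, and the place requiring the most care, is the bookkeeping of the source/target Dynkin diagrams of the $\Xi$'s under composition. As stated, Definition \ref{df:lbhsisomeven} gives $\Xi_i$ only with fixed source $Y_\hbar(m|n)=Y_\hbar(\mathfrak g_a)$; to make compositions like $\Xi_i\circ\Xi_j$ literally well-defined one must implicitly extend the definition by replacing the source by $Y_\hbar(\mathfrak g_{z'})$ for an intermediate Dynkin diagram $D_{z'}$ and declaring $\Xi_i(h_{j,r,z'}):=h_{j,r,z''}$, $\Xi_i(x^\pm_{j,r,z'}):=x^\pm_{j,r,z''}$, where $D_{z''}$ is obtained from $D_{z'}$ by applying $\sigma_i$. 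Once this extension is set up, each of the four claimed identities reduces to checking that two sequences of basis permutations of $P_{st}(m,n)$ produce the same final Dynkin diagram, which is exactly the content of the corresponding Coxeter relation for $W(n_+,n_-)$ proved in Subsection \ref{sub:WGdef}.
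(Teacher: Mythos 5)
Your argument is correct and takes essentially the same approach as the paper, whose entire proof reads ``The proof follows immediately from the definition'': your reduction of each identity to the corresponding Coxeter relation for the simple reflections acting on $P_{st}(m,n)$, combined with the fact that each $\Xi_i$ preserves generator labels, is exactly the content the paper leaves implicit. Your remark that compositions such as $\Xi_i \circ \Xi_j$ require silently extending Definition \ref{df:lbhsisomeven} to intermediate source diagrams $Y_{\hbar}(\mathfrak{g}_{z'})$ is a legitimate bookkeeping point that the paper glosses over but that does not affect the validity of the result.
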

\begin{proof}
	The proof follows immediately from the definition.
\end{proof}
Set $|\Xi_{i}| = \bar{0}$ for all $i \in L_{a}$. Recall Subsection \ref{sub:WGdef}, namely the definition of the Weyl group $W(m, n)$ of type $(m, n)$. We conclude that there is the faithful representation of groups (which is a $\mathbb{Z}_{2}$-graded even function) $\varrho : S_{I_{\bar{0}}} \times S_{ I_{\bar{1}} } \to \langle \{ \Xi_{i}  \}_{i \in L_{a}} \rangle$ given by $\varrho(\sigma_{i}) = \Xi_{i}$ for all $i \in L_{a}$. Recall that there is a faithful representation (which is a $\mathbb{Z}_{2}$-graded even function) $\rho: S_{I_{\bar{0}}} \times S_{ I_{\bar{1}} } \to Aut_{\Bbbk}(P_{st}(m, n))$. Thus we have
\begin{theorem}
	There is an isomorphim of $\mathbb{Z}_{2}$-graded groups (a $\mathbb{Z}_{2}$-graded even function) $\upsilon: \langle \{ \Xi_{i}  \}_{i \in L_{a}} \rangle \to  \langle \{ \rho( \sigma_{i} ) \}_{i \in L_{a}} \rangle $ given by $\upsilon( \Xi_{i} ) = \rho( \sigma_{i} )$ for all $i \in L_{a}$. Moreover, the following diagram commutes
	\begin{center}
		\begin{tikzpicture}
			\matrix (m) [matrix of math nodes,row sep=3em,column sep=4em,minimum width=2em]
			{
				S_{I_{\bar{0}}} \times S_{ I_{\bar{1}} } \\
				\langle \{ \Xi_{i}  \}_{i \in L_{a}} \rangle & \langle \{ \rho( \sigma_{i} ) \}_{i \in L_{a}} \rangle \\};
			\path[-stealth]
			(m-1-1) edge node [left] {$\varrho$} (m-2-1)
			edge node [right] {$\rho$} (m-2-2)
			(m-2-1) edge node [below] {$\upsilon$} (m-2-2);
		\end{tikzpicture}
	\end{center}
\end{theorem}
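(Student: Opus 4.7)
The plan is to factor everything through the abstract Weyl group $W(m,n)$ and exploit faithfulness of $\rho$. First I observe that $\varrho$, as defined in the statement, is indeed a well-defined surjective group homomorphism $W(m,n) \to \langle\{\Xi_i\}_{i \in L_a}\rangle$: by Lemma \ref{lm:defrellbr} the automorphisms $\Xi_i$ satisfy precisely the Coxeter relations presenting $W(m,n) = S_{I_{\bar 0}} \times S_{I_{\bar 1}}$ given in Subsection \ref{sub:WGdef}, so $\sigma_i \mapsto \Xi_i$ extends uniquely, and surjectivity is immediate from the choice of generating set. Simultaneously, $\rho$ is a faithful representation by definition of the Weyl group, hence restricts to a group isomorphism $W(m,n) \xrightarrow{\sim} \langle\{\rho(\sigma_i)\}_{i \in L_a}\rangle$.

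With this setup, the natural candidate is $\upsilon := \rho \circ \varrho^{-1}$, and the whole content is reduced to proving that $\varrho$ is injective. My plan for this is to trace the action of a word $\Xi_{i_k} \circ \cdots \circ \Xi_{i_1}$ at the level of generators of $Y_{\hbar}(m|n)$: each $\Xi_i$ sends $h_{j,r,\cdot}, x^{\pm}_{j,r,\cdot}$ to the generators with identical labels but living in $Y_{\hbar}(\mathfrak g_{z})$ for the diagram $z$ obtained from the previous one by the simple reflection $\sigma_i$ applied to $P_{st}(m,n)$. Composing produces a shift of the root data by $w = \sigma_{i_k}\cdots\sigma_{i_1}$, and upon the canonical identification given by Theorem \ref{th:SYD} (which reduces to the permutation of indices on generators), the resulting automorphism encodes the action of $\rho(w)$ on $P_{st}(m,n)$. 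Therefore $\varrho(w) = \id_{Y_{\hbar}(m|n)}$ forces $\rho(w) = \id$, and faithfulness of $\rho$ yields $w = e$, so $\ker\varrho = \{e\}$.

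Once injectivity of $\varrho$ is secured, $\upsilon = \rho \circ \varrho^{-1}$ is a composition of two group isomorphisms and hence an isomorphism. Its action on generators is exactly $\upsilon(\Xi_i) = \rho(\sigma_i)$, and the equality $\upsilon \circ \varrho = \rho$ is built into the definition, giving commutativity of the diagram. The $\mathbb Z_2$-grading is trivial to verify: every generator $\Xi_i$ has been set to degree $\bar 0$, and every $\sigma_i \in W(m,n)$ likewise carries grading $\bar 0$ by construction in Subsection \ref{sub:WGdef}, so both groups are concentrated in even degree and $\upsilon$ is automatically $\mathbb Z_2$-graded even. The main obstacle in the argument is the injectivity step, since it requires a careful (and canonical) choice of identification $Y_{\hbar}(\mathfrak g_z) \cong Y_{\hbar}(m|n)$ so that the compositions of $\Xi_i$'s, viewed as endomorphisms of a single algebra, faithfully record the permutation action of $W(m,n)$ on $P_{st}(m,n)$ rather than collapsing onto a proper quotient.
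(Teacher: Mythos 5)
Your proposal follows the same route as the paper: both factor through the abstract Weyl group $W(m,n)=S_{I_{\bar 0}}\times S_{I_{\bar 1}}$, invoke Lemma \ref{lm:defrellbr} for the Coxeter relations satisfied by the $\Xi_{i}$, and obtain $\upsilon$ as $\rho\circ\varrho^{-1}$ using faithfulness of $\varrho$ and $\rho$. The paper's proof is a one-line appeal to the preceding paragraph, where faithfulness of $\varrho$ is simply asserted, so your explicit treatment of surjectivity and injectivity --- including the honest caveat that injectivity hinges on choosing the identification $Y_{\hbar}(\mathfrak{g}_{z})\cong Y_{\hbar}(m|n)$ so that the composites of the $\Xi_{i}$ record the permutation action on $P_{st}(m,n)$ rather than collapsing --- only supplies detail the paper omits.
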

\begin{proof}
	The result follows from the definition of $W( m, n )$, the arguments in the preceding paragraph and Lemma \ref{lm:defrellbr}.
\end{proof}

Now we want to find out which elements of $W_{c}(m,n)$ move the odd vertix (recall there is exact one such vertix) labeled by $\epsilon_{m,a} - \epsilon_{m+1,a}$ in $D_{a}$ \eqref{eq:stdddcons} in such a way that in the Dynkin diagram $D_{b}$ \eqref{eq:isomdsupalg} there is exactly one odd vertix. Let $\sigma \in W_{c}(m,n)$ be such a permutation and suppose that $\sigma(\epsilon_{m,a} - \epsilon_{m+1,a} ) = \epsilon_{\sigma(m),a} - \epsilon_{\sigma(m+1),a} = \epsilon_{j,b} - \epsilon_{j+1,b}$ for $j \in I$, where $|\epsilon_{m,a} - \epsilon_{m+1,a}| = |\epsilon_{\sigma(m),a} - \epsilon_{\sigma(m+1),a} | = |\epsilon_{j,b} - \epsilon_{j+1,b} | = \bar{1}$. There are the following possibilities:
\begin{enumerate}
	\item $|\epsilon_{j-1,b}| = |\epsilon_{m,a}|$ and $ |\epsilon_{j+2,b}| = |\epsilon_{m+1,a}|$;
	\item $|\epsilon_{j-1,b}| = |\epsilon_{m+1,a}|$ and $|\epsilon_{j+2,b}| = |\epsilon_{m,a}|$.
\end{enumerate}
In our particular case it is easy to see that we have
\begin{enumerate}
	\item $\sigma = \sigma^{'}$ for $\sigma^{'} \in W(a)$ (1-st case);
	\item $ \sigma(\epsilon_{i,a}) = \sigma^{'} \circ \sigma^{''}(\epsilon_{i,a}) = \sigma^{'} ( \epsilon_{m+n+1-i,a} ) $ for all $i \in I$ and for $\sigma^{'} \in W(b)$ (2-nd case).
\end{enumerate}
The latter second case motivates
\begin{definition}
	\label{df:lbhsisomchorder}
	Define a $\mathbb{Z}_{2}$-graded even mapping $\Xi: Y_{\hbar}(m|n) \to Y_{\hbar}(\mathfrak{g}_{b})$ by
	\[ \Xi ( h_{j,r,a} ) = h_{m+n+1-j,r,b}, \; \Xi ( x^{\pm}_{j,r,a} ) = x^{\pm}_{m+n+1-j,r,b} \]
	for all $j \in I$ and for all $r \in \mathbb{N}_{0}$.
\end{definition}
It is obvious that definition is well-defined.

\begin{lemma}
	The function $\Xi$ given by Definition \ref{df:lbhsisomchorder} is the isomorphism of superalgebras.
	\label{lm:lbhsisomchorder}
\end{lemma}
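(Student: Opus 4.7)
The plan is to verify that $\Xi$ carries each defining relation of $Y_\hbar(m|n)$ to a valid relation of $Y_\hbar(\mathfrak{g}_b)$, so it extends to a homomorphism of superalgebras, and then to exhibit an explicit two-sided inverse. The key preliminary step is to unpack the root-data implications of the underlying permutation $\sigma''$ discussed before Definition \ref{df:lbhsisomchorder}. Since $\sigma''$ reverses the order of the basis $\{\epsilon_{i,a}\}$ and the identification with the basis of $P_b$ defining $D_b$ is the one indicated in the paragraph preceding the definition, the simple root $\alpha_{i,a}$ maps to (minus) a simple root of $D_b$ whose index is the reversal of $i$. Because $\sigma''$ preserves the bilinear form and the $\mathbb{Z}_2$-grading on roots, we obtain the two invariants that are used throughout: along paired indices the Cartan entries are equal, $c_{ij}^{a} = c_{i',j'}^{b}$, and the parities coincide, $|\alpha_{i,a}| = |\alpha_{i',b}|$, where $i',j'$ denote the reversed indices employed by $\Xi$. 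In particular the unique odd vertex of $D_a$ corresponds to the unique odd vertex of $D_b$.

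Using these two invariants, the defining relations are checked in parallel. Relations \eqref{eq:Cer}, \eqref{eq:SYrc1}, \eqref{eq:SYrc3}, \eqref{eq:SYrc5} and \eqref{eq:bssr} transfer immediately, as they depend only on the Cartan entries, the Kronecker delta and parity. Relations \eqref{eq:SYrc2} and \eqref{eq:SYrc4} follow for the same reason together with the invariance of the supersymmetric anticommutator $\{\cdot,\cdot\}$. The cubic super Serre relation \eqref{eq:cssr} is preserved because the neighbour condition $j=i\pm 1$ is invariant under index reversal. The quartic super Serre relation \eqref{eq:qssr} is the most delicate point: under reversal the ordered triple $(j-1,j,j+1)$ becomes its reverse, but a direct calculation with the $\mathbb{Z}_2$-graded Jacobi identity \eqref{eq:sje} shows that the bracket $[[x^\pm_{j-1,0},x^\pm_{j,0}],[x^\pm_{j,0},x^\pm_{j+1,0}]]$ equals, up to an overall sign absorbed by the vanishing, the analogous bracket with $j-1$ and $j+1$ interchanged, so the relation is symmetric in the two outer indices.

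For bijectivity, I would define a candidate inverse $\Xi^{-1}: Y_\hbar(\mathfrak{g}_b) \to Y_\hbar(m|n)$ by the analogous index-reversal formulas on the generators, verify by exactly the same argument that $\Xi^{-1}$ is a superalgebra homomorphism, and observe that $\Xi^{-1}\circ \Xi$ and $\Xi \circ \Xi^{-1}$ act as the identity on every generator, hence on the whole superalgebras.

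The main obstacle is the quartic super Serre relation \eqref{eq:qssr}: because the ordering of the three consecutive simple roots around the odd vertex is flipped, one has to carefully track the signs produced by the graded Jacobi identity to see that no residual terms appear and the relation is in fact invariant under swapping $j-1$ and $j+1$. All the other relations respond in a transparent way to the Cartan-matrix and parity invariants established in the first step, and so the proof is structurally very close to the verification made in Lemma \ref{lm:srwgsaeref}.
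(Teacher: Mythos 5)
Your proposal is correct and follows essentially the same route as the paper: the paper's proof simply records the invariance of the Cartan data under index reversal ($c_{ij}^{a}=c_{m+n+1-i,m+n+1-j}^{b}$) and declares that the defining relations \eqref{eq:Cer}--\eqref{eq:qssr} transfer, which is exactly your first step plus relation-checking. Your additional care with the quartic super Serre relation (showing via the graded antisymmetry of the bracket that swapping the outer indices $j-1$ and $j+1$ only permutes the labels $r,s$ and introduces an overall sign) and the explicit two-sided inverse are details the paper leaves implicit, but they do not change the argument.
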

\begin{proof}
	It follows from the definition of the $\sigma \in W_{c}(m,n)$ that for elements of Cartan matrices $c_{ij}^{a} = c_{m+n+1-i,m+n+1-j}^{b}$ for all $i,j \in I$. The result follows immediately from defining relations \eqref{eq:Cer} - \eqref{eq:qssr}.
\end{proof}

We move to the general case. For a fixed $d_1 \in D$ consider the Drinfeld super Yangian $Y_{\hbar}( \mathfrak{g}_{d_1})$ with Dynkin diagram $D_{d_1}$ and its complete Weyl group $W_{c}(d_1)$ (see Remark \ref{rm:cwgfadd}). We introduce a family of isomorphisms of superagebras which are used in the next Subsection \ref{sub:ihssdsy}. We associate with an arbitrary $\sigma \in W_{c}(d_1)$ a corresponding Dynkin diagram $D_{d_2}$ ($d_2 \in D$) induced by a set of simple roots $\Pi_{d_2} = \{ \sigma(\alpha) \; | \; \alpha \in \Pi_{d_1} \}$.

Let $\sigma_{i} \in W_{c}(d_1)$  be such that $|\sigma_{i}| = \bar{0}$ for $i \in I$. Then by analogy with Definition \ref{df:lbhsisomeven} we give
\begin{definition}
	\label{df:gencaseven}
	Define a $\mathbb{Z}_{2}$-graded even mapping $\Xi_{i}: Y_{\hbar}(\mathfrak{g}_{d_1}) \to Y_{\hbar}(\mathfrak{g}_{d_2})$ by
	\[ \Xi_{i} ( h_{j,r,d_1} ) = h_{j,r,d_2}, \; \Xi_{i} ( x^{\pm}_{j,r,d_1} ) = x^{\pm}_{j,r,d_2} \]
	for all $j \in I$ and for all $r \in \mathbb{N}_{0}$.
\end{definition}

\begin{lemma}
	The function $\Xi_{i}$ given by Definition \ref{df:gencaseven} is the isomorphism of superalgebras.
	\label{lm:evenisomsay}
\end{lemma}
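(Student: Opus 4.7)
The proof will mirror that of Lemma \ref{lm:srwgsaeref}, with the role of the distinguished Dynkin diagram replaced by the arbitrary diagram $D_{d_1}$. The key point is to verify that the even simple reflection $\sigma_i \in W_c(d_1)$ transports the Cartan data of $\mathfrak{g}_{d_1}$ onto the Cartan data of $\mathfrak{g}_{d_2}$ in a way that preserves both the Cartan matrix and the parities of all simple roots; once this is in hand, the defining relations \eqref{eq:Cer}--\eqref{eq:qssr} are matched between source and target index by index, and $\Xi_i$ is forced to be a superalgebra homomorphism.

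First I would analyze the effect of $\sigma_i$ on the simple roots. Since $|\sigma_i| = \bar{0}$ means $|\epsilon_{i,d_1}| = |\epsilon_{i+1,d_1}|$, and $\sigma_i$ acts on $P_{d_1}$ by swapping $\epsilon_{i,d_1}$ and $\epsilon_{i+1,d_1}$ while fixing all other basis vectors, the new basis $\{\epsilon_{j,d_2}\}_{j}$ obtained by setting $\epsilon_{j,d_2} := \epsilon_{\sigma_i(j),d_1}$ has the same parity pattern as $\{\epsilon_{j,d_1}\}_{j}$, i.e.\ $|\epsilon_{j,d_2}| = |\epsilon_{j,d_1}|$ for every $j$. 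Consequently $|\alpha_{j,d_2}| = |\alpha_{j,d_1}|$ for all $j \in I$, so the odd/even type of each Chevalley generator is preserved under $\Xi_i$.

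Next I would check the equality of Cartan matrices. Because $\sigma_i$ is an orthogonal transformation of $P_{d_1}$ (it permutes an orthogonal basis up to signs determined by parity), the induced inner product on $\Pi_{d_2}$ satisfies
\[ c_{jk}^{d_2} = (\alpha_{j,d_2},\alpha_{k,d_2}) = (\epsilon_{\sigma_i(j),d_1}-\epsilon_{\sigma_i(j+1),d_1},\epsilon_{\sigma_i(k),d_1}-\epsilon_{\sigma_i(k+1),d_1}) = c_{jk}^{d_1} \]
for all $j,k \in I$, since swapping two basis vectors of equal parity does not change the values of $\delta_{pq}(-1)^{\bar p}$. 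Combined with the previous step, this shows that the Cartan data $(C_{d_1}, \{|\alpha_{j,d_1}|\})$ and $(C_{d_2}, \{|\alpha_{j,d_2}|\})$ agree index by index.

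With both the parities and the Cartan matrix preserved, every relation in the defining set \eqref{eq:Cer}--\eqref{eq:qssr} for $Y_\hbar(\mathfrak{g}_{d_1})$ becomes, under $\Xi_i$, exactly the corresponding relation for $Y_\hbar(\mathfrak{g}_{d_2})$: all structure constants match, the super Serre relations apply to the same indices (since those cases are governed by $|\alpha_{j,d}|$ and $c_{ij}^d$), and no new relations are introduced. Hence $\Xi_i$ extends to a well-defined homomorphism of superalgebras. Bijectivity follows from symmetry: since $\sigma_i^2 = \id$, the analogous formula $\Xi_i^{-1}(h_{j,r,d_2}) = h_{j,r,d_1}$, $\Xi_i^{-1}(x^{\pm}_{j,r,d_2}) = x^{\pm}_{j,r,d_1}$ yields a two-sided inverse by the same argument applied to $\sigma_i \in W_c(d_2)$. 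The only genuinely non-routine step is verifying that the parity preservation $|\epsilon_{j,d_2}| = |\epsilon_{j,d_1}|$ really holds at the three boundary indices $j \in \{i-1,i,i+1\}$; this is precisely where the hypothesis $|\sigma_i| = \bar 0$ is used, and it is exactly the generalization of the equalities \eqref{eq:fdch1}--\eqref{eq:fdch2} established in the distinguished case.
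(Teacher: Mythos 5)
Your proposal is correct and follows essentially the same route as the paper: the paper's proof simply refers back to Lemma \ref{lm:srwgsaeref}, whose argument is exactly the observation that an even simple reflection preserves the parities of the simple roots and the Cartan matrix (the general-case analogue of \eqref{eq:fdch1}--\eqref{eq:fdch2}), whence the defining relations \eqref{eq:Cer}--\eqref{eq:qssr} match index by index and the map is a bijective homomorphism. Your write-up merely makes the parity and orthogonality bookkeeping explicit, which is a faithful expansion rather than a different approach.
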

\begin{proof}
	The proof is similar to that given in Lemma \ref{lm:srwgsaeref}.
\end{proof}

Now we want to find out which elements of $W_{c}(d_1)$ move an arbitrary odd vertix associated with $\epsilon_{i,d_1} - \epsilon_{i+1,d_1}$ ($i \in I$) in $D_{d_1}$ in such a way that in the Dynkin diagram $D_{d_2}$ there is the same number of odd vertices as in $D_{d_1}$. Let $\sigma \in W_{c}(d_1)$ be such a permutation and suppose that $\sigma(\epsilon_{i,d_1} - \epsilon_{i+1,d_1} ) = \epsilon_{j,d_2} - \epsilon_{j+1,d_2}$ for $j \in I$, where $|\epsilon_{i,d_1} - \epsilon_{i+1,d_1}| = |\epsilon_{j,d_2} - \epsilon_{j+1,d_2}| = \bar{1}$. Let $I_{\bar{0}} = \{ i \; | \; |\epsilon_{i,d_1}| = \bar{0} \}$ and $I_{\bar{1}} = \{ i \; | \; |\epsilon_{i,d_1}| = \bar{1} \}$. There are the following possibilities:
\begin{enumerate}
	\item $|\epsilon_{j-1,d_2}| = |\epsilon_{i,d_1}|$ and $ |\epsilon_{j+2,d_2}| = |\epsilon_{i+1,d_1}|$;
	\item $|\epsilon_{j-1,d_2}| = |\epsilon_{i+1,d_1}|$ and $|\epsilon_{j+2,d_2}| = |\epsilon_{i,d_1}|$.
\end{enumerate}
It is easy to see that we have
\begin{enumerate}
	\item $\sigma = \sigma^{'}$ for $\sigma^{'} \in W(d_1)$ (1-st case);
	\item $ \sigma(\epsilon_{k,d_1}) =  \epsilon_{m+n+1-\lambda(k),d_1} $ for all $k \in I$ and a bijective function $\lambda: I \to I $ such that $\lambda(I_{\bar{0}}) = I_{\bar{1}}$ (it is a necessary condition that $m=n$) (1-st case);
	\item $ \sigma(\epsilon_{k,d_1}) = \sigma^{'} \circ \sigma^{''}(\epsilon_{k,d_1}) = \sigma^{'} ( \epsilon_{m+n+1-k,d_1} ) $ for all $k \in I$ and for $\sigma^{'} \in W(d_2)$ (2-nd case);
	\item $ \sigma(\epsilon_{k,d_1}) =  \epsilon_{\lambda(k),d_1} $ for all $k \in I$ and a bijective function $\lambda: I \to I $ such that $\lambda(I_{\bar{0}}) = I_{\bar{1}}$ (it is a necessary condition that $m=n$) (2-nd case).
\end{enumerate}

The latter second case motivates
\begin{definition}
	\label{df:fcod}
	Define a $\mathbb{Z}_{2}$-graded even mapping $\Xi^{''}: Y_{\hbar}(\mathfrak{g}_{d_1}) \to Y_{\hbar}(\mathfrak{g}_{d_2})$ by
	\[ \Xi^{''} ( h_{j,r,d_1} ) = h_{m+n+1-j,r,d_2}, \; \Xi^{''} ( x^{+}_{j,r,d_1} ) = (-1)^{1+|\alpha_{j,d_1}|} x^{-}_{m+n+1-j,r,d_2}, \; \Xi^{''} ( x^{-}_{j,r,d_1} ) =  x^{+}_{m+n+1-j,r,d_2} \]
	for all $j \in I$ and for all $r \in \mathbb{N}_{0}$.
\end{definition}

\begin{lemma}
	The function $\Xi^{''}$ given by Definition \ref{df:fcod} is the isomorphism of superalgebras.
	\label{lm:fcod}
\end{lemma}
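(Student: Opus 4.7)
The proof mirrors that of Lemma~\ref{lm:lbhsisomchorder}, but requires additional sign bookkeeping because $\Xi^{''}$ swaps $x^{+}$ and $x^{-}$ with a parity-dependent sign. Write $\tau(j) := m+n+1-j$ for the index transformation.

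First, I would extract from the structure of the permutation $\sigma = \sigma' \circ \sigma'' \in W_{c}(d_1)$ (the second case in the enumeration preceding Definition~\ref{df:fcod}) the relationship between the Cartan matrices $C^{d_1}$ and $C^{d_2}$. Since $\sigma''$ reverses the ordering of $\{\epsilon_{k,d_1}\}$ and $\sigma' \in W(d_2)$ acts by parity-preserving permutation within $d_2$, while the parity pattern of $d_2$ arises by interchanging the gradings on the two sides of each odd vertex, the expected identity is
\[ c^{d_1}_{ij} \; = \; -\,c^{d_2}_{\tau(i),\,\tau(j)}, \qquad |\alpha_{j,d_1}| \; = \; |\alpha_{\tau(j),d_2}| \qquad (i,j \in I). \]
The overall sign reflects the fact that under the parity interchange, the bilinear form inducing $C^{d_2}$ is the negative of the one used for $C^{d_1}$ at the corresponding indices.

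Second, using this identity, I would check compatibility with each of the defining relations \eqref{eq:Cer}--\eqref{eq:qssr}. For the Cartan-type relations \eqref{eq:Cer}--\eqref{eq:SYrc2} and \eqref{eq:SYrc5}, the Cartan-matrix sign flip is absorbed by the sign $(-1)^{1+|\alpha_{j,d_1}|}$ carried by $\Xi^{''}(x^{+}_{j,r})$ together with the sign flip between $[h, x^{+}] = c\,x^{+}$ and $[h, x^{-}] = -c\, x^{-}$. For relation \eqref{eq:SYrc3} with $i = j$, the identity $[x^{-}_{\tau(i)}, x^{+}_{\tau(i)}] = -(-1)^{|\alpha_{i,d_1}|}\,[x^{+}_{\tau(i)}, x^{-}_{\tau(i)}]$ combines with the prefactor $(-1)^{1+|\alpha_{i,d_1}|}$ to yield the scalar $+1$, giving exactly $\Xi^{''}(h_{i,r+s}) = h_{\tau(i),r+s,d_2}$; the case $i \ne j$ vanishes on both sides. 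For the Serre-type relations \eqref{eq:SYrc4}, \eqref{eq:bssr}, \eqref{eq:cssr}, and \eqref{eq:qssr}, both sides of each relation live in a single sign sector (all $x^{+}$ or all $x^{-}$), so the uniform product of $(-1)^{1+|\alpha|}$-factors produced by $\Xi^{''}$ cancels across the equation, while the Cartan sign flip converts the $+\frac{c^{d_1}\hbar}{2}\{\cdot,\cdot\}$-term on the $+$-side to the $-\frac{c^{d_2}\hbar}{2}\{\cdot,\cdot\}$-term expected on the $-$-side in $d_2$.

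Finally, bijectivity will follow by writing down an explicit inverse $\Psi: Y_{\hbar}(\mathfrak{g}_{d_2}) \to Y_{\hbar}(\mathfrak{g}_{d_1})$ defined on generators by $h_{j,r,d_2} \mapsto h_{\tau^{-1}(j),r,d_1}$, $x^{+}_{j,r,d_2} \mapsto x^{-}_{\tau^{-1}(j),r,d_1}$, and $x^{-}_{j,r,d_2} \mapsto (-1)^{1+|\alpha_{\tau^{-1}(j),d_1}|}\, x^{+}_{\tau^{-1}(j),r,d_1}$; an analogous argument (with the roles of $d_1$ and $d_2$ exchanged) shows that $\Psi$ is a superalgebra homomorphism, and $\Psi \circ \Xi^{''} = \id$, $\Xi^{''} \circ \Psi = \id$ are verified on generators by direct computation. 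The main obstacle I anticipate is the sign bookkeeping in the quartic super Serre relation \eqref{eq:qssr}: its nested commutator structure at an odd vertex mixes generators of different parities, and one must carefully check that the signs produced on each $x^{+}$ by $\Xi^{''}$, the signs coming from reordering within nested brackets, and the overall Cartan-sign flip all combine into the expected single sign so that the relation is preserved.
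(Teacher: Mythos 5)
Your proposal is correct and follows essentially the same route as the paper: the key point in both is the Cartan-matrix identity $c^{d_1}_{ij} = -c^{d_2}_{m+n+1-i,\,m+n+1-j}$, after which compatibility with the defining relations \eqref{eq:Cer}--\eqref{eq:qssr} is a direct check (the paper states this check in one line, whereas you carry out the sign bookkeeping explicitly, including the correct cancellation $(-1)^{1+|\alpha_i|}\cdot(-1)^{1+|\alpha_i|}=1$ in relation \eqref{eq:SYrc3}). Your explicit inverse $\Psi$ and the verification $\Psi\circ\Xi''=\id$, $\Xi''\circ\Psi=\id$ is a reasonable way to make the bijectivity, which the paper leaves implicit, fully rigorous.
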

\begin{proof}
	It follows from the definition of the $\sigma \in W_{c}(m,n)$ that for elements of Cartan matrices $c_{ij}^{d_1} = - c_{m+n+1-i,m+n+1-j}^{d_2}$ for all $i,j \in I$. The result follows immediately from defining relations \eqref{eq:Cer} - \eqref{eq:qssr}.
\end{proof}

The latter third case motivates
\begin{definition}
	\label{df:lbhsisomchorderg}
    Define a $\mathbb{Z}_{2}$-graded even mapping $\Xi: Y_{\hbar}(\mathfrak{g}_{d_1}) \to Y_{\hbar}(\mathfrak{g}_{d_2})$ by
	\[ \Xi ( h_{j,r,d_1} ) = h_{m+n+1-j,r,d_2}, \; \Xi ( x^{\pm}_{j,r,d_1} ) = x^{\pm}_{m+n+1-j,r,d_2} \]
	for all $j \in I$ and for all $r \in \mathbb{N}_{0}$.
\end{definition}

\begin{lemma}
	The function $\Xi$ given by Definition \ref{df:lbhsisomchorderg} is the isomorphism of superalgebras.
	\label{lm:lbhsisomchorderg}
\end{lemma}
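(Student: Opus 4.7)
The plan is to mirror the proof of Lemma \ref{lm:lbhsisomchorder}, since the map $\Xi$ here is the natural generalization to an arbitrary $d_1 \in D$ of the reversal map considered there. The key observation is that under the index reversal $j \mapsto m+n+1-j$ induced by $\sigma$, the entries of the Cartan matrices satisfy $c_{ij}^{d_1} = c_{m+n+1-i,\, m+n+1-j}^{d_2}$ for all $i,j \in I$, and the parities of the simple roots match, i.e. $|\alpha_{j,d_1}| = |\alpha_{m+n+1-j,d_2}|$. Both of these are immediate consequences of the construction of $\Pi_{d_2}$ from $\Pi_{d_1}$ via $\sigma$, together with the fact that $(\epsilon_{k,d_1}, \epsilon_{l,d_1}) = (\epsilon_{\sigma(k),d_1}, \epsilon_{\sigma(l),d_1})$ and the assignment of $\epsilon_{\sigma(k),d_1}$ to $\epsilon_{k,d_2}$.

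First I would record these two identities explicitly, and then verify compatibility with each family of defining relations \eqref{eq:Cer}--\eqref{eq:qssr} in turn. Relations \eqref{eq:Cer}, \eqref{eq:SYrc5} and \eqref{eq:bssr} only involve Cartan entries $c_{ij}^{d_1}$ and parities $|\alpha_{i,d_1}|$, which transfer to $c_{m+n+1-i,\,m+n+1-j}^{d_2}$ and $|\alpha_{m+n+1-i,d_2}|$ respectively, so they hold automatically. Relations \eqref{eq:SYrc1}--\eqref{eq:SYrc4} carry over in the same fashion, term by term, after the substitution of indices. For the cubic and quartic super Serre relations \eqref{eq:cssr}--\eqref{eq:qssr}, the condition ``$j = i \pm 1$'' on the left-hand side becomes ``$m+n+1-j = (m+n+1-i) \mp 1$'' on the right-hand side, which is again a neighboring-index condition; similarly, the triple $(j-1, j, j+1)$ appearing in \eqref{eq:qssr} is mapped by $\Xi$ to $(m+n+2-j, m+n+1-j, m+n-j)$, which is still a consecutive triple (read in reverse), so the quartic relation is preserved. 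Hence $\Xi$ is a well-defined superalgebra homomorphism; since $|\alpha_{j,d_1}| = |\alpha_{m+n+1-j,d_2}|$, the map is $\mathbb{Z}_2$-graded even as claimed.

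Finally I would exhibit the inverse: define $\Xi^{-1}: Y_{\hbar}(\mathfrak{g}_{d_2}) \to Y_{\hbar}(\mathfrak{g}_{d_1})$ by $h_{j,r,d_2} \mapsto h_{m+n+1-j,r,d_1}$ and $x_{j,r,d_2}^{\pm} \mapsto x_{m+n+1-j,r,d_1}^{\pm}$; by the same verification this is a superalgebra homomorphism, and it is clearly a two-sided inverse to $\Xi$ on generators. Thus $\Xi$ is an isomorphism of superalgebras. The only mild subtlety is the bookkeeping in the quartic Serre relation \eqref{eq:qssr}, where one must confirm that reversing the order of the three consecutive indices does not break the relation; this is where I expect to spend the most care, but it reduces to the symmetry of the relation under the involution $(j-1,j,j+1) \leftrightarrow (j+1,j,j-1)$, which is evident from the explicit form of \eqref{eq:qssr} once the Cartan and parity identifications above are in hand.
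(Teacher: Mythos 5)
Your proposal is correct and follows essentially the same route as the paper: the paper's proof simply refers back to Lemma \ref{lm:lbhsisomchorder}, whose argument is exactly the observation that $c_{ij}^{d_1} = c_{m+n+1-i,\,m+n+1-j}^{d_2}$ (with matching parities) and that the defining relations \eqref{eq:Cer}--\eqref{eq:qssr} then transfer term by term under the index reversal. Your additional care with the Serre relations and the explicit inverse are just a more detailed write-up of the same verification.
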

\begin{proof}
	The proof is similar to that given in Lemma \ref{lm:lbhsisomchorder}.
\end{proof}

The latter fourth case motivates
\begin{definition}
	\label{df:tcod}
	Define a $\mathbb{Z}_{2}$-graded even mapping $\Xi^{'}: Y_{\hbar}(\mathfrak{g}_{d_1}) \to Y_{\hbar}(\mathfrak{g}_{d_2})$ by
	\[ \Xi^{'} ( h_{j,r,d_1} ) = h_{j,r,d_2}, \; \Xi^{'} ( x^{+}_{j,r,d_1} ) = (-1)^{1+|\alpha_{j,d_1}|} x^{-}_{j,r,d_2}, \; \Xi^{'} ( x^{-}_{j,r,d_1} ) = x^{+}_{j,r,d_2} \]
	for all $j \in I$ and for all $r \in \mathbb{N}_{0}$.
\end{definition}

\begin{lemma}
	The function $\Xi^{'}$ given by Definition \ref{df:tcod} is the isomorphism of superalgebras.
	\label{lm:tcod}
\end{lemma}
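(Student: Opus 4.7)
My plan is to verify directly that $\Xi^{'}$ is compatible with the defining relations \eqref{eq:Cer}--\eqref{eq:qssr} of $Y_{\hbar}(\mathfrak{g}_{d_1})$, using the relations that hold in $Y_{\hbar}(\mathfrak{g}_{d_2})$. The first observation to make is the link between the two Cartan data. Since the permutation $\sigma$ associated with this fourth case sends $\epsilon_{k,d_1}$ to $\epsilon_{\lambda(k),d_1}$ with $\lambda(I_{\bar{0}})=I_{\bar{1}}$ and preserves the index $j$ of the simple root $\alpha_{j,d_1}=\epsilon_{j,d_1}-\epsilon_{j+1,d_1}$, the parities of simple roots are preserved, $|\alpha_{j,d_1}|=|\alpha_{j,d_2}|$, while the bilinear form on the basis vectors flips sign. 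Consequently $c^{d_2}_{ij}=-c^{d_1}_{ij}$ for all $i,j\in I$. The sign $(-1)^{1+|\alpha_{j,d_1}|}$ in the definition is exactly what is needed to compensate for the reversal of sign that occurs in each defining relation when positive and negative generators are interchanged.

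Next I would systematically check the compatibility. Relation \eqref{eq:Cer} is immediate. For \eqref{eq:SYrc1} the sign $(-1)^{1+|\alpha_{j,d_1}|}$ is preserved on both sides and the Cartan flip $c^{d_2}_{ij}=-c^{d_1}_{ij}$ swaps the sign in front of $c^{d_1}_{ij}$, as is needed to turn $x^{+}$ into $x^{-}$. For \eqref{eq:SYrc3}, when $i=j$ the supercommutator satisfies $[x^{-},x^{+}]=-(-1)^{|\alpha_i|^2}[x^{+},x^{-}]$, which combined with the sign $(-1)^{1+|\alpha_{i,d_1}|}$ produces exactly the factor $+1$ in front of $h_{i,r+s,d_2}$. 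Relations \eqref{eq:SYrc2} and \eqref{eq:SYrc4} come out the same way: the two signs appearing from $\Xi^{'}$ on each generator combine with the flip $c^{d_2}_{ij}=-c^{d_1}_{ij}$ and with the supercommutator swap rule so that each side is rescaled by the same global factor, so the equations descend. Relations \eqref{eq:SYrc5} and \eqref{eq:bssr} are trivial since they either give $0$ or involve $c_{ij}^d=0$, which is preserved under negation. For the Serre relations \eqref{eq:cssr} and \eqref{eq:qssr} the verification reduces to counting the powers of $(-1)$: each generator in the iterated bracket contributes an identical sign which factors out, while the super Jacobi identity \eqref{eq:sje} guarantees that the relation written entirely in $x^{-}$'s is equivalent to the relation written entirely in $x^{+}$'s.

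To conclude that $\Xi^{'}$ is an isomorphism, I would exhibit its two-sided inverse explicitly. A natural candidate is the map $\Xi^{''}:Y_{\hbar}(\mathfrak{g}_{d_2})\to Y_{\hbar}(\mathfrak{g}_{d_1})$ defined by $\Xi^{''}(h_{j,r,d_2})=h_{j,r,d_1}$, $\Xi^{''}(x^{+}_{j,r,d_2})=x^{-}_{j,r,d_1}$ and $\Xi^{''}(x^{-}_{j,r,d_2})=(-1)^{1+|\alpha_{j,d_1}|}x^{+}_{j,r,d_1}$. A direct computation on generators gives $\Xi^{''}\circ\Xi^{'}=\mathrm{id}$ and $\Xi^{'}\circ\Xi^{''}=\mathrm{id}$, using that $(-1)^{2(1+|\alpha_{j,d_1}|)}=1$. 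Well-definedness of $\Xi^{''}$ as a homomorphism follows by the symmetric argument to the one above, so bijectivity is immediate.

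The main subtlety, and the only place where a careful bookkeeping of signs is required, is the verification of the quartic super Serre relation \eqref{eq:qssr}: when unrolling the double commutator of commutators one must keep track of four commuting signs and show they collapse to a single overall factor that cancels uniformly on both sides of the identity. Once this sign-matching is handled, the remaining relations follow by the uniform schema described above; no further nontrivial input is needed beyond the equality $c^{d_2}_{ij}=-c^{d_1}_{ij}$ and the preservation of simple-root parities.
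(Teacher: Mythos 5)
Your proposal is correct and follows essentially the same route as the paper: the paper's proof consists precisely of the observation that $c_{ij}^{d_1}=-c_{ij}^{d_2}$ together with a (left implicit) check of the defining relations \eqref{eq:Cer}--\eqref{eq:qssr}, which you carry out explicitly, and your sign bookkeeping (e.g.\ the cancellation $(-1)^{2(1+|\alpha_{j,d_1}|)}=1$ in \eqref{eq:SYrc3}) is accurate. The only additions are the explicit two-sided inverse, which the paper leaves to the reader, and a minor notational clash in that you reuse the symbol $\Xi^{''}$ already reserved by Definition \ref{df:fcod}.
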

\begin{proof}
	It follows from the definition of the $\sigma \in W_{c}(m,n)$ that for elements of Cartan matrices $c_{ij}^{d_1} = - c_{ij}^{d_2}$ for all $i,j \in I$. The result follows immediately from defining relations \eqref{eq:Cer} - \eqref{eq:qssr}.
\end{proof}

\vspace{1cm}

\subsection{Isomorphisms of Hopf superalgebra structures on Drinfeld super Yangians}

\label{sub:ihssdsy}

We suppose that all Drinfeld super Yangians that we consider in this section have the Hopf superalgebra structure described in Subsection \ref{sub:hssy}. Let $\underline{\text{sHAlg}}$ be the supercategory of Hopf superalgebras over the field $\Bbbk[\hbar]$ (see \cite{MS21} for more information and references). Denote by $\underline{\text{sAlg}}$ the supercategory of superalgebras over the field $\Bbbk[\hbar]$. By $\underline{\text{sLieAlg}}$ we denote the supercategory of Lie superalgebras over the field $\Bbbk[\hbar]$. Recall that $Y_{\hbar}(\mathfrak{g}_{d})$ is $\mathbb{N}_{0}$-graded, see Subsection \ref{lb:HSSCY}. We classify Hopf superalgebra structures on Drinfeld super Yangians using Dynkin diagrams. In order to do this we prove some auxiliary results about properties of Hopf superalebgra isomorphisms.

\begin{lemma}
	Any isomorphism of Drinfeld super Yangians considered as Hopf superalgebras preserves $\mathbb{N}_{0}$-grading. 
	\label{lm:zetangr}
\end{lemma}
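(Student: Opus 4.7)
The plan is to exploit the fact that the $\mathbb{N}_{0}$-grading on $Y_{\hbar}(\mathfrak{g}_d)$ is intrinsically encoded in the Hopf superalgebra structure via its classical limit, and that Hopf isomorphisms commute with this limit. Let $\Phi : Y_{\hbar}(\mathfrak{g}_{d_1}) \to Y_{\hbar}(\mathfrak{g}_{d_2})$ be a Hopf superalgebra isomorphism in $\underline{\text{sHAlg}}$. Since morphisms in this supercategory are $\Bbbk[\hbar]$-linear by construction, $\Phi$ fixes $\hbar$ and therefore descends to a Hopf superalgebra isomorphism $\bar{\Phi}$ between the quotients $Y_{\hbar}(\mathfrak{g}_{d_j})/\hbar Y_{\hbar}(\mathfrak{g}_{d_j}) \cong U(\mathfrak{g}_{d_j}[u])$, where the latter identification is the one fixed in Definition \ref{df:isomzerogr}. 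By Proposition \ref{pr:QUEliebi}, this quotient map restricts to a Lie superbialgebra isomorphism $\bar{\phi} : \mathfrak{g}_{d_1}[u] \to \mathfrak{g}_{d_2}[u]$.

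Next I would show that the $u$-grading on $\mathfrak{g}_d[u]$ is determined intrinsically by its Lie superbialgebra structure. The degree-zero subspace coincides with $\mathfrak{g}_d$ and is characterized by $\delta_d(a) = 0$ via \eqref{eq:cosuperbr1}, while for $r \geq 1$, the piece $\mathfrak{g}_d \otimes u^r$ can be reconstructed inductively using the explicit formula \eqref{eq:cosuperbr2}: namely, $\delta_d(au^r) = [a \otimes 1, \Omega_d] \sum_{k=0}^{r-1} u^k v^{r-1-k}$ belongs to the previously identified subspace of bi-homogeneous elements of total $u$-degree $r-1$. Consequently, any Lie superbialgebra isomorphism, being compatible with $\delta$, must preserve the $u$-grading, and in particular $\bar{\phi}$ is a graded map.

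Finally I would lift this grading preservation from the classical limit back to $Y_{\hbar}(\mathfrak{g}_d)$. The key observation is that the $\mathbb{N}_{0}$-grading on $Y_{\hbar}$, with $\deg(\hbar) = 1$ and $\deg(h_{i,r}) = \deg(x_{i,r}^{\pm}) = r$, reduces modulo $\hbar$ to the $u$-grading on $U(\mathfrak{g}_d[u])$; moreover, the explicit formulas \eqref{eq:comg}--\eqref{eq:comh1} together with the recursive relations \eqref{eq:rec1}--\eqref{eq:rec2} from Subsection \ref{sb:mpsy} show that $\Delta$ is itself a graded map into $Y_{\hbar}(\mathfrak{g}) \otimes Y_{\hbar}(\mathfrak{g})$ with the induced product grading. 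Combining the graded behaviour of $\bar{\phi}$ on the generators of $\mathfrak{g}_d[u]$ with the graded comultiplication and the minimalistic presentation (Theorem \ref{th:mpy}), an induction on the level $r$ shows that $\Phi$ sends each generator of degree $r$ to an element of pure degree $r$, whence $\Phi$ respects the $\mathbb{N}_0$-grading on the entire superalgebra $Y_{\hbar}(\mathfrak{g}_{d_1})$.

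The main obstacle I expect is the final lifting step: a priori, $\Phi$ could produce corrections of the form $\hbar^k \cdot (\text{lower-degree terms})$ that remain compatible with the Hopf structure but could break strict homogeneity. Overcoming this requires a careful analysis of each filtered piece $F_n = \bigoplus_{k \geq n} Y_{\hbar}(\mathfrak{g})_k$ and the use of the freeness of the recursion \eqref{eq:rec1}--\eqref{eq:rec2} together with the explicit form of $\Delta(h_{i,1})$ and $\Delta(x_{i,1}^{\pm})$ to rule out mixed-degree corrections. Once this rigidity is established, preservation of the $\mathbb{N}_{0}$-grading follows by induction on the degree.
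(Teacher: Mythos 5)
Your reduction to the classical limit founders at its central step: the claim that any Lie superbialgebra isomorphism $\mathfrak{g}_{d_1}[u]\to\mathfrak{g}_{d_2}[u]$ must preserve the $u$-grading is false. The shift map $\tau_c\colon f(u)\mapsto f(u+c)$ (for $c\in\Bbbk$) is a Lie superalgebra automorphism of $\mathfrak{g}_d[u]$, and since the co-supercommutator $\delta_d(f)=[\,f(u)\otimes 1+1\otimes f(v),\,\Omega_d/(u-v)\,]$ depends on $u,v$ only through the difference $u-v$, one checks directly that $(\tau_c\otimes\tau_c)\circ\delta_d=\delta_d\circ\tau_c$; yet $\tau_c(au)=au+ca$ is not homogeneous. (Equivalently: \eqref{eq:cosuperbr1} says $\delta_d$ kills all of $\mathfrak{g}_d$, so degree-zero corrections added to the degree-one piece are invisible to $\delta_d$.) What your inductive reconstruction actually yields is at best preservation of the canonical filtration $F_r=\delta_d^{-1}(F_{r-1}\otimes F_{r-1})$ with $F_0=\ker\delta_d$, which is strictly weaker than preservation of the grading. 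Your characterization of the degree-zero piece as $\ker\delta_d$ is fine for the superalgebras considered here (the kernel of $a\mapsto[a\otimes 1,\Omega_d]$ is the center), but nothing above degree zero is intrinsic to the superbialgebra structure alone.

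The lifting step from $Y_{\hbar}/\hbar Y_{\hbar}$ back to $Y_{\hbar}$ is also not carried out: you correctly identify corrections of the form $\hbar^{k}\cdot(\text{lower-degree terms})$ as the danger, but only assert that a ``careful analysis'' would exclude them. The paper takes an entirely different and more elementary route that never leaves $Y_{\hbar}$: it applies the isomorphism $\zeta$ to the defining relations \eqref{eq:SYrc1}, \eqref{eq:SYrc3}, \eqref{eq:SYrc4} and \eqref{eq:mpy4} and reads off the degrees of the images of the generators one at a time --- $[h_{i0},x_{js}^{\pm}]=\pm c_{ij}x_{js}^{\pm}$ with $c_{ij}\neq 0$ forces $\deg\zeta(h_{i0})=0$; then $[x_{i0}^{+},x_{i0}^{-}]=h_{i0}$ forces $\deg\zeta(x_{i0}^{\pm})=0$ because degrees lie in $\mathbb{N}_{0}$; and the relation $2[x_{i1}^{\pm},x_{i0}^{\pm}]=\pm c_{ii}\hbar(x_{i0}^{\pm})^{2}$ together with $\deg\hbar=1$ pins the first level to degree one (with \eqref{eq:mpy4} handling odd $i$ via an even neighbour). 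It is precisely this use of the $\hbar$-linear terms in the relations that breaks the shift ambiguity your argument cannot see; to rescue your approach you would need to inject comparable information from the quantum level, at which point you are essentially redoing the paper's computation.
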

\begin{proof}	
	Let $\zeta \in  \text{Hom}_{\underline{\text{sHAlg}}}(Y_{\hbar}(\mathfrak{g}_{d_1}), Y_{\hbar}(\mathfrak{g}_{d_2}))$ be an isomorphism. Consider the equation \eqref{eq:SYrc1} and suppose that there are $i \ne j$ such that $c_{ij}^{d} \ne 0$ (this is always possible as $|I| > 1$). If we apply $\zeta$ to the latter formula we will get that $\deg(\zeta(h_{i0})) + \deg( \zeta(x_{j,s,d}^{\pm}) ) = \deg( \zeta(x_{j,s,d}^{\pm}) ) $. Thus $\deg(\zeta(h_{i0})) = 0$ for all $i \in I$. Consider the equation \eqref{eq:SYrc3}. It follows that for all $i \in I$ we have
	\[ \deg( \zeta(x_{i0}^{+}) ) + \deg( \zeta(x_{i0}^{-}) ) = \deg( \zeta( h_{i0} ) ) = 0 \iff \deg( \zeta(x_{i0}^{+}) ) = - \deg( \zeta(x_{i0}^{-}) ) \iff \]
	\[ \deg( \zeta(x_{i0}^{+}) ) = \deg( \zeta(x_{i0}^{-}) ) = 0. \]
	Therefore to conclude that $U(\mathfrak{g}_{d_1}) \overset{\zeta}{\cong} U(\mathfrak{g}_{d_2})$. Consider the formula \eqref{eq:SYrc3} and set $s=0$. Then we get that $\deg(\zeta(x_{i,r}^{+})) = \deg(\zeta(h_{i,r}))$ for all $i \in I$ and $r \in \mathbb{N}_{0}$. In the same way $\deg(\zeta(x_{i,r}^{-})) = \deg(\zeta(h_{i,r}))$ for all $i \in I$ and $r \in \mathbb{N}_{0}$. Moreover, if we set $s = 1$ we have 
	\[ \deg(\zeta(x_{i,r}^{+})) + \deg(\zeta(x_{i,1}^{-})) = \deg(\zeta(h_{i,r+1})). \]
	Thus by the mathematical induction on $r$ we have
	\[ \deg(\zeta(h_{i,r})) = \deg(\zeta(x_{i,r}^{\pm})) = r \deg(\zeta(h_{i,1})) \]
	for all $i \in I$ and $r \in \mathbb{N}_{0}$.
	
	Suppose $i \in I$ is such that $|\alpha_{i}| = \bar{0}$. Consider the equation \eqref{eq:SYrc4} and set $r=s=0$. Then it follows that 
	\[ \deg( \zeta( x_{i,1}^{\pm} ) ) = \deg(\hbar) = 1. \]
	Consequently $\deg( \zeta( x_{i,r}^{\pm} ) ) = \deg( \zeta( h_{i,r} ) ) = r$ for all $r \in \mathbb{N}_{0}$.
	
	Suppose $i \in I$ is such that $|\alpha_{i}| = \bar{1}$. Consider the equation \eqref{eq:mpy4}. Suppose that there is $j \ne i$, $c_{ij}^{d} \ne 0$ and $|\alpha_{j}| = \bar{0}$ (this is always possible as of our restrictions on Dynkin diagrams). Then $\deg( \zeta( \widetilde{h}_{i1} ) ) = \deg( \zeta( h_{i1} ) ) = \deg( \zeta( x_{j1}^{\pm} ) ) = 1$. Consequently $\deg( \zeta( x_{i,r}^{\pm} ) ) = \deg( \zeta( h_{i,r} ) ) = r$ for all $r \in \mathbb{N}_{0}$. The result follows.
\end{proof}

\begin{remark}
	\item It follows from equations \eqref{eq:comg} - \eqref{eq:comh1}, \eqref{eq:comrec1} - \eqref{eq:comrec2} that $\Delta$ preserves $\mathbb{N}_{0}$-grading, i. e. $\deg(\Delta (x)) = \deg(x)$. 
	\item To prove Lemma \ref{lm:zetangr} we use our assumptions on Dynkin diagrams (see the paragraph before Theorem \ref{th:mpy}).
\end{remark}

Consider the Hopf superalgebra $U(\mathfrak{g}_d) \xhookrightarrow{} Y_{\hbar}(\mathfrak{g}_{d})$. Let $P_{1,1}(U(\mathfrak{g}_d))$ be the $\Bbbk$ super vector space of primitive elements in $U(\mathfrak{g}_d)$, i. e. for any $x \in P_{1,1}(U(\mathfrak{g}_d))$ we have
\[ \Delta_{U(\mathfrak{g}_d)}(x) = x \otimes 1 + 1 \otimes x. \]
By PBW Theorem \ref{th:pbwyang} and according to the Friedrich's theorem \cite{J62} (which also holds in the super case) it follows that
\begin{equation}
	P_{1,1}(U(\mathfrak{g}_d)) = \sum_{\substack{\alpha, \beta \in \Delta_{d}, \\ i \in I}} ( \Bbbk e_{\alpha,0} + \Bbbk h_{i,0} + \Bbbk f_{\beta,0}) = \mathfrak{g}_d.
	\label{eq:prues}
\end{equation}
As $U(\mathfrak{g}_{d_1}) \overset{\zeta\vert_{U(\mathfrak{g}_{d_1})}}{\cong} U(\mathfrak{g}_{d_2})$ and 
\[ \Delta_{d_2} \circ \zeta (x) =  \zeta( x ) \otimes 1 + 1 \otimes \zeta( x ), \]
we have $P_{1,1}(U(\mathfrak{g}_{d_1})) \overset{\zeta\vert_{P_{1,1}(U(\mathfrak{g}_{d_1}))}}{\cong} P_{1,1}(U(\mathfrak{g}_{d_2}))$, i. e. $\mathfrak{g}_{d_1} \overset{\zeta\vert_{\mathfrak{g}_{d_1}}}{\cong} \mathfrak{g}_{d_2}$. Thus if we take any isomorphism of superalgebras $\gamma \in \text{Hom}_{\underline{\text{sAlg}}}(Y_{\hbar}(\mathfrak{g}_{d_1}), Y_{\hbar}(\mathfrak{g}_{d_2}))$ and consider the restriction 
\[\gamma\vert_{U(\mathfrak{g}_{d_1})} \in \text{Hom}_{\underline{\text{sAlg}}}(U(\mathfrak{g}_{d_1}), U(\mathfrak{g}_{d_2})), \]
we automatically get the isomorphism of Hopf superalgebras, i. e.
\[\gamma\vert_{U(\mathfrak{g}_{d_1})} \in \text{Hom}_{\underline{\text{sHAlg}}}(U(\mathfrak{g}_{d_1}), U(\mathfrak{g}_{d_2})) \]
and the isomorphism of Lie superalgebras
\[ \gamma\vert_{\mathfrak{g}_{d_1}} \in \text{Hom}_{\underline{\text{sLieAlg}}}(\mathfrak{g}_{d_1}, \mathfrak{g}_{d_2}). \]

Now we are ready to prove
\begin{lemma}
	Drinfeld super Yangians are not isomorphic as Hopf superalgebras if their associated Dynkin diagrams have different number of even (odd) vertices.
	\label{lm:crihsbp}
\end{lemma}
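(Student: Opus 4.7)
The plan is to reduce a hypothetical Hopf-superalgebra isomorphism $\zeta \colon Y_{\hbar}(\mathfrak{g}_{d_1}) \to Y_{\hbar}(\mathfrak{g}_{d_2})$ to a parity-preserving Lie-superalgebra isomorphism between the classical limits $\mathfrak{g}_{d_1}$ and $\mathfrak{g}_{d_2}$, and then extract numerical constraints on the pairs $(n_{+,d_i}, n_{-,d_i})$ that rule out discrepancies in the numbers of even and odd vertices.

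First I would invoke Lemma \ref{lm:zetangr}: any Hopf-superalgebra isomorphism $\zeta$ is automatically $\mathbb{N}_0$-graded with respect to the grading of Subsection \ref{lb:HSSCY}. Restricting $\zeta$ to the degree-zero component yields a Hopf-superalgebra isomorphism $\zeta_0 \colon U(\mathfrak{g}_{d_1}) \to U(\mathfrak{g}_{d_2})$. Because any Hopf morphism sends primitive elements to primitive elements, and by \eqref{eq:prues} the primitives of $U(\mathfrak{g}_d)$ coincide with $\mathfrak{g}_d$, the restriction $\gamma := \zeta_0\vert_{\mathfrak{g}_{d_1}}$ is a $\mathbb{Z}_2$-graded isomorphism of Lie superalgebras $\mathfrak{g}_{d_1} \to \mathfrak{g}_{d_2}$.

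Since $\gamma$ preserves the $\mathbb{Z}_2$-grading, $\dim(\mathfrak{g}_{d_i})_{\bar{0}}$ and $\dim(\mathfrak{g}_{d_i})_{\bar{1}}$ must agree for $i=1,2$. A direct dimension count in $\mathfrak{sl}(n_+|n_-)$ yields $\dim(\mathfrak{g}_d)_{\bar{1}} = 2n_{+,d}n_{-,d}$ and $\dim(\mathfrak{g}_d)_{\bar{0}} = n_{+,d}^2 + n_{-,d}^2 - 1$ (with an additional $-1$ in the equal-rank case, which does not affect the argument). Equating these quantities for $d_1$ and $d_2$ yields the system $n_{+,d_1}n_{-,d_1} = n_{+,d_2}n_{-,d_2}$ and $n_{+,d_1}^2 + n_{-,d_1}^2 = n_{+,d_2}^2 + n_{-,d_2}^2$, which forces $\{n_{+,d_1}, n_{-,d_1}\} = \{n_{+,d_2}, n_{-,d_2}\}$ as multisets. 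In particular the total number of vertices $n_{+,d}+n_{-,d}-1$ of $D_d$ is already a Hopf invariant, which disposes of every case in which the two diagrams have different total size and therefore trivially differ in their numbers of even or odd vertices.

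The main obstacle is the remaining case: when $(n_{+,d_1}, n_{-,d_1})$ and $(n_{+,d_2}, n_{-,d_2})$ agree as multisets, yet $D_{d_1}$ and $D_{d_2}$ assign different numbers of odd parities to the same number of simple roots. To dispatch this I would use the explicit comultiplication \eqref{eq:comyh1} for $\widetilde{h}_{j,1,d}$ together with the Serre-type relation \eqref{eq:mpy6}, which characterises $h_{j,1,d}$ as arising from an odd simple root precisely via the vanishing $[h_{j,1,d}, x_{j,0,d}^{\pm}]=0$. Transporting this vanishing and the degree-one coproduct through $\zeta$ should identify, at the level of degree-one elements and their images under projection to primitives, a parity-preserving bijection between odd simple root vectors of $\mathfrak{g}_{d_1}$ and $\mathfrak{g}_{d_2}$, giving equality of the odd (and hence even) vertex counts.
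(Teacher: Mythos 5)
Your first two steps are sound and the opening move coincides with the paper's: Lemma \ref{lm:zetangr} forces $\zeta$ to preserve the $\mathbb{N}_0$-grading, and restriction to primitives via \eqref{eq:prues} gives a $\mathbb{Z}_2$-graded Lie superalgebra isomorphism $\mathfrak{g}_{d_1}\cong\mathfrak{g}_{d_2}$; your dimension count correctly pins down $\{n_{+},n_{-}\}$ as a multiset (an observation the paper does not spell out, but which is harmless and disposes of the unequal-rank case). The problem is that you stop exactly at the crux. By Theorem \ref{th:SYD} and the surrounding discussion, \emph{all} Dynkin diagrams with the same $(n_+,n_-)$ produce isomorphic Lie superalgebras $\mathfrak{sl}(V^d)$, so the number of odd vertices is an invariant of the chosen simple root system, not of the abstract $\mathbb{Z}_2$-graded Lie superalgebra. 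A parity-preserving isomorphism $\gamma\colon\mathfrak{g}_{d_1}\to\mathfrak{g}_{d_2}$ is free to send an odd \emph{simple} root vector to an odd \emph{non-simple} root vector (or a combination of odd root vectors of various heights), so ``parity preservation'' gives you no bijection between odd vertices. Your proposed fix --- transporting $[h_{j1},x_{j0}^{\pm}]=0$ from \eqref{eq:mpy6} and the coproduct \eqref{eq:comyh1} through $\zeta$ --- is only a gesture: to use that relation you must first control the shape of $\zeta(h_{j1,d_1})$ and $\zeta(x_{j0,d_1}^{\pm})$ as elements of $Y_{\hbar}(\mathfrak{g}_{d_2})$, and that is precisely the nontrivial computation you have not done. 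The phrase ``should identify \dots a parity-preserving bijection'' restates the conclusion rather than deriving it.

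What the paper actually does at this point is write $\zeta(x_{z1,d_1}^{+})$ in its most general degree-one form (coefficients in $U(\mathfrak{g}_{d_2})$ times $h_{j1,d_2}$, $x_{j1,d_2}^{\pm}$, plus an $\hbar$-term), equate $(\zeta\otimes\zeta)\Delta_{d_1}(x_{z1,d_1}^{+})$ with $\Delta_{d_2}\zeta(x_{z1,d_1}^{+})$ using the explicit formulas \eqref{eq:comh1} and \eqref{eq:comxbp}, and extract from the $\hbar$-linear terms (using $\zeta\otimes\zeta(\Omega_{d_1}^{+})=\Omega_{d_2}^{+}$ and the PBW Theorem \ref{th:pbwyang}) that for odd $z$ one has $\zeta(x_{z0,d_1}^{+})=\sum_{j}a_{zj}^{+}x_{j0,d_2}^{+}$ with the sum supported on odd simple indices $j$. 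If $D_{d_1}$ has strictly more odd vertices than $D_{d_2}$, these images are linearly dependent, contradicting injectivity. Some version of this computation --- pinning the image of an odd simple root vector inside the span of the odd simple root vectors of the target, not merely inside $(\mathfrak{g}_{d_2})_{\bar 1}$ --- is unavoidable, and your proposal does not supply it. I would grade the submission as identifying the correct strategy and the correct obstacle, but leaving the decisive step unproved.
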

\begin{proof}
	Let $\zeta \in  \text{Hom}_{\underline{\text{sHAlg}}}(Y_{\hbar}(\mathfrak{g}_{d_1}), Y_{\hbar}(\mathfrak{g}_{d_2}))$ be an isomorphism. Recall the equations \eqref{eq:comh1}, \eqref{eq:comxbp} and \eqref{eq:comxbm}. It follows from Lemma \ref{lm:zetangr} that
	\[ \zeta( x_{z1,d_1}^{+} ) = \sum_{j \in I} a_{zj} h_{j1,d_2} + \sum_{j \in I} a^{+}_{zj} x^{+}_{j1,d_2} + \sum_{j \in I} a^{-}_{zj} x^{-}_{j1,d_2} + \hbar a_{z}, \]
	where	
	\[ a_{zj}, a_{zj}^{\pm}, a_{z} \in  U( \mathfrak{g}_{d_2} ), \]
	\[ a_{zj} = \sum_{t \in \mathbb{N}} (a_{zj})_{t}, \; a_{zj}^{\pm} = \sum_{t \in \mathbb{N}} (a_{zj}^{\pm})_{t}, \]
	\[ (a_{zj})_{0}, (a_{zj}^{\pm})_{0} \in \Bbbk, \; (a_{zj})_{t}, (a_{zj}^{\pm})_{t} \notin \Bbbk \text{ for all } t > 0. \]
	
	As $\zeta$ is an isomorphism of Hopf superalgebras it must satisfy the equation
	\[ \zeta \otimes \zeta \Delta_{d_1} (x_{z1,d_1}^{+}) = \Delta_{d_2} \zeta (x_{z1,d_1}^{+}). \]
	Explore the left part
	\[ \zeta \otimes \zeta \Delta_{d_1} (x_{z1,d_1}^{+}) =  \]
	\[ \square( \sum_{j \in I} a_{zj} h_{j1,d_2} + \sum_{j \in I} a^{+}_{zj} x^{+}_{j1,d_2} + \sum_{j \in I} a^{-}_{zj} x^{-}_{j1,d_2} + \hbar a_{z} ) - \hbar \zeta \otimes \zeta ( [ 1 \otimes x_{z0,d_1}^{+} , \Omega_{d_1}^{+} ] ) =  \]
	\[ \square( \sum_{j \in I} a_{zj} h_{j1,d_2} + \sum_{j \in I} a^{+}_{zj} x^{+}_{j1,d_2} + \sum_{j \in I} a^{-}_{zj} x^{-}_{j1,d_2} ) + \hbar ( \square( a_{z} ) - \zeta \otimes \zeta ( [ 1 \otimes x_{z0,d_1}^{+} , \Omega_{d_1}^{+} ] ) );  \]
	Explore the right part
	\[ \Delta_{d_2} \zeta (x_{z1,d_1}^{+}) = \]
	\[ \Delta_{d_2} ( \sum_{j \in I} a_{zj} h_{j1,d_2} + \sum_{j \in I} a^{+}_{zj} x^{+}_{j1,d_2} + \sum_{j \in I} a^{-}_{zj} x^{-}_{j1,d_2} + \hbar a_{z} ) = \]
	\[ \sum_{j \in I} \Delta_{d_2}( a_{zj} ) ( \square(h_{j1,d_2}) + \hbar( h_{j0,d_2} \otimes h_{j0,d_2} - \sum_{ \alpha \in \Delta_{d_2}^{+}} (\alpha_{j}, \alpha) x_{\alpha}^{-} \otimes x_{\alpha}^{+} ) ) + \]
	\[ \sum_{j \in I} \Delta_{d_2}( a^{+}_{zj} ) ( \square( x_{j1,d_2}^{+} ) - \hbar [ 1 \otimes x_{j0,d_2}^{+} , \Omega_{d_2}^{+} ] ) + \]
	\[ \sum_{j \in I} \Delta_{d_2}( a^{-}_{zj} ) ( \square( x_{j1,d_2}^{-} ) + \hbar [ x_{j0,d_2}^{-} \otimes 1 , \Omega_{d_2}^{+} ] ) + \hbar \Delta_{d_2} ( a_{z} ) = \]
	\[ \square( \sum_{j \in I} (a_{zj})_{0} h_{j1,d_2} + \sum_{j \in I} (a^{+}_{zj})_{0} x^{+}_{j1,d_2} + \sum_{j \in I} (a^{-}_{zj})_{0} x^{-}_{j1,d_2} ) + \]
	\[ \square( \sum_{j \in I, t > 0} (a_{zj})_{t} h_{j1,d_2} + \sum_{j \in I, t > 0} (a^{+}_{zj})_{t} x^{+}_{j1,d_2} + \sum_{j \in I, t > 0} (a^{-}_{zj})_{t} x^{-}_{j1,d_2} ) + \]
	\[ \sum_{j \in I, t>0} ( (a_{zj})_{t} \otimes h_{j1,d_2} + h_{j1,d_2} \otimes (a_{zj})_{t} )  + \sum_{j \in I, t>0} ( (a^{+}_{ij})_{t} \otimes x^{+}_{j1,d_2} + (-1)^{|\alpha_{j}| |(a^{+}_{ij})_{t}|} x^{+}_{j1,d_2} \otimes (a^{+}_{ij})_{t} ) + \]
	\[ \sum_{j \in I, t>0} ( (a^{-}_{ij})_{t} \otimes x^{-}_{j1,d_2} + (-1)^{|\alpha_{j}| |(a^{-}_{ij})_{t}|} x^{-}_{j1,d_2} \otimes (a^{-}_{ij})_{t} ) + \]
	\[ \hbar ( \sum_{j \in I} \Delta_{d_2}( a_{zj} ) ( h_{j0,d_2} \otimes h_{j0,d_2} - \sum_{ \alpha \in \Delta_{d_2}^{+}} (\alpha_{j}, \alpha) x_{\alpha}^{-} \otimes x_{\alpha}^{+} ) ) + \]
	\[ \sum_{j \in I} (-1) \Delta_{d_2}( a^{+}_{zj} )  [ 1 \otimes x_{j0,d_2}^{+} , \Omega_{d_2}^{+} ] + \sum_{j \in I} \Delta_{d_2}( a^{-}_{zj} ) [ x_{j0,d_2}^{-} \otimes 1 , \Omega_{d_2}^{+} ] + \Delta_{d_2} ( a_{z} ) ). \]
	
	It is easy to see that
	\[ \square( a_{z} ) = \Delta_{d_2} ( a_{z} ) \Rightarrow a_{z} \in \mathfrak{g}_{d_2}, \; a_{zj}, a_{zj}^{\pm} \in \{ 0, 1 \} \]
	for all $j \in I$. Therefore we get
	\[ \zeta \otimes \zeta ( [ 1 \otimes x_{z0,d_1}^{+} , \Omega_{d_1}^{+} ] ) = \sum_{j \in I} \Delta_{d_2}( a_{zj} ) ( - h_{j0,d_2} \otimes h_{j0,d_2} + \sum_{ \alpha \in \Delta_{d_2}^{+}} (\alpha_{j}, \alpha) x_{\alpha}^{-} \otimes x_{\alpha}^{+} ) + \]
	\[ \sum_{j \in I} [ a^{+}_{zj} 1 \otimes x_{j0,d_2}^{+} - a^{-}_{zj} x_{j0,d_2}^{-} \otimes 1 , \Omega_{d_2}^{+} ]. \]
	
	Now we suppose that $|x_{z1,d_1}^{+}| = \bar{1}$. Thus we have $a_{zj} = 0$ as $|h_{j1,d_2}| = \bar{0}$ and by PBW Theorem \ref{th:pbwyang} for all $j \in I$. The last equation reduces to
	\[ \zeta \otimes \zeta ( [ 1 \otimes x_{z0,d_1}^{+} , \Omega_{d_1}^{+} ] ) = \sum_{j \in I} [ a^{+}_{zj} 1 \otimes x_{j0,d_2}^{+} - a^{-}_{zj} x_{j0,d_2}^{-} \otimes 1 , \Omega_{d_2}^{+} ]. \]
	Recall that $|\Omega_{d_1}^{+}| = |\Omega_{d_2}^{+}| = \bar{0}$. Note that $|\alpha_{j,d_1}| = |x_{z0,d_1}^{+}| = |x_{j0,d_2}^{\pm}| = \bar{1}$ when coefficients $a_{zj}^{\pm}$ are non-zero for $j \in I$. Thus as $\zeta$ is a $\mathbb{Z}_{2}$-graded even mapping we must have
	\[ \zeta \otimes \zeta ( \Omega_{d_1}^{+} ) = \Omega_{d_2}^{+}. \]
	In this way
	\[ 1 \otimes \zeta( x_{z0,d_1}^{+} ) = \sum_{j \in I} ( a^{+}_{zj} 1 \otimes x_{j0,d_2}^{+} - a^{-}_{zj} x_{j0,d_2}^{-} \otimes 1 ). \]
	Thus we have $a^{-}_{zj} = 0$ for all $j \in I$. It follows that
	\[ \zeta( x_{z0,d_1}^{+} ) = \sum_{j \in I} a^{+}_{zj} x_{j0,d_2}^{+}. \]
	
	Suppose that Dynkin diagrams associated with $Y_{\hbar}(\mathfrak{g}_{d_1})$ and $Y_{\hbar}(\mathfrak{g}_{d_2})$ respectively have different number of odd vertices. We assume without loss of generality that the number of odd vertices in the Dynkin diagram $D_{d_1}$ is strictly greater than the number of odd vertices in $D_{d_2}$. Then the elements $\zeta( x_{z0,d_1}^{+} )$ such that $|\alpha_{z,d_1}| = \bar{1}$ for $z \in I$ by PBW Theorem \ref{th:pbwyang} are linear dependent. As we suppose that $\zeta$ is an isomorphism of Hopf superalgebras we get a contradiction. The result follows.
\end{proof}

\begin{remark}
	Notice that Lemma \ref{lm:crihsbp} remains true for those Drinfeld super Yangians that have the Hopf superalgebra structure described in Subsection \ref{sub:hssy}. We don't use the minimalistic presentation in the proof.
\end{remark}

Recall Subsections \ref{sub:WGdef}. Consider a Drinfeld super Yangian $Y_{\hbar}(\mathfrak{g}_{d_1})$. Let $I_{\bar{0}} = \{ i \; | \; |\epsilon_{i,d_1}| = \bar{0} \}$ and $I_{\bar{1}} = \{ i \; | \; |\epsilon_{i,d_1}| = \bar{1} \}$. Finally we prove
\begin{theorem}
	Consider the Dynkin diagram of the Hopf superalgebra $Y_{\hbar}(\mathfrak{g}_{d_1})$ (for some $d_1 \in D$)
	\[ D_{d_1} = ( \epsilon_{1,d_1} - \epsilon_{2,d_1}, \epsilon_{2,d_1} - \epsilon_{3,d_1}, ..., \epsilon_{m+n-1,d_1} - \epsilon_{m+n,d_1} ). \]	
	Then Dynkin diagrams of isomorphic Hopf superlagebras of the type $Y_{\hbar}(\mathfrak{g}_{d_2})$ ($d_2 \in D$) have the form
	\begin{enumerate}
		\item
		\[ ( s ( \epsilon_{1,d_1} - \epsilon_{2,d_1} ), s (\epsilon_{2,d_1} - \epsilon_{3,d_1}), ..., s  (\epsilon_{m+n-1,d_1} - \epsilon_{m+n,d_1} ) ); \]
		\label{en:dd1}
		\item
		\[ ( s ( \epsilon_{m+n,d_1} - \epsilon_{m+n-1,d_1} ), ..., s (\epsilon_{3,d_1} - \epsilon_{2,d_1}), s (\epsilon_{2,d_1} - \epsilon_{1,d_1} ) ); \]
		\item
		\[ ( \lambda ( \epsilon_{1,d_1}  -  \epsilon_{2,d_1} ), \lambda ( \epsilon_{2,d_1} - \epsilon_{3,d_1} ), ..., \lambda ( \epsilon_{m+n-1,d_1} - \epsilon_{m+n,d_1} ) ), \]
		if $m=n$ and all vertices in $D_{d_1}$ are odd;
		\item
		\[ ( \lambda ( \epsilon_{m+n,d_1}  - \epsilon_{m+n-1,d_1}  ), ..., \lambda ( \epsilon_{3,d_1}  - \epsilon_{2,d_1}  ), \lambda ( \epsilon_{2,d_1}  - \epsilon_{1,d_1}  ) ), \]
		if $m=n$ and all vertices in $D_{d_1}$ are odd;
		\label{en:dd4}
	\end{enumerate}
	for all $s \in W(d_1)$ and all bijective functions $\lambda: I \to I$ such that $\lambda(I_{\bar{0}}) = I_{\bar{1}}$.
	\label{th:chsdy}
\end{theorem}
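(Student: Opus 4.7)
The plan is to prove the theorem in two directions: sufficiency (each of the four listed Dynkin diagrams yields an isomorphic Hopf superalgebra) and necessity (no other Dynkin diagram can arise).

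For sufficiency, I would produce an explicit Hopf superalgebra isomorphism in each case. In case (\ref{en:dd1}), decompose $s \in W(d_1)$ as a product of simple reflections $\sigma_{i_k}$ with $|\sigma_{i_k}| = \bar{0}$ and take the composition of the corresponding superalgebra isomorphisms $\Xi_{i_k}$ from Definition \ref{df:gencaseven} (well-defined by Lemma \ref{lm:evenisomsay}). To upgrade this to a coalgebra map, inspect formulas \eqref{eq:comg}, \eqref{eq:comh1}, \eqref{eq:comxbp}, \eqref{eq:comxbm}: each is written purely in terms of the root datum and of $\Omega^+$, and both transform consistently under an even Weyl-group element, so $\Delta$ is preserved on the generators of Theorem \ref{th:mpy}. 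Case (\ref{en:dd4}) is handled by precomposing case (\ref{en:dd1}) with the order-reversal map $\Xi$ of Definition \ref{df:lbhsisomchorderg}, using the same comultiplication analysis. For the remaining two cases, which require $m=n$ and all roots odd, use instead the "swap" isomorphisms $\Xi'$ and $\Xi''$ from Definitions \ref{df:tcod} and \ref{df:fcod}; the prescribed sign $(-1)^{1+|\alpha_{j,d_1}|}$ is exactly what makes these respect $\Delta(h_{i,1})$ since the Casimir half $\Omega^+$ transforms into its opposite under $x^+ \leftrightarrow x^-$, and the change of sign of the Cartan matrix is compensated by the simultaneous exchange.

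For necessity, let $\zeta \in \mathrm{Hom}_{\underline{\mathrm{sHAlg}}}(Y_{\hbar}(\mathfrak{g}_{d_1}), Y_{\hbar}(\mathfrak{g}_{d_2}))$ be an isomorphism. By Lemma \ref{lm:zetangr}, $\zeta$ preserves the $\mathbb{N}_0$-grading, and by Lemma \ref{lm:crihsbp} the number of odd vertices is preserved. Using \eqref{eq:prues}, the restriction $\bar\zeta := \zeta|_{\mathfrak{g}_{d_1}}$ is a Lie superalgebra isomorphism $\mathfrak{g}_{d_1} \to \mathfrak{g}_{d_2}$. I would then refine the calculation of Lemma \ref{lm:crihsbp}: comparing $(\zeta\otimes\zeta)\circ\Delta_{d_1}(x_{z,1,d_1}^+)$ with $\Delta_{d_2}\circ\zeta(x_{z,1,d_1}^+)$ for \emph{every} $z \in I$ (not just odd ones), and matching degrees in $\hbar$ with the help of the PBW Theorem \ref{th:pbwyang}, forces $\zeta(x_{z,0,d_1}^+) = \sum_j a_{zj}^+\, x_{j,0,d_2}^+ + \sum_j a_{zj}^-\, x_{j,0,d_2}^-$ with $a_{zj}^\pm \in \Bbbk$ and with compatibility on the Casimir $(\zeta\otimes\zeta)(\Omega_{d_1}^+) = \Omega_{d_2}^+$ or its analog after swapping. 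A parity-by-parity analysis then shows that either all $a_{zj}^- = 0$ (cases (\ref{en:dd1}) and (2)) or all $a_{zj}^+ = 0$ (cases (3) and (\ref{en:dd4})), and that the induced map on simple roots is a bijection which must, after composing with the relations \eqref{eq:SYrc1} transported through $\zeta$, respect the Cartan matrix up to an overall sign. Combined with the classification of the elements of $W_c(d_1)$ preserving the parity pattern of $D_{d_1}$ worked out in the paragraphs preceding Definitions \ref{df:fcod}–\ref{df:tcod}, this forces $D_{d_2}$ into one of the four listed forms.

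The main obstacle will lie in the necessity direction, and specifically in the extension of the argument of Lemma \ref{lm:crihsbp} to even generators $x_{z,0,d_1}^+$ with $|\alpha_{z,d_1}| = \bar 0$. In that case the expansion of $\zeta(x_{z,1,d_1}^+)$ in the PBW basis can \emph{a priori} include Cartan terms $h_{j,1,d_2}$ with non-scalar coefficients, and eliminating them requires extracting the coefficient of $\hbar$ in $\Delta_{d_2}\zeta(x_{z,1,d_1}^+)$ and comparing with the explicit commutator $[1\otimes x_{z,0,d_1}^+,\Omega_{d_1}^+]$, then invoking the $\mathbb{Z}_2$-evenness of $\zeta$ together with the fact that $\Omega^{+}$ is an even tensor. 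The secondary subtlety is isolating the $m=n$ all-odd situation as the unique scenario in which the $x^+ \leftrightarrow x^-$ swap is compatible with the sign change of the Cartan matrix, which follows from comparing the supersymmetric bilinear form on $P_{d_1}$ with its pullback under $\bar\zeta$.
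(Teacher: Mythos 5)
Your sufficiency argument coincides with the paper's: both take the explicit superalgebra isomorphisms of Definitions \ref{df:gencaseven}, \ref{df:lbhsisomchorderg}, \ref{df:tcod} and \ref{df:fcod} and verify compatibility with \eqref{eq:comg}--\eqref{eq:comh1} directly, with the all-odd hypothesis entering exactly where you place it (the sign flip of the Cartan matrix versus the behaviour of $\Omega^{+}$ under $x^{+}\leftrightarrow x^{-}$). Where you genuinely diverge is the necessity direction. The paper's own proof stops at Lemma \ref{lm:crihsbp} (invariance of the number of odd vertices) and then appeals to the earlier Weyl-group classification of superalgebra isomorphisms; since Theorem \ref{th:SYD} makes every member of the set $B$ isomorphic to $Y_{\hbar}(\mathfrak{g}_{d_1})$ as a plain superalgebra, that appeal by itself does not control the \emph{positions} of the odd vertices, and the count alone does not exclude two diagrams with the same number of odd vertices in non-mirror positions. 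Your plan --- rerunning the degree-one coproduct computation of Lemma \ref{lm:crihsbp} for every $z\in I$, eliminating the Cartan coefficients for even $z$, and reading off that $\zeta$ induces a bijection of simple roots preserving the Cartan matrix up to a global sign before invoking the $W_{c}(d_1)$ combinatorics --- is precisely the refinement needed to make necessity airtight, and the step you single out as the main obstacle is the right one. Two points should be made explicit to close it: for even $z$ you must first establish $\zeta(\mathfrak{h}_{d_1})=\mathfrak{h}_{d_2}$ (degree $0$, primitivity via \eqref{eq:prues}, and commutativity give this) so that the weight argument killing the $h_{j1,d_2}$-terms applies; and you must rule out a mixed sign pattern across different $z$ in $\zeta(x^{+}_{z0,d_1})=\sum_{j}a^{+}_{zj}x^{+}_{j0,d_2}+\sum_{j}a^{-}_{zj}x^{-}_{j0,d_2}$, which follows because $\zeta$ restricted to $\mathfrak{g}_{d_1}$ maps root spaces to root spaces and the images of the $x^{+}_{z0,d_1}$ must form a system of simple root vectors for a single choice of positivity (use \eqref{eq:SYrc3} transported through $\zeta$). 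With these two points supplied, your argument is complete and in fact more detailed than the proof printed in the paper.
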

\begin{proof}
  	Consider the set $B$ of all Drinfeld super Yangians of the type $Y_{\hbar}(\mathfrak{g}_{d_2})$ ($d_2 \in D$) with the same number of even (odd) verices in associated Dynkin diagrams as in $D_{d_1}$. It follows from Lemma \ref{lm:crihsbp} that all we have is to find out which elements of $B$ are isomorhic to $Y_{\hbar}(\mathfrak{g}_{d_1})$ as Hopf superalgebras. We have already described which elements of $B$ are isomorphic as superalgebras to $Y_{\hbar}(\mathfrak{g}_{d_1})$ by Definitions \ref{df:gencaseven}, \ref{df:fcod}, \ref{df:lbhsisomchorderg}, \ref{df:tcod} and Lemmas \ref{lm:evenisomsay}, \ref{lm:fcod}, \ref{lm:lbhsisomchorderg}, \ref{lm:tcod}. It is easy to see that isomorphisms given by Definitions \ref{df:gencaseven} and \ref{df:lbhsisomchorderg} are indeed the Hopf superalgebra isomorphisms by direct computations (verify the compatibility with the equations \eqref{eq:comg} - \eqref{eq:comh1}, \eqref{eq:comrec1} - \eqref{eq:comrec2}). Isomorhisms given by Definitions \ref{df:fcod} and \ref{df:tcod} are also isomorphisms of Hopf superalgebra if we assume that all vertices in $D_{d_1}$ are odd. The latter requirement is motivated by the equation \eqref{eq:SYrc3}, i. e. isomorphisms are compatible with this equation if all vertices in $D_{d_1}$ are odd.
\end{proof}

\begin{remark}
	We describe how we can apply Theorem \ref{th:chsdy} about classification of Drinfeld super Yangians considered as Hopf superalgebras to quantum loop superalgebras. It follows from Proposition \ref{pr:homcomcY} that Theorem \ref{th:chsdy} remains true for completions of Drinfeld super Yangians $\widehat{Y_{\hbar}(\mathfrak{g}_{d})}$ for $d \in D$. From Theorem \ref{th:cisomsyls} it follows that we have the explicit isomorphisms of associative superalgebras 
	\[ \widehat{\Phi}_{d} : \widehat{U_{\hbar}( L \mathfrak{g}_{d} )} \to \widehat{Y_{\hbar}(\mathfrak{g}_{d})} \]
	for $d \in D$. Then $\widehat{\Phi}_{d}$ ($d \in D$) becomes an isomorphism of Hopf superalgebras if we induce the Hopf superalgebra structure on the completion of quantum loop superalgebra using the completion of super Yangian. In this case, we can consider the isomorphism $\widehat{\Phi}_{d}$ as a twist of type 2 (see for more information \cite[Definition 4.5, Proposition 4.4]{MS21}). Thus Theorem \ref{th:chsdy} remains true for completions of quantum loop superalgebras $\widehat{U_{\hbar}( L \mathfrak{g}_{d} )}$ for $d \in D$.
\end{remark}

\vspace{1cm}

\end{document}